\numberwithin{equation}{section}
\theoremstyle{plain} 
\newtheorem{thm}{Theorem}[section] 
\newtheorem{cor}[thm]{Corollary}
\newtheorem{lem}[thm]{Lemma}
\newtheorem{pro}[thm]{Proposition}
\theoremstyle{definition}
\newtheorem{defi}[thm]{Definition}
\newtheorem{rem}[thm]{Remark}
\theoremstyle{remark}
\newcommand{\N}{\mathbb{N}}
\newcommand{\R}{\mathbb{R}}
\newcommand{\Z}{\mathbb{Z}}
\newcommand{\F}{\mathcal{F}}
\newcommand{\D}{\mathcal{D}}
\newcommand{\Act}{\mathcal{A}}
\newcommand{\Ham}{\mathcal{H}}
\newcommand{\SobH}{\mathscr{H}}
\renewcommand{\S}{\mathcal{S}}
\newcommand{\cX}{\mathcal{X}}
\newcommand{\cV}{\mathcal{V}}
\newcommand{\eps}{\varepsilon}
\newcommand{\bx}{\mathbf{x}}
\newcommand{\bv}{\mathbf{v}}
\DeclareMathOperator{\arcosh}{arcosh}
\newcommand\restr[2]{{
		\left.\kern-\nulldelimiterspace 
		#1 
		\vphantom{\big|} 
		\right|_{#2} 
}}
\newcommand{\norm}[2][]{\left\|#2\right\|_{#1}}
\newcommand{\skp}[2]{\left\langle #1,#2 \right\rangle}
\newcommand{\jabr}[1]{\left\langle #1 \right\rangle}
\newcommand{\Lpnorm}[2][]{\ifthenelse{\equal{#1}{}}{\norm{#2}_{L^p}}{\norm{#2}_{L^p(#1)}}}
\newcommand{\Hknorm}[2][]{\ifthenelse{\equal{#1}{}}{\norm{#2}_{H^k}}{\norm{#2}_{H^k(#1)}}}
\newcommand{\supp}{\text{\normalfont supp}}
\newcommand{\sgn}{\mathrm{sgn}}
\newcommand{\QV}[2][]{\ifthenelse{\equal{#1}{}}{\langle #1 \rangle}{\langle #1,#2 \rangle}}
\newcommand{\ind}{\mathbbm{1}}
\DeclareMathOperator{\eff}{\operatorname{eff}}
\title[Modified scattering in Vlasov-Poisson near an attractive point mass]{\vspace*{-1cm}Modified scattering dynamics in the Vlasov-Poisson equation near an attractive point mass}
\author[B. Kepka]{Bernhard Kepka}
\address{Institute of Mathematics, University of Zurich}
\email{bernhard.kepka@math.uzh.ch}
\author[K. Widmayer]{Klaus Widmayer}
\address{Faculty of Mathematics, University of Vienna \& Institute of Mathematics, University of Zurich}
\email{klaus.widmayer@univie.ac.at \& klaus.widmayer@math.uzh.ch}
\subjclass[2020]{35Q83, 35B40, 35B07}
\keywords{Vlasov-Poisson, attractive interactions, point mass, dispersion, radial symmetry, action-angle variables}
\begin{document}

\begin{abstract}
We study the long-time behavior of radially symmetric solutions to the Vlasov-Poisson equation consisting of an attractive point mass and a small, suitably localized and absolutely continuous distribution of particles: if the latter is initially localized on hyperbolic trajectories for the associated Kepler problem, we obtain global in time, unique Lagrangian solutions that asymptotically undergo a modified scattering dynamics (in the sense of distributions). A key feature of this result is its low regularity regime, which does not make use of derivative control, but can be upgraded to strong solutions and strong convergence by propagation of regularity. 
\end{abstract}
\maketitle

\tableofcontents

\section{Introduction}\label{sec:Introduction}
A classical model for galactic dynamics is the Vlasov-Poisson system 
\begin{align}\label{eq:VP}\tag{VP}
	\partial_t F + \bv\cdot \nabla_{\bx} F - \nabla_{\bx} \phi \cdot \nabla_{\bv} F =0,
	\quad
	\Delta \phi(t,\bx) = 4\pi\int F(t,\bx,\bv)\, d\bv, \quad \lim_{|\bx|\to \infty} \phi(t,\bx) =0,
\end{align}
which describes the dynamics of a particle distribution function $F:\R\times\R^3\times\R^3\to\R_+$ subject to its self-generated gravitational field $\nabla_\bx\phi(t,\bx)$. We refer to \cite{BinneyTremaine2008,Rein2007} for a physical and mathematical introduction to the Vlasov-Poisson equation in galactic dynamics. In this article, we investigate the dynamics of \eqref{eq:VP} near an attractive point mass. Concretely, we consider solutions of the form
\begin{equation}\label{eq:F-shape}
	F(t,d\bx,d\bv) = m_p\delta_{(\cX(t),\cV(t))}(d\bx,d\bv) + \lambda f(t,\bx,\bv) d\bx d\bv,
\end{equation}
where $m_p>0$ and $(\cX(t),\cV(t))\in\R^3\times\R^3$ are the mass and location of the point mass, respectively, and $\lambda>0$ is the specific mass of an absolutely continuous distribution $f(t,\bx,\bv)\geq 0$. One correspondingly decomposes the gravitational potential as
\begin{align*}
	\phi = \phi_p + \phi_g, \quad \phi_p(t,\bx) = - \dfrac{m_p}{|\bx-\cX(t)|},
	\quad
	\Delta \phi_g(t,\bx) = 4\pi\int f(t,\bx,\bv)\, d\bv,
\end{align*}
and formally obtains the coupled system
\begin{equation}\label{eq:VPD}
\begin{aligned}
	\left(  \partial_t + \bv\cdot\nabla_\bx - \dfrac{m}{2} \dfrac{\bx-\cX(t)}{|\bx-\cX(t)|^3} \cdot \nabla_\bv \right) f - \lambda \nabla_\bx \phi_g \cdot \nabla_\bv f &=0,\quad \Delta \phi_g(t,\bx) = 4\pi\int f(t,\bx,\bv)\, d\bv,\\
    \frac{d\cX(t)}{dt}=\cV(t),\qquad \frac{m}{2}\frac{d\cV(t)}{dt}&=-\nabla\phi_g(\cX(t),t),
\end{aligned}    
\end{equation}
where $ m:=2m_p >0$ and again we impose that $\lim_{|\bx|\to \infty} \phi_g(t,\bx) =0$. 

Our main result -- stated first in a simplified fashion in Theorem \ref{thm:main_intro} -- captures the global in time stability of \eqref{eq:VPD} for radially symmetric, sufficiently small and \emph{suitably localized} initial perturbations $f_0(\bx,\bv)$. Here, by radial symmetry we mean the natural $\mathcal{O}(3)$-symmetry of \eqref{eq:VPD} with $\cX(0)=\cV(0)=0$, i.e.\ the invariance under transformations $(\bx,\bv) \mapsto (O\bx,O\bv)$ for $ O\in \mathcal{O}(3) $, which dynamically persists if initially imposed.

\begin{thm}\label{thm:main_intro}
Let $f_0\in L^\infty_c(\D_0)$ be radially symmetric, where $\D_0:=\{|\bv|^2 >m|\bx|^{-1}\}$. 
\begin{enumerate}[(i)]
    \item\label{it:global} (Global existence and uniqueness) Then there exists $\eps_0>0$ such that for all $0<\eps\leq \eps_0$, there is a unique global Lagrangian solution to \eqref{eq:VPD} with $f(0,\bx,\bv)=\eps f_0(\bx,\bv)$, and $f\in C_tL^\infty_{\bx,\bv}$ with $\cX(t)=\cV(t)=0$, i.e.\ the characteristics for $f$ are well-defined and $f$ is the unique weak solution given by transport of $f_0$ along them. 
    \item\label{it:aysmpt} (Asymptotic behavior) Moreover, there exists a Lagrangian map $ (\mathscr{X},\mathscr{V}): [0,\infty)\times\D_0 \to \D_0 $ and an asymptotic profile $ f_\infty\in L^\infty_c(\D_0) $ such that for all $\varphi\in C^\infty_c(\D_0)$
	\begin{equation}\label{eq:asympt_main}
		\jabr{f\left( t, \mathscr{X}(t,\bx,\bv),\mathscr{V}(t,\bx,\bv)\right),\varphi}_{L^2} \to \jabr{f_\infty(\bx,\bv),\varphi}_{L^2},\qquad t\to\infty.
	\end{equation}
    \item\label{it:strong} (Strong solutions for smoother initial data) If $f\in C^1_c(\D_0)$, then the above solution is in fact a strong solution $C_tC^1_{\bx,\bv}$, and the asymptotic convergence \eqref{eq:asympt_main} holds strongly in $L^2$.
\end{enumerate}
\end{thm}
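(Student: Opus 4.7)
The plan is to exploit radial symmetry to reduce \eqref{eq:VPD} to a perturbed Kepler problem for its characteristics, close a bootstrap argument giving sharp dispersive decay of the self-consistent field $\nabla_\bx \phi_g$, and then read off the modified scattering asymptotics from the explicit behavior of hyperbolic Kepler orbits. The radial reduction is essential: for $f_0$ radial and $\cX(0)=\cV(0)=0$, $\mathcal{O}(3)$-invariance is propagated, so $\cX(t)=\cV(t)\equiv 0$, and Newton's shell theorem yields the explicit expression
\begin{equation*}
    \nabla_\bx \phi_g(t,\bx) = \frac{\bx}{|\bx|^3} \int_0^{|\bx|} 4\pi r^2 \rho(t,r)\, dr, \qquad \rho(t,\bx) := \int f(t,\bx,\bv)\,d\bv.
\end{equation*}
Moreover, the hyperbolic constraint $\D_0 = \{|\bv|^2 > m/|\bx|\}$ together with compactness of $\supp f_0$ in velocity gives a uniform lower bound $|\bx|\geq c>0$ on $\supp f(t)$, keeping all characteristics away from the Kepler singularity.

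For $\lambda=0$ the characteristics are hyperbolic Kepler orbits: planar (by angular momentum conservation), escaping to spatial infinity as $t\to\pm\infty$, with the explicit asymptotics
\begin{equation*}
    V(t) \to V_\infty,\qquad X(t) = V_\infty t + \frac{m}{2|V_\infty|^2}\,\hat V_\infty \log t + b_\infty + o(1),\qquad t\to\infty,
\end{equation*}
where the logarithmic phase correction is the signature of the long-range $1/r^2$ Coulombic force. Passing to hyperbolic Delaunay / action-angle variables trivializes the unperturbed flow and provides the change of coordinates showing that the Jacobian from initial data $(\bx,\bv)$ to $(X(t),V(t))$ grows like $\jabr{t}^3$.

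The core of the argument is a bootstrap on the pair
\begin{equation*}
    \|\rho(t)\|_{L^\infty} \leq C\eps\jabr{t}^{-3},\qquad |\nabla_\bx\phi_g(t,\bx)|\leq C\eps\jabr{t}^{-2} \text{ on } \supp f(t).
\end{equation*}
Under these, $\lambda\nabla_\bx\phi_g$ is integrable in time along each trajectory, and a Gronwall-type argument shows that the perturbed characteristics remain close to their Kepler counterparts and inherit the $\jabr{t}^3$-Jacobian behavior. Conservation of $f$ along the Lagrangian flow (by change of variables) then recovers the $\rho$-estimate, and the shell-theorem identity, combined with the linear-in-$t$ expansion of $\supp \rho(t)$, yields $\int_0^{|\bx|} r^2\rho\lesssim \eps$ and hence the force bound; this closes the bootstrap for $\eps \leq \eps_0$ and yields \eqref{it:global}. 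From the scattering structure of the perturbed flow one then extracts a Lagrangian map $(\mathscr{X},\mathscr{V})$ encoding the modified free-streaming dynamics and an asymptotic profile $f_\infty$, for which \eqref{eq:asympt_main} follows by dominated convergence, giving \eqref{it:aysmpt}. For \eqref{it:strong}, an analogous bootstrap at one derivative higher, propagating $\nabla_{\bx,\bv}f$ along characteristics with a matching $\|\nabla^2\phi_g\|$ estimate, upgrades the solution to $C^1_tC^1_{\bx,\bv}$ and the convergence to strong $L^2$.

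The main obstacle is the self-consistency of the bootstrap at the low $L^\infty$-regularity level: the sharp $\jabr{t}^{-3}$-decay of $\rho$ comes from the $\jabr{t}^3$-Jacobian of the characteristics, which itself is controlled only through the $\jabr{t}^{-2}$-force estimate one is trying to establish, and this circularity must be resolved without any derivative control on $f$. Tracking the logarithmic modified-scattering correction consistently throughout, and ensuring that the perturbed flow uniformly inherits the hyperbolic character of the Kepler flow on $\supp f(t)$, is the delicate technical heart of the argument; the radial reduction is what makes it tractable, as it decouples the angular dynamics and reduces the force to the one-dimensional shell-theorem integral above.
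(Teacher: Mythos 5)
Your radial reduction and the general dispersive philosophy are in the spirit of the paper, but three steps of your outline do not hold as written, and they are precisely the ones carrying the theorem. First, the claim that compact support in $\D_0$ gives a uniform lower bound $|\bx|\geq c>0$ on $\supp f(t)$ is false: $\D_0$ places no lower bound on the angular momentum $\ell=|\bx\wedge\bv|^2$, and a hyperbolic orbit with energy $\tfrac12 a^2$ and small $\ell$ has perihelion $r_0(a,\ell)\approx \ell/m$ and radial speed peaking at $v_{peak}(a,\ell)=\sqrt{a^2+m^2/(4\ell)}$, so characteristics emanating from the initial support can pass arbitrarily close to the point mass with arbitrarily large velocities (compactness of the velocity support is not propagated). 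The singularity cannot simply be "kept away from"; what must be proved instead is that the nonlinear characteristics stay in the hyperbolic region $\{a\jabr{\ell}^{1/2}\geq\delta/2\}$ --- a statement about the distance to the separatrix in energy, not to the origin --- and this is exactly where the smallness $\eps\lesssim\delta^2$ enters (Lemma \ref{lem:NonlinCharSys}, Corollary \ref{cor:nonlin_chars}). Second, the circularity you flag is not resolved, and it is the crux of part \eqref{it:global}: extracting $\|\rho(t)\|_{L^\infty}\lesssim\jabr{t}^{-3}$ from a ``$\jabr{t}^3$ Jacobian'' requires differentiating the characteristic flow with respect to its initial data, hence Lipschitz bounds on $\nabla_\bx\phi_g$ (i.e.\ control of second derivatives of the potential), for which you have no mechanism when $f$ is merely bounded. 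The paper never proves decay of $\rho$ at all: it estimates $\F(t,r)=-r^{-2}\int\ind_{\{\tilde R\leq r\}}\gamma^2$ and $\partial_r\F$ directly as phase-space integrals, using only weighted $L^2\cap L^\infty$ moments of $\gamma$ in asymptotic action-angle variables together with set-size and change-of-variables arguments (Lemmas \ref{lem:EstimatesGravFieldLInf}, \ref{lem:EstimatesDerivGravFieldLInf}); moments, unlike Jacobians, propagate without any derivative control, and this is what breaks the circle.

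Beyond this, you assert uniqueness of the Lagrangian solution but give no argument for it; at this regularity it is the hardest part of \eqref{it:global}, and in the paper it occupies most of the proof of Theorem \ref{thm:WellPosedLagrangianSol} (a weighted Gr\"onwall comparison of two characteristic flows with the weight $\omega$, Lipschitz bounds on $\nabla_{(\theta,a)}\tilde{\Psi}$, the monotonicity of $a\mapsto R(\theta,a,\ell)$ from Lemma \ref{lem:BoundActionDerivR}, and set-size estimates for the difference of the two fields). For \eqref{it:aysmpt} your asymptotics contain only the Kepler logarithm $\frac{m}{2|V_\infty|^2}\log t$; the content of the modified scattering here is the additional nonlinear correction $-\lambda\F_\infty(a)\ln t$ generated by the self-consistent field. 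Your proposal neither constructs the limiting field $\F_\infty$ (in the paper via convergence of the actions $\hat{A}_t$ and of $t^2\F_{\eff}(t,at)$, Proposition \ref{pro:ConvergenceCharacFlow}, resting on the comparison of $\F$ with $\F_{\eff}$ in Lemma \ref{lem:GravFieldvsEffField}) nor shows why the corrected characteristics converge; ``dominated convergence'' is no substitute for the time-integrable error estimates that make $\hat{\Theta}_t+\lambda\ln(1+t)\F_\infty(\hat{A}_t)$ Cauchy (Theorem \ref{thm:ModifiedScatteringLagrangianSol}). Finally, for \eqref{it:strong} a bare $C^1$ bootstrap is insufficient as stated: the second derivatives of the field in action-angle variables degenerate as $a\to0$ and for extreme $\ell$, which is why the paper propagates derivatives in weighted norms with the rebalancing weight $\omega=(a/(a+\jabr{\ell}^{1/2}))^{1/2}$ and additional $a,\ell$-weights.
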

As it turns out, the dynamics of \eqref{eq:VPD} are more adequately captured in adapted spherical resp.\ action-angle variables, and we give the corresponding, more precise version of the main statement below in Theorem \ref{thm:IntrodMainThmAAVar}. For now, let us highlight four key features of this result:
\begin{enumerate}[(1)]
    \item (Open trajectories and modified scattering) The assumption that the initial data be supported on $\D_0$ is essential for our theorem: This guarantees that the dynamics start on hyperbolic trajectories of the linearized flow, which in turn is simply the Hamiltonian flow with respect to the Keplerian Hamiltonian
    \begin{align*}
	\Ham_{lin}(\bx,\bv)= \dfrac{1}{2}|\bv|^2 - \dfrac{m}{2}\dfrac{1}{|\bx|},
    \end{align*}
    see also Section \ref{sec:intro_lin+aa}. For simplicity, we have quantified this support restriction here using a compactness assumption, but this can be relaxed and made more precise using suitable moments -- see Theorem \ref{thm:IntrodMainThmAAVar} below. Our result then shows that the resulting dynamics in the nonlinear problem, the characteristic system of which is the Hamiltonian flow of
    \begin{align*}
	\Ham(t,\bx,\bv) = \Ham_{lin}(\bx,\bv) + \lambda \phi_g(t,\bx),
    \end{align*}
    remain in the region $\D_0=\{\Ham_{lin}>0\}$. Furthermore, $\mu$ undergoes a modified scattering dynamic due to the long-range effects of the gravitational potential $\phi_g$: the asymptotic behavior \eqref{eq:asympt_main} is given by a logarithmic correction to the linearized dynamics -- we refer to Remark \ref{rem:asympt} for more details. To the best of our knowledge, Theorem \ref{thm:main_intro} is then the first global in time stability result for dynamics near an attractive point mass in the Vlasov-Poisson equations. Contrast this with the setting of elliptic, bound orbits $\Ham_{lin}(\bx,\bv)<0$, where only local in time stability is known \cite{CL2024}, see also the discussion below.
    
    \item (Weak topology) A key feature of our result is that unique, global solutions can be constructed already for bounded and localized initial data (without any derivative assumptions), and that moreover their asymptotic behavior can be isolated in the form of the weak distributional convergence in \eqref{eq:asympt_main} (which could be quantified more precisely). Our functional setting thus allows to isolate the asymptotic behavior of patch type solutions, for example. Overall, this reinforces the perspective of \eqref{eq:VP} as a transport equation for measures. We expect the relevant arguments to apply more broadly, in particular also to the Vlasov-Poisson equations near vacuum. We also remark that propagation of regularity holds in more generality, for higher derivatives than stated in \eqref{it:strong}. 

    \item (Radial symmetry and angular momentum) The assumption of radial symmetry about the point mass reduces the degrees of freedom, and in particular ensures that the point mass remains at rest at its initial location. With our choice of coordinates, this is simply the origin, and leaves as dynamic variables $(r,v,\ell)\in\R_+\times\R\times\R$, where $\ell$ denotes the angular momentum -- see Section \ref{sec:intro_symm} below for more details. Since (as in the classical Kepler problem) angular momentum is conserved, $\ell$ largely plays the role of a parameter, and the dynamics essentially reduce to those on a $1+1$-dimensional phase space. However, the precise support properties with respect to $\ell$ still play an important role in the quantitative understanding of the gravitational force field -- see Remark \ref{rem:lweights} for more on this. 

    \item (Methodology) The approach of the present paper builds on the ``method of asymptotic actions'' developed in the context of the repulsive setting \cite{PW2020,PWY2022}, see also the discussion just below. Whereas in \cite{PW2020} radial symmetry with vanishing angular momentum was considered, here we extend the methodology to the case of general angular momenta (but do not treat the case without symmetry assumptions as in \cite{PWY2022}). This shows that the Hamiltonian or symplectic tools developed in the aforementioned can be adapted and extended to a setting with three degrees of freedom.
\end{enumerate}

\paragraph{\bf Context.}
As a fundamental model for collisionless dynamics in galaxies and in plasmas, the Vlasov-Poisson system \eqref{eq:VP} for particle distribution functions of the form \eqref{eq:F-shape} with $m_p=0$, $\lambda\in\R$, has been widely studied, and the corresponding literature is too vast to be surveyed here adequately. We highlight instead some more directly relevant aspects and limit our attention to the setting of three space and three velocity dimensions $(\bx,\bv)\in\R^3\times\R^3$. Here, classical works have established the global well-posedness of sufficiently smooth and localized solutions \cite{BD1985,GI2020,LP1991,Pfa1992,Sch1991}. Under milder assumptions, weak solutions have been shown to exist globally e.g.\ in \cite{DPL1988}, and their Lagrangian nature (i.e.\ they are given as transport of the initial data along a well-defined characteristic flow) has been established in \cite{ACF17}, while criteria for uniqueness can be found e.g.\ in \cite{Loeper2006UniquenessVP,Mio2016}. However, there are very few results concerning long-time dynamics. Recent works \cite{CK2016,FOPW2021,IPWW2020,Pan2020} have started to address aspects of this, and provide a precise description of asymptotic behavior in particular near vacuum. (See also \cite{Big2022,Bret2025,PBA2023} for related results on the Vlasov-Maxwell equations.)

The presence of a point charge or mass, i.e.\ $m_p\neq 0$ in \eqref{eq:F-shape}, introduces a singular force field and severe analytical challenges, rendering many classical results inapplicable. Nevertheless, in the repulsive setting ($m_p<0$), parallels to the classical theory have been developed, including the global well-posed of strong solutions under suitable support restrictions \cite{MMP2011} (see also \cite{CM2010}), the existence of global weak solutions \cite{DMS2015,LZ2017,LZ2018} and global Lagrangian solutions \cite{CLS2018}. Moreover, the global stability of a point charge has been investigated in \cite{PW2020,PWY2022}, identifying the asymptotic as a modified scattering dynamic.

In the present setting of an attractive point mass, i.e.\ $m_p>0$, strong well-posedness is not known (not even locally in time), but global weak solutions have been constructed \cite{CMMP2012,CZW2015}. Theorem \ref{thm:main_intro} establishes the global nonlinear stability of an attractive point mass under a certain class of radially symmetric perturbations.
Our support restriction to (linearly) hyperbolic trajectories guarantees that the main mechanism of stability is dispersion, which in particular implies the decay of the gravitational field by virtue of the spatial dilution of the particle distribution. This shows clear parallels with the dynamics near a repulsive point charge, where all dynamics are (linearly) hyperbolic, and global in time stability was shown in \cite{PW2020,PWY2022}, even without radial symmetry assumptions. The ``method of asymptotic actions'' developed in those works is also the driving force behind the present article. In contrast, for bound orbits different dynamics are to be expected, and first important results \cite{CL2024} in this direction point to parallels with Landau damping on the torus: decay of the gravitational force field hereby arises from mixing effects, which rely on regularity (rather than spatial dilution). This has been established for the linearized dynamics, and has been shown to imply an extended time scale of existence in the radially symmetric case.

The setting of \eqref{eq:VP} near a point mass may be regarded as an idealization for the dynamics outside a spherically symmetric and highly localized equilibrium of \eqref{eq:VP}. Classical examples of the latter are polytropes, which are known to be orbitally stable, see for instance \cite{Guo1999StableSteadyStates,Lemou2012OrbitalStability,Rein2002StabilityGerneralPerturbations,Rein2007}. Furthermore, recent works have made substantial progress on the linear stability of these (and other) configurations, both in terms of spectral analysis \cite{HRSS2023,HRS2021} and quantitative decay rates for the gravitational field \cite{HRSS24,HS25}.

Finally, we remark that the question of stability of particular equilibria of \eqref{eq:VP} has also seen much progress in the setting of spatially homogeneous backgrounds, which are of natural interest for multi-species plasmas. Despite differing underlying mechanism, such stability results are typically referred to as Landau damping: in confined or screened cases these are comparatively well-understood \cite{BMM2018,FR2016,HNR2019,IPWW24,MV2011} and have parallels to inviscid damping in the $2d$ Euler equations (see e.g.\ \cite{IJ2019} for the stability of a point vortex), whereas only few results exist in the unconfined setting \cite{BMM2020,HNR2020,IPWW2023,IPWW2022} (see also \cite{T2024} for an interesting model problem). We further highlight the related works \cite{AW2021,HW2022} on the interaction of point charges with a homogeneous background.

\subsection{The setting of radial symmetry}\label{sec:intro_symm}
As is well known, the system \eqref{eq:VP} is radially symmetric in the sense that it is invariant under phase space rotations $(\bx,\bv)\mapsto (O\bx,O\bv)$, $O\in \mathcal{O}(3)$, and thus it is natural to look for solutions that share this feature. In the context of \eqref{eq:VPD}, if we choose coordinates such that $\cX(0)=\cV(0)=0$ and $f_0$ is radially symmetric, then so is the solution $f$ and the point mass is stationary at the origin (see also Remark \ref{rem:lweights}). In this setting, due to the reduced degrees of freedom  the particle distribution function can be expressed in terms of three scalar variables: the spatial distance to the center $r\in\R_+$, the velocity $v\in\R$ in the radial direction and the angular momentum $\ell\geq 0$, i.e.\footnote{Here we have chosen to work with non-negative densities $\mu^2$ in an $L^2$ framework rather than a general non-negative function $f$ in $L^1$ -- see also previous works \cite{FOPW2021,IPWW2020,PW2020,PWY2022} for more on this.}
\begin{align*}
	f(t,\bx,\bv) &= \mu^2(t,r(\bx),v(\bx,\bv),\ell(\bx,\bv)), \quad r(\bx) =|\bx|,\quad v(\bx,\bv):=\bv\cdot \dfrac{\bx}{|\bx|},\quad  \ell(\bx,\bv)=|\bx \wedge \bv|^2,
\end{align*}
while the Liouville measure in phase space transforms as
\begin{align*}
	f(t,\bx,\bv) \, d\bx d\bv &= 4\pi^2\mu^2(t,r,v,\ell) \, drdvd\ell.
\end{align*}
We have chosen here to incorporate the non-negativity of $f$ by writing $f$ in terms of $ \mu^2 $. Slightly abusing notation, we shall henceforth label the radial velocity with $v$. The equation for $ \mu(t,r,v,\ell):\R\times \R_+\times\R\times\R\to\R $ is then given by (see Appendix \ref{sec:AppendixDerivRadSymEq} for a full derivation)
\begin{align}\label{eq:Sec1:NonDynPhysVar}
	\left( \partial_t + v\partial_r+ \dfrac{\ell}{r^3}\partial_v - \dfrac{m}{2} \dfrac{1}{r^2}\partial_v \right)\mu- \lambda\partial_r\psi \, \partial_v\mu =0,
\end{align}
where the gravitational potential is recovered from $\mu$ as
\begin{align*}
	\psi(t,r) =-\int_{s=0}^\infty \int_{v\in\R}\int_{\ell=0}^\infty \,  \dfrac{\mu^2(t,s,v,\ell)}{\max(r,s)}ds dv d\ell.
\end{align*}
Thus, in particular the force is given by
\begin{align}\label{eq:Sec1:GravField}
	\F(t,r) := -\partial_r \psi(t,r) = -\dfrac{1}{r^2}\int_{s=0}^r \int_{v\in\R}\int_{\ell=0}^\infty \, \mu^2(t,s,v,\ell)ds dvd\ell.
\end{align}
The characteristic system of \eqref{eq:Sec1:NonDynPhysVar} is the Hamiltonian flow with respect to the Hamiltonian 
\begin{align*}
	\Ham(t,r,v,\ell) = \Ham_{lin}(r,v,\ell) + \lambda \psi(t,r),\quad \Ham_{lin}(r,v,\ell) =\dfrac{1}{2}v^2 + \dfrac{\ell}{2r^2} - \dfrac{m}{2r},
\end{align*}
so that \eqref{eq:Sec1:NonDynPhysVar} can be recast as
\begin{align}\label{eq:Sec1:NonDyn}
		\partial_t \mu + \{\mu,\Ham\} &= \partial_t \mu +  \{\mu,\Ham_{lin}\} + \lambda \{\mu,\psi\} =0,
\end{align}
with Poisson bracket given by
\begin{equation}\label{eq:Sec1:PoissonBracket}
  \{f,g\} = \partial_rf \partial_vg-\partial_vf \partial_rg.  
\end{equation}
As is well known, the angular momentum $ \ell $ is conserved in any central field. In particular, $ \ell $ is only a parameter in the dynamics and hence the Poisson bracket above is acting only on the variables $ (r,v) $.

\begin{rem}\label{rem:lweights}
    While weaker interpretations may be possible, in this article we will focus on dynamics for which the characteristic systems of \eqref{eq:VPD} are well-defined. In particular, for the point mass motion this requires that the gravitational field at the point mass vanishes, i.e.\ that $\partial_r\psi(t,0)=0$. This amounts to a vanishing condition of $\mu$ as $\ell \searrow 0$: we quantify this using moments in $\ell^{-1}$ on $\mu$, see Theorem \ref{thm:IntrodMainThmAAVar}.
\end{rem}

\begin{rem}\label{rem:asympt}
    The assumptions of Theorem \ref{thm:main_intro} translate in a straightforward fashion to the variables $(r,v,\ell)$. As sketched in \eqref{eq:asympt_main}, asymptotically $\mu$ converges along modified trajectories $(\mathscr{R},\mathscr{V})(t,r,v,\ell)$ to a final state $\mu_\infty$, where the Lagrangian map  $ t\mapsto (\mathscr{R},\mathscr{V})(t,r,v,\ell) $ can be expressed via an asymptotic gravitational field $ \F_\infty:\R_+\to (-\infty,0] $, depending only on the final state $\mu_\infty$. More precisely, as $ t\to \infty $ it is of the form
    \begin{equation}\label{eq:modscat_rvl}
	\begin{aligned}
		\mathscr{R}(r,v,\ell,t) &= t\sqrt{v^2+\dfrac{\ell}{r^2}-\dfrac{m}{r}} + \dfrac{m}{2}\left( v^2+\dfrac{\ell}{r^2}-\dfrac{m}{r}\right) ^{-1} \ln t -\lambda \F_\infty\left( \sqrt{v^2+\dfrac{\ell}{r^2}-\dfrac{m}{r}} \right) \ln t +\mathcal{O}(1),
		\\
		\mathscr{V}(r,v,\ell,t) &= \sqrt{v^2+\dfrac{\ell}{r^2}-\dfrac{m}{r}} + \dfrac{m}{2}\left( v^2+\dfrac{\ell}{r^2}-\dfrac{m}{r}\right) ^{-1} \dfrac{1}{t} + \mathcal{O}\left(\dfrac{1}{t^2} \right).
	\end{aligned}
    \end{equation}
	Note that the first term is the particle energy $ \Ham_{lin}(r,v,\ell) $ along the trajectory of the linearized dynamics. The second term is due to the linearized dynamics, whereas the third term is a nonlinear long-range correction due to the critical nature of Newton interactions in three dimensions. Note that the force $ \F_\infty $ is negative, i.e.\ it is directed towards the centre. This is due to the spherical symmetry and the nature of the Newtonian interaction (more precisely, at radius $ r>0 $ only mass at radius $ s\leq r $ is giving a contribution to the induced force field). As a consequence, the nonlinear correction in $ \mathscr{R} $ above gives a contribution which slows down particles due to the (attractive) interaction with the gas. Note that, formally setting $m=0$ in \eqref{eq:modscat_rvl}, we recover the asymptotic behavior near vacuum \cite{FOPW2021,IPWW2020}.
\end{rem}

\subsection{Overview and ideas of proof -- the method of asymptotic actions}
Our overall approach to establishing Theorem \ref{thm:main_intro} is guided by the Hamiltonian structure of the equations as exploited through the method of asymptotic actions developed in the prior works \cite{PW2020,PWY2022}. A first important step is the analysis of the linearized flow and the development of suitable asymptotic action-angle variables (see Section \ref{sec:LinearizedDyn} for full details). These are symplectic and allow to explicitly integrate the linear flow. Due to the symplectic structure and the favorable expression of the gravitational potential in terms of a phase space integral, this allows to reduce the nonlinear dynamics to a purely nonlinear equation. The asymptotic behavior can then be understood through a leading order asymptotic shear equation. 

In the context of attractive interactions, a key challenge is the lack of a global smooth choice of action-angle variables. As already discussed, in this article we focus on the case of open, hyperbolic trajectories, which shares parallels with the repulsive setting, and for which a smooth choice of action-angle variables is possible. However, since this is a condition on the possible linearized dynamics, one needs to make sure that the nonlinear dynamics do not leave the corresponding realm. We also remark that the setting of radial symmetry considered here goes beyond that of \cite{PW2020}, in that general angular momenta $\ell> 0$ are considered. While this alters the overall phase space structure, since $\ell$ is conserved along the nonlinear evolution it mostly plays the role of a parameter, and we can regard the problem as being set along one-parameter family of $1+1$-dimensional phase spaces in $(r,v)$.

\subsubsection{Linearized dynamics and action-angle variables}\label{sec:intro_lin+aa}
We observe that the linearized dynamics of \eqref{eq:Sec1:NonDynPhysVar} respectively \eqref{eq:Sec1:NonDyn} are given by the singular transport equation
\begin{align}\label{eq:Sec1:LinEq}
	\left( \partial_t + v\partial_r+ \dfrac{\ell}{r^3}\partial_v - \dfrac{m}{2} \dfrac{1}{r^2}\partial_v \right)\mu=0\quad\Leftrightarrow\quad \partial_t \mu +  \{\mu,\Ham_{lin}\} = 0.
\end{align}
The associated characteristic ODEs (see also \eqref{eq:Sec2:CharSys}) are simply those of the Kepler problem, i.e.\ of the classical, Newtonian two-body problem with attractive interactions. As is well-known, this is a completely integrable system, and both $\ell\geq 0$ and $\Ham_{lin}(r,v,\ell)$ are conserved along its trajectories. Moreover, $\Ham_{lin}$ can be used to distinguish different types of trajectories: when $\Ham_{lin}>0$, these are hyperbolas, while for $\Ham_{lin}<0$ one finds ellipses, and the separatrix $\Ham_{lin}=0$ consists of parabolas. We refer to \cite[Section 15]{LandauI1976} for more on the Kepler problem.

In the present article, our focus is on \emph{hyperbolic trajectories}, and we will thus restrict ourselves to studying solutions to \eqref{eq:Sec1:LinEq} with support on $\Ham_{lin}>0$. (By the aforementioned conservation laws, this support assumption is dynamically preserved if initially assumed.) The dynamics then naturally foliates along the conserved angular momentum $\ell\geq 0$, which plays the role of a parameter. Defining the action variable $a>0$ as
\begin{equation}
    a=a(r,v,\ell):=\sqrt{2\Ham_{lin}(r,v,\ell)}=\sqrt{v^2 + \dfrac{\ell}{r^2} - \dfrac{m}{r}}>0,
\end{equation}
in Proposition \ref{pro:ActionAngleVariables} we explicitly construct an associated angle variable $\theta=\theta(r,v,\ell)$ such that for each $\ell$ the change of variables $(r,v)\mapsto (\theta,a)$ has the following properties:
\begin{enumerate}[(i)]
	\item It is symplectic, i.e.\ preserves the Hamiltonian structure, in particular the Poisson bracket \eqref{eq:Sec1:PoissonBracket}.
	\item It satisfies the action-angle property, i.e.\ under the characteristics of \eqref{eq:Sec1:LinEq} we have $\dot{a}=0$ and $\dot{\theta}=a$,
	\item\label{it:asymptotic_actions} The actions $a$ parametrize the asymptotic velocities along trajectories as $ t\to \infty $.
\end{enumerate}
As discussed in \cite[Section 1.2]{PWY2022}, this last property is crucial for the study of the long-time behavior of the nonlinear system, as it implies that trajectories with different values of $a$ separate linearly in time.

These ``asymptotic'' action-angle variables allow us to desingularize and explicitly solve the underlying linear problem \eqref{eq:Sec1:LinEq}: denoting by  $(\theta,a)\mapsto (R(\theta,a,\ell),V(\theta,a,\ell))$ the (symplectic, for each $\ell$) inverse of the above change to action-angle variables, the equations for $\bar{\mu}(t,\theta,a,\ell):=\mu(t,R(\theta,a,\ell),V(\theta,a,\ell)) $ become
\begin{align*}
	\partial_t \bar{\mu} - a\partial_\theta \bar{\mu}=0,\quad \bar{\mu}\mid_{t=0}=\bar{\mu}_0,
\end{align*} 
the solution of which is given by
\begin{equation}
    \bar{\mu}(t,\theta,a,\ell)=\bar{\mu}_0(\theta+ta,a,\ell).
\end{equation}

\subsubsection{Nonlinear Dynamics}
As we just saw, one can stabilize the linearized dynamics by defining
\begin{align}\label{eq:Sec1:StabLinFlow}
	\begin{split}
		\gamma(t,\theta,a,\ell) :=&\bar{\mu}(t,\theta+ta,a,\ell) = \mu\left( t,\tilde{R}(\theta,a,\ell),\tilde{V}(\theta,a,\ell)\right) ,
		\\
		(\tilde{R}(\theta,a,\ell),\tilde{V}(\theta,a,\ell)) =& 	(R(\theta+at,a,\ell),V(\theta+at,a,\ell)).
	\end{split}
\end{align}
Since also $ (\theta,a)\mapsto (R(\theta+at,a,\ell),V(\theta+at,a,\ell)) $ is symplectic, the non-linear dynamics \eqref{eq:Sec1:NonDyn} is given by the purely nonlinear equation
\begin{align}\label{eq:Sec1:VPinActionAngle}
	\begin{split}
		\partial_t \gamma + \lambda \{\gamma,\tilde{\Psi}\} &=0,\qquad \tilde{\Psi}(t,\theta,a,\ell) := \Psi\left(t,\tilde{R}(\theta,a,\ell)\right),
		\\
		\Psi(t,r) =& - \int_{\R}\int_0^\infty\int_0^\infty  \,  \dfrac{\gamma(t,\theta,a,\ell)^2}{\max(R(\theta+at,a),r)} \, d\theta da d\ell, \quad 
	\end{split}
\end{align}
where the Poisson brackets only include the variables $ (\theta,a) $, while $ \ell $ appears as a parameter. Here we have used that the gravitational potential as well as the field are given as phase space integrals, that can naturally be expressed in terms of the unknown $\gamma$ in the action-angle variables $ (\theta,a,\ell)$ (since this change of variables is symplectic, the associated Jacobian is one).

While the restriction to hyperbolic trajectories $ \{a>0\} $ is a simple matter for the linearized dynamics, in order to justify the reformulation \eqref{eq:Sec1:VPinActionAngle} one needs to ascertain that also the nonlinear dynamics remain in the regime of hyperbolic trajectories, provided the initial distribution is. This is due to the perturbative framework and the Hamiltonian structure, and is made more precise in Lemma \ref{lem:NonlinCharSys}. There we show that for any $\delta\in (0,1)$ and any sufficiently small and localized initial distribution $ \gamma_0 $ supported inside the set
\begin{equation}\label{eq:def_Ddelta}
    \D(\delta)=\left\lbrace (\theta,a,\ell) \in \R\times\R_+^2 \, : \, a\jabr{\ell}^{\frac12}\geq \delta\right\rbrace,
\end{equation}
the corresponding solution $ \gamma $ remains supported inside $\D(\delta/2)$. In particular, this also includes an additional stabilizing effect of high angular momenta, for which the support of $\gamma_0$ can be closer to the separatrix $\{a=0\}$ of the linearized dynamics. A posteriori, this proves that the action-angle variables can be used for all times, and the equation \eqref{eq:Sec1:VPinActionAngle} in the variables $ (\theta,a,\ell) $ remains consistent with the one in the variables $ (r,v,\ell) $.

In order to obtain dynamical control over $\gamma$ and its uniqueness, it is essential to understand the decay and regularity properties of the gravitational field
\begin{equation}
   \F(t,r) = -\partial_r \Psi(t,r) = -\dfrac{1}{r^2}\int_{\R}\int_0^\infty\int_0^\infty  \,  \ind_{\{R(\theta+ta,a,\ell)\leq r\}}\gamma(t,\theta,a,\ell)^2  \, d\theta da d\ell, 
\end{equation}
and its derivative -- see Section \ref{sec:GravField}.  In particular, in order to be able to work with minimal requirements on the function space topology for the unknowns, a careful analysis of the interplay between the localization of $\gamma$ and the resulting decay, regularity and vanishing conditions on $\F$ is needed (Section \ref{subsec:EstGravFieldLInf}). We track the former through moment bounds on $\gamma$ in Lebesgue spaces, which in turn need to be propagated in time. As already commented on in Remark \ref{rem:lweights}, in these arguments the angular momentum plays a subtle role by translating vanishing conditions for $\gamma$ as $\ell\searrow 0$ into vanishing orders of $\F(t,r)$ as $r\searrow 0$. Inspired by the asymptotic behavior of solutions to the characteristic system \eqref{eq:Sec1:LinEq} and using similar tools, we also derive sharp bounds for the leading order ``effective'' gravitational field (Section \ref{subsec:EstEffGravField}), which will be instrumental in deriving the asymptotic behavior, as well as the gravitational force field in action-angle variables as it occurs in \eqref{eq:Sec1:VPinActionAngle}, i.e.\ 
\begin{equation}
    \partial_a\tilde{\Psi}(t,\tilde{R})=\F(t,\tilde{R})  \partial_a \tilde{R},   
\quad  - \partial_\theta\tilde{\Psi}(t,\tilde{R})=- \F(t,\tilde{R})\partial_\theta \tilde{R},
\end{equation}
see Section \ref{subsec:EstEffGravFieldAA}.

Using an Eulerian bootstrap, we combine the results of Section \ref{sec:GravField} with the propagation of moments in order to obtain well-defined global solutions. Hereby, building on the fact that they commute with the linear equation $\partial_t\gamma=0$, moments in $(\theta,a)$ can naturally be kept under control for \eqref{eq:Sec1:VPinActionAngle},\footnote{contrast this with moments in $(r,v)$ for \eqref{eq:Sec1:NonDynPhysVar}} while moments in $\ell$ simply commute with the equation due to the radial symmetry assumption. The key difficulty then lies in establishing that the resulting Lagrangian solutions are in fact unique (see Theorem \ref{thm:WellPosedLagrangianSol}): This relies on a detailed study of the nonlinear characteristics and Lipschitz bounds for $\F$ as established in Lemma \ref{lem:EstimatesDerivGravFieldLInf}. It is then a straightforward matter to propagate higher regularity -- we do this in Theorem \ref{thm:WellPosednessRegularSol} for initial data with one derivative in $L^2$.

\subsubsection{Asymptotic behavior}
A key towards understanding the long-time dynamics is the asymptotic behavior of the radial component of the action-angle variables along the flow $ (\theta,a)\mapsto (\theta+at,a) $ of the linear dynamics, namely that (thanks to the asymptotic action property \eqref{it:asymptotic_actions})
\begin{align}\label{eq:Sec1:RAsympt}
	\tilde{R}(\theta,a,\ell) = R(\theta+at,a,\ell) = at + o(t), \quad t\to \infty,
\end{align}
see Lemma \ref{lem:EstDynRadBulk} for the precise statement. Plugging this into the formula for $\tilde{\Psi}$ suggests that to leading order the dynamics in \eqref{eq:Sec1:VPinActionAngle} are driven by an effective potential
\begin{equation}
    \tilde{\Psi}_{\eff}(t,a):=-\frac{1}{t}\int_{\R}\int_0^\infty\int_0^\infty  \,  \dfrac{\gamma(t,\theta,\alpha,\ell)^2}{\max(\alpha,a)} \, d\theta d\alpha d\ell.
\end{equation}
In fact, one can show that for the solutions of Theorem \ref{thm:WellPosedLagrangianSol} (and thus also of Theorem \ref{thm:WellPosednessRegularSol})
\begin{equation}
   \tilde{\Psi}_{\eff}(t,a)\to-\frac{1}{t}\tilde{\Psi}_\infty(a),\quad t\to\infty, 
\end{equation}
so the characteristics for the asymptotic behavior are
\begin{equation}
    \dot\theta=-\lambda t^{-1}\F_\infty(a),\quad \dot{a}=0,\qquad \F_\infty(a)=\partial_a\tilde{\Psi}_\infty(a),
\end{equation}
which leads to the modified scattering behavior 
\begin{equation}\label{eq:mod_scat_formal}
   \lim_{t\to\infty}\gamma\left( t, \theta-\lambda\ln(1+t)\F_\infty(a),a,\ell \right)=\gamma_\infty(\theta,a). 
\end{equation}
These heuristic arguments can be made precise for the unique Lagrangian solutions of Theorem \ref{thm:WellPosedLagrangianSol} by working with the nonlinear characteristics, and thereby one obtains the convergence \eqref{eq:mod_scat_formal} in the sense of distributions (see Theorem \ref{thm:ModifiedScatteringLagrangianSol}). For strong solutions as in Theorem \ref{thm:WellPosednessRegularSol}, a simpler Eulerian approach working with the asymptotic shear equation
\begin{equation}
    \partial_t \gamma +\dfrac{\lambda}{t} \partial_{a}\tilde{\Psi}_\infty \, \partial_{\theta}\gamma =o\left(t^{-1}\right),
\end{equation}
shows that the convergence \eqref{eq:mod_scat_formal} holds strongly in $L^2$ (see Theorem \ref{thm:ModifiedScattering}).

\subsection{Main result}
A more precise version of our main result Theorem \ref{thm:main_intro} can then be summarized as follows (see Section \ref{sec:notation} for the relevant notation):
\begin{thm}\label{thm:IntrodMainThmAAVar}
	There exist a sufficiently small constant $ c>0 $ such that the following holds. Let $ \delta\in (0,1) $ and  $\gamma_0\in L^2\cap L^\infty$ with $ \supp \gamma_0\subset \D(\delta) $.
    \begin{enumerate}[(i)]
    \item\label{it:IntMainThm1} (Theorem \ref{thm:WellPosedLagrangianSol}) If $ \eps\leq c\delta^2 $ and
    \begin{align*}
    	\norm[L^\infty]{\jabr{\ell}^{32} \ell^{-1} (a+a^{-1})^{32} \gamma_0}+\norm[L^\infty]{\jabr{\ell}^{16} \ell^{-1} \jabr{\theta}^{32} \gamma_0} \leq \eps,
    \end{align*}
	then the equation \eqref{eq:Sec1:VPinActionAngle} has a unique, global in time Lagrangian solution $ \gamma\in C([0,\infty),L^2\cap L^\infty) $ with initial datum $ \gamma_0 $. Moreover this solution satisfies for all $t\geq 0$ that $ \supp \gamma(t)\subset \D(\delta/2) $ and
	\begin{align*}
		\norm[L^2]{(a+a^{-1})^{4}\gamma(t)} +\norm[L^\infty]{\jabr{\ell}^{32} \ell^{-1} (a+a^{-1})^{32} \gamma(t)}&\lesssim \eps,\quad\norm[L^\infty]{\jabr{\ell}^{16} \ell^{-1} \jabr{\theta}^{32} \gamma(t)}\lesssim \eps\ln^{32}\jabr{t}.
	\end{align*}
	
    \item\label{it:IntMainThm2} (Theorem \ref{thm:ModifiedScatteringLagrangianSol}) If in addition $ \eps\leq c\delta^3 $ then the Lagrangian solution in \eqref{it:IntMainThm1} disperses according to a modified scattering dynamics: there exists $ \gamma_\infty\in L^2\cap L^\infty $ such that
    \begin{align*}
    	\lim_{t\to \infty} \hat{\gamma}(t) = \gamma_\infty, \quad \hat{\gamma}(t,\theta,a,\ell):=\gamma\left( t, \theta-\lambda\ln(1+t)\F_\infty(a),a,\ell \right),
    \end{align*}
    in the sense of distributions, where
    \begin{align*}
    	\F_\infty(a) = -\dfrac{1}{a^2}\int_{\R}\int_{0}^\infty\int_{0}^\infty\ind_{\{\alpha\leq a\}} \, \gamma_\infty^2(\theta,\alpha, \ell) \, d\theta d\alpha d\ell.
    \end{align*}
    \item\label{it:IntMainThm3} (Theorems \ref{thm:WellPosednessRegularSol} and \ref{thm:ModifiedScattering}) If moreover $ \eps\leq c\delta^3 $ and $ \gamma_0 $ satisfies
    \begin{align*}
    	&\norm[L^2]{\omega^{-1}\chi\partial_\theta\gamma_0} + \norm[L^2]{\omega\chi\partial_a\gamma_0}+
    	\norm[L^2]{\omega^{-1}\jabr{\theta}^{2}\partial_\theta\gamma_0} + \norm[L^2]{\omega\jabr{\theta}^{2}\partial_a\gamma_0}\leq \eps,
        \\
        &\chi=\jabr{\ell}^4\jabr{\ell^{-1}}^2\jabr{a}^2, \quad \omega=\left(\frac{a}{a+\jabr{l}^{\frac12}}\right)^{\frac12},
    \end{align*}
 	then the solution in \eqref{it:IntMainThm2} is strong $ \gamma\in C^1((0,\infty);L^2)\cap C([0,\infty);\SobH^1) $ and satisfies for all $t\geq0$
	\begin{align*}
		\norm[L^2]{\omega^{-1}\chi\partial_\theta\gamma(t)}&\lesssim \varepsilon, \quad \norm[L^2]{\omega\chi\partial_a\gamma(t)}\lesssim \varepsilon \ln^{33}\jabr{t}
		\\
		\norm[L^2]{\omega^{-1}\jabr{\theta}^{2}\partial_\theta\gamma(t)} &\lesssim \eps\ln^2\jabr{t}, \quad \norm[L^2]{\omega\jabr{\theta}^{2}\partial_a\gamma(t)}\lesssim \eps\ln^{35}\jabr{t}.
	\end{align*}
    Furthermore, the convergence in \eqref{it:IntMainThm2} is strong: we have
 	\begin{align*}
 		\norm[L^2]{\hat{\gamma}(t)-\gamma_\infty} \lesssim \dfrac{\varepsilon^3}{\delta^4}\dfrac{\ln^{35}\jabr{t}}{(1+t)^{1/2}}.
 	\end{align*}
    \end{enumerate}
\end{thm}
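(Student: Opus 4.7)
The plan is to work throughout in the stabilized action-angle coordinates $(\theta,a,\ell)$, where the unknown $\gamma$ satisfies the purely nonlinear equation \eqref{eq:Sec1:VPinActionAngle} and trivialization of the linear flow decouples the dispersive decay from the transport. For each of the three parts I would close a bootstrap on the relevant weighted norms, with the decay and regularity estimates for the gravitational field $\F$ (and its action-angle avatar $\partial\tilde\Psi$) from Section \ref{sec:GravField} as the main quantitative input. Throughout, $\ell$ is an inert parameter by radial symmetry, so the essential estimates reduce to a one-parameter family of $1+1$-dimensional transport problems in $(\theta,a)$.

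For part \eqref{it:IntMainThm1}, the bootstrap assumptions are $\supp\gamma(t)\subset\D(\delta/2)$ together with the claimed $L^2$ and weighted $L^\infty$ bounds. Under these, the estimates of Section \ref{subsec:EstGravFieldLInf} give time-integrable decay and Lipschitz bounds on $\F$, so that the nonlinear characteristics of \eqref{eq:Sec1:VPinActionAngle} are a small, integrable-in-time perturbation of the identity flow in $(\theta,a)$. This closes the support assumption via the trapping Lemma \ref{lem:NonlinCharSys}, and pure transport along the characteristics propagates the weighted $L^\infty$ moments with at most logarithmic growth where the $\theta$-shearing enters; the $L^2$ norm is conserved by volume preservation of the symplectic flow. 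For uniqueness of the Lagrangian solution I would run a Cauchy-Lipschitz argument at the level of characteristics, relying on Lemma \ref{lem:EstimatesDerivGravFieldLInf} for Lipschitz estimates on $\F$.

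For part \eqref{it:IntMainThm2}, the key asymptotic $R(\theta+ta,a,\ell)=at+o(t)$ from Lemma \ref{lem:EstDynRadBulk} lets me split $\tilde\Psi(t,\theta,a,\ell)=t^{-1}\tilde\Psi_\infty(a)+\mathrm{remainder}$, where $\tilde\Psi_\infty$ depends only on the mass profile $a\mapsto\int\gamma_\infty^2\,d\theta d\ell$. To extract $\gamma_\infty$, I would test the equation against $\varphi\in C^\infty_c$ composed with the modified-scattering map $(\theta,a,\ell)\mapsto(\theta-\lambda\ln(1+t)\F_\infty(a),a,\ell)$ and show that the difference between the pulled-back equation and the trivial one is a total time derivative of a quantity converging to zero, plus a time-integrable remainder. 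Identifying $\F_\infty$ self-consistently as the field generated by $\gamma_\infty$ then follows from the decay estimates of part \eqref{it:IntMainThm1} together with dominated convergence.

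For part \eqref{it:IntMainThm3}, I would commute $\partial_\theta$ and $\partial_a$ with \eqref{eq:Sec1:VPinActionAngle}; the weights $\omega$ and $\chi$ are exactly those needed for the commutators $\{\gamma,\partial_\theta\tilde\Psi\}$ and $\{\gamma,\partial_a\tilde\Psi\}$ to be controlled by the bounds of Section \ref{subsec:EstEffGravFieldAA}, and the logarithmic factors in the growth of $\partial_a\gamma$ reflect the usual shearing loss from the drift $\dot\theta=\mathcal{O}(t^{-1})$. Once $\SobH^1$ control is established, I would subtract the asymptotic shear equation $\partial_t\gamma+\lambda t^{-1}\F_\infty(a)\partial_\theta\gamma=0$ from \eqref{eq:Sec1:VPinActionAngle} and close a Duhamel estimate for $\hat\gamma-\gamma_\infty$ in $L^2$; the refined bound on the difference $\tilde\Psi-t^{-1}\tilde\Psi_\infty$ then yields the stated $t^{-1/2}$ rate up to the accumulated logarithmic factors. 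The main obstacle I anticipate is uniqueness in part \eqref{it:IntMainThm1} at the low regularity imposed: even with good time decay of $\F$, one must control $\partial_r\F$ near $r=0$ where the action-angle change of variables degenerates, and it is precisely the $\ell^{-1}$ moment in the hypotheses that translates into sufficient vanishing of $\F$ at the origin to survive the Jacobian blow-up, a point which has to be carefully coordinated with the Cauchy-Lipschitz argument for the characteristics.
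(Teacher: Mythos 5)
Your overall architecture matches the paper's (Eulerian moment bootstrap plus the trapping Lemma \ref{lem:NonlinCharSys} for part (i), a logarithmically corrected flow for part (ii), commuted weighted derivatives and a Duhamel/shear argument for part (iii)), but there is a genuine gap in the step you dismiss most quickly: uniqueness of the Lagrangian solution. A ``Cauchy--Lipschitz argument at the level of characteristics, relying on Lemma \ref{lem:EstimatesDerivGravFieldLInf}'' only controls the term where \emph{one and the same} field $\F^1$ is evaluated along the two flows. But two candidate solutions $\gamma_1,\gamma_2$ generate \emph{different} fields, and in the topology of Theorem \ref{thm:WellPosedLagrangianSol} (no derivatives of $\gamma$, only weighted $L^2\cap L^\infty$) there is no usable stability estimate bounding $\F^1-\F^2$ by a norm of $\gamma_1-\gamma_2$: both solutions are transports of the \emph{same} $\gamma_0$ along a priori different flows, so the only handle on $\F^1-\F^2$ is the measure of the symmetric difference of the sets $\{\tilde R\circ\Phi^1_t\le r\}$ and $\{\tilde R\circ\Phi^2_t\le r\}$. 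Converting that set difference into the flow distance is where the real work lies: it requires the strict monotonicity of $a\mapsto R(\theta,a,\ell)$ and the comparability of $\partial_a R$ over comparable action intervals (Lemma \ref{lem:BoundActionDerivR}, resting on Lemma \ref{lem:HFunctEquiv}), and the resulting bound on $\F^1-\F^2$ grows in $t$, which is why the paper runs the Gr\"onwall argument for an anisotropically weighted distance (with the weight $\omega=(a/(a+\jabr{\ell}^{1/2}))^{1/2}$) chosen to rebalance the Hessian bounds on $\tilde\Psi$ from Lemma \ref{lem:BootstrapDerivGravFieldAALInf}. The obstacle you do flag (vanishing of $\F$ at $r=0$ via the $\ell^{-1}$ moment) is real but enters only the Lipschitz estimate for a single field; it is not the bottleneck, and without the field-difference mechanism your uniqueness proof does not close.

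A secondary gap concerns part (ii) at Lagrangian regularity: your plan to test against $\varphi$ composed with the modified-scattering map presupposes that $\F_\infty$ exists and is (locally) Lipschitz, since $\partial_a(\varphi\circ\S_t)$ produces $\ln(1+t)\,\F_\infty'(a)$. For strong solutions this can be extracted Eulerianly (as in Proposition \ref{pro:ConvergenceMacroQuant}), but for merely Lagrangian solutions the indicator test functions defining $\F_{\eff}$ are not admissible without derivatives of $\gamma$. The paper instead constructs $\F_\infty$ from the limit $\hat A_\infty$ of the action component of the nonlinear characteristics, and obtains its Lipschitz bound from the non-degeneracy $\partial_a\hat A_\infty\approx 1$ (Proposition \ref{pro:ConvergenceCharacFlow}), which uses differentiability of the flow in its initial data rather than of $\gamma$; your sketch should either adopt this route or explain how to produce $\F_\infty$ and $\F_\infty'$ at this regularity. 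Parts (i) (a priori bounds), and (iii) are essentially the paper's argument and look fine.
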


\paragraph{\bf Organization of the paper.} In the following subsection we recall some useful definitions and notations used in our study. In Section \ref{sec:LinearizedDyn} we study the linearized dynamics induced by the Kepler problem and define appropriate action-angle variables. Furthermore, we establish several estimates of relevant quantities in  action-angle variables. In Section \ref{sec:GravField} we prove bounds on the gravitational field in both physical and action-angle variables. In Section \ref{sec:nonlinear} we study the nonlinear problem. We first prove well-posedness of Lagrangian and strong solutions in Subsection \ref{subsec:WellPosedness}. Then, we show that Lagrangian respectively strong solutions admit a modified scattering dynamics in Subsection \ref{subsec:LongtimeBehavior}. Finally, in Appendix \ref{sec:AppendixDerivRadSymEq} we give a derivation of the Vlasov-Poisson equation \eqref{eq:Sec1:NonDynPhysVar} for radially symmetric data.

\subsection{Notations}\label{sec:notation}
We use the notation $ \R_+=(0,\infty) $, excluding zero. Furthermore, we define the Japanese brackets $ \jabr{x}= (2+|x|^2)^{1/2} $ for any $ x\in \R $. In particular, $ \ln\jabr{x}\geq \ln2>0 $.

For our estimates we will use the abbreviation: for two real-valued functions $ f,\, g $ (depending on some parameters) we write $ f\lesssim g $ if there is a positive constant $ C>0 $ such that $ f\leq Cg $. The constant is independent of the parameters. For our purposes this in particular includes time $ t $, the angle variable $ \theta $, the action variable $ a $ and the angular momentum $ \ell $. Similarly, we define $ f\gtrsim g $. Furthermore, we write $ f\approx g $ if both $ f\lesssim g $ and $ f\gtrsim g $ holds. Observe that with this notation $ \jabr{x}\approx 1+|x| $ and $ \ln\jabr{x}\approx \ln(2+|x|) $.

Moreover, we denote by $ L^2(\Omega;\R) $ the standard $ L^2 $-space of real-valued functions defined on some set $ \Omega\subset \R^d $, $ d\in \N $. We will drop both the domain $ \Omega  $ and codomain $ \R $ if there is no ambiguity. The corresponding scalar product is denoted by $ \skp{f}{g}_{L^2} $ for $ f,\, g\in L^2 $. 

Furthermore, for real-valued functions $ f=f(\theta,a,\ell) $, where $ (\theta,a,\ell)\in \R\times\R_+^2 $ are the action-angle variables, see Section \ref{sec:LinearizedDyn}, we define the following Sobolev space $ \SobH^1 $. We say $ f\in \SobH^1 $ if $ f\in L^2(\R\times\R_+^2) $ and the weak derivatives $ \partial_{\theta}f, \, \partial_{a}f \in L^2(\R\times\R_+^2) $ exists. A norm is defined via
\begin{align*}
	\norm[\SobH^1]{f}^2 = \norm[L^2]{f}^2+\norm[L^2]{\partial_{\theta}f}^2+\norm[L^2]{\partial_{a}f}^2.
\end{align*}
Moreover, we denote by $ \mathcal{D}'(\Omega) $ the space of distributions on some domain $ \Omega $. 

Furthermore, we write $ C([0,\infty);L^2) $ and $ C([0,\infty);\SobH^1) $ for those functions spaces containing functions $ t\mapsto f(t,\cdot) $ continuous (with respect to the norm) into $ L^2 $ and $ \SobH^1 $, respectively, for all $ t\geq0 $. Similarly, we define $ C([0,\infty);\mathcal{D}') $ only requiring $ t\mapsto f(t,\cdot) $ to be continuous into $ \mathcal{D}' $ with respect to the weak topology (in the sense of distribution). In addition, we define $ C^1((0,\infty);L^2) $ when $ t\mapsto f(t,\cdot)\in L^2 $ is Fr\'echet-differentiable at each point $ t\in (0,\infty) $ and its Fr\'echet derivative $ \partial_tf\in C((0,\infty);L^2) $.

Concerning Poisson brackets we will use the notation
\begin{align*}
	\{f,g\}=\partial_rf\partial_vg-\partial_vf\partial_rg,
\end{align*}
when $ f,\, g $ are depending on the (physical) variables $ (r,v)\in \R_+\times\R $, possible also depending on the momentum variable $ \ell\in \R_+ $. On the other hand when $ f,\, g $ are functions of the action-angle variables $ (\theta,a)\in \R\times \R_+ $, possible also depending on the momentum variable $ \ell\in \R_+ $, we write
\begin{align*}
	\{f,g\}=\partial_\theta f\partial_ag-\partial_af\partial_\theta g.
\end{align*}


\section{Linearized dynamics}\label{sec:LinearizedDyn}
We study the linearized dynamics, which is described by the equation
\begin{align*}
	\partial_t \mu + \{\mu,\Ham_{lin}\} = \left( \partial_t + v\partial_r+ \dfrac{\ell}{r^3}\partial_v - \dfrac{m}{2} \dfrac{1}{r^2}\partial_v \right)\mu=0.
\end{align*}

\subsection{Characteristic system and action-angle variables}
The linearized equation can be solved by the method of characteristics
\begin{align}\label{eq:Sec2:CharSys}
	\dot{r} = v, \quad \dot{v} = \dfrac{\ell}{r^3}-\dfrac{m}{2}\dfrac{1}{r^2}, \quad \dot{\ell}=0.
\end{align}
These ODEs appear as the Hamiltonian dynamics to the Hamiltonian
\begin{align*}
	\Ham_{lin}(r,v,\ell) = \dfrac{1}{2}\left( v^2+\dfrac{\ell}{r^2}-\dfrac{m}{r} \right) 
\end{align*}
We want to define the action $ a $ such that the asymptotic velocity $ t\to \infty $ is given by $ a $. Since we are only interested in unbounded trajectories, i.e.\ as $ t\to \infty $ we have $ r(t)\to \infty $, we see from the conservation of energy that $ \Ham_{lin}(r,v,\ell)\geq0 $. The limit case $ \Ham_{lin}(r,v,\ell)=0 $ corresponds to the separatrix between bounded and unbounded trajectories. We study in detail only hyperbolic trajectories, i.e.\ those for which $ \Ham_{lin}(r,v,\ell)>0 $.

The action $ a>0 $ can now be defined via
\begin{align*}
	a^2 = v^2-\dfrac{m}{r}+\dfrac{\ell}{r^2}
\end{align*} 
such that for $ t\to \infty $ it yields the asymptotic velocity. We then obtain
\begin{align}\label{eq:Sec2:velocity}
	v = \pm a\sqrt{1+\dfrac{m}{a^2r}-\dfrac{\ell}{a^2r^2}} = \pm \dfrac{a}{r}\sqrt{r^2+\dfrac{m}{a^2}r-\dfrac{\ell}{a^2}}.
\end{align}
We now summarize some properties of the trajectory $ t\to (r(t),v(t)) $ for $ a>0 $. See also Figure \ref{fig:RadTrajectory} and Figure \ref{fig:VelTrajectory} for plots of the radial function $ r(t) $ and the velocity $ v(t) $.
\begin{lem}[Hyperbolic trajectories]\label{lem:HypTraj}
	Consider a solution $ t\to (r(t),v(t)) $ to \eqref{eq:Sec2:CharSys} with $ a,\, \ell >0 $. 
	\begin{enumerate}[(i)]
		\item\label{it:lem:HypTraj1} There is exactly one point of closest approach, i.e.\ $ v(t)=0 $ at one time $ t=t_0 $ with
		\begin{align*}
			r(t_0) = r_0(a,\ell) := \dfrac{m}{2a^2}\left( \sqrt{1+\dfrac{4a^2\ell}{m^2}}-1\right) = \dfrac{2\ell}{m} \dfrac{1}{1+\sqrt{1+\dfrac{4a^2\ell}{m^2}}}.
		\end{align*}
		The trajectory is symmetric with respect to $ t=t_0 $, that is
		\begin{align*}
			r(t_0+t) = r(t_0-t), \quad v(t_0+t) = -v(t-t_0).
		\end{align*}
		Furthermore, the function $ a\mapsto r_0(a,\ell) : \R_+\to (0,2\ell/m) $ is decreasing.
		
		\item\label{it:lem:HypTraj2} For $ t\to \pm\infty $ we have $ v(t) \to \pm a $.
		
		\item\label{it:lem:HypTraj3} The function $ t\to v(t_0+t) $ is increasing-decreasing attaining its maximum with value
		\begin{align*}
			v_{peak}(a,\ell) = \sqrt{a^2+\dfrac{m^2}{4\ell}}
		\end{align*}
		while $ r $ attains the value $ r_{peak}(\ell) = 2\ell/m $.
		
		\item\label{it:lem:HypTraj4} A parametric form of the solution is given by
		\begin{align*}
			\begin{cases}
				r = p(\cosh\xi - \kappa)
				\\
				a(t-t_0) = p(\sinh\xi -\kappa \xi)
			\end{cases}, \quad \kappa(a,\ell):= \left(1+\dfrac{4a^2\ell}{m^2} \right)^{-1/2}, \quad  p(a,\ell) := \dfrac{m}{2a^2\kappa(a,\ell)}. 
		\end{align*}
		Here, the parameter is given by $ \xi\in \R $.
		
		\item\label{it:lem:HypTraj5} Finally, an explicit formula is given by
		\begin{align*}
			r(t+t_0) = p H_\kappa\left(\dfrac{a|t-t_0| }{p}\right)-p\kappa, \quad r_0(a,\ell)=p(a,\ell)(1-\kappa(a,\ell)).
		\end{align*}
		where 
		\begin{align*}
			&G_\kappa : [1,\infty) \to [0,\infty) : x\mapsto \sqrt{x^2-1} - \kappa \arcosh(x)= \sqrt{x^2-1} -\kappa\ln\left(x+\sqrt{x^2-1}\right),
			\\
			&H_\kappa: [0,\infty) \to [1,\infty) : H_\kappa(x) = G_\kappa^{-1}(x).
		\end{align*}
	\end{enumerate}
\end{lem}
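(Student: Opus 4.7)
The starting point is the conservation along characteristics of the energy relation $a^2 = v^2 + \ell/r^2 - m/r$, which gives
\begin{equation*}
v^2(t) = a^2 - U_{\mathrm{eff}}(r(t)), \qquad U_{\mathrm{eff}}(r) := \dfrac{\ell}{r^2} - \dfrac{m}{r}.
\end{equation*}
Since $U_{\mathrm{eff}}$ blows up as $r\searrow 0$, vanishes at infinity, and has a unique minimum at $r = 2\ell/m$ with value $-m^2/(4\ell)$, the equation $U_{\mathrm{eff}}(r)=a^2$ reduces to the quadratic $a^2 r^2 + mr - \ell = 0$ with exactly one positive root, namely the $r_0(a,\ell)$ from the statement. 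This is thus the unique turning point where $v$ vanishes; the two listed expressions for $r_0$ differ by rationalizing a square-root binomial, and the monotonicity $a \mapsto r_0(a,\ell)$ is immediate from the second form. For the symmetry claim in (i), I would exploit the time-reversal invariance of \eqref{eq:Sec2:CharSys}: if $t\mapsto(r(t),v(t))$ solves the system, so does $t\mapsto(r(-t),-v(-t))$; applying this at $t_0$ (where $v=0$ while $r=r_0>0$ keeps us in the smooth regime) and invoking uniqueness of the ODE yields $r(t_0+t)=r(t_0-t)$ and $v(t_0+t)=-v(t_0-t)$. Item (ii) then follows from the energy identity together with $r(t)\to\infty$ as $|t|\to\infty$, which itself holds because $v^2\to a^2>0$ is bounded below whenever $r$ stays bounded, forcing $r$ to escape. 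For (iii), $\tfrac{d}{dr}v^2 = m/r^2 - 2\ell/r^3$ vanishes exactly at $r_{\mathrm{peak}}=2\ell/m$ with $v_{\mathrm{peak}}^2=a^2+m^2/(4\ell)$; observing that $\dot v(t_0)>0$ (which follows from $\dot v = \ell/r^3-m/(2r^2)$ at $r_0$ together with the quadratic relation above), $v$ increases strictly past $t_0$, and since it cannot vanish again by uniqueness of the turning point, it attains its maximum exactly when $r$ first equals $r_{\mathrm{peak}}$ on the outgoing branch.

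For part (iv), I would introduce the ansatz $r=p(\cosh\xi-\kappa)$ with $p,\kappa$ as given, noting that $p(1-\kappa)=r_0$ pins $\xi=0$ to $t=t_0$. Substituting into the energy identity, and using the key algebraic relations $2a^2\kappa p=m$ and $4a^2\ell/m^2 = \kappa^{-2}-1$, the right-hand side simplifies to
\begin{equation*}
a^2+\dfrac{m}{r}-\dfrac{\ell}{r^2}=\dfrac{a^2\sinh^2\xi}{(\cosh\xi-\kappa)^2},
\end{equation*}
so that $v=\pm a\sinh\xi/(\cosh\xi-\kappa)$, with the $+$ sign on the outgoing branch by continuity from $\dot v(t_0)>0$. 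The ODE $\dot r=v$ then becomes $p\sinh\xi\cdot\dot\xi = a\sinh\xi/(\cosh\xi-\kappa)$, i.e., $\tfrac{d}{dt}[\sinh\xi-\kappa\xi]=a/p$, which integrates with $\xi(t_0)=0$ to $a(t-t_0)=p(\sinh\xi-\kappa\xi)$; the symmetry from (i) then extends this parametrization to $\xi\in\R$ covering both branches.

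For part (v), setting $x=\cosh\xi\in[1,\infty)$ so that $\sinh\xi=\sqrt{x^2-1}$ and $\xi=\arcosh(x)=\ln(x+\sqrt{x^2-1})$, the parametrization from (iv) rewrites as $a(t-t_0)/p = G_\kappa(x)$ and $r=p(x-\kappa)$. Since $G'_\kappa(x)=(x-\kappa)/\sqrt{x^2-1}>0$ on $(1,\infty)$ (as $\kappa\in(0,1)$), and $G_\kappa$ maps $[1,\infty)$ bijectively onto $[0,\infty)$ (with $G_\kappa(1)=0$ and $G_\kappa(x)\to\infty$ at infinity), the inverse $H_\kappa$ is well defined, giving $r=pH_\kappa(a|t-t_0|/p)-p\kappa$, with the absolute value justified by the symmetry in (i). The main obstacle here is not conceptual but algebraic bookkeeping: verifying in (iv) that the ansatz $r=p(\cosh\xi-\kappa)$ together with the time reparametrization $a(t-t_0)=p(\sinh\xi-\kappa\xi)$ genuinely solves \eqref{eq:Sec2:CharSys} requires a careful manipulation of the identities among $p,\kappa,a,\ell,m$. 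Once this is executed, (i)--(iii) follow by elementary energy methods and (v) by direct inversion.
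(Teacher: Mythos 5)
Your proposal is correct and follows essentially the same route as the paper: energy conservation and the quadratic $a^2r^2+mr-\ell=0$ for the turning radius, time-reversal invariance plus ODE uniqueness for the symmetry, maximizing $v$ as a function of $r$ for (iii), the hyperbolic substitution $r=p(\cosh\xi-\kappa)$ for (iv) (the paper performs it inside the quadrature integral $a(t-t_0)=\int_{r_0}^{r}\rho\,d\rho/\sqrt{\rho^2+m\rho/a^2-\ell/a^2}$, while you verify the ansatz and integrate the resulting ODE for $\xi$ --- the same computation), and inversion of $G_\kappa$ for (v). Two minor points to tighten, both cosmetic: the sign in your expression for $\tfrac{d}{dr}v^2$ is flipped (harmless, same critical point and same maximal value), and the existence of a \emph{finite} time $t_0$ at which $v$ vanishes is asserted rather than argued --- one should note, e.g., that the zero of $a^2-U_{\mathrm{eff}}$ at $r_0$ is simple so the travel time $\int dr/|v|$ to $r_0$ converges, and that every zero of $v$ is an up-crossing since $\dot v=\ell/r_0^3-m/(2r_0^2)>0$ there, which gives uniqueness of $t_0$; this matches the level of detail of the paper's own ``one can see that it has to reach $r_0$ at some finite time.''
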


\begin{rem}
	Note that the trajectories as written in \eqref{it:lem:HypTraj4} are hyperbolas, the geometry of which is determined by the following dimensionless parameters: $ p(a,\ell) $ is the \textit{semi-major axis}, $ e(a,\ell)=1/\kappa(a,\ell) $ is the \textit{eccentricity} and $ p(1/\kappa^2-1)=2\ell/m\kappa $ is the \textit{parameter}. We refer to \cite[Section 15]{LandauI1976}.
\end{rem}

\begin{rem}[Rescaling the angular momentum]\label{rem:Scaling}
	Let us note that the differential equations \eqref{eq:Sec2:CharSys} can be reduced to the case $ \ell=1 $ via the following scaling argument: Denoting by $ (r^\ell,v^\ell) $ the solution to \eqref{eq:Sec2:CharSys} with angular momentum $ \ell>0 $, we observe that
    \begin{align*}
		r^\ell(t) = \ell \, r^1\left( \dfrac{t}{\ell^{3/2}} \right), \quad v^\ell(t) = \dfrac{1}{\sqrt{\ell}} \, v^1\left( \dfrac{t}{\ell^{3/2}} \right).
	\end{align*} 
	This implies for the energy resp.\ actions that
	\begin{align*}
		a^2\ell = \ell\, \Ham_{lin}\left( r^\ell(t),v^\ell(t),\ell \right) = \Ham_{lin}\left( r^1\left( \dfrac{t}{\ell^{3/2}}\right),v^1\left( \dfrac{t}{\ell^{3/2}}\right),1\right). 
	\end{align*}
	The natural scaling for the actions $ a $ is $ 1/\sqrt{\ell} $: observe that
	\begin{align*}
		r_0\left( a, \ell\right) &= \ell \, r_0\left( \sqrt{\ell}a,1\right), \quad r_{peak}\left( a, \ell\right) = \ell \, r_{peak}\left( \sqrt{\ell}a,1\right), \quad v_{peak}\left(a, \ell \right) = \dfrac{1}{\sqrt{\ell}}v_{peak}\left(\sqrt{\ell}a,1\right),
		\\
		\kappa(a,\ell) &= \kappa\left(\sqrt{\ell}a,\ell\right), \quad p(a,\ell) = \ell\, p\left(\sqrt{\ell}a,\ell\right).
	\end{align*}
\end{rem}

\begin{figure}[!ht]
	\centering
	\begin{minipage}{.5\textwidth}
		\centering
		\includegraphics[width=\linewidth]{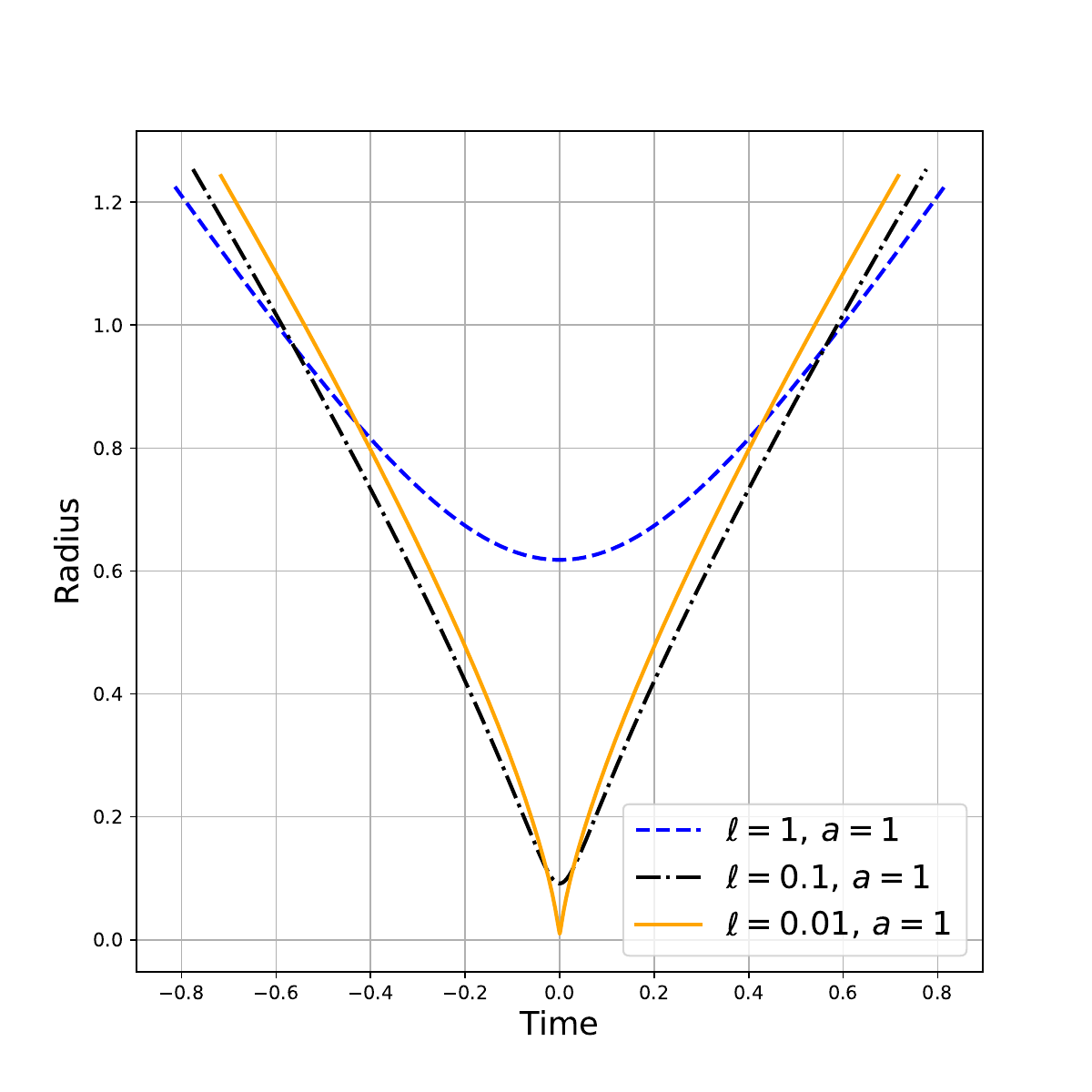}
	\end{minipage}%
	\begin{minipage}{.5\textwidth}
		\centering
		\includegraphics[width=\linewidth]{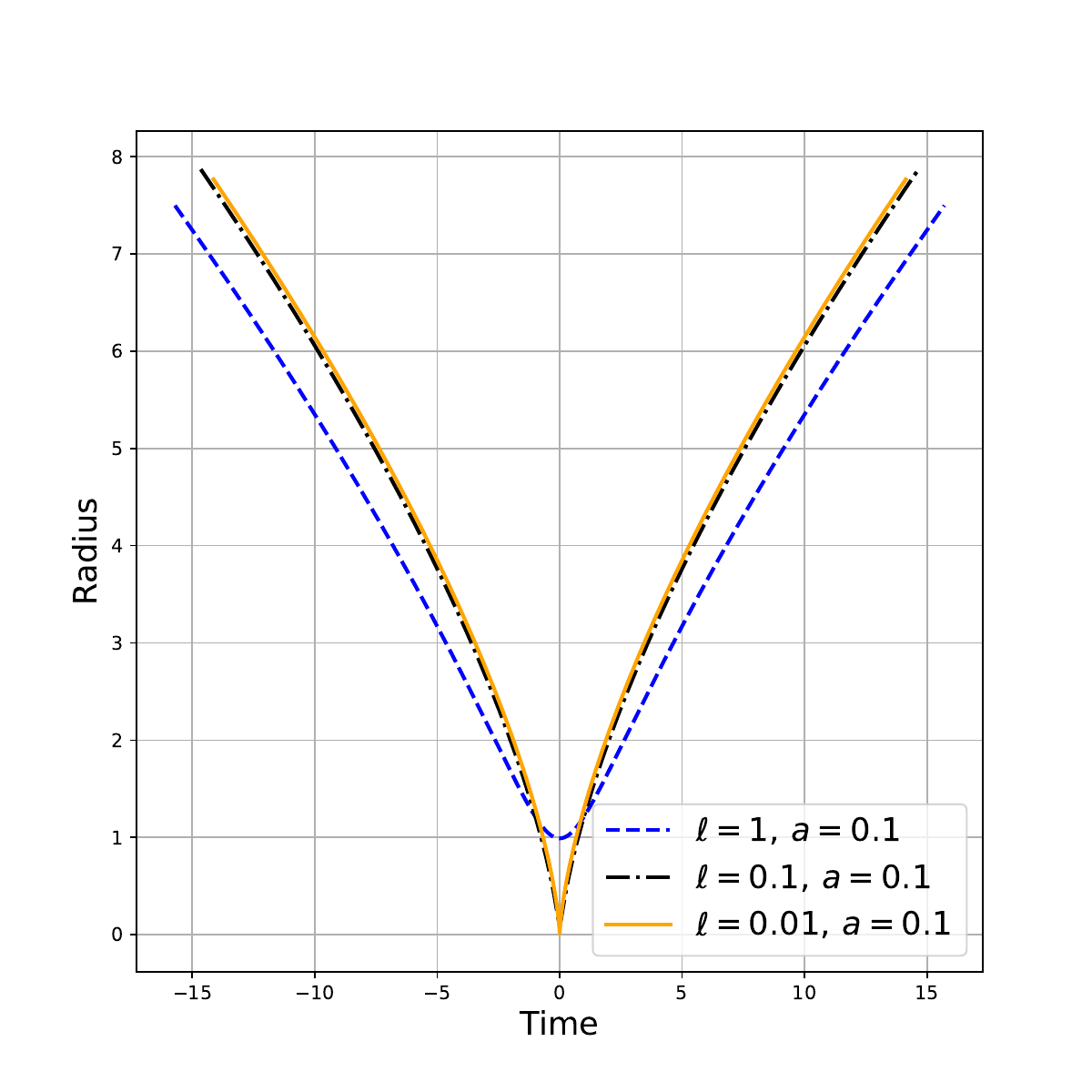}
	\end{minipage}
	\caption{Plots of the radial function $ t\mapsto r(t) $ of the solution to \eqref{eq:Sec2:CharSys} for $ t_0=0 $ and different values of actions $ a $ and angular momentum $ \ell $. Observe that for small values of $ \ell $ the function behaves more singular at time $ t=0 $. Compare this with Remark \ref{rem:ZeroEll}.}
	\label{fig:RadTrajectory}
\end{figure}

\begin{figure}[!ht]
	\centering
	\begin{minipage}{.5\textwidth}
		\centering
		\includegraphics[width=\linewidth]{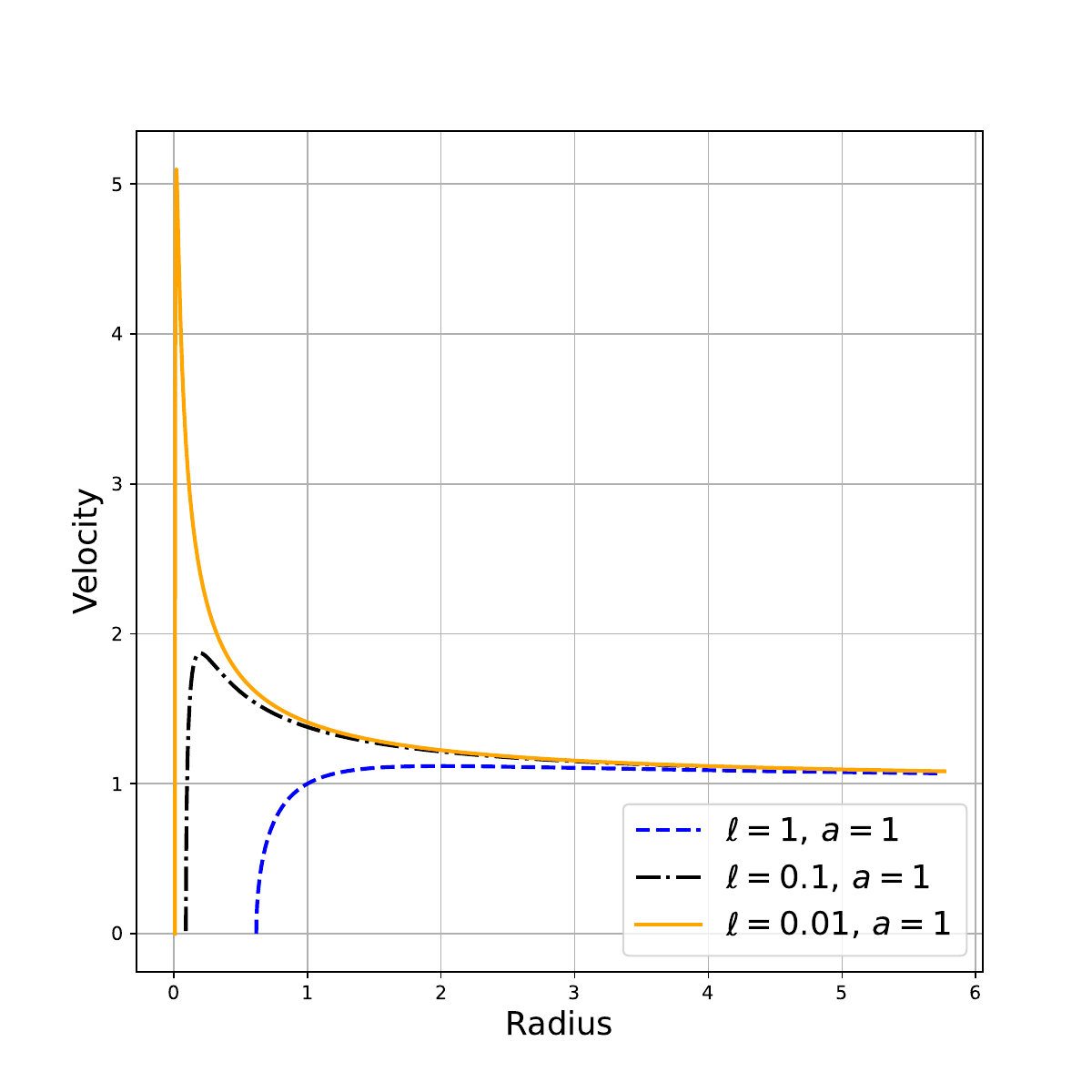}
	\end{minipage}%
	\begin{minipage}{.5\textwidth}
		\centering
		\includegraphics[width=\linewidth]{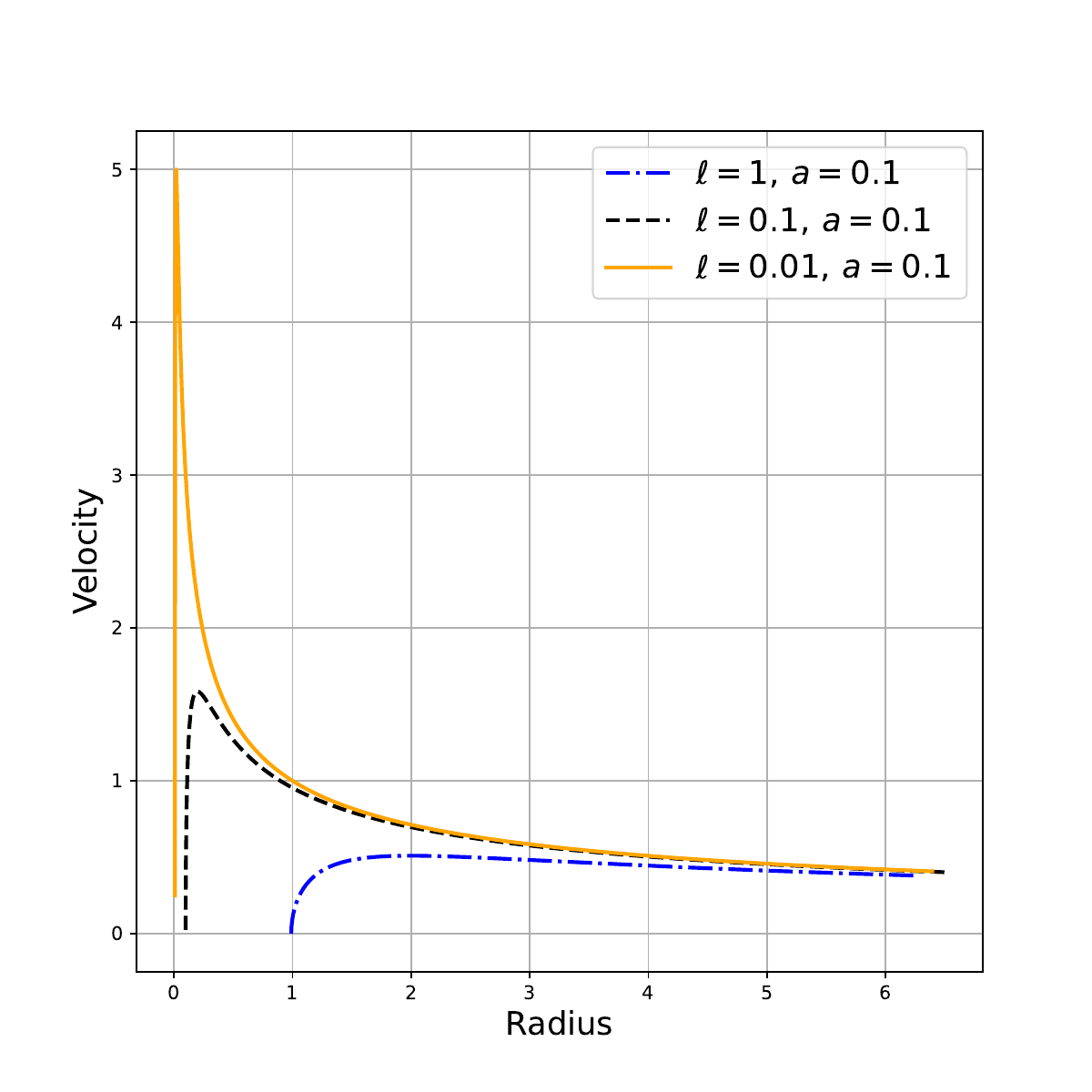}
	\end{minipage}
	\caption{Plots of the velocity $ t\mapsto v(t) $ of the solution to \eqref{eq:Sec2:CharSys} for $ t_0=0 $ and different values of actions $ a $ and angular momentum $ \ell $. Note that the velocity approaches the asymptotic velocity $ a $ when $ t\to \infty $. Furthermore, observe that for small values of $ \ell $ the velocity becomes singular close to $ t=0 $. Compare this with Remark \ref{rem:ZeroEll}.}
	\label{fig:VelTrajectory}
\end{figure}

\begin{proof}[Proof of Lemma \ref{lem:HypTraj}]
	First of all, from \eqref{eq:Sec2:velocity} we see that 
	\begin{align*}
		v = \pm \dfrac{a}{r}\sqrt{(r-r_0(a,\ell))(r+r_-(a,\ell))},
	\end{align*}
	where the roots of the polynomial $ r^2+rm/a^2-\ell/a^2 $ are given by $ r_0(a,\ell)>0 $ and
	\begin{align*}
		r_-(a,\ell) := \dfrac{m}{2a^2}\left( \sqrt{1+\dfrac{4a^2\ell}{m^2}}+1\right).
	\end{align*}
	Hence, necessarily we have $ r(t)\geq r_0(a,\ell) $. 
	
	Consequently, if $ v(t)<0 $ at some point $ t\in \R $, the function is strictly decreasing. One can see that it has to reach the value $ r_0(a,\ell) $ at some finite time. At this point of closest approach, say for $ t=t_0 $ the acceleration is given by
	\begin{align*}
		\dot{v}(t_0) = \left( \dfrac{\ell}{r^3}-\dfrac{m}{2}\dfrac{1}{r^2} \right) \mid_{r=r_0} = \dfrac{1}{r_0^3}\left( \ell-\dfrac{mr_0}{2} \right) >0,
	\end{align*}
	since $ r_0(a,\ell)<2\ell/m $. Hence, $ v(t)>0 $ for all $ t>t_0 $. One can argue similarly when $ v(t)<0 $ at some point $ t\in \R $ and looking at the dynamics backwards in time. This shows that there is exactly one point of closest approach at some time $ t=t_0 $. The fact that the trajectories are symmetric follows from the symmetry of the equations and the uniqueness of the solution. The fact that $ a\mapsto r_0(a,\ell) $ is decreasing follows from the formula. This proves \eqref{it:lem:HypTraj1}.
	
	Point \eqref{it:lem:HypTraj1} shows that for $ t> t_0 $ the function $ r(t) $ is strictly increasing as $ v(t)>0 $. Thus, $ r(t)\to \infty $ and by \eqref{eq:Sec2:velocity} we have $ v(t) \to a $. Together with the symmetry of the trajectory this shows \eqref{it:lem:HypTraj2}.
	
	For \eqref{it:lem:HypTraj3} we observe that
	\begin{align*}
		r\mapsto \dfrac{a}{r}\sqrt{r^2+\dfrac{m}{a^2}r-\dfrac{\ell}{a^2}} = \dfrac{a}{r} \sqrt{(r-r_0(a,\ell))(r+r_-(a,\ell))}
	\end{align*}
	attains its maximum for $ r = 2r_0r_-/(r_--r_0) = 2\ell/m= r_{peak}(\ell) $ with value $ v_{peak}(a,\ell) $.
	
	For the parametric form in \eqref{it:lem:HypTraj4} we compute the integral for $ t\geq t_0 $, following from \eqref{eq:Sec2:velocity},
	\begin{align*}
		a(t-t_0) = \int_{r_0}^{r(t)} \dfrac{\rho d\rho}{\sqrt{\rho^2+\dfrac{m}{a^2}\rho-\dfrac{\ell}{a^2}}} = \int_{r_0}^{r(t)} \dfrac{\rho d\rho}{\sqrt{\left( \rho+\dfrac{m}{2a^2} \right)^2-p^2 }}.
	\end{align*}
	We now use the substitution $ \rho = p\cosh\xi-\frac{m}{2a^2}= p(\cosh\xi-\kappa) $ yielding
	\begin{align*}
		a(t-t_0) = p \int_0^{\xi(t)}\dfrac{(\cosh\xi-\kappa)\sinh\xi\, d\xi}{\sqrt{\cosh^2\xi-1}} = p(\sinh\xi(t)-\kappa\xi(t)).
	\end{align*}
	Note that we used $ r_0=p(1-\kappa) $ yielding $ \xi_0=\xi(t_0)=0 $. This yields the parametric from.
	
	Finally, for the explicit form we use
	\begin{align*}
		\xi = \arcosh\left( \dfrac{r}{p}+\kappa \right)
	\end{align*}
	yielding
	\begin{align*}
		a|t-t_0| = pG_\kappa\left( \dfrac{r}{p}+\kappa \right).
	\end{align*}
    This concludes the proof.
\end{proof}

\begin{rem}[Trajectories with $ \ell=0 $]\label{rem:ZeroEll}
	In case $ \ell=0 $ the point of closest approach becomes $ r_0(a,0)=0 $. Furthermore, the velocity becomes infinite there, since $ r_{peak}(a,0)=0 $ and by Remark \ref{rem:Scaling} we have
	\begin{align*}
		v_{peak} (a,\ell) = \dfrac{1}{\sqrt{\ell}} v_{peak}\left(\sqrt{\ell}a,1\right) \to \infty
	\end{align*}
	as $ \ell\to 0 $, see also Figure \ref{fig:TrajectoryL0}. This singular behavior is reflected in action-angle variables in terms of singular derivatives as $ \ell \to 0 $. More precisely, $ H_\kappa $ is not differentiable at zero for $ \kappa(a,0)=1 $. 
\end{rem}

\begin{figure}[!ht]
	\centering
	\begin{minipage}{.5\textwidth}
		\centering
		\includegraphics[width=\linewidth]{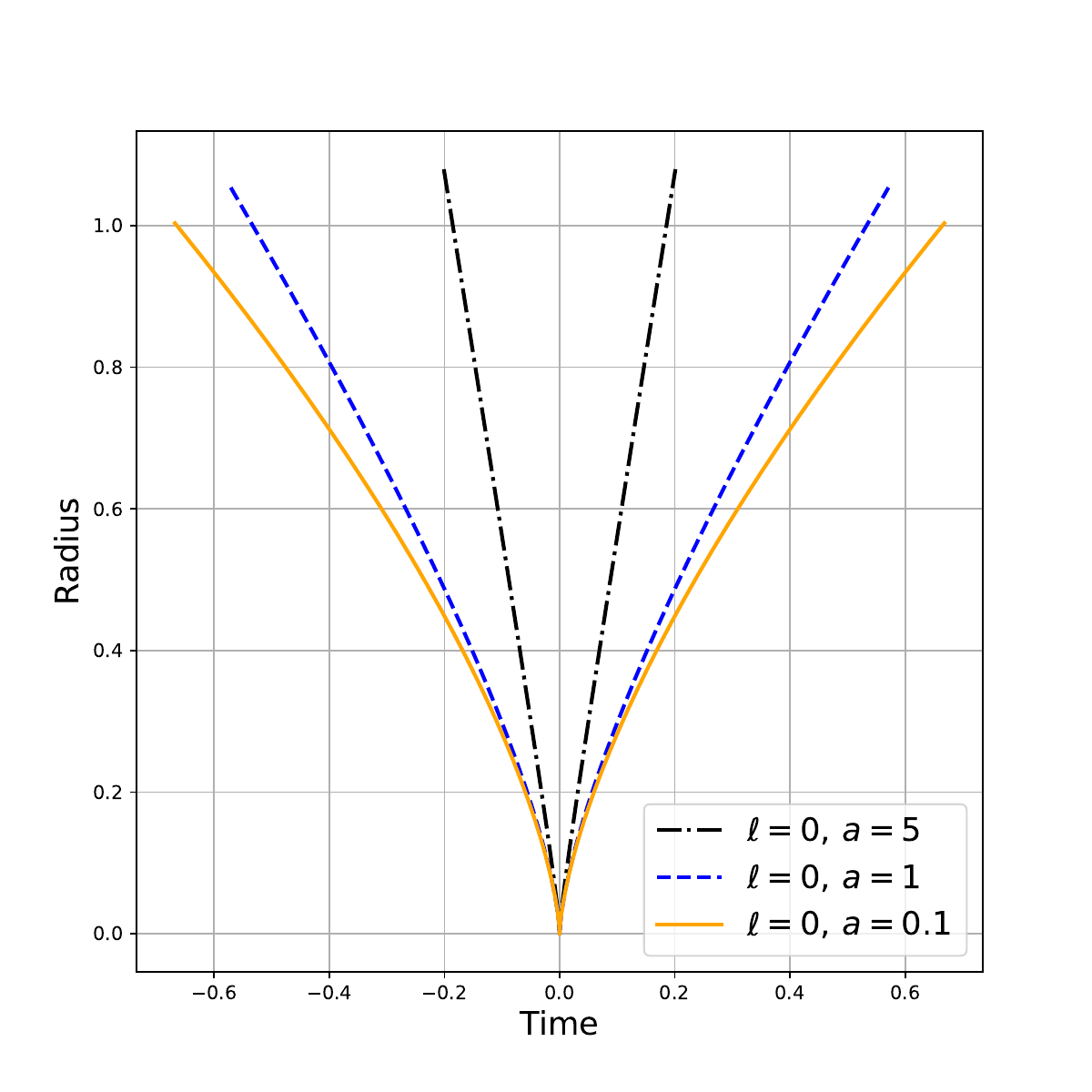}
	\end{minipage}%
	\begin{minipage}{.5\textwidth}
		\centering
		\includegraphics[width=\linewidth]{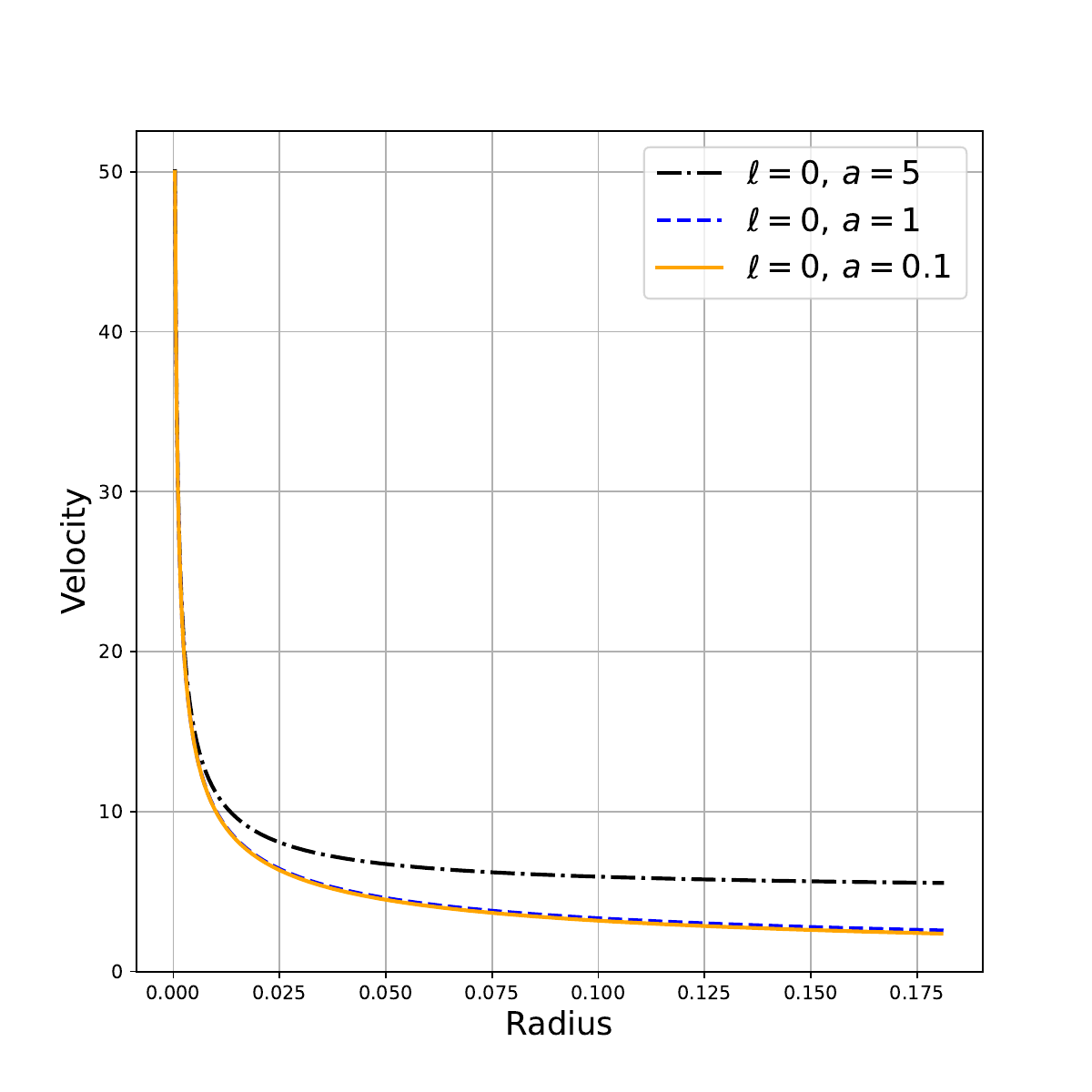}
	\end{minipage}
	\caption{Plot of the radial function $ t\mapsto r(t) $ (left) and velocity $ t\mapsto v(t) $ (right) of the solution to \eqref{eq:Sec2:CharSys} when $ t_0 = \ell=0 $ and different values of actions $ a $. At time $ t=0 $ the radial function has a singular behavior. Accordingly, the velocity goes to infinity as predicted in Remark \ref{rem:ZeroEll}.}
	\label{fig:TrajectoryL0}
\end{figure}

\begin{rem}[Parabolic trajectories]
	In the case that $ a=0 $ and hence $ \kappa(a,\ell)=1 $, $ p(0,\ell)= \infty $ the trajectories become parabolas, in comparison with the hyperbolic form (see point \eqref{it:lem:HypTraj4} in Lemma \ref{lem:HypTraj}). Then we get the parametric form (by replacing $ \xi $ by $ a\xi $ and letting $ a\to0 $ in the parametric form for $ a>0 $)
	\begin{align*}
		\begin{cases}
			r = \dfrac{m}{4}\xi^2 + \dfrac{\ell}{m}
			\\
			t-t_0 = \dfrac{m}{12}\xi^3+\dfrac{\ell}{m}\xi
		\end{cases}, \quad |t-t_0| = \dfrac{2}{\sqrt{m}}\left[ \dfrac{1}{3}\left( r(t)-\dfrac{\ell}{m} \right)^{3/2}+\dfrac{\ell}{m}\left( r(t)-\dfrac{\ell}{m} \right)^{1/2}  \right] .
	\end{align*}
\end{rem}

\paragraph{\bf Action-angle variables.} Using the explicit formulas we can define the action-angle variables as follows.

\begin{pro}[Action-angle variables]\label{pro:ActionAngleVariables}
	For any $ \ell>0 $ define 
	\begin{align*}
		\mathscr{D}_\ell := \left\lbrace (r,v)\in(0,\infty)\times\R \, :\, v^2-\dfrac{m}{r}+\dfrac{\ell}{r^2}>0 \right\rbrace.
	\end{align*}
	Then, for any $ \ell>0 $ the functions
	\begin{align}\label{eq:Sec2:PropAAVar1}
		\begin{split}
			(R(\cdot,\cdot,\ell),V(\cdot,\cdot,\ell)) &: \R \times (0,\infty)\to \mathscr{D}_\ell\subset (0,\infty)\times\R :
			\\
			R(\theta,a,\ell) &= p(a,\ell) H_\kappa\left(\dfrac{|\theta| }{p(a,\ell) }\right)-p(a,\ell) \kappa(a,\ell),
			\\
			V(\theta,a,\ell) &= \sgn(\theta) \, a \, \sqrt{1+\dfrac{m}{a^2R(\theta,a,\ell)}-\dfrac{\ell}{a^2R(\theta,a,\ell)^2}}
			\\
			&= \sgn(\theta) \, a \dfrac{p(a,\ell)}{R(\theta,a,\ell)} \, \sqrt{\left( \dfrac{R(\theta,a,\ell)}{p(a,\ell)} +\kappa(a,\ell) \right)^2-1 }
		\end{split}
	\end{align}
	define a canonical diffeomorphism. The inverse is given by
	\begin{align}\label{eq:Sec2:PropAAVar2}
		\begin{split}
			(\Theta(\cdot,\cdot,\ell),\Act(\cdot,\cdot,\ell)) &: \mathscr{D}_\ell \to \R \times (0,\infty) :
			\\
			\Act(r,v,\ell) &= \sqrt{v^2-\dfrac{m}{r}+\dfrac{\ell}{r^2}},
			\\
			\Theta(r,v,\ell) &= \sgn(v) \, p(a,\ell) \, G_\kappa\left( \dfrac{r}{p(a,\ell)}+\kappa(a,\ell)  \right) \mid_{a=\Act(r,v,\ell)}
		\end{split}
	\end{align}
	Finally, the solutions to \eqref{eq:Sec2:CharSys} with positive energy have the form
	\begin{align}\label{eq:Sec2:SolActAng}
		t\mapsto  \left( R(\theta+ta,a,\ell) , V(\theta+ta,a,\ell)\right) 
	\end{align}
	with parameters $ (\theta,a)\in \R\times (0,\infty) $ and $ \ell>0 $.
\end{pro}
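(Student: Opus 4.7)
The plan is to verify the four assertions in turn: well-definedness of $(R,V)$ into $\mathscr{D}_\ell$, mutual invertibility with $(\Theta,\Act)$, canonicity, and the solution formula. The first, second and fourth follow quickly from Lemma \ref{lem:HypTraj}, while canonicity admits a clean direct computation once those are in place. For well-definedness: since $H_\kappa\geq 1$ and $\kappa(a,\ell)\in (0,1)$, the formula yields $R \geq p(a,\ell)(1-\kappa(a,\ell)) = r_0(a,\ell) > 0$, and the defining square-root formula for $V$ rearranges algebraically to $V^2 - m/R + \ell/R^2 = a^2 > 0$, placing $(R,V) \in \mathscr{D}_\ell$. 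Smoothness at $\theta = 0$ I would check via a local expansion of $G_\kappa(1+\eta)\sim \sqrt{2(1-\kappa)}\,\eta^{1/2}$, which inverts to $H_\kappa(\tau) = 1 + \tau^2/(2(1-\kappa)) + O(\tau^4)$, so that $R$ is smooth in $\theta^2$ and $V$ vanishes linearly in $\theta$. Mutual invertibility is then immediate: $\Act(R,V,\ell) = a$ by construction, while $R/p + \kappa = H_\kappa(|\theta|/p)$ combined with $\sgn V = \sgn \theta$ gives $\Theta(R,V,\ell) = \theta$. The solution formula \eqref{eq:Sec2:SolActAng} follows by setting $t_0 := -\theta/a$ in the parametric description of Lemma \ref{lem:HypTraj}\eqref{it:lem:HypTraj5}, which gives $r(t) = R(\theta + at, a, \ell)$ along the Kepler orbit, and matching velocity signs via \eqref{eq:Sec2:velocity}.

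The main step, and what looks most technical at first glance, is canonicity: showing that the Jacobian
\[
J := \partial_\theta R \, \partial_a V - \partial_a R \, \partial_\theta V
\]
equals $1$. The apparent obstacle is that $R$ is only defined implicitly through $H_\kappa$ and depends on $a$ via both $p(a,\ell)$ and $\kappa(a,\ell)$, so expanding the $a$-derivatives by hand is unpleasant. However, the already-established solution formula provides a shortcut. Differentiating $r(t) = R(\theta + at, a, \ell)$ and $v(t) = V(\theta + at, a, \ell)$ in $t$ and comparing with the characteristic system \eqref{eq:Sec2:CharSys} yields
\[
\partial_\theta R = \frac{V}{a}, \qquad \partial_\theta V = \frac{1}{a}\left(\frac{\ell}{R^3} - \frac{m}{2R^2}\right),
\]
while differentiating the conservation law $V^2 - m/R + \ell/R^2 = a^2$ in $a$ gives
\[
V \partial_a V - \left(\frac{\ell}{R^3} - \frac{m}{2R^2}\right) \partial_a R = a.
\]
Substituting then immediately produces $J = 1$ for $\theta \neq 0$, with the case $\theta = 0$ following by continuity.

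As a sanity check and alternative route I would keep in mind a type-2 generating function argument: on $\{\theta > 0\}$ the function $W(r,a,\ell) := \int_{r_0(a,\ell)}^r \sqrt{a^2 + m/s - \ell/s^2}\, ds$ satisfies $\partial_r W = V$, and since the integrand vanishes at the lower endpoint there is no boundary contribution when differentiating in $a$, giving $\partial_a W = a\int_{r_0}^r ds/v(s) = a(t-t_0) = \theta$. By the standard theory of type-2 canonical transformations this yields canonicity on $\{v > 0\}$ directly, the $\{v < 0\}$ branch following by the symmetry $\theta \mapsto -\theta$. This consistency with the two-line computation above gives confidence in the overall construction.
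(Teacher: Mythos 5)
Your proposal is correct, and the parts on well-definedness, invertibility, smoothness at $\theta=0$, and the solution formula \eqref{eq:Sec2:SolActAng} follow the same route as the paper (explicit formulas from Lemma \ref{lem:HypTraj}, smoothness of $H_\kappa$ as in Lemma \ref{lem:HFunction}, and $t_0=-\theta/a$ in Lemma \ref{lem:HypTraj}\eqref{it:lem:HypTraj5}). Where you genuinely diverge is the canonicity step. The paper proves it via the generating function $S^\sigma_\ell(r,a)=\sigma\int_{r_0}^r a\sqrt{1+\tfrac{m}{a^2\rho}-\tfrac{\ell}{a^2\rho^2}}\,d\rho$ with $\partial_r S^\sigma_\ell=V$ and $\partial_a S^\sigma_\ell=\Theta$ -- this is exactly your ``sanity check'' route, including the point (which the paper does not spell out but you correctly flag) that the boundary term at $\rho=r_0$ vanishes because the integrand does; note the paper even adds a remark that $\sigma=\sgn v$ is not a function of $(r,a)$ alone, so the two branches $S^\pm_\ell$ must be patched. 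Your primary argument instead computes the Jacobian directly: $\partial_\theta R=V/a$ and $\partial_\theta V=\tfrac1a\bigl(\tfrac{\ell}{R^3}-\tfrac{m}{2R^2}\bigr)$ follow from the already-established solution formula (no circularity, since that formula rests on the explicit trajectory description of Lemma \ref{lem:HypTraj}, exactly as the paper itself uses in Lemma \ref{lem:DerivRadFunct}\eqref{it:lem:DerivRadFunct2}), and differentiating the energy identity $V^2-m/R+\ell/R^2=a^2$ in $a$ closes the computation, giving $\partial_\theta R\,\partial_a V-\partial_a R\,\partial_\theta V=1$ with the correct sign; since the phase space is two-dimensional for each fixed $\ell$, unit Jacobian is equivalent to preservation of the Poisson bracket \eqref{eq:Sec1:PoissonBracket}, so this suffices. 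This route is more elementary, sidesteps the branch-patching issue entirely, and only needs the smoothness in $a$ that you already have from $H_\kappa$; what it does not produce is the identity $\partial_a S^\sigma_\ell=\Theta$, which the generating-function approach yields as a byproduct and which is conceptually tied to property \eqref{it:asymptotic_actions}. Both arguments are sound, and your version would serve as a valid replacement for the paper's canonicity step.
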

\begin{rem}\label{rem:Scaling2}
	By the scaling properties discussed in Remark \ref{rem:Scaling}, the functions $R,V$ in \eqref{eq:Sec2:PropAAVar1} satisfy
	\begin{align*}
		R(\theta,a,\ell) = \ell \, R\left( \dfrac{\theta}{\ell}, \sqrt{\ell} a,1 \right), \quad V(\theta,a,\ell) = \dfrac{1}{\sqrt{\ell}} V\left( \dfrac{\theta}{\ell}, \sqrt{\ell} a,1 \right).
	\end{align*}
	In particular, we note that $ R/p $ is invariant under this scaling:
	\begin{align*}
		\dfrac{R(\theta,a,\ell)}{p(a,\ell)} = \dfrac{R\left( \theta/\ell,\sqrt{\ell}a,1\right)}{p\left(\sqrt{\ell} a,1\right)}.
	\end{align*}
    For the bounds on $R,V$ in the below sections, instead of frequent rescalings we have chosen to rely on this quantity whenever possible -- compare e.g.\ Lemmas \ref{lem:EstimatesDerivRadFunct} and \ref{lem:EstimatesSecondDerivRadFunct}.
\end{rem}

\begin{proof}[Proof of Proposition \ref{pro:ActionAngleVariables}]
	By the formulas in Lemma \ref{lem:HypTraj} the defined functions are inverse to each other. Observe that by the definition of $ \kappa, p $ in Lemma \ref{lem:HypTraj} \eqref{it:lem:HypTraj4} we have
	\begin{align*}
		a\sqrt{1+\dfrac{m}{a^2r}-\dfrac{\ell}{a^2r^2}} = a\dfrac{p}{r}\sqrt{\left( \dfrac{r}{p}+\kappa \right)^2-1 }
	\end{align*}
	yielding the formula for $ V(\theta,a,\ell) $. 
    
    Furthermore, as is shown in Lemma \ref{lem:HFunction} below the function $ H_\kappa $ and hence also $ R $ is smooth. In Lemma \ref{lem:DerivRadFunct} \eqref{it:lem:DerivRadFunct2} below we obtain $ V = \partial_{\theta}R/a $, so that $V$ is smooth too. Below we show that $ (\theta,a)\mapsto (R(\theta,a,\ell),V(\theta,a,\ell)) $ is a canonical change of variables, and by the inverse function theorem (or direct computation) it is a diffeomorphism.
    
    To show that it is canonical one can use the fact that they are locally given through a generating function $ S^\pm_\ell(r,a) $ with
	\begin{align*}
		\dfrac{\partial S^\pm_\ell}{\partial r} = V(r,a) = \pm a\sqrt{1+\dfrac{m}{a^2r}-\dfrac{\ell}{a^2r^2}}, \quad \dfrac{\partial S^\pm_\ell}{\partial a} = \Theta(r,v).
	\end{align*}
	Let us define $ \sigma = \sgn(\theta)=\sgn(v)\in \{-1,+1\} $ and
	\begin{align}\label{eq:Sec2:GenFct}
		S^\sigma_\ell(r,a) :=& \sigma\int_{r_0}^{r} a\sqrt{1+\dfrac{m}{a^2\rho}-\dfrac{\ell}{a^2\rho^2}}\, d\rho,
	\end{align}
	Observe that by the computation done in the proof of Lemma \ref{lem:HypTraj}
	\begin{align*}
		\dfrac{\partial S^\sigma_\ell(r,a)}{\partial a} =& \sigma\int_{r_0}^{r} \dfrac{1}{\sqrt{1+\dfrac{m}{a^2\rho}-\dfrac{\ell}{a^2\rho^2}}}\, d\rho = \sigma pG_\kappa\left( \dfrac{r}{p}+\kappa \right).
	\end{align*}
	Hence, $ \Theta(r,v) $ coincides with $ \partial S^\sigma_\ell/\partial a $. 
	
	Finally, the fact that \eqref{eq:Sec2:SolActAng} gives a parametrization of all the solutions with positive energy follows from the formulas in \eqref{it:lem:HypTraj5} in Lemma \ref{lem:HypTraj}.
\end{proof}

\begin{rem}
	We remark that the term $ \sigma=\sgn \theta= \sgn v $ in \eqref{eq:Sec2:GenFct} is not a function of $ (r,a) $ alone. In particular, in order to obtain a globally defined generating function a patching of the two functions $ S_\ell^\pm $ is required (see also \cite[Section 2.2]{PW2020} for a related discussion).
\end{rem}

\subsection{Preliminary estimates on the action-angle coordinates}
In this section we provide estimates on the change of variables introduced in the previous section. We then derive estimates for the kinematic variables $ \tilde{R} $. Here, we define for any function $ g(t,\theta,a,\ell) $ 
\begin{align*}
	\tilde{g}(t,\theta,a,\ell) := g(t,\theta+at,a,\ell).
\end{align*}
We will usually write $ \tilde{R}(\theta,a) $ and suppress the dependence on $ (t,\ell) $. 

Concerning estimates on $ \tilde{R} $ we distinguish those holding for all $ (t,\theta,a,\ell) $ and those improved estimates when $ (t,\theta,a,\ell) $ is restricted to the bulk $ \mathcal{B} $, that is
\begin{align}\label{eq:Sec2:DefBulk}
	\mathcal{B} := \left\lbrace |\theta|\leq at/2 \right\rbrace.
\end{align}

\medskip
\paragraph{\bf Estimates on radial function.} The following lemma will be useful for our further estimates.
\begin{lem}\label{lem:EstimatesG}
	It holds for all $ y\geq1 $
	\begin{align*}
		\dfrac{G_\kappa(y)}{y-\kappa} &\lesssim 1,
		\\
		 G_{\kappa}(y) &= y -\kappa \ln y + \mathcal{O}(1), \quad y\to \infty.
	\end{align*}
\end{lem}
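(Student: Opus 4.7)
Both inequalities follow from direct analysis of $G_\kappa(y)=\sqrt{y^2-1}-\kappa\ln(y+\sqrt{y^2-1})$, keeping in mind that $\kappa=(1+4a^2\ell/m^2)^{-1/2}\in(0,1)$, so uniformity in $\kappa$ on $(0,1]$ is what the $\lesssim$ notation requires.

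\emph{First inequality.} Rather than estimating the ratio directly, I would prove the stronger pointwise bound $G_\kappa(y)\leq y-\kappa$ for all $y\geq1$ and $\kappa\in(0,1]$, which immediately yields $G_\kappa(y)/(y-\kappa)\leq 1$. Set
\[
h(y):=(y-\kappa)-G_\kappa(y),\qquad h(1)=1-\kappa\geq0.
\]
Using $(d/dy)\ln(y+\sqrt{y^2-1})=1/\sqrt{y^2-1}$, one computes
\[
h'(y)=1-\frac{y-\kappa}{\sqrt{y^2-1}}.
\]
The equation $y-\kappa=\sqrt{y^2-1}$ has the unique solution $y^\ast=(1+\kappa^2)/(2\kappa)\geq 1$ on $[1,\infty)$, so $h$ is decreasing on $[1,y^\ast]$ and increasing on $[y^\ast,\infty)$; hence the minimum is at $y^\ast$. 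At $y^\ast$ one finds $y^\ast-\kappa=\sqrt{(y^\ast)^2-1}=(1-\kappa^2)/(2\kappa)$ and $y^\ast+\sqrt{(y^\ast)^2-1}=1/\kappa$, so
\[
h(y^\ast)=(y^\ast-\kappa)-\sqrt{(y^\ast)^2-1}+\kappa\ln\!\bigl(y^\ast+\sqrt{(y^\ast)^2-1}\bigr)=-\kappa\ln\kappa\geq 0.
\]
Thus $h\geq 0$ on $[1,\infty)$ uniformly in $\kappa\in(0,1]$, which gives the claim.

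\emph{Second inequality (asymptotic).} I would simply use the elementary identities
\[
\sqrt{y^2-1}-y=-\frac{1}{y+\sqrt{y^2-1}},\qquad \ln(y+\sqrt{y^2-1})-\ln y=\ln\!\bigl(1+\sqrt{1-y^{-2}}\bigr).
\]
The first is bounded by $1/y$ uniformly for $y\geq1$, and the second lies in $[0,\ln 2]$ and tends to $\ln 2$ as $y\to\infty$. Combining,
\[
\bigl|G_\kappa(y)-(y-\kappa\ln y)\bigr|\leq \tfrac{1}{y}+\kappa\,\ln 2\leq 1+\ln 2
\]
uniformly in $\kappa\in[0,1]$, which yields the $\mathcal{O}(1)$ error as $y\to\infty$.

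\emph{Expected main obstacle.} The only delicate point is the joint limit $y\to 1^+$ with $\kappa\to 1^-$, where both numerator and denominator in the first inequality go to zero and naive term-by-term bounds (e.g.\ $G_\kappa(y)\leq\sqrt{y^2-1}$ paired with $y-\kappa\geq 1-\kappa$) blow up. The monotonicity argument via $h$ handles this uniformly because the sign of $h(y^\ast)=-\kappa\ln\kappa$ is free, so no case distinction on the size of $1-\kappa$ versus $y-1$ is needed. Everything else is a routine Taylor expansion.
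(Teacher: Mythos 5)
Your proof is correct. For the first estimate you take a genuinely different route from the paper: the paper expands near $y=1$ (writing $y=1+h$ and using $G_\kappa(y)=(1-\kappa)\sqrt{h(2+h)}+\mathcal{O}(h)$ against the denominator $1-\kappa+h$) and treats $y\geq 2$ separately, which yields the bound with an unspecified constant; you instead prove the sharper pointwise inequality $G_\kappa(y)\leq y-\kappa$ by analyzing $h(y)=(y-\kappa)-G_\kappa(y)$, using $G_\kappa'(y)=\frac{y-\kappa}{\sqrt{y^2-1}}$, locating the unique critical point $y^\ast=\frac{1+\kappa^2}{2\kappa}$ and evaluating $h(y^\ast)=-\kappa\ln\kappa\geq 0$. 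This removes the case distinction, gives the explicit constant $1$, and handles the delicate joint regime $y\to1^+$, $\kappa\to1^-$ uniformly — a cleaner argument, at the cost of a short calculus computation (whose sign analysis of $h'$ on $(y^\ast,\infty)$ is implicit in your write-up but follows from the uniqueness of the zero of $h'$ together with $h'<0$ near $1$ and $h'>0$ at infinity). For the second estimate your argument is essentially the same elementary expansion as the paper's, and in fact slightly stronger, since your bound $\left|G_\kappa(y)-(y-\kappa\ln y)\right|\leq \frac{1}{y}+\kappa\ln 2$ is uniform in $y\geq1$ and $\kappa\in(0,1)$ rather than only asymptotic.
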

\begin{proof}
	Observe first that for $ y=1+h $ we have
	\begin{align*}
		G_\kappa(y) &= \sqrt{y^2-1}-\kappa\ln\left(y+\sqrt{y^2-1}\right) = \sqrt{h(2+h)}-\kappa\ln\left(1+h+\sqrt{h(2+h)}\right)
		\\
		&= (1-\kappa)\sqrt{h(2+h)} +  \mathcal{O}(h).
	\end{align*}
	We then have
	\begin{align*}
		\dfrac{G_\kappa(y)}{y-\kappa} =\dfrac{\sqrt{y^2-1}-\kappa\ln\left(y+\sqrt{y^2-1}\right)}{y-\kappa}.
	\end{align*}
	For $ y=1+h $, $ h\in (0,1) $ we get 
	\begin{align*}
		\dfrac{G_\kappa(y)}{y-\kappa} = \dfrac{(1-\kappa)\sqrt{h(2+h)} + \mathcal{O}(h)}{1-\kappa+h} \leq \sqrt{h(2+h)} + \mathcal{O}(1) \lesssim 1.
	\end{align*}
	On the other hand, we have for $ y\geq 2 $
	\begin{align*}
		\dfrac{G_\kappa(y)}{y-\kappa} \lesssim \dfrac{\sqrt{y^2-1}+\kappa\ln\left(y+\sqrt{y^2-1}\right)}{y} \lesssim 1.
	\end{align*}
	The other estimate follows from 
	\begin{align*}
		G_\kappa(y) = y \sqrt{1-\dfrac{1}{y^2}} -\kappa \ln y - \kappa \ln\left( 1+\sqrt{1-\dfrac{1}{y^2}} \right)  = y-\kappa \ln y +  \mathcal{O}(1).
	\end{align*}
\end{proof}

In the following we collect some properties of the function $ H_\kappa $, see also Figure \ref{fig:HFunct}.
\begin{lem}\label{lem:HFunction}
	For any $ \kappa\in(0,1) $ the function $ H_\kappa $ is smooth and
	\begin{align}
		H_\kappa(x) &= 1+\dfrac{x^2}{2(1-\kappa)^2}+o\left( \dfrac{x^2}{(1-\kappa)^2}\right), \quad x\to 0, \label{eq:Sec2:HFunction1}
		\\
		H_\kappa(x) &= x + \kappa \ln x + \kappa^2\dfrac{\ln x}{x} + \mathcal{O}(1), \quad x\to \infty. \label{eq:Sec2:HFunction2}
	\end{align}
\end{lem}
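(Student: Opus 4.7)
The plan is to treat smoothness on $(0,\infty)$ and at $x=0$ separately, and to derive the asymptotics by Taylor expansion near $x=0$ and by a bootstrap argument at infinity. On $(1,\infty)$ one has $G_\kappa'(y) = (y-\kappa)/\sqrt{y^2-1}>0$ for $\kappa\in(0,1)$, so $G_\kappa$ is a smooth diffeomorphism from $(1,\infty)$ onto $(0,\infty)$ and $H_\kappa$ is smooth on $(0,\infty)$ by the inverse function theorem.

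The main obstacle is smoothness at $x=0$, since $G_\kappa'(y)\to\infty$ as $y\to 1^+$. I would remove this singularity by the substitution $w = \sqrt{2(y-1)}$, i.e.\ $y = 1+w^2/2$. Using the (easily verified) identity $\arcosh(1+w^2/2) = 2\arsinh(w/2)$, a direct computation gives
\[
G_\kappa\bigl(1+\tfrac{w^2}{2}\bigr) \;=\; w\sqrt{1+\tfrac{w^2}{4}} \;-\; 2\kappa\,\arsinh(w/2) \;=:\; g(w),
\]
which is a \emph{smooth odd} function of $w\in\R$ with $g'(0)=1-\kappa>0$. The inverse function theorem then yields a smooth odd inverse $g^{-1}$ near $0$, and since $(g^{-1}(x))^2$ is a smooth even function of $x$,
\[
H_\kappa(x) \;=\; 1+\tfrac12\bigl(g^{-1}(x)\bigr)^2
\]
extends smoothly across $x=0$. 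Oddness of $g^{-1}$ forces the expansion $g^{-1}(x) = x/(1-\kappa) + O(x^3)$, from which \eqref{eq:Sec2:HFunction1} follows by squaring.

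For the asymptotics at infinity, I would sharpen the expansion of $G_\kappa$ already implicit in the proof of Lemma \ref{lem:EstimatesG}: writing $\sqrt{y^2-1} = y-\frac{1}{2y}+O(y^{-3})$ and $\ln(y+\sqrt{y^2-1}) = \ln(2y)+O(y^{-2})$ gives
\[
G_\kappa(y) \;=\; y - \kappa\ln y - \kappa\ln 2 + O(1/y), \qquad y\to\infty.
\]
Now set $x=G_\kappa(y)$ with $y=H_\kappa(x)\to\infty$ and iterate the implicit relation $y = x + \kappa\ln y + \kappa\ln 2 + O(1/y)$. A first pass gives $y = x + O(\ln x)$, hence $\ln y = \ln x + O(\ln x/x)$; plugging back yields $y = x + \kappa\ln x + O(1)$. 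Refining once more,
\[
\ln y \;=\; \ln x + \frac{y-x}{x} + O\!\Bigl(\frac{\ln^2 x}{x^2}\Bigr) \;=\; \ln x + \kappa\,\frac{\ln x}{x} + O(1/x),
\]
and substituting into the implicit relation produces $y = x + \kappa\ln x + \kappa^2\frac{\ln x}{x} + O(1)$, which is exactly \eqref{eq:Sec2:HFunction2}. The hard step is recognizing the algebraic identity $\arcosh(1+w^2/2)=2\arsinh(w/2)$ that makes $g$ smooth; the rest is a routine expansion and bootstrap.
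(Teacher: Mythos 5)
Your proposal is correct, and both halves check out: the identity $\arcosh(1+w^2/2)=2\arsinh(w/2)$ is valid, so $g(w)=w\sqrt{1+w^2/4}-2\kappa\arsinh(w/2)$ is indeed smooth, odd, with $g'(0)=1-\kappa>0$, and your bootstrap at infinity from $G_\kappa(y)=y-\kappa\ln y-\kappa\ln 2+\mathcal{O}(1/y)$ is sound. The route differs from the paper mainly in the treatment of smoothness at $x=0$: the paper does not invert $G_\kappa$ algebraically but instead works with the ODE $H_\kappa'=\sqrt{H_\kappa^2-1}/(H_\kappa-\kappa)$ and the ansatz $H_\kappa(x)=1+(1-\kappa)\bigl(\eta_\kappa(x/(1-\kappa)^{3/2})\bigr)^2$, reducing matters to the smooth ODE $\eta_\kappa'=\tfrac12\sqrt{2+(1-\kappa)\eta_\kappa^2}/(1+\eta_\kappa^2)$, $\eta_\kappa(0)=0$; this is the same desingularization-by-square-root idea, but packaged so that the function $\eta_\kappa$ and its comparison properties in $\kappa$ can be reused verbatim in Lemma \ref{lem:HFunctEquiv}, which your substitution $y=1+w^2/2$ does not directly provide (though it is arguably more explicit and self-contained for the present lemma). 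For the expansion at infinity the paper is terser: it only invokes Lemma \ref{lem:EstimatesG} to get $x=H_\kappa(x)-\kappa\ln H_\kappa(x)+\mathcal{O}(1)$, which already suffices because the term $\kappa^2\ln x/x$ in \eqref{eq:Sec2:HFunction2} is $o(1)$ and hence absorbed by the $\mathcal{O}(1)$; your extra refinement with the constant $-\kappa\ln 2$ and the second iteration genuinely derives that term rather than absorbing it, which is harmless and slightly more informative, but not needed for the statement.
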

\begin{proof}
	We first prove \eqref{eq:Sec2:HFunction1}. To this end, we observe that for any $ \kappa\in (0,1) $ we have $ H_\kappa(0)=1 $ and $H_\kappa$ satisfies
	\begin{align*}
		H_\kappa' = \dfrac{1}{G_\kappa'\circ H_\kappa} = \dfrac{\sqrt{H_\kappa^2-1}}{H_\kappa-\kappa}.
	\end{align*}
	In particular, the ansatz
	\begin{align*}
		H_\kappa(x) = 1 + (1-\kappa) \left( \eta_\kappa\left( \dfrac{x}{(1-\kappa)^{3/2}} \right)  \right)^2
	\end{align*}
	yields the differential equation
	\begin{align*}
		\eta'_\kappa = \dfrac{1}{2}\dfrac{\sqrt{2+(1-\kappa)\eta_\kappa^2}}{1+\eta_\kappa^2}, \quad \eta_\kappa(0)=0.
	\end{align*} 
	One can readily see that $ \eta_{\kappa} $ is smooth and 
	\begin{align*}
		\eta_{\kappa}(x)=\frac{1}{\sqrt{2}}x+o(x), \quad x\to 0.
	\end{align*}
	This yields  \eqref{eq:Sec2:HFunction1}. In order to show  \eqref{eq:Sec2:HFunction2} one can use Lemma \ref{lem:EstimatesG} to obtain that
	\begin{align*}
		x = H_\kappa(x) -\kappa\ln H_\kappa(x) + \mathcal{O}(1), \quad x\to \infty.
	\end{align*}
\end{proof}

\begin{lem}\label{lem:HFunctEquiv}
	Let $ C_0\geq1 $. There exists a constant $ C_1=C_1(C_0)\geq1 $ such that for all $ x_1,\, x_2>0 $ and $ \kappa_1,\, \kappa_2\in (0,1) $ satisfying
	\begin{align}\label{eq:assume-kappa}
		\dfrac{x_1}{x_2},\, \dfrac{\kappa_1}{\kappa_2},\, \dfrac{1-\kappa_1}{1-\kappa_2}\in \left[ \dfrac{1}{C_0},C_0 \right], 
	\end{align}
	we have
	\begin{align}\label{eq:Hkappa-compare}
		\dfrac{1}{C_1}\leq \dfrac{H_{\kappa_1}(x_1)-1}{H_{\kappa_2}(x_2)-1}\leq C_1.
	\end{align}
\end{lem}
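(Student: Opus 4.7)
The plan is to deduce \eqref{eq:Hkappa-compare} from two scale-invariant Lipschitz bounds on the auxiliary quantity $h(x,\kappa):=H_\kappa(x)-1 > 0$, namely
\begin{equation}\label{eq:plan-bounds}
x\,|\partial_x \log h(x,\kappa)| \lesssim 1, \qquad \min(\kappa, 1-\kappa)\,|\partial_\kappa \log h(x,\kappa)| \lesssim 1,
\end{equation}
uniformly in $x>0$ and $\kappa\in(0,1)$. Given these, integration of $d\log h$ along the path $(x_1,\kappa_1) \to (x_2,\kappa_1) \to (x_2,\kappa_2)$ yields
\begin{equation*}
\bigl|\log h(x_1,\kappa_1)-\log h(x_2,\kappa_2)\bigr| \lesssim \Bigl|\log\tfrac{x_1}{x_2}\Bigr| + \int_{\min(\kappa_1,\kappa_2)}^{\max(\kappa_1,\kappa_2)}\Bigl(\tfrac{1}{\kappa}+\tfrac{1}{1-\kappa}\Bigr)d\kappa \lesssim \log C_0,
\end{equation*}
where I used $\min(\kappa,1-\kappa)^{-1}\leq \kappa^{-1}+(1-\kappa)^{-1}$ together with all three ratio bounds in \eqref{eq:assume-kappa}. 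Exponentiating gives \eqref{eq:Hkappa-compare} with $C_1$ depending only on $C_0$.

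To establish \eqref{eq:plan-bounds} I would proceed from the defining identity $x=\tilde{G}_\kappa(h)$, where
\begin{equation*}
\tilde{G}_\kappa(h):=G_\kappa(1+h)=\sqrt{h(2+h)}-\kappa\arcosh(1+h).
\end{equation*}
A direct computation yields $\partial_h \tilde{G}_\kappa(h)=(h+1-\kappa)/\sqrt{h(2+h)}$ and $\partial_\kappa \tilde{G}_\kappa(h)=-\arcosh(1+h)$, and implicit differentiation produces
\begin{equation*}
\partial_x h=\frac{\sqrt{h(2+h)}}{h+1-\kappa}, \qquad \partial_\kappa h=\frac{\arcosh(1+h)\sqrt{h(2+h)}}{h+1-\kappa}.
\end{equation*}
In the regime $h\geq 1$ both bounds in \eqref{eq:plan-bounds} are immediate: one uses $\sqrt{h(2+h)}\approx h \approx h+1-\kappa$, $\arcosh(1+h)\lesssim \log(1+h)$, the crude estimate $x\leq\sqrt{h(2+h)}$ (from $\arcosh\geq 0$), and the inequality $h+1-\kappa\geq 1-\kappa\geq\min(\kappa, 1-\kappa)$.

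The delicate regime is $h\leq 1$ with $\kappa$ close to $1$, where $x\leq\sqrt{h(2+h)}$ is too crude on account of the cancellation between the two summands defining $\tilde{G}_\kappa$. The key observation is the identity
\begin{equation*}
\sqrt{h(2+h)}-\arcosh(1+h)=\int_0^h\sqrt{s/(2+s)}\,ds \lesssim h^{3/2}\quad\text{for }h\leq 1,
\end{equation*}
which via $\tilde{G}_\kappa(h) = (1-\kappa)\sqrt{h(2+h)} + \kappa\bigl[\sqrt{h(2+h)}-\arcosh(1+h)\bigr]$ upgrades to the sharper bound $x/\sqrt{h(2+h)}\lesssim (1-\kappa)+h$. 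Since $(1-\kappa)+h = h+1-\kappa$, this immediately yields
\begin{equation*}
x\,\partial_x\log h = \frac{x}{\sqrt{h(2+h)}}\cdot\frac{2+h}{h+1-\kappa} \lesssim 2+h \lesssim 1,
\end{equation*}
and an analogous computation (using $\arcosh(1+h)\sqrt{h(2+h)}\leq h(2+h) \lesssim h$ for $h\leq 1$) gives $|\partial_\kappa\log h|\lesssim 1/(h+1-\kappa)$, which combined with $\min(\kappa,1-\kappa)/(h+1-\kappa)\leq 1$ completes \eqref{eq:plan-bounds}. I expect this small-$h$ analysis in the near-parabolic regime $\kappa\to 1$ to be the main technical step; the rest is routine calculus.
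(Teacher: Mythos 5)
Your proposal is correct, and it takes a genuinely different route from the paper. The paper's proof recycles the ansatz $H_\kappa(x)=1+(1-\kappa)\,\eta_\kappa\bigl(x(1-\kappa)^{-3/2}\bigr)^2$ from Lemma \ref{lem:HFunction} and runs two ODE comparison arguments for $\eta_\kappa$ (using that $E_\kappa$ is decreasing both in its argument and in $\kappa$), so comparability of $H_{\kappa_i}(x_i)-1$ is inherited from comparability of the $\eta$'s. You instead differentiate the inverse relation $x=G_\kappa(1+h)$, $h=H_\kappa(x)-1$, implicitly, prove the scale-invariant bounds $x\,|\partial_x\log h|\lesssim1$ and $\min(\kappa,1-\kappa)\,|\partial_\kappa\log h|\lesssim1$, and integrate $d\log h$ along a two-leg path, so that the three ratio hypotheses enter exactly through the integrals of $1/x$, $1/\kappa$ and $1/(1-\kappa)$. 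I checked the two delicate points and they hold: the formula $\partial_\kappa h=\arcosh(1+h)\sqrt{h(2+h)}/(h+1-\kappa)$ is right, and for $h\leq1$ the cancellation identity $\sqrt{h(2+h)}-\arcosh(1+h)=\int_0^h\sqrt{s/(2+s)}\,ds\lesssim h^{3/2}$ indeed upgrades $x/\sqrt{h(2+h)}$ to $\lesssim h+1-\kappa$, which is precisely what keeps both logarithmic-derivative bounds uniform as $\kappa\to1$; for $h\geq1$ the $\kappa$-bound is most directly closed via $\arcosh(1+h)\lesssim\log(2+h)\lesssim h\leq h+1-\kappa$ rather than the lower bound $h+1-\kappa\geq\min(\kappa,1-\kappa)$ you cite, but every ingredient you list suffices. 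What each approach buys: yours avoids the auxiliary ODE altogether and yields an explicit, quantitative constant $C_1\lesssim C_0^{C}$ for a universal $C$ (the paper's comparison argument gives no explicit dependence on $C_0$), while the paper's proof is shorter given that the $\eta_\kappa$ machinery is already set up in the proof of Lemma \ref{lem:HFunction}.
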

\begin{proof}
	As in Lemma \ref{lem:HFunction} we write the ansatz
	\begin{align}\label{eq:Sec2:ProofHFunctEquiv}
		H_\kappa(x) = 1 + (1-\kappa) \left( \eta_\kappa\left( \dfrac{x}{(1-\kappa)^{3/2}} \right)  \right)^2,
	\end{align}
	where $ \eta_{\kappa} $ satisfies
	\begin{align}\label{eq:kappa_ODE}
		\eta'_\kappa = \dfrac{1}{2}\dfrac{\sqrt{2+(1-\kappa)\eta_\kappa^2}}{1+\eta_\kappa^2}=:E_\kappa(\eta_\kappa), \quad \eta_\kappa(0)=0.
	\end{align}
	Note that $ E'_\kappa\leq0 $, and $\partial_\kappa E_\kappa\leq 0$. Based on this we divide the proof into three steps.

    \textit{Step 1.} Let $\eta_\kappa$ solve \eqref{eq:kappa_ODE}. We claim that for all $ x\geq 0 $ and $ C\geq 1 $ there holds
	\begin{equation}\label{eq:etakappa_compare}
		\eta_\kappa(x)\leq \eta_\kappa(Cx) \leq C\eta_\kappa(x).
	\end{equation}
	The first inequality follows from the fact that $ \eta_\kappa $ is increasing. For the second one, observe that $ \tilde{\eta}_\kappa(x) = \eta_\kappa(Cx)/C $ satisfies
    \begin{align*}
		\tilde{\eta}_\kappa' = E_\kappa(C_1\tilde{\eta}_\kappa)\leq E_\kappa(\tilde{\eta}_\kappa),
	\end{align*}
    since $ E_\kappa $ is decreasing. Since $\tilde{\eta}_\kappa(0)=\eta_\kappa(0)=0$, a comparison argument gives the claim.

    \textit{Step 2.} Let $\kappa_1\leq \kappa_2$ as in \eqref{eq:assume-kappa}. We claim that
    for all $ x\geq0 $
	\begin{equation}\label{eq:kappa12_compare}
		\eta_{\kappa_2}(x)\leq \eta_{\kappa_1}(x) \leq \sqrt{C_0}\eta_{\kappa_2}(x).
	\end{equation}
    A comparison argument gives the first inequality, since $ \kappa\mapsto E_\kappa $ is decreasing. Similarly, for the second one we use that by assumption $ (1-\kappa_1)\leq C_0(1-\kappa_2) $ and thus $ E_{\kappa_1} \leq \sqrt{C_0} E_{\kappa_2} $, so that another comparison argument gives
	\begin{align*}
		\eta_{\kappa_1}(x)\leq \eta_{\kappa_2}\left( \sqrt{C_0}x\right) \leq \sqrt{C_0}\eta_{\kappa_2}(x),
	\end{align*}
	where in the last step we have used \eqref{eq:etakappa_compare}.

	\textit{Step 3.} We deduce \eqref{eq:Hkappa-compare} by invoking \eqref{eq:Sec2:ProofHFunctEquiv} together with the bounds \eqref{eq:etakappa_compare} and \eqref{eq:kappa12_compare}.
\end{proof}

\begin{figure}
	\centering
	\begin{minipage}{.5\textwidth}
		\centering
		\includegraphics[width=\linewidth]{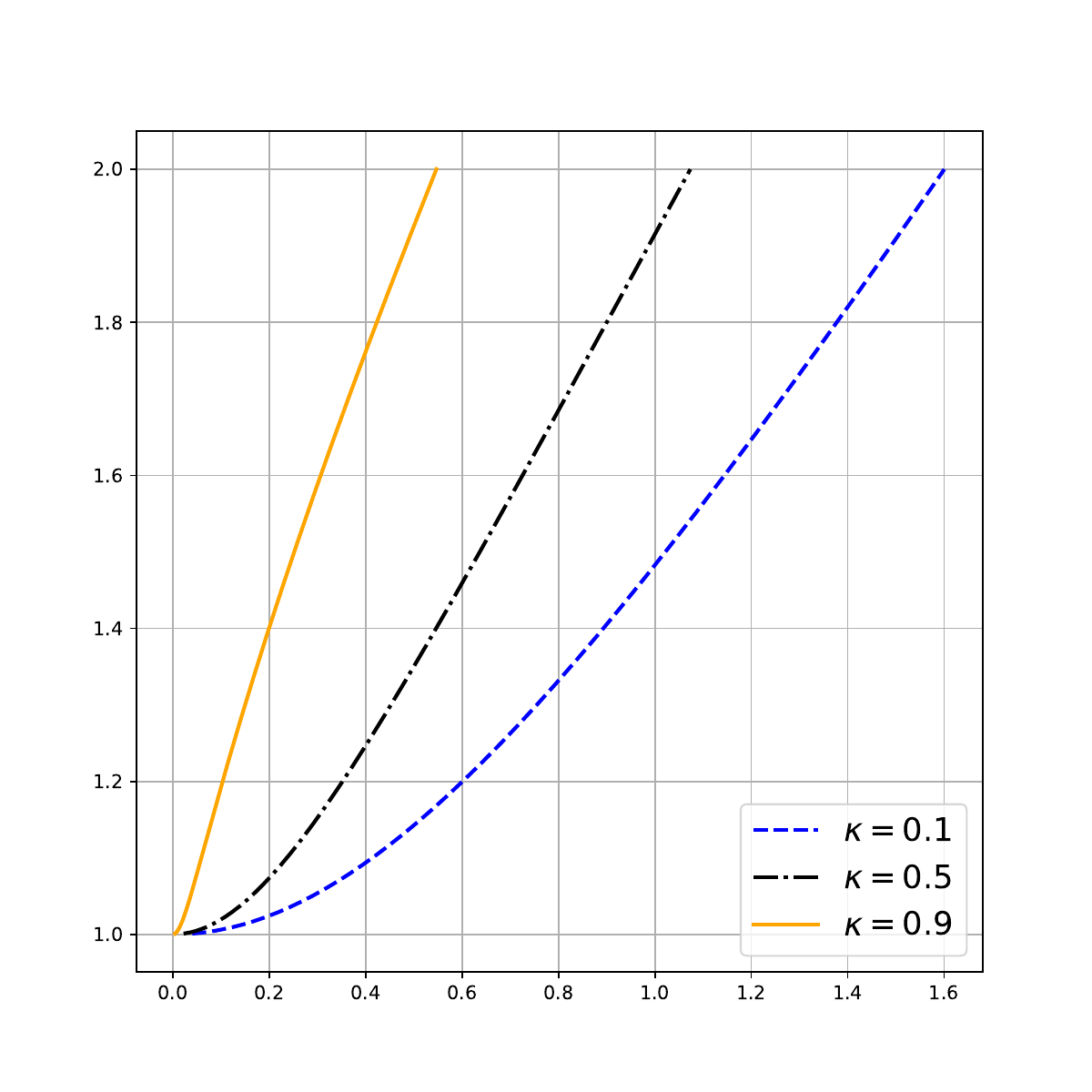}
	\end{minipage}%
	\begin{minipage}{.5\textwidth}
		\centering
		\includegraphics[width=\linewidth]{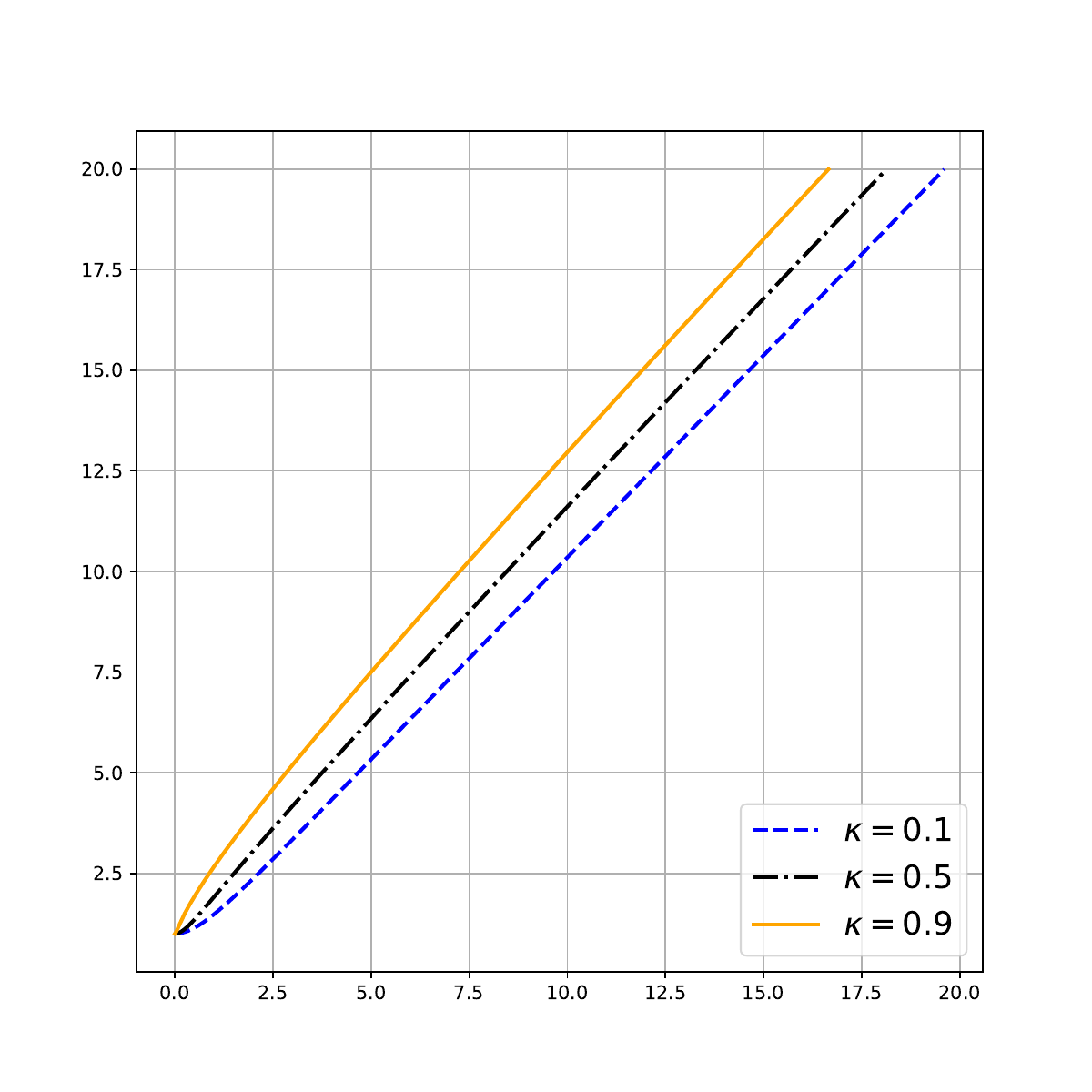}
	\end{minipage}
	\caption{Plots of the function $ H_\kappa(x) $ for small and large values of $ x $ in the cases $ \kappa=0.1, \, 0.5, \, 0.9 $.}
	\label{fig:HFunct}
\end{figure}

\begin{lem}\label{lem:EstimatesRadFunct}
	The following estimates hold.
	\begin{enumerate}[(i)]
		\item\label{it:lem:EstimatesRadFunct1} It holds 
		\begin{align*}
			r_0(a,\ell)\approx \dfrac{\ell}{\jabr{a\sqrt{\ell}}}, \quad
			\kappa(a,\ell) \approx \jabr{a\sqrt{\ell}}^{-1}, \quad
			p(a,\ell) \approx \dfrac{1}{a^2}\jabr{a\sqrt{\ell}}.
		\end{align*}
		as well as $ \kappa\in (0,1) $ and
		\begin{align*}
			\dfrac{1}{1-\kappa} \approx \jabr{\dfrac{1}{a^2\ell}}, \quad a\ell \kappa^2 \approx \min\{a\ell, \dfrac{1}{a}\}.
		\end{align*}
		\item\label{it:lem:EstimatesRadFunct2} We have for all $ (\theta,a,\ell)\in \R\times \R_+\times\R_+ $
		\begin{align*}
			R&\geq r_0=p(1-\kappa), \quad \dfrac{R}{p}+\kappa \approx \jabr{\dfrac{R}{p}},
			\\
			\left| \dfrac{R}{p}+\kappa-\dfrac{|\theta|}{p} \right| & \lesssim \ln\jabr{\dfrac{R}{p}}, \quad
			|\theta| \lesssim  R \lesssim p+|\theta|.
		\end{align*}
	\end{enumerate}
\end{lem}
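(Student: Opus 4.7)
}

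For part \eqref{it:lem:EstimatesRadFunct1}, everything will follow from direct algebraic manipulation of the closed-form expressions. From the definition $\kappa(a,\ell) = (1 + 4a^2\ell/m^2)^{-1/2}$ we immediately get $\kappa \in (0,1)$ and $\kappa^{-1} = \sqrt{1 + 4a^2\ell/m^2} \approx 1 + a\sqrt{\ell} \approx \jabr{a\sqrt{\ell}}$. This plugged into $p = m/(2a^2\kappa)$ gives the bound on $p$, and together with $r_0 = p(1-\kappa)$ together with the alternative formula $r_0 = (2\ell/m)(1+\sqrt{1+4a^2\ell/m^2})^{-1} \approx \ell/\jabr{a\sqrt{\ell}}$ we obtain the estimate on $r_0$. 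For $1/(1-\kappa)$, I will split into the regimes $a^2\ell \lesssim 1$ (where $1-\kappa \approx 2a^2\ell/m^2$) and $a^2\ell \gtrsim 1$ (where $1-\kappa \approx 1$) to see that $(1-\kappa)^{-1} \approx \jabr{1/(a^2\ell)}$. The same two-regime split applied to $a\ell\kappa^2 = a\ell(1+4a^2\ell/m^2)^{-1}$ gives $a\ell\kappa^2 \approx \min(a\ell, 1/a)$.

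For part \eqref{it:lem:EstimatesRadFunct2}, the key observation is the identity
\begin{equation*}
  \frac{R}{p} + \kappa = H_\kappa\!\left(\frac{|\theta|}{p}\right), \qquad \text{equivalently}\qquad \frac{|\theta|}{p} = G_\kappa\!\left(\frac{R}{p}+\kappa\right),
\end{equation*}
which comes directly from the definitions in Proposition \ref{pro:ActionAngleVariables} together with $G_\kappa = H_\kappa^{-1}$. Since $H_\kappa \geq 1$ on $[0,\infty)$, this shows $R \geq p(1-\kappa) = r_0$. Writing $y := R/p + \kappa$, we have $y \geq 1$, and from $\kappa < 1$ also $y \leq R/p + 1$, while $y \geq R/p$. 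Hence $\max(1, R/p) \leq y \leq 1 + R/p$, giving the equivalence $y \approx \jabr{R/p}$.

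The remaining two estimates both rely on the asymptotic expansion $G_\kappa(y) = y - \kappa\ln y + \mathcal{O}(1)$ from Lemma \ref{lem:EstimatesG}, together with the linear bound $G_\kappa(y) \lesssim y - \kappa$ from the same lemma. Concretely, the logarithmic bound $|R/p + \kappa - |\theta|/p| = |y - G_\kappa(y)|$ is controlled using $|y - \sqrt{y^2-1}| \leq 1$ and $\kappa \ln(y + \sqrt{y^2-1}) \lesssim \ln\jabr{y} \approx \ln\jabr{R/p}$. For the two-sided bound $|\theta| \lesssim R \lesssim p + |\theta|$, the lower bound $|\theta| \lesssim R$ follows from $|\theta|/p = G_\kappa(y) \lesssim y - \kappa = R/p$. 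For the upper bound, I will argue by cases: if $R/p$ is bounded by an absolute constant, then $R \lesssim p$ and we are done; otherwise $\ln\jabr{R/p} \leq (R/p)/(2C)$, so the already-proved log bound $R/p + \kappa \leq |\theta|/p + C\ln\jabr{R/p}$ can be absorbed to give $R/p \lesssim |\theta|/p + 1$, i.e.\ $R \lesssim |\theta| + p$.

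None of these steps is especially delicate. The mildly awkward point is the case split in the upper bound $R \lesssim p + |\theta|$, where one must absorb a logarithmic term into the leading linear one; this is the only place where more than direct substitution and Lemma \ref{lem:EstimatesG} is needed.
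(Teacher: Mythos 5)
Your proposal is correct and follows essentially the same route as the paper: part (i) by direct algebra on the closed-form expressions for $r_0,\kappa,p$ (with your regime split for $1/(1-\kappa)$ replacing the paper's identity $(1-\kappa)^{-1}=(1+\kappa)(1-\kappa^2)^{-1}$), and part (ii) via the identity $R/p+\kappa=H_\kappa(|\theta|/p)$, the expansion of $G_\kappa$, and Lemma \ref{lem:EstimatesG}. The only harmless local deviations are your cleaner two-sided bound $\max(1,R/p)\le R/p+\kappa\le 1+R/p$ for the $\jabr{R/p}$ equivalence (the paper splits on $\kappa\gtrless 1/2$) and your absorption of the logarithmic term to get $R\lesssim p+|\theta|$, where the paper instead invokes $y\lesssim 1+G_\kappa(y)$ from Lemma \ref{lem:EstimatesG}; both variants are valid.
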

\begin{proof}
	For part \eqref{it:lem:EstimatesRadFunct1} we have
	\begin{align*}
		r_0(a,\ell) &= \dfrac{2\ell}{m} \dfrac{1}{1+\sqrt{1+\dfrac{4a^2\ell}{m^2}}}\approx \dfrac{\ell}{\jabr{a^2\ell}^{1/2}}\approx \dfrac{\ell}{\jabr{a\sqrt{\ell}}},
		\\
		\kappa &= \left( 1+\dfrac{4a^2\ell}{m^2}\right)^{-1/2}\approx \jabr{a^2\ell}^{-1/2}\approx \jabr{a\sqrt{\ell}}^{-1},
		\\
		p(a,\ell) &= \dfrac{m}{2a^2}\left( 1+\dfrac{4a^2\ell}{m^2} \right)^{1/2} \approx\dfrac{1}{a^2}\jabr{a^2\ell}^{1/2}\approx \dfrac{1}{a^2}\jabr{a\sqrt{\ell}}.
	\end{align*}
	By definition we have $ \kappa \in (0,1) $. Furthermore, we have
	\begin{align*}
		\dfrac{1}{1-\kappa} = (1+\kappa)\dfrac{1}{1-\kappa^2} = (1+\kappa) \dfrac{m^2}{4a^2\ell} \left( 1+ \dfrac{4a^2\ell}{m^2}\right)  = (1+\kappa) \left( 1+ \dfrac{m^2}{4a^2\ell}\right) \approx 1+\dfrac{1}{a^2\ell}.
	\end{align*}
	In addition, we have for $ a\ell\leq 1/a $ respectively $ a\ell\geq 1/a $ 
	\begin{align*}
		a\ell \kappa^2 = \dfrac{a\ell}{1+\dfrac{4a^2\ell}{m^2}} \approx  a\ell, \quad a\ell \kappa^2 \approx \dfrac{1}{a},
	\end{align*}
	which yields the assertion.
	
	Concerning part \eqref{it:lem:EstimatesRadFunct2} we first observe that $ R(\theta,a)\geq r_0(a,\ell) = p(1-\kappa) $ by definition. In addition, we have for $ \kappa\geq1/2 $ and $ \kappa \leq 1/2 $, respectively,
	\begin{align*}
		\dfrac{R}{p}+\kappa \gtrsim\jabr{\dfrac{R}{p}} ,\quad \dfrac{R}{p}+\kappa  \geq \dfrac{1}{2}\dfrac{R}{p} + \dfrac{1-\kappa}{2} \gtrsim\jabr{\dfrac{R}{p}}. 
	\end{align*}
	Furthermore, we have for $ x=|\theta|/p\geq0 $, $ y= H_\kappa(x)\geq1 $
	\begin{align*}
		\left| \dfrac{R}{p}+\kappa-\dfrac{|\theta|}{p} \right| &= \left| H_\kappa(x) -x \right| = \left| y -G_\kappa(y) \right| = \left| \sqrt{y^2-1}-y-\kappa \ln \left( y+ \sqrt{y^2-1} \right)  \right| 
		\\
		&=\left| y\left( \sqrt{1-\dfrac{1}{y^2}}-1\right) -\kappa \ln \left( y \left(  1+ \sqrt{1-\dfrac{1}{y^2}} \right) \right)  \right| \lesssim \dfrac{1}{y} + \kappa \ln (1+y) 
		\\
		&\leq \left( \dfrac{R}{p}+\kappa\right)^{-1} + \kappa \ln\jabr{\dfrac{R}{p}} \lesssim \ln\jabr{\dfrac{R}{p}},
	\end{align*}
	where we used $ y= R/p+\kappa $. 
	
	In addition, due to Lemma \ref{lem:EstimatesG} it holds
	\begin{align*}
		\dfrac{|\theta|}{R} = \dfrac{|\theta|/p}{R/p} = \dfrac{x}{H_\kappa(x)-\kappa} = \dfrac{G_\kappa(y)}{y-\kappa} \lesssim 1.
	\end{align*}
	Furthermore, we have by Lemma \ref{lem:EstimatesG} 
	\begin{align*}
		y \lesssim 1+G_\kappa(y)  \implies H_\kappa(x) \lesssim 1+x
	\end{align*}	
	and hence
	\begin{align*}
		\dfrac{R}{p}+\kappa = H_\kappa\left( \dfrac{|\theta|}{p} \right) \lesssim 1+\dfrac{|\theta|}{p}
	\end{align*}
	which yields $ R \lesssim p+|\theta| $. This concludes the proof.
\end{proof}

We start with formulas for first order derivatives.
\begin{lem}\label{lem:DerivRadFunct}
	The following equalities hold:
	\begin{enumerate}[(i)]
		\item\label{it:lem:DerivRadFunct1} $ \partial_a p = -p(1+\kappa^2)/a $, $ \partial_a\kappa=-4a\ell\kappa^3/m^2 $, $ \partial_a(p\kappa)=-m/a^3 $.
		
		\item\label{it:lem:DerivRadFunct2} We have
		\begin{align*}
			\partial_\theta R &=\dfrac{1}{a}V =  \sgn\theta\,  \dfrac{p}{R}\sqrt{\left( \dfrac{R}{p}+\kappa \right)^2-1},
			\\
			\partial_a R &= \partial_a p \left[ \dfrac{R}{p}+\kappa -\dfrac{\theta}{p}\partial_\theta R \right] + p\partial_a\kappa \arcosh\left( \dfrac{R}{p}+\kappa \right) \, \sgn\theta \, \partial_\theta R  +\dfrac{m}{a^3}.
		\end{align*}		
	\end{enumerate}
\end{lem}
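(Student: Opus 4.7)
Both parts reduce to careful bookkeeping of the chain rule, exploiting the definitional identity $p\kappa = m/(2a^2)$ and the inverse function $H_\kappa = G_\kappa^{-1}$ with $G_\kappa(y) = \sqrt{y^2-1} - \kappa\arcosh y$.

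\emph{Part (i).} The identity $p\kappa = m/(2a^2)$ yields $\partial_a(p\kappa) = -m/a^3$ directly. From $\kappa^{-2} = 1 + 4a^2\ell/m^2$, differentiation in $a$ gives $-2\kappa^{-3}\partial_a\kappa = 8a\ell/m^2$, hence $\partial_a\kappa = -4a\ell\kappa^3/m^2$. Writing $p = (p\kappa)/\kappa$, the product rule together with the algebraic identity $1-\kappa^2 = 4a^2\ell\kappa^2/m^2$ (which follows from $\kappa^{-2} - 1 = 4a^2\ell/m^2$) then condenses the two contributions into $\partial_a p = -p(1+\kappa^2)/a$.

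\emph{Part (ii), angular derivative.} Starting from $R = pH_\kappa(|\theta|/p) - p\kappa$, only the first term depends on $\theta$, so
\begin{equation*}
  \partial_\theta R \;=\; \sgn(\theta)\, H_\kappa'(|\theta|/p).
\end{equation*}
I compute $G_\kappa'(y) = (y-\kappa)/\sqrt{y^2-1}$ directly from the definition of $G_\kappa$, so by the inverse function theorem $H_\kappa'(x) = \sqrt{H_\kappa(x)^2-1}/(H_\kappa(x)-\kappa)$. Inserting $H_\kappa(|\theta|/p) = R/p + \kappa$ (equivalently $H_\kappa - \kappa = R/p$) gives the stated formula, and comparison with the explicit expression for $V$ from Proposition \ref{pro:ActionAngleVariables} shows $\partial_\theta R = V/a$.

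\emph{Part (ii), action derivative.} Here I view $R$ as $R = p(a,\ell)\, H(\kappa(a,\ell),\,|\theta|/p(a,\ell)) - p(a,\ell)\kappa(a,\ell)$ and apply the chain rule in $a$, obtaining three groups of contributions: (a) the explicit $\partial_a p$ acting on $H$ and on the inner argument $|\theta|/p$; (b) the $\partial_a \kappa$ acting on the parameter of $H$; and (c) the $-\partial_a(p\kappa) = m/a^3$ term. Regrouping (a), using $H=R/p+\kappa$ and $H_\kappa'(|\theta|/p) = \sgn\theta\,\partial_\theta R$ together with $|\theta|\sgn\theta = \theta$, yields $\partial_a p\bigl[R/p + \kappa - (\theta/p)\partial_\theta R\bigr]$, which is the first bracket in the claim. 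For (b) I compute $\partial_\kappa H_\kappa$: differentiating the identity $G_\kappa(H_\kappa(x)) = x$ in $\kappa$ with $x$ fixed gives $\partial_\kappa H_\kappa(x) = \arcosh(H_\kappa(x))/G_\kappa'(H_\kappa(x))$. Substituting $H_\kappa(|\theta|/p) - \kappa = R/p$ and then recognizing $(p/R)\sqrt{(R/p+\kappa)^2-1} = \sgn\theta\,\partial_\theta R$ from the angular derivative above, one obtains precisely $p\,\partial_a\kappa\,\arcosh(R/p+\kappa)\,\sgn\theta\,\partial_\theta R$, matching the second term.

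The main nuisance (not obstacle) is simply disentangling the two distinct derivatives of $H_\kappa$ appearing in $\partial_a R$ — one with respect to its spatial argument, one with respect to its parameter $\kappa$ — and packaging both in a form consistent with $\partial_\theta R$. Everything else is algebra driven by the two structural identities $p\kappa = m/(2a^2)$ and $H_\kappa - \kappa = R/p$.
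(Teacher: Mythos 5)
Your proof is correct and follows essentially the same route as the paper: part (i) by direct differentiation of the explicit formulas, and $\partial_a R$ by differentiating $R = pH_\kappa(|\theta|/p)-p\kappa$ with the identities $\partial_\kappa H_\kappa = \arcosh(H_\kappa)H_\kappa'$, $H_\kappa'(|\theta|/p)=\sgn\theta\,\partial_\theta R$ and $H_\kappa(|\theta|/p)=R/p+\kappa$. The only (inessential) variation is in the first identity of (ii): you obtain $\partial_\theta R$ by the inverse function theorem and then compare with the formula for $V$, whereas the paper gets $\partial_\theta R = V/a$ directly from the trajectory parametrization $t\mapsto R(\theta+at,a)$ of the characteristic flow.
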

\begin{proof}
	The first statement can be readily checked. For the first equality in \eqref{it:lem:DerivRadFunct2} we use the fact that 
	\begin{align}
		\partial_\theta  R(\theta,a) = \dfrac{1}{a}\dfrac{d}{dt}\mid_{t=0} R(\theta+at,a) = \dfrac{1}{a}V(\theta,a) = \sgn\theta\,  \dfrac{p}{R}\sqrt{\left( \dfrac{R}{p}+\kappa \right)^2-1 }.
	\end{align}
	due to \eqref{eq:Sec2:SolActAng} and the definition of $ \kappa, p $. The formula for $ \partial_a R $ follows from differentiating
	\begin{align*}
		R(\theta,a) = pH_\kappa\left( \dfrac{|\theta|}{p} \right) -p\kappa
	\end{align*}
	and using (see Lemma \ref{lem:HypTraj} \eqref{it:lem:HypTraj5} for the definition of $ G_\kappa $)
	\begin{align*}
		\partial_\kappa H_\kappa(x) &= -\dfrac{\partial_\kappa G_\kappa(H_\kappa(x))}{G_\kappa'(H_\kappa(x))} = \arcosh(H_\kappa(x)) H_\kappa'(x), \quad x = |\theta|/p
		\\
		H_\kappa'\left( \dfrac{|\theta|}{p} \right) &= \sgn\theta\, \partial_\theta R(\theta,a), \quad H_\kappa\left( \dfrac{|\theta|}{p} \right) = \dfrac{R(\theta,a)}{p}+\kappa.
	\end{align*}
	This yields the assertion.
\end{proof}

\begin{lem}\label{lem:EstimatesDerivRadFunct}
	We have the following statements.
	\begin{enumerate}[(i)]
		\item\label{it:lem:EstimatesDerivRadFunct1} The following estimates hold:
		\begin{align*}
			|\partial_ap|\approx \dfrac{1}{a^3}\jabr{a\sqrt{\ell}}, \quad |\partial_a\kappa| \approx  \dfrac{1}{a}\jabr{a\sqrt{\ell}}^{-1} \min\{ 1,a^2\ell \}. 
		\end{align*}
	
		\item\label{it:lem:EstimatesDerivRadFunct2} It holds for all $ (\theta,a,\ell)\in \R\times \R_+\times\R_+ $
		\begin{align*}
			\left| \sgn \theta \, \partial_\theta R - 1 \right| &\lesssim \dfrac{p}{R},
			\\
			\left| \partial_\theta R \right| &\lesssim \jabr{\dfrac{p}{R}}^{1/2} \lesssim \jabr{\dfrac{1}{a\sqrt{\ell}}},
			\\
			\left| \partial_aR \right| &\lesssim \dfrac{p}{a} \ln\jabr{\dfrac{R}{p}} \approx \dfrac{1}{a^3}\jabr{a\sqrt{\ell}}\ln\jabr{\dfrac{R}{p}}.
		\end{align*}	
	\end{enumerate}
\end{lem}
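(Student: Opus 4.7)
For part (i), the formulas $\partial_a p = -p(1+\kappa^2)/a$ and $\partial_a\kappa = -4a\ell\kappa^3/m^2$ from Lemma \ref{lem:DerivRadFunct} \eqref{it:lem:DerivRadFunct1}, combined with $\kappa\in(0,1)$, give $|\partial_a p|\approx p/a$; the asymptotic for $p$ in Lemma \ref{lem:EstimatesRadFunct} \eqref{it:lem:EstimatesRadFunct1} then yields the first equivalence. For $|\partial_a \kappa|\approx a\ell\kappa^3$, I would factor $a\ell\kappa^3 = \kappa\cdot (a\ell\kappa^2)$ and invoke the two equivalences of Lemma \ref{lem:EstimatesRadFunct} \eqref{it:lem:EstimatesRadFunct1}: namely $\kappa\approx\jabr{a\sqrt{\ell}}^{-1}$ and $a\ell\kappa^2\approx \min\{a\ell, 1/a\} = a^{-1}\min\{a^2\ell,1\}$.

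For the first estimate in part (ii), I introduce the auxiliary variable $y:=R/p+\kappa$, which satisfies $y\geq 1$ (since $R\geq r_0 = p(1-\kappa)$) and $y-\kappa = R/p$. The formula of Lemma \ref{lem:DerivRadFunct} \eqref{it:lem:DerivRadFunct2} then reads $\sgn\theta\,\partial_\theta R = \sqrt{y^2-1}/(y-\kappa)$, so that
\begin{align*}
  \sgn\theta\,\partial_\theta R - 1 = \frac{\sqrt{y^2-1}-(y-\kappa)}{y-\kappa} = \frac{\kappa - (y-\sqrt{y^2-1})}{y-\kappa}.
\end{align*}
Since $y - \sqrt{y^2-1} = (y+\sqrt{y^2-1})^{-1}\in(0,1/y]$, the numerator is bounded by $\kappa + 1/y\leq 2$, which yields the claim. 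For the second estimate, I split into regimes: when $R/p\geq 1$, $y\approx R/p$ and $\sqrt{y^2-1}\leq y$ give $|\partial_\theta R|\lesssim 1$; when $R/p<1$, the factorization $\sqrt{y^2-1} = \sqrt{(y-1)(y+1)}\lesssim \sqrt{R/p}$ (using $y-1\leq R/p$ and $y\leq 2$) gives $|\partial_\theta R|\lesssim\sqrt{p/R}$. The final inequality follows from $p/R\leq 1/(1-\kappa)\approx\jabr{1/(a^2\ell)}$ (Lemma \ref{lem:EstimatesRadFunct} \eqref{it:lem:EstimatesRadFunct1}) together with $\jabr{1/(a^2\ell)}^{1/2} \lesssim \jabr{1/(a\sqrt{\ell})}$.

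The third estimate is the most delicate. Using Lemma \ref{lem:DerivRadFunct} \eqref{it:lem:DerivRadFunct2}, I split $\partial_a R$ into three pieces and treat each separately. For the $\partial_a p$ piece, set $x:=|\theta|/p=G_\kappa(y)$; the first estimate of (ii) gives $\sgn\theta\,\partial_\theta R = 1+O(p/R)$, so the bracket rewrites as $(y-x) - x\cdot O(p/R) = (y-x) + O(|\theta|/R)$. Both $|y-x|\lesssim \ln\jabr{R/p}$ and $|\theta|/R\lesssim 1$ come from Lemma \ref{lem:EstimatesRadFunct} \eqref{it:lem:EstimatesRadFunct2}, so the bracket is $\lesssim \ln\jabr{R/p}$, and together with $|\partial_a p|\approx p/a$ this contributes $(p/a)\ln\jabr{R/p}$. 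For the $\partial_a\kappa$ piece, I bound $\arcosh(y)\lesssim \ln\jabr{R/p}$ (using $y\approx \jabr{R/p}$ from Lemma \ref{lem:EstimatesRadFunct} \eqref{it:lem:EstimatesRadFunct2}) and $|\sgn\theta\,\partial_\theta R|\lesssim \jabr{1/(a\sqrt{\ell})}$, and invoke part (i) to get $p|\partial_a\kappa|\approx a^{-3}\min\{1,a^2\ell\}$; the required bound thus reduces to the elementary inequality $\min\{1,a^2\ell\}\,\jabr{1/(a\sqrt{\ell})}\lesssim \jabr{a\sqrt{\ell}}$, verified by casework on whether $a\sqrt{\ell}\geq 1$ or $<1$. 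Finally, $m/a^3\lesssim p/a$ since $p\gtrsim 1/a^2$. The main technical obstacle lies precisely in this $\partial_a\kappa$ piece: the factor $\jabr{p/R}^{1/2}$ inherited from the bound on $|\partial_\theta R|$ could in principle blow up near the perihelion $R\sim r_0$, but the smallness $\jabr{a\sqrt{\ell}}^{-1}\min\{1,a^2\ell\}$ in $p|\partial_a\kappa|$ compensates it exactly.
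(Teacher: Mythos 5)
Your proposal is correct and follows essentially the same route as the paper: part (i) via the formulas of Lemma \ref{lem:DerivRadFunct} \eqref{it:lem:DerivRadFunct1} and the equivalences of Lemma \ref{lem:EstimatesRadFunct} \eqref{it:lem:EstimatesRadFunct1}, and part (ii) via the decomposition of $\partial_aR$ from Lemma \ref{lem:DerivRadFunct} \eqref{it:lem:DerivRadFunct2} together with the bounds $|R/p+\kappa-|\theta|/p|\lesssim\ln\jabr{R/p}$, $|\theta|\lesssim R$ and the compensation $\min\{1,a^2\ell\}\jabr{1/(a\sqrt{\ell})}\lesssim\jabr{a\sqrt{\ell}}$. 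The only differences are cosmetic: for the first two bounds in (ii) you manipulate $\sqrt{y^2-1}/(y-\kappa)$ directly with a case split in $R/p$, while the paper expands under the square root (using $|\sqrt{1+x}-1|\leq|x|$) and bounds $|V|/a$ via $\ell/R\leq\ell/r_0$ — both are equivalent elementary computations.
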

\begin{proof}
	To prove \eqref{it:lem:EstimatesDerivRadFunct1} we use Lemma \ref{lem:EstimatesRadFunct} \eqref{it:lem:EstimatesRadFunct1} and Lemma \ref{lem:DerivRadFunct} \eqref{it:lem:DerivRadFunct1} to get
	\begin{align*}
		|\partial_ap| &\approx \dfrac{p}{a} \approx \dfrac{1}{a^3}\jabr{a\sqrt{\ell}},
		\\
		|\partial_a\kappa| &\approx a\ell\kappa^3 \approx \kappa \min\{ a\ell,\dfrac{1}{a}\}\approx \dfrac{1}{a}\jabr{a\sqrt{\ell}}^{-1} \min\{ 1, a^2\ell \}. 
	\end{align*}
	
	For \eqref{it:lem:EstimatesDerivRadFunct2} we first observe with Lemma \ref{lem:DerivRadFunct} \eqref{it:lem:DerivRadFunct2}
	\begin{align*}
		\left| \sgn \theta \, \partial_\theta R - 1 \right|  &= \left| \dfrac{p}{R}\sqrt{\left(\dfrac{R}{p}+\kappa \right)^2-1} - 1 \right| = \left| \sqrt{1+\dfrac{2\kappa p}{R}-\dfrac{(1-\kappa^2)p^2}{R^2}} - 1 \right|
		\\
		&\leq \left| \dfrac{2\kappa p}{R}-\dfrac{(1-\kappa^2)p^2}{R^2} \right| \leq  \dfrac{p}{R}\left| 2\kappa+(1+\kappa) \right| \leq \dfrac{4p}{R} ,
	\end{align*}
	where we used $ |\sqrt{1+x}-1|\leq |x| $ for $ x\geq-1 $. Furthermore, we have with Lemma \ref{lem:DerivRadFunct} \eqref{it:lem:DerivRadFunct2}
	\begin{align*}
		\left| \partial_\theta R \right| &= \dfrac{1}{a} \left| V(\theta,a) \right| = \sqrt{1 + \dfrac{m}{a^2R}-\dfrac{\ell}{a^2R^2}} \leq \sqrt{1 + \dfrac{1}{a^2R}\left| m-\dfrac{\ell}{R} \right|} \lesssim \sqrt{1 + \dfrac{\jabr{a\sqrt{\ell}}}{a^2R}} \lesssim \jabr{\dfrac{p}{R}}^{1/2}.
	\end{align*}
	In the above estimates we used the fact that $ \ell / R \leq \ell/r_0 \lesssim \jabr{a\sqrt{\ell}} $, $ p\approx\jabr{a\sqrt{\ell}}/a^2 $ according to Lemma \ref{lem:EstimatesRadFunct} \eqref{it:lem:EstimatesRadFunct1}. In addition, we have with $ R\geq r_0=p(1-\kappa) $ and \ref{lem:EstimatesRadFunct} \eqref{it:lem:EstimatesRadFunct1}
	\begin{align*}
		\left| \partial_\theta R\right| \lesssim \jabr{\dfrac{p}{R}}^{1/2} \lesssim \jabr{\dfrac{1}{\sqrt{1-\kappa}}}\approx \jabr{\dfrac{1}{a\sqrt{\ell}}}.
	\end{align*}
	
	The last assertion follows using Lemma \ref{lem:DerivRadFunct} and the previous estimates. Indeed, we have
	\begin{align*}
		|\partial_a R| & \lesssim |\partial_a p| \left| \dfrac{R}{p}+\kappa -\dfrac{\theta}{p}\partial_\theta R \right| + p|\partial_a\kappa| \ln\jabr{\dfrac{R}{p}} \left(  \right| \sgn\theta \, \partial_\theta R- 1\left| +1  \right) + \dfrac{1}{a^3}.
	\end{align*}
	We observe that with Lemma \ref{lem:EstimatesRadFunct} \eqref{it:lem:EstimatesRadFunct2}
	\begin{align}\label{eq:Sec2:ProofEstDerivRadFunct}
		\left| \dfrac{R}{p}+\kappa -\dfrac{\theta}{p}\partial_\theta R \right|  &\leq 	\left| \dfrac{R}{p}+\kappa -\dfrac{|\theta|}{p} \right| + \dfrac{|\theta|}{p}	\left| 1 -\sgn \theta \partial_\theta R \right| \lesssim \ln\jabr{\dfrac{R}{p}} + \dfrac{|\theta|}{p}\dfrac{p}{R} \lesssim \ln\jabr{\dfrac{R}{p}}.
	\end{align}
	Furthermore, we have 
	\begin{align*}
		\left| \sgn\theta \, \partial_\theta R- 1\right| \lesssim \jabr{\dfrac{1}{a\sqrt{\ell}}}.
	\end{align*}
	Hence, we obtain with Lemma \ref{lem:DerivRadFunct} \eqref{it:lem:DerivRadFunct1} and the previous estimate in \eqref{it:lem:EstimatesDerivRadFunct1}
	\begin{align*}
		|\partial_a R| &\lesssim \left[ |\partial_ap| + p|\partial_a\kappa| \jabr{\dfrac{1}{a\sqrt{\ell}}} \right] \ln\jabr{\dfrac{R}{p}} + \dfrac{1}{a^3} 
		\\
		&\lesssim \left[\dfrac{1}{a^3}\jabr{a\sqrt{\ell}} + \dfrac{1}{a^3}\min\{ 1, a^2\ell \}  \jabr{\dfrac{1}{a\sqrt{\ell}}} \right] \ln\jabr{\dfrac{R}{p}} + \dfrac{1}{a^3} 
		\\
		&\lesssim \dfrac{1}{a^3}\jabr{a\sqrt{\ell}} \ln\jabr{\dfrac{R}{p}} \approx \dfrac{p}{a}\ln\jabr{\dfrac{R}{p}}.
	\end{align*}
	In the last step we used $ p\approx \jabr{a\sqrt{\ell}}/a^2 $ according to \ref{lem:EstimatesRadFunct} \eqref{it:lem:EstimatesRadFunct1}. This concludes the proof.
\end{proof}
In the following lemmas we treat second order derivatives.
 \begin{lem}\label{lem:SecondDerivRadFunct}
 	The following equalities hold.
 	\begin{enumerate}[(i)]
 		\item\label{it:lem:SecondDerivRadFunct1} We have
 		\begin{align*}
 			\partial_a^2 p &= \dfrac{p(1+\kappa^2)}{a^2} + \dfrac{p(1+\kappa^2)^2}{a^2} + \dfrac{8p\ell\kappa^4}{m^2},
 			\\
 			\partial_a^2 \kappa &= -\dfrac{4\ell\kappa^3}{m^2} + \dfrac{48a^2\kappa^5\ell^2}{m^4}.
 		\end{align*}
 		\item\label{it:lem:SecondDerivRadFunct2} We have
 		\begin{align*}
 			\partial_\theta^2 R &= \dfrac{1}{a^2R^2}\left( \dfrac{\ell}{R}-\dfrac{m}{2} \right),
 			\\
 			\partial^2_{\theta a} R &= \dfrac{p^2}{R^2}\left[ (1-\kappa^2)\dfrac{p}{R}-\kappa \right] \left[ -\theta\dfrac{\partial_ap}{p^2} + \partial_a\kappa \arcosh\left( \dfrac{R}{p}+\kappa \right) \sgn\theta \right] +\dfrac{p}{R}\partial_a\kappa \partial_\theta R,
 			\\
 			\partial_a^2 R &= \partial^2_ap \left[  \dfrac{R}{p}+\kappa -\dfrac{\theta}{p}\partial_\theta R \right] + \left[ \dfrac{p^2}{R}(\partial_a\kappa)^2 + 2\partial_ap\partial_a\kappa +p\partial^2_a\kappa \right]  \arcosh\left( \dfrac{R}{p}+\kappa \right)\sgn\theta \partial_\theta R
 			\\
 			&\quad +\partial_{\theta a}^2R \left[ -\dfrac{\partial_ap}{p}\theta +p\partial_a\kappa \arcosh\left( \dfrac{R}{p}+\kappa \right)\sgn\theta  \right] - \theta\dfrac{\partial_ap\partial_a\kappa}{R} \partial_{\theta} R - \dfrac{3m}{a^4}.
 		\end{align*}
 	\end{enumerate}
 \end{lem}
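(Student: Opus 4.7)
The plan is to derive every identity in Lemma~\ref{lem:SecondDerivRadFunct} by direct differentiation of the first-order formulas established in Lemma~\ref{lem:DerivRadFunct}, relying throughout on the two algebraic identities
\begin{align*}
    p\kappa = \dfrac{m}{2a^2},\qquad p^2(1-\kappa^2) = \dfrac{\ell}{a^2},
\end{align*}
the second of which follows from $1/\kappa^2-1 = 4a^2\ell/m^2$ by the definition of $\kappa$. Part~\eqref{it:lem:SecondDerivRadFunct1} is then purely mechanical: differentiating $\partial_a p = -p(1+\kappa^2)/a$ once more in $a$, using the product rule and substituting the first-order expressions for $\partial_a p$ and $\partial_a\kappa$ back in, immediately gives the claimed formula for $\partial_a^2 p$. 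The same procedure applied to $\partial_a\kappa = -4a\ell\kappa^3/m^2$ gives the formula for $\partial_a^2\kappa$.

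For the second derivatives of $R$ in part~\eqref{it:lem:SecondDerivRadFunct2}, I would proceed in the order $\partial_\theta^2 R$, then $\partial^2_{\theta a} R$, then $\partial_a^2 R$, each step building on the previous ones. The cleanest derivation of $\partial_\theta^2 R$ exploits the fact that $t\mapsto (R(\theta+at,a,\ell),V(\theta+at,a,\ell))$ solves the characteristic system \eqref{eq:Sec2:CharSys}: combined with $\partial_\theta R = V/a$ from Lemma~\ref{lem:DerivRadFunct}\eqref{it:lem:DerivRadFunct2}, the relation $a\partial_\theta V = \dot V = \ell/R^3 - m/(2R^2)$ along the flow yields $\partial_\theta^2 R = (\ell/R-m/2)/(a^2 R^2)$ immediately.

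For $\partial^2_{\theta a} R$, I would differentiate the formula for $\partial_a R$ in Lemma~\ref{lem:DerivRadFunct}\eqref{it:lem:DerivRadFunct2} with respect to $\theta$. The bracket $R/p+\kappa-(\theta/p)\partial_\theta R$ telescopes under $\partial_\theta$ to $-(\theta/p)\partial_\theta^2 R$, while for the $\arcosh$-term the key observation is that by the explicit expression for $\partial_\theta R$,
\begin{align*}
    \sqrt{(R/p+\kappa)^2-1} = \sgn\theta\,\dfrac{R}{p}\,\partial_\theta R,
\end{align*}
so that $\partial_\theta \arcosh(R/p+\kappa) = \sgn\theta/R$. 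Collecting terms and rewriting the coefficient of $\partial_\theta^2 R$ via the identity $p\partial_\theta^2 R = (p^2/R^2)[(1-\kappa^2)p/R-\kappa]$ (which itself follows from the formula for $\partial_\theta^2 R$ together with the two algebraic identities above) recovers the stated expression.

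Finally, for $\partial_a^2 R$, the plan is to apply the product rule to $\partial_a R$ while carefully treating the two $a$-dependent pieces $A := R/p+\kappa-(\theta/p)\partial_\theta R$ and $B := \arcosh(R/p+\kappa)\sgn\theta\,\partial_\theta R$. Differentiation of $A$ in $a$ produces a single $\partial^2_{\theta a} R$-contribution together with lower-order pieces, while differentiation of $B$ uses the chain rule $\partial_a \arcosh(R/p+\kappa) = (\partial_a R/p - R\partial_a p/p^2+\partial_a\kappa)/\sqrt{(R/p+\kappa)^2-1}$ together with the same identity relating the square root to $\partial_\theta R$; this cleanly produces the $(p^2/R)(\partial_a\kappa)^2$ coefficient of $\partial_\theta R$ as well as the cross-term $-\theta\partial_a p\partial_a\kappa\partial_\theta R/R$, while the term $-3m/a^4$ arises from differentiating $m/a^3$. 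The principal obstacle here is purely bookkeeping: organizing the many product-rule contributions so that $\partial^2_{\theta a} R$ (for which an explicit formula has already been derived one step earlier) appears as a single, unexpanded factor, and so that the algebraic simplifications using $p\kappa = m/(2a^2)$ and $p^2(1-\kappa^2) = \ell/a^2$ can be applied at the right places.
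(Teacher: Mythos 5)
Your proposal is correct and follows essentially the same route as the paper: direct differentiation of the first-order formulas of Lemma \ref{lem:DerivRadFunct}, with $\partial_\theta^2 R$ obtained from the characteristic ODE and the identities $p\kappa = m/(2a^2)$, $p^2(1-\kappa^2)=\ell/a^2$ used for the simplifications. The only (harmless) deviation is that you compute the mixed derivative by differentiating $\partial_a R$ in $\theta$ rather than $\partial_\theta R$ in $a$ as the paper does; since $R$ is smooth the two computations agree, and your intermediate identities (e.g.\ $\partial_\theta\arcosh(R/p+\kappa)=\sgn\theta/R$ and $p\,\partial_\theta^2 R = (p^2/R^2)\left[(1-\kappa^2)p/R-\kappa\right]$) are all correct.
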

\begin{proof}
	For assertion \eqref{it:lem:SecondDerivRadFunct1} we use Lemma \ref{lem:DerivRadFunct} yielding
	\begin{align*}
		\partial_a^2p &= \partial_a\left[ -\dfrac{p(1+\kappa^2)}{a} \right] = \dfrac{p(1+\kappa^2)}{a^2} + \dfrac{p(1+\kappa^2)^2}{a^2} + \dfrac{8p\ell\kappa^4}{m^2},
		\\
		\partial_a^2\kappa &= \partial_a\left[ -\dfrac{4a\ell\kappa^3}{m^2} \right] = -\dfrac{4\ell\kappa^3}{m^2} + \dfrac{48a^2\kappa^5\ell^2}{m^4}.
	\end{align*}
	Concerning \eqref{it:lem:SecondDerivRadFunct2} we have with \eqref{eq:Sec2:SolActAng} Proposition \ref{pro:ActionAngleVariables} and \eqref{eq:Sec2:CharSys}
	\begin{align*}
		\partial_\theta^2R = \dfrac{1}{a^2}\dfrac{d^2}{dt^2}\mid_{t=0} R(\theta +at,a) = \dfrac{1}{a^2}\left[ \dfrac{\ell}{R^3}-\dfrac{m}{2}\dfrac{1}{R^2} \right] = \dfrac{1}{a^2R^2}\left( \dfrac{\ell}{R}-\dfrac{m}{2} \right).
	\end{align*}
	Furthermore, we have with Lemma \ref{lem:DerivRadFunct} \eqref{it:lem:DerivRadFunct2}
	\begin{align*}
		\partial^2_{\theta a} R &= \partial_a \left[ \sgn\theta\,  \dfrac{p}{R}\sqrt{\left( \dfrac{R}{p}+\kappa \right)^2-1 }\right]  
		\\
		&= \partial_a\left( \dfrac{p}{R}\right) \dfrac{R}{p}\partial_\theta R  + \sgn\theta\, \dfrac{p}{R}\left[ \left( \dfrac{R}{p}+\kappa \right)^2-1  \right]^{-1/2} \left( \dfrac{R}{p}+\kappa \right) \partial_a\left( \dfrac{R}{p}+\kappa \right) 
		\\
		&= -\partial_a\left( \dfrac{R}{p}\right) \dfrac{p}{R}\partial_\theta R+ \dfrac{1}{\partial_\theta R} \dfrac{p^2}{R^2}\left( \dfrac{R}{p}+\kappa \right) \partial_a\left( \dfrac{R}{p}+\kappa \right)
		\\
		&= -\partial_a\left( \dfrac{R}{p}+\kappa\right) \dfrac{p}{R}\partial_\theta R + \dfrac{p}{R}\partial_a\kappa \partial_\theta R + \dfrac{1}{\partial_\theta R} \dfrac{p^2}{R^2}\left( \dfrac{R}{p}+\kappa \right) \partial_a\left( \dfrac{R}{p}+\kappa \right)
		\\
		&= \dfrac{p}{R} \partial_a\left( \dfrac{R}{p}+\kappa\right) \dfrac{1}{\partial_\theta R} \left[ -|\partial_{\theta}R|+1 + \dfrac{\kappa p}{R} \right] +\dfrac{p}{R}\partial_a\kappa \partial_\theta R.
	\end{align*}
	Note that by Lemma \ref{lem:DerivRadFunct} \eqref{it:lem:DerivRadFunct2}
	\begin{align*}
		-|\partial_{\theta}R|+1 + \dfrac{\kappa p}{R} = (1-\kappa^2)\dfrac{p^2}{R^2}-\dfrac{\kappa p}{R} = \dfrac{p}{R}\left[ (1-\kappa^2)\dfrac{p}{R}-\kappa \right] .
	\end{align*}
	Furthermore, we have using Lemma \ref{lem:DerivRadFunct} \eqref{it:lem:DerivRadFunct2}
	\begin{align}\label{eq:Sec2:ProofSecDeriv}
		\partial_a\left( \dfrac{R}{p}+\kappa \right) = \dfrac{\partial_aR}{p} - \dfrac{R\partial_ap}{p^2} + \partial_{a}\kappa = -\theta\dfrac{\partial_ap}{p^2}\partial_\theta R + \partial_a\kappa \arcosh\left( \dfrac{R}{p}+\kappa \right) \sgn\theta\, \partial_\theta R.
	\end{align}
	We hence obtain
	\begin{align*}
		\partial^2_{\theta a} R = \dfrac{p^2}{R^2}\left[ (1-\kappa^2)\dfrac{p}{R}-\kappa \right] \left[ -\theta\dfrac{\partial_ap}{p^2} + \partial_a\kappa \arcosh\left( \dfrac{R}{p}+\kappa \right) \sgn\theta \right] +\dfrac{p}{R}\partial_a\kappa \partial_\theta R.
	\end{align*}
	Finally, we have with Lemma \ref{lem:DerivRadFunct} \eqref{it:lem:DerivRadFunct2}
	\begin{align*}
		\partial^2_a R &= \partial_a\left[ \partial_a p \left[ \dfrac{R}{p}+\kappa -\dfrac{\theta}{p}\partial_\theta R \right] + p\partial_a\kappa \arcosh\left( \dfrac{R}{p}+\kappa \right) \, \sgn\theta \, \partial_\theta R  +\dfrac{m}{a^3} \right]
		\\
		&= \partial_a^2 p \left[ \dfrac{R}{p}+\kappa -\dfrac{\theta}{p}\partial_\theta R \right] + \partial_ap \left[ \partial_a\left( \dfrac{R}{p}+\kappa \right) +\theta\dfrac{\partial_ap}{p^2}\partial_\theta R \right] - \dfrac{\partial_ap}{p} \theta \partial^2_{\theta a} R
		\\
		&\quad +\left( \partial_ap\partial_a\kappa+p\partial_a^2\kappa \right) \arcosh\left( \dfrac{R}{p}+\kappa \right) \, \sgn\theta \, \partial_\theta R
		\\
		&\quad +p\partial_a \kappa\left[  \left( \left( \dfrac{R}{p}+\kappa \right)^2-1 \right)^{-1/2}  \, \sgn\theta \, \partial_\theta R \partial_a\left( \dfrac{R}{p}+\kappa \right) + \arcosh\left( \dfrac{R}{p}+\kappa \right) \, \sgn\theta \partial^2_{\theta a}R \right]  - \dfrac{3m}{a^4}
	\end{align*}
	Using \eqref{eq:Sec2:ProofSecDeriv} we get
	\begin{align*}
		\partial^2_a R &= \partial_a^2 p \left[ \dfrac{R}{p}+\kappa -\dfrac{\theta}{p}\partial_\theta R \right] + \partial_ap \partial_a\kappa \arcosh\left( \dfrac{R}{p}+\kappa \right) \, \sgn\theta\, \partial_\theta R - \dfrac{\partial_ap}{p} \theta \partial^2_{\theta a} R
		\\
		&\quad +\left[\partial_ap\partial_a\kappa+p\partial_a^2\kappa \right] \arcosh\left( \dfrac{R}{p}+\kappa \right) \, \sgn\theta \, \partial_\theta R - \dfrac{3m}{a^4}
		\\
		&\quad +p\partial_a \kappa\left[  \dfrac{p}{R} \left(-\theta\dfrac{\partial_ap}{p^2}\partial_\theta R + \partial_a\kappa \arcosh\left( \dfrac{R}{p}+\kappa \right) \sgn\theta\, \partial_\theta R \right) + \arcosh\left( \dfrac{R}{p}+\kappa \right) \, \sgn\theta \partial^2_{\theta a}R \right].
	\end{align*}
	Rearranging terms concludes the proof.
\end{proof}

\begin{lem}\label{lem:EstimatesSecondDerivRadFunct}
	The following estimates hold:
	\begin{enumerate}[(i)]
		\item\label{it:lem:EstimatesSecondDerivRadFunct1} We have
		\begin{align*}
			|\partial^2_ap| \approx \dfrac{1}{a^4}\jabr{a\sqrt{\ell}}, \quad |\partial^2_a\kappa| \lesssim \dfrac{1}{a^2}\jabr{a\sqrt{\ell}}^{-1} \min\{ 1, a^2\ell \}. 
		\end{align*}
	
		\item\label{it:lem:EstimatesSecondDerivRadFunct2} It holds for all $ (\theta,a,\ell)\in \R\times \R_+\times\R_+ $
		\begin{align*}
			|\partial_{\theta}^2R| &\lesssim \dfrac{p}{R^2}\lesssim \dfrac{1}{a^2\ell^2}\jabr{a\sqrt{\ell}}^3,
			\\
			|\partial_{\theta a}^2R| &\lesssim \dfrac{1}{a}\dfrac{p}{R} \lesssim \dfrac{1}{a} \jabr{\dfrac{1}{a\sqrt{\ell}}}^2,
			\\
			|\partial_{a}^2R| &\lesssim \dfrac{p}{a^2} \ln \jabr{\dfrac{R}{p}}\approx  \dfrac{1}{a^4} \jabr{a\sqrt{\ell}}\ln \jabr{\dfrac{R}{p}}.
		\end{align*}
	\end{enumerate}
\end{lem}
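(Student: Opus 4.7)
The plan is to deduce both parts as direct consequences of the explicit formulas in Lemma \ref{lem:SecondDerivRadFunct}, combined with the size estimates for $p,\kappa,R,\partial_\theta R,\partial_a R$ already established in Lemmas \ref{lem:EstimatesRadFunct}, \ref{lem:EstimatesDerivRadFunct}, together with the logarithmic remainder $|R/p+\kappa-|\theta|/p|\lesssim\ln\langle R/p\rangle$.

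For part \eqref{it:lem:EstimatesSecondDerivRadFunct1} I would observe that every summand in the formula for $\partial_a^2 p$ given in Lemma \ref{lem:SecondDerivRadFunct} \eqref{it:lem:SecondDerivRadFunct1} is positive, so $|\partial_a^2 p|\approx p/a^2+p\ell\kappa^4/m^2$. Substituting $p\approx a^{-2}\langle a\sqrt{\ell}\rangle$ and $\kappa\approx\langle a\sqrt{\ell}\rangle^{-1}$, the first term gives exactly $a^{-4}\langle a\sqrt{\ell}\rangle$ while the second is controlled by it thanks to the elementary bound $a^2\ell\lesssim\langle a\sqrt{\ell}\rangle^2$. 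For $\partial_a^2\kappa$, the same substitutions yield $\ell\kappa^3\lesssim a^{-2}\langle a\sqrt{\ell}\rangle^{-1}\min\{1,a^2\ell\}$ by splitting into the cases $a^2\ell\lessgtr 1$, and a similar check handles the quadratic correction term.

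For part \eqref{it:lem:EstimatesSecondDerivRadFunct2}, the bound on $\partial_\theta^2 R$ is immediate from the closed form in Lemma \ref{lem:SecondDerivRadFunct} \eqref{it:lem:SecondDerivRadFunct2} together with $\ell/R\leq\ell/r_0\lesssim\langle a\sqrt{\ell}\rangle$, which yields $|\partial_\theta^2R|\lesssim p/R^2$; using $R\geq r_0\approx\ell/\langle a\sqrt{\ell}\rangle$ then gives the stated equivalence. For $\partial^2_{\theta a}R$ one inserts the estimates $|\partial_a p|\approx p/a$, $|\partial_a\kappa|\lesssim 1/a$, $|\theta|\lesssim R$, $|\arcosh(R/p+\kappa)|\lesssim\ln\langle R/p\rangle$, and $|\partial_\theta R|\lesssim\langle p/R\rangle^{1/2}$ into the formula of Lemma \ref{lem:SecondDerivRadFunct} \eqref{it:lem:SecondDerivRadFunct2}. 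Each of the resulting terms is bounded by $\frac{1}{a}\frac{p}{R}$ after absorbing logarithms into the polynomial factor $p/R\lesssim 1/(1-\kappa)\approx\langle 1/(a\sqrt{\ell})\rangle^2$; the final bound then follows from Lemma \ref{lem:EstimatesRadFunct} \eqref{it:lem:EstimatesRadFunct1}.

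The main obstacle is the estimate for $\partial_a^2 R$, where seven distinct contributions appear. The strategy is to collect them into two groups: (a) bracket factors of the form $R/p+\kappa-\theta\partial_\theta R/p$, which by the computation in \eqref{eq:Sec2:ProofEstDerivRadFunct} are bounded by $\ln\langle R/p\rangle$, and (b) factors involving $\arcosh(R/p+\kappa)\partial_\theta R$, which by Lemma \ref{lem:EstimatesDerivRadFunct} \eqref{it:lem:EstimatesDerivRadFunct2} are controlled by $\ln\langle R/p\rangle\cdot\langle p/R\rangle^{1/2}$. Together with part \eqref{it:lem:EstimatesSecondDerivRadFunct1} and the bound on $\partial^2_{\theta a}R$ just obtained, the dominant contribution is $|\partial_a^2 p|\cdot\ln\langle R/p\rangle\approx(p/a^2)\ln\langle R/p\rangle$. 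All other terms—in particular $p(\partial_a\kappa)^2\cdot\arcosh(\cdot)\partial_\theta R$ and the cross term involving $\partial^2_{\theta a}R$—are absorbed using $|\partial_a\kappa|\cdot a^{-1}\langle a\sqrt{\ell}\rangle^{-1}\lesssim a^{-2}\langle a\sqrt{\ell}\rangle^{-2}$ and the trivial term $3m/a^4\lesssim p/a^2$, yielding the claimed bound $p/a^2\cdot\ln\langle R/p\rangle$.
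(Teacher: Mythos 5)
Part \eqref{it:lem:EstimatesSecondDerivRadFunct1} and the bound on $\partial_\theta^2R$ in your sketch are fine and follow the paper's route (explicit formulas from Lemma \ref{lem:SecondDerivRadFunct} plus the size estimates of Lemma \ref{lem:EstimatesRadFunct}). The genuine gap is in the mixed and pure $a$-derivatives: the inputs you list for $\partial^2_{\theta a}R$ and $\partial^2_aR$, namely $|\partial_a\kappa|\lesssim 1/a$ (resp.\ $|\partial_a\kappa|\lesssim a^{-1}\jabr{a\sqrt{\ell}}^{-1}$), are too lossy, and the claimed absorption ``each term is bounded by $\frac{1}{a}\frac{p}{R}$ after absorbing logarithms into $p/R$'' fails. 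Concretely, the term $\frac{p^2}{R^2}\bigl[(1-\kappa^2)\frac{p}{R}-\kappa\bigr]\partial_a\kappa\,\arcosh\bigl(\frac{R}{p}+\kappa\bigr)$ is, with your input, only controlled by $\frac{1}{a}\frac{p^2}{R^2}\ln\jabr{\frac{R}{p}}$; near the turning point $R\approx r_0=p(1-\kappa)$ with $a^2\ell\ll1$ the logarithm is of order one and $\frac{p}{R}\approx (1-\kappa)^{-1}\approx (a^2\ell)^{-1}$, so this exceeds the target $\frac{1}{a}\frac{p}{R}$ (and even the final bound $\frac1a\jabr{\frac{1}{a\sqrt{\ell}}}^2$) by a factor $(a^2\ell)^{-1}$. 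The extra power of $p/R$ is not a logarithm and cannot be ``absorbed''. The same loss, now of order $(a^2\ell)^{-3/2}$, occurs in the $\frac{p^2}{R}(\partial_a\kappa)^2\arcosh(\cdot)\,\partial_\theta R$ term of $\partial_a^2R$ if one only uses $|\partial_a\kappa|\lesssim a^{-1}\jabr{a\sqrt{\ell}}^{-1}$.

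What is missing is precisely the mechanism the paper exploits: the full estimate $|\partial_a\kappa|\approx \frac{1}{a}\jabr{a\sqrt{\ell}}^{-1}\min\{1,a^2\ell\}$ from Lemma \ref{lem:EstimatesDerivRadFunct} \eqref{it:lem:EstimatesDerivRadFunct1}, whose factor $\min\{1,a^2\ell\}$ compensates exactly the large factor $\jabr{\frac{p}{R}}\lesssim \jabr{\frac{1}{a^2\ell}}$ coming from $R\geq p(1-\kappa)$, via $\min\{1,a^2\ell\}\,\jabr{\frac{p}{R}}\lesssim 1$ (equivalently, $|\partial_a\kappa|\jabr{\frac{p}{R}}\lesssim \frac1a$). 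With this refinement every $\partial_a\kappa$-term in Lemma \ref{lem:SecondDerivRadFunct} \eqref{it:lem:SecondDerivRadFunct2} is indeed $\lesssim \frac1a\frac{p}{R}$ (resp.\ $\lesssim\frac{p}{a^2}\ln\jabr{\frac{R}{p}}$), which is how the paper closes the argument; without it, your sketch only yields the weaker bounds $\frac1a\frac{p}{R}\jabr{\frac{p}{R}}$ and $\frac{p}{a^2}\jabr{\frac{1}{a^2\ell}}^{3/2}\ln\jabr{\frac{R}{p}}$, which are insufficient for the statement of the lemma (and for its later use, e.g.\ in Lemmas \ref{lem:EstDynRadNoBulk} and \ref{lem:EstimatesDerivGravFieldAA}).
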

\begin{proof}
	For \eqref{it:lem:EstimatesSecondDerivRadFunct1} we use Lemma \ref{lem:EstimatesRadFunct} \eqref{it:lem:EstimatesRadFunct1} Lemma \ref{lem:SecondDerivRadFunct} \eqref{it:lem:SecondDerivRadFunct1} to get
	\begin{align*}
		|\partial^2_ap| &\approx \dfrac{p}{a^2} + p\ell\kappa^4 \approx \dfrac{1}{a^4} \jabr{a\sqrt{\ell}} + \dfrac{\ell}{a^2} \jabr{a\sqrt{\ell}}^{-3} = \dfrac{1}{a^4} \jabr{a\sqrt{\ell}} \left(1+a^2\ell\jabr{a\sqrt{\ell}}^{-2} \right) \approx \dfrac{1}{a^4} \jabr{a\sqrt{\ell}},
		\\
		|\partial^2_a\kappa| &= \dfrac{4\ell\kappa^5}{m^2} \left| \kappa^{-2} - \dfrac{12a^2\ell}{m^2} \right| = \dfrac{4\ell\kappa^5}{m^2} \left| 1-\dfrac{8a^2\ell}{m^2} \right|.
	\end{align*}
	To estimate the last line we distinguish the cases $ a^2\ell\leq 1 $ and $ a^2\ell\geq1 $ to get, respectively,
	\begin{align*}
		|\partial^2_a\kappa| &\lesssim \ell \kappa^5 = \dfrac{a^2\ell \kappa^5}{a^2} \leq \dfrac{\kappa}{a^2}a^2\ell \approx \dfrac{1}{a^2}\jabr{a\sqrt{\ell}}^{-1}a^2\ell,
		\\
		|\partial^2_a\kappa| &\lesssim \ell \kappa^5 a^2\ell = \dfrac{(a^2\ell)^2}{a^2} \jabr{a\sqrt{\ell}}^{-5} \leq \dfrac{1}{a^2}\jabr{a\sqrt{\ell}}^{-1}.
 	\end{align*}
 	This yields the assertion.
 	
 	For \eqref{it:lem:EstimatesSecondDerivRadFunct2} we use Lemma \ref{lem:SecondDerivRadFunct} \eqref{it:lem:SecondDerivRadFunct2}. We obtain with $ r_0<2\ell/m $, see Lemma \ref{lem:HypTraj} \eqref{it:lem:HypTraj1},
 	\begin{align*}
 		|\partial_{\theta}^2 R| = \dfrac{\ell}{a^2R^2} \left| \dfrac{1}{R} - \dfrac{m}{2\ell} \right| \lesssim \dfrac{\ell}{a^2R^2} \dfrac{1}{r_0} \lesssim \dfrac{1}{a^2R^2} \jabr{a\sqrt{\ell}} \approx \dfrac{p}{R^2} \lesssim \dfrac{1}{a^2\ell^2} \jabr{a\sqrt{\ell}}^3.
 	\end{align*}
 	For the last three inequalities we used Lemma \ref{lem:EstimatesRadFunct} \eqref{it:lem:EstimatesRadFunct1}.
 	
 	Furthermore, we have from Lemma \ref{lem:SecondDerivRadFunct} \eqref{it:lem:SecondDerivRadFunct2}, using also Lemma \ref{lem:EstimatesDerivRadFunct} \eqref{it:lem:EstimatesDerivRadFunct1} and \eqref{it:lem:EstimatesDerivRadFunct2}, in particular $ |\partial_ap|/p\approx 1/a $,
 	\begin{align*}
 		|\partial^2_{\theta a} R| &\lesssim \dfrac{p^2}{R^2}\left[ \dfrac{1-\kappa^2}{1-\kappa}+\kappa \right] \left[ \dfrac{1}{a}\dfrac{|\theta|}{p} + |\partial_{a}\kappa|\ln\jabr{\dfrac{R}{p}} \right] + \dfrac{p}{R}\jabr{\dfrac{p}{R}}^{1/2}|\partial_a\kappa|
 		\\
 		&\lesssim \dfrac{1}{a}\dfrac{p}{R}\dfrac{|\theta|}{R} + \dfrac{1}{a}\dfrac{p^2}{R^2}\min\{1, a^2\ell\} \ln\jabr{\dfrac{R}{p}} + \dfrac{1}{a}\dfrac{p}{R} \jabr{\dfrac{p}{R}}^{1/2}\min\{1, a^2\ell\}
 	\end{align*}
 	We now use the fact that $ x\ln\jabr{1/x}\lesssim \jabr{x} $ to get
 	\begin{align*}
 		|\partial^2_{\theta a} R| &\lesssim \dfrac{1}{a}\dfrac{p}{R} \left[ 1+\jabr{\dfrac{p}{R}}\min\{1, a^2\ell\} \right] \lesssim \dfrac{1}{a}\dfrac{p}{R} \left[ 1+\jabr{\dfrac{1}{a^2\ell}}\min\{1, a^2\ell\} \right] 
 		\\
 		&\lesssim \dfrac{1}{a}\dfrac{p}{R} \lesssim \dfrac{1}{a}\jabr{\dfrac{1}{a\sqrt{\ell}}}^2.
 	\end{align*}
 	Here, we also used $ |\theta|/R\lesssim1 $ and $ p/R\leq 1/(1-\kappa)\approx \jabr{1/a^2\ell} $ according to Lemma \ref{lem:EstimatesRadFunct} \eqref{it:lem:EstimatesRadFunct2}.
 	
 	Finally, for $ \partial^2_aR $ we use Lemma \ref{lem:SecondDerivRadFunct} \eqref{it:lem:SecondDerivRadFunct2} and \eqref{eq:Sec2:ProofEstDerivRadFunct}
 	\begin{align*}
 		|\partial^2_aR| &\lesssim |\partial_a^2p| \, \ln\jabr{\dfrac{R}{p}} + \left[ \dfrac{p(\partial_a\kappa)^2}{1-\kappa} + |\partial_ap||\partial_a\kappa| + p|\partial^2_{a}\kappa| \right] \ln\jabr{\dfrac{R}{p}} \jabr{\dfrac{p}{R}}^{1/2}
 		\\
 		&\quad + \dfrac{1}{a}\dfrac{p}{R} \left[ |\theta| \dfrac{|\partial_ap|}{p} +p|\partial_a\kappa| \ln\jabr{\dfrac{R}{p}} \right] + |\partial_ap||\partial_a\kappa| \dfrac{|\theta|}{R} \jabr{\dfrac{p}{R}}^{1/2} + \dfrac{1}{a^4}.
 	\end{align*}
 	Here, we used the estimate for $ \partial_{a \theta}^2R $ and \ref{lem:EstimatesDerivRadFunct} \eqref{it:lem:EstimatesDerivRadFunct2}. We now use \ref{lem:EstimatesDerivRadFunct} \eqref{it:lem:EstimatesDerivRadFunct1} and the estimates in \eqref{it:lem:EstimatesSecondDerivRadFunct1}, in particular $ |\partial_a^2p|\approx p/a^2 $ by Lemma \ref{lem:EstimatesRadFunct} \eqref{it:lem:EstimatesRadFunct1}, to get
 	\begin{align*}
 		|\partial^2_aR| &\lesssim \dfrac{p}{a^2} \, \ln\jabr{\dfrac{R}{p}} + \dfrac{p}{a^2} \min\{1, a^2\ell\} \ln\jabr{\dfrac{R}{p}} \jabr{\dfrac{p}{R}}^{1/2} + \dfrac{p}{a^2}\left[ 1+ \ln\jabr{\dfrac{R}{p}} \right] + \dfrac{p}{a^2}+ \dfrac{1}{a^4}
 		\\
 		&\lesssim \dfrac{p}{a^2} \, \ln\jabr{\dfrac{R}{p}}.
 	\end{align*}
 	Here, we made use of $ |\partial_a\kappa|\jabr{p/R}\lesssim 1/a $. This concludes the proof.
\end{proof}

In the following lemma we show that the radial function $ R(\theta,a,\ell) $ is decreasing in $ a $ for any fixed value of $ \theta,\, \ell $. Furthermore, we prove that comparable values of actions provide comparable values of $ \partial_aR $. This will be used later on to ensure that trajectories in a comparable set of actions, like $ [a/C_0,C_0a] $ for fixed $ a>0 $ and $ C_0\geq1 $, remain comparable.

\begin{lem}\label{lem:BoundActionDerivR}
	The function $ a\mapsto R(\theta,a,\ell) $ is strictly decreasing for all $ \theta\in \R\ $, $ \ell>0 $. Moreover, for any $ C_0\geq1 $ there is a constant $ C=C(C_0)\geq 1 $, such that for all $ (\theta,a,\ell) $
	\begin{equation}\label{eq:daR-compare}
		\max_{[a/C_0,C_0a]} |\partial_{a}R(\theta,\cdot,\ell)|\leq C \min_{[a/C_0,C_0a]} |\partial_{a}R(\theta,\cdot,\ell)|.
	\end{equation}
\end{lem}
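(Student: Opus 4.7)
The plan is to derive both claims from a single clean rewriting of $\partial_a R$ using the substitution $y=R/p+\kappa=\cosh\alpha$ with $\alpha\geq 0$. Starting from the formula in Lemma \ref{lem:DerivRadFunct} \eqref{it:lem:DerivRadFunct2}, and plugging in $\partial_a p=-p(1+\kappa^2)/a$, $\partial_a\kappa=-4a\ell\kappa^3/m^2$, together with the identities $4a^2\ell/m^2=(1-\kappa^2)/\kappa^2$ and $m/a^3=2p\kappa/a$ (both coming from the definitions of $p,\kappa$), direct manipulation should give
\[
-\partial_a R \;=\; \frac{p^2}{aR}\,h(y), \qquad h(\cosh\alpha) \;=\; (1-\kappa)^3 + \kappa\,\phi(\alpha), \qquad \phi(\alpha):=2\alpha\sinh\alpha-(3+\kappa^2)(\cosh\alpha-1).
\]
This single representation encodes everything I need.

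For strict monotonicity I observe $\phi(0)=\phi'(0)=0$ and $\phi''(\alpha)=(1-\kappa^2)\cosh\alpha+2\alpha\sinh\alpha\geq 0$, so $\phi\geq 0$. Combined with $(1-\kappa)^3>0$ this yields $h(y)>0$ for all $y\geq 1$, hence $\partial_a R<0$.

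For the comparability statement, fix $C_0\geq 1$ and work on $I:=[a/C_0,C_0 a]$. Using Lemma \ref{lem:EstimatesRadFunct} \eqref{it:lem:EstimatesRadFunct1} together with $1-\kappa\approx 1-\kappa^2$, one checks that $a$, $\kappa$, $1-\kappa$, $1+\kappa^2$, $3+\kappa^2$ and $p$ all stay within a factor depending only on $C_0$ on $I$. Next, applied to $H_\kappa(|\theta|/p)-1=(R-r_0)/p$, Lemma \ref{lem:HFunctEquiv} gives that $\cosh\alpha-1=(R-r_0)/p$ is also comparable on $I$. From this: $R=r_0+(R-r_0)$ is a sum of two nonnegative comparable terms so $R$ is comparable, $y=(R-r_0)/p+1$ is comparable, and via $\alpha^2\approx\cosh\alpha-1$ for $\alpha\leq 1$ resp.\ $e^\alpha\approx \cosh\alpha$ for $\alpha\geq 1$ one concludes that $\alpha^2$ (if $\alpha\leq 1$) resp.\ $e^\alpha$ and $\alpha$ (if $\alpha\geq 1$) are comparable on $I$. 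The remaining task is to show that $\phi(\alpha)$ is comparable on $I$; granted this, $h(y)=(1-\kappa)^3+\kappa\,\phi(\alpha)$ is a sum of two nonnegative comparable terms, $p^2/(aR)$ is a product of comparable factors, and \eqref{eq:daR-compare} follows.

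The main technical obstacle is therefore the sharp two-sided control of $\phi(\alpha)$, which I would handle in two regimes. For $\alpha\in[0,1]$ the bound $\phi''(s)\geq (1-\kappa^2)+s^2$ (obtained by retaining two terms of the Taylor expansion of $\cosh s,\sinh s$ in $\phi''$) and the matching upper bound $\phi''(s)\leq 2(1-\kappa^2)+4s^2$ integrate twice to give $\phi(\alpha)\approx (1-\kappa^2)\alpha^2+\alpha^4$. Since $1-\kappa^2$ and $\alpha^2$ are each comparable on $I$, so is $\phi(\alpha)$. For $\alpha\geq 2$ one sandwiches $\phi(\alpha)$ between $2\alpha\sinh\alpha-4(\cosh\alpha-1)$ and $2\alpha\sinh\alpha$, each of which is $\approx \alpha e^\alpha$, and both $\alpha$ and $e^\alpha$ are comparable on $I$. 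The intermediate range $\alpha\in[1,2]$ is a compact region on which $\phi$ is continuous, strictly positive and jointly continuous in $\kappa\in(0,1)$, giving trivial comparability. The crucial subtlety is in the first regime: the naive bound $\phi\geq (1-\kappa^2)\alpha^2/2$ coming from $\phi''\geq 1-\kappa^2$ alone is too weak when $\kappa\to 1$, and one really needs the additional $\alpha^4$ contribution to match the upper bound.
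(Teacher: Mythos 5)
Your proposal is correct and follows essentially the paper's own route: the representation $-\partial_aR=\frac{p^2}{aR}\bigl[(1-\kappa)^3+\kappa\phi(\alpha)\bigr]$ is exactly the paper's Step 1 formula written in the parameter $\alpha=\arcosh(R/p+\kappa)$, positivity of $\phi$ via $\phi(0)=\phi'(0)=0$, $\phi''\geq0$ replaces the paper's monotonicity claim for $\eta$, and the comparability argument (Lemma \ref{lem:HFunctEquiv} for $\cosh\alpha-1$, then a small/intermediate/large case analysis with the sharp small-$\alpha$ bound $\phi\approx(1-\kappa^2)\alpha^2+\alpha^4$) mirrors the paper's treatment of $\eta(h)\approx h-1$ resp.\ $\ln(h-1)$. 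The only points to tidy are routine: when $\alpha_1,\alpha_2$ fall in different regimes, comparability of $\cosh\alpha-1$ forces both into a common ($C_0$-dependent) bounded range where your Taylor-type bound still applies, and the uniform-in-$\kappa$ positivity on the intermediate range follows from your own sandwich $2\alpha\sinh\alpha-4(\cosh\alpha-1)\le\phi(\alpha)$ rather than from continuity in $\kappa\in(0,1)$ alone.
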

\begin{proof}
	We prove the result in several steps.
	
	\textit{Step 1.} We first show that for given $ \ell>0 $ and $ \theta\in \R $, the map $ a\mapsto R(\theta,a,\ell) $ is strictly decreasing, by giving a convenient formula for $ \partial_{a}R $. We can assume without loss of generality that $ \theta\geq 0 $. Using the abbreviation $ h=h(\theta,a,\ell) = R/p+\kappa $, by Lemma \ref{lem:DerivRadFunct} \eqref{it:lem:DerivRadFunct2} we have
	\begin{align*}
		\partial_{a}R = \partial_{a}p\left( h - \theta \partial_{\theta}h \right) + p^2\partial_{a}\kappa \, \partial_{\theta}h \, \arcosh h + \dfrac{m}{a^3},
	\end{align*}
    and
	\begin{align*}
		\partial_{\theta}h =\frac{\partial_{\theta}R}{p} =\dfrac{1}{R}\sqrt{h^2-1} = \dfrac{\sqrt{h^2-1}}{p(h-\kappa)}.
	\end{align*}
	In addition, by the definition of $ G_\kappa $ and $ h(\theta,a,\ell)= H_\kappa(|\theta|/p) $ (see Proposition \ref{pro:ActionAngleVariables}) we have
	\begin{align*}
		\dfrac{\theta}{p} = G_\kappa(h) = \sqrt{h^2-1}-\kappa \arcosh h .
	\end{align*}
	This yields
	\begin{align*}
		\partial_{a}R 
		&= \partial_{a}p \left( h - p\partial_{\theta}h \, \sqrt{h^2-1}\right)  + \partial_{\theta}h \, \left( p\kappa\partial_{a}p + p^2\partial_{a}\kappa \right) \, \arcosh h+ \dfrac{m}{a^3}
		\\
		&= \partial_{a}p \left( h - \dfrac{h^2-1}{h-\kappa}\right) + \dfrac{\sqrt{h^2-1}}{h-\kappa} \,\partial_a(\kappa p) \,  \arcosh h+ \dfrac{m}{a^3}.
	\end{align*}
    With $\kappa p=\frac12ma^{-2}$ and Lemma \ref{lem:DerivRadFunct} \eqref{it:lem:DerivRadFunct1} it follows that
	\begin{align*}
		\partial_{a}R &= -\dfrac{m(1+\kappa^2)}{2a^3\kappa} \dfrac{1}{h-\kappa}\left( -\kappa h+1 \right) - \dfrac{m}{a^3} \dfrac{\sqrt{h^2-1}}{h-\kappa} \, \arcosh h+ \dfrac{m}{a^3}
		\\
		&=-\dfrac{m}{2a^3\kappa}\dfrac{1}{h-\kappa} \left\lbrace (1+\kappa^2)(1-\kappa h) + 2\kappa\sqrt{h^2-1}\arcosh h -2\kappa(h-\kappa) \right\rbrace
		\\
		&=-\dfrac{m}{2a^3\kappa}\dfrac{1}{h-\kappa} \left\lbrace -(3\kappa+\kappa^3) h + 2\kappa\sqrt{h^2-1}\arcosh h + 1+3\kappa^2 \right\rbrace
	\end{align*}
	Note that $ h(\theta=0,a,\ell)=1 $ so that 
	\begin{align}\label{eq:Sec2ProofActDerivFormula}
		\begin{split}
			\partial_{a}R &= -\dfrac{m}{2a^3\kappa}\dfrac{1}{h-\kappa}  \left\lbrace -\kappa(3+\kappa^2) (h-1) + 2\kappa\sqrt{h^2-1}\arcosh h + (1-\kappa)^3 \right\rbrace
			\\
			&= -\dfrac{p}{a}\dfrac{1}{h-\kappa}\left\lbrace (1-\kappa)^3 +  \kappa\left( \sqrt{h^2-1}\arcosh h - \dfrac{3+\kappa^2}{2} (h-1) \right) \right\rbrace
			\\
			&= -\dfrac{p}{a}\dfrac{(1-\kappa)^3}{h-\kappa} -\dfrac{p\kappa}{a}\dfrac{h-1}{h-\kappa} \left( 2 - \dfrac{3+\kappa^2}{2} +\sqrt{\dfrac{h+1}{h-1}}\arcosh h - 2\right)
			\\
			&= -\dfrac{p}{a}\dfrac{(1-\kappa)^3}{h-\kappa}-\dfrac{p\kappa(1-\kappa^2)}{a}\dfrac{h-1}{h-\kappa} -\dfrac{p\kappa}{a}\dfrac{h-1}{h-\kappa} \eta(h), \qquad \eta(h):= \sqrt{\dfrac{h+1}{h-1}}\arcosh h - 2.
		\end{split}
	\end{align}
	Observe that $ \eta(h(0,a,\ell))=0 $ and $ \partial_{\theta}\eta(h(\theta,a,\ell)) \geq0 $ for $ \theta\geq0 $. Hence we have $ \eta(h(\theta,a,\ell))\geq 0 $ and $ \partial_{a}R<0 $ for all $ (\theta,a,\ell)\in \R\times\R_+\times \R_+ $.
	
	\textit{Step 2.} In the following we prove \eqref{eq:daR-compare}, i.e.\ for given $C_0\geq 1$, $a>0$ and arbitrary $\theta,\ell$ we show that for any $a_1,a_2\in [a/C_0,C_0a]$
	\begin{align}\label{eq:Sec2ProofActDerivEquiv}
		\dfrac{\partial_{a}R(\theta,a_1,\ell)}{\partial_{a}R(\theta,a_2,\ell)}\approx1,
	\end{align}
	with bounds depending only on $ C_0 $. For readability we abbreviate $ p_1,\, p_2 $ as well as $ \kappa_1,\, \kappa_2 $ and $ h_1 $ and $ h_2 $ when evaluating the actions at $ a_1 $ and $ a_2 $, respectively. For the rest of the proof we shall use $\approx$ to denote bounds depending only on $C_0$ and universal constants.

    For $a_1,a_2$ we have by assumption $ a_1/a_2\approx 1 $, and hence with Lemma \ref{lem:EstimatesRadFunct} \eqref{it:lem:EstimatesRadFunct1}
	\begin{align*}
		\dfrac{p_1}{p_2}\approx 1, \quad \dfrac{\kappa_1}{\kappa_2}\approx 1,\quad \dfrac{1-\kappa_1}{1-\kappa_2}\approx 1.
	\end{align*}
In particular, for $ \theta=0 $ the estimate \eqref{eq:Sec2ProofActDerivEquiv} follows from \eqref{eq:Sec2ProofActDerivFormula} using the fact that $ h=1 $. We may thus assume that $\theta>0$.  As a consequence of Lemma \ref{lem:HFunctEquiv} we have
	\begin{align*}
		\dfrac{h_1-1}{h_2-1} = \dfrac{H_{\kappa_1}(|\theta|/p_1)-1}{H_{\kappa_2}(|\theta|/p_2)-1} \approx 1.
	\end{align*}
This also implies that
\begin{align*}
    \dfrac{h_1-\kappa_1}{h_2-\kappa_2} = \dfrac{h_1-1+1-\kappa_1}{h_2-1+1-\kappa_2} \approx 1,
\end{align*}
since $h_i-1>0$ for $\theta>0$, $1-\kappa_i>0$, $i=1,2$.

We now show that also $ \eta(h_1)\approx \eta(h_2) $. Let us assume that $ h_1\leq h_2 $ and let $ c=c(C_0)\geq1 $ such that $ 1/c\leq (h_1-1)/(h_2-1) \leq c $. We first observe that for $ h\leq 10 $ we have $ \eta(h) \approx h-1 $. In particular, we infer for $ h_2\leq 10 $
	\begin{align*}
		\eta(h_2) \approx h_2-1 \approx h_1-1 \approx \eta(h_1).
	\end{align*}    
On the other hand, for $ h\geq 10/c $ we have $ \eta(h) \approx \ln (h-1) $. Hence, for $ h_2\geq 10 $ we infer $ h_1\geq (h_2-1)/c+1\geq 10/c $ and obtain since $h_1-1\geq 9/c$
	\begin{align*}
		\eta(h_2) \approx \ln(h_2-1)\approx \ln(h_1-1) \approx \eta(h_1).
	\end{align*}
The above estimates together with \eqref{eq:Sec2ProofActDerivFormula} yield \eqref{eq:Sec2ProofActDerivEquiv}.
\end{proof}

\paragraph{\bf Estimates on dynamical variable.} We now provide estimates on $ \tilde{R} $. We split them in two cases, depending on whether $ (t,\theta,a,\ell) $ is in the bulk $ \mathcal{B} $ or not, see \eqref{eq:Sec2:DefBulk}.

\begin{lem}[Estimates in the bulk]\label{lem:EstDynRadBulk}
	For all $ (t,\theta,a,\ell) \in\mathcal{B}  $ we have
	\begin{align*}
		at \lesssim \tilde{R} \lesssim p+at.
	\end{align*}
	as well as
	\begin{align*}
		\left| \partial_\theta \tilde{R} - 1 \right|   &\lesssim \dfrac{p}{at}, \quad
		|\partial_a\tilde{R}-t| \lesssim \jabr{\dfrac{p}{a}} \ln\jabr{t},
		\\
		\left| \partial_{\theta}^2\tilde{R} \right| &\lesssim \dfrac{p}{a^2t^2}, \quad
		\left| \partial_{a \theta}^2\tilde{R} \right| \lesssim \dfrac{p}{a^2t}, \quad
		\left| \partial_{a}^2\tilde{R} \right| \lesssim \dfrac{1}{a}\jabr{\dfrac{p}{a}}\ln\jabr{t}.
	\end{align*}
\end{lem}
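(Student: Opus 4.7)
The plan is to derive every bound in the lemma by applying the chain rule to $\tilde{R}(\theta,a,\ell) = R(\theta+at,a,\ell)$ and then invoking the pointwise estimates on $R$ and its first two derivatives already established in Lemmas \ref{lem:EstimatesRadFunct}, \ref{lem:EstimatesDerivRadFunct} and \ref{lem:EstimatesSecondDerivRadFunct}. The key simplification in the bulk $\mathcal{B}$ is that the shifted angle $\theta+at$ lies in $[at/2,3at/2]$; in particular it is positive and $|\theta+at| \approx at$. Together with Lemma \ref{lem:EstimatesRadFunct}\eqref{it:lem:EstimatesRadFunct2} this yields immediately $at \approx |\theta+at| \lesssim \tilde{R} \lesssim p+|\theta+at| \approx p+at$.

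For the first-order derivatives the chain rule gives
\begin{align*}
\partial_\theta \tilde{R} = (\partial_\theta R)(\theta+at,a,\ell), \qquad \partial_a \tilde{R} = t\,(\partial_\theta R)(\theta+at,a,\ell) + (\partial_a R)(\theta+at,a,\ell).
\end{align*}
Since $\sgn(\theta+at)=+1$ in $\mathcal{B}$, Lemma \ref{lem:EstimatesDerivRadFunct}\eqref{it:lem:EstimatesDerivRadFunct2} directly yields $|\partial_\theta \tilde{R}-1| \lesssim p/\tilde{R} \lesssim p/(at)$. Inserting this into the expression for $\partial_a\tilde{R}$ and using $|\partial_a R| \lesssim (p/a)\ln\jabr{R/p}$ leads to
\begin{align*}
\left| \partial_a \tilde{R} - t \right| \lesssim t\cdot \frac{p}{at} + \frac{p}{a}\ln\jabr{\frac{\tilde{R}}{p}} \lesssim \frac{p}{a} + \frac{p}{a}\ln\left(2+\frac{at}{p}\right).
\end{align*}
The only non-trivial step is the conversion of $\ln\jabr{\tilde{R}/p}$ into $\ln\jabr{t}$: setting $y:=p/a$ one needs the elementary inequality $y\ln(2+t/y) \lesssim \jabr{y}\ln\jabr{t}$, uniform in $y,t>0$. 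This is verified by case distinction (for $y\geq 1$, use $\ln(2+t/y)\leq \ln(2+t)$; for $y\leq 1$ and $t\geq 1$, note that $x\mapsto x^{-1}\ln(2+x)$ is decreasing for $x\geq t$, and apply with $x=t/y\geq t$; the remaining case is trivially bounded), and delivers the stated estimate.

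For the second-order derivatives I differentiate once more to obtain
\begin{align*}
\partial_\theta^2\tilde{R} &= (\partial_\theta^2 R)(\theta+at,a,\ell),\\
\partial_{a\theta}^2\tilde{R} &= t\,(\partial_\theta^2 R)(\theta+at,a,\ell) + (\partial_{a\theta}^2 R)(\theta+at,a,\ell),\\
\partial_a^2\tilde{R} &= t^2\,(\partial_\theta^2 R)(\theta+at,a,\ell) + 2t\,(\partial_{a\theta}^2 R)(\theta+at,a,\ell) + (\partial_a^2 R)(\theta+at,a,\ell),
\end{align*}
and plug in the pointwise bounds $|\partial_\theta^2 R| \lesssim p/R^2$, $|\partial_{a\theta}^2 R| \lesssim p/(aR)$, $|\partial_a^2 R| \lesssim (p/a^2)\ln\jabr{R/p}$ from Lemma \ref{lem:EstimatesSecondDerivRadFunct}\eqref{it:lem:EstimatesSecondDerivRadFunct2}, combined with $\tilde{R}\gtrsim at$. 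The $t$-factors cancel against the $R^{-1}$ and $R^{-2}$ decay to give $|\partial_\theta^2\tilde{R}| \lesssim p/(a^2t^2)$ and $|\partial_{a\theta}^2\tilde{R}| \lesssim p/(a^2t)$ directly; for $\partial_a^2\tilde{R}$ each contribution is bounded by $(p/a^2)(1+\ln\jabr{\tilde{R}/p})$, and one more application of the same elementary logarithmic inequality produces the claimed bound $(1/a)\jabr{p/a}\ln\jabr{t}$. The main obstacle throughout is the handling of $\ln\jabr{\tilde{R}/p}$, which can be as large as $\ln(at/p)$ precisely when $p/a$ is small; the elementary inequality above is what bridges this to $\jabr{p/a}\ln\jabr{t}$ and makes the bookkeeping close.
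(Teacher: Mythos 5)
Your proposal is correct and follows essentially the same route as the paper: chain rule applied to $\tilde{R}(\theta,a,\ell)=R(\theta+at,a,\ell)$, the bulk observation $|\theta+at|\approx at$ with $\sgn(\theta+at)=+1$, and the pointwise bounds from Lemmas \ref{lem:EstimatesRadFunct}, \ref{lem:EstimatesDerivRadFunct} and \ref{lem:EstimatesSecondDerivRadFunct}. The only cosmetic difference is in the logarithmic bookkeeping: you prove the combined inequality $y\ln(2+t/y)\lesssim\jabr{y}\ln\jabr{t}$ directly, whereas the paper splits $\ln\jabr{at/p}\lesssim\ln\jabr{a/p}+\ln\jabr{t}$ and then uses $x\ln\jabr{1/x}\lesssim\jabr{x}$ — these are equivalent.
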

\begin{proof}
	The first estimate follows from $ |\theta+at|\geq at/2 $ and Lemma \ref{lem:EstimatesRadFunct} \eqref{it:lem:EstimatesRadFunct2}. We observe that as a consequence
	\begin{align*}
		\jabr{\dfrac{\tilde{R}}{p}} \approx \jabr{\dfrac{at}{p}}.
	\end{align*}
	Concerning the first derivatives we make use of
	\begin{align}\label{eq:Sec2:DerivDynRadFct}
		\partial_\theta \tilde{R} = \partial_\theta R(\theta+ta,a), \quad \partial_a \tilde{R} = t \partial_\theta R(\theta+at,a) + \partial_a R(\theta+at,a).
	\end{align}
	We hence obtain via Lemma \ref{lem:EstimatesDerivRadFunct} \eqref{it:lem:EstimatesDerivRadFunct2}
	\begin{align*}
		\left| \partial_\theta \tilde{R} - 1 \right|   \lesssim \dfrac{p}{at}.
	\end{align*}
	Here, we also used the fact that $ \sgn(\theta+at)=1 $.
	
	Furthermore, we get with the previous estimate and Lemma \ref{lem:EstimatesDerivRadFunct} \eqref{it:lem:EstimatesDerivRadFunct2}
	\begin{align*}
		|\partial_a\tilde{R}-t| \lesssim t\left| \partial_\theta \tilde{R} - 1 \right| + | \partial_a R(\theta+at,a)| \lesssim \dfrac{p}{a} \ln\jabr{\dfrac{at}{p}} \lesssim \dfrac{p}{a} \left( \ln\jabr{\dfrac{a}{p}}+\ln\jabr{t} \right)  \lesssim  \jabr{\dfrac{p}{a}}\ln\jabr{t}.
	\end{align*}
	Here, we used the inequality $ x\ln \jabr{1/x}\lesssim 1+x\approx\jabr{x} $ for all $ x>0 $. 
	
	For the second derivatives we use the formulas
	\begin{align}\label{eq:Sec2:SecDerivDynRadFct}
		\begin{split}
			\partial_{\theta}^2\tilde{R} &= \partial^2_\theta R(\theta+ta,a), \quad \partial_{a \theta}^2\tilde{R} = t \partial_{\theta}R(\theta+ta,a)+ \partial_{a \theta} R(\theta+ta,a),
			\\
			\partial_{a}^2\tilde{R} &= t^2 \partial^2_\theta R(\theta+ta,a) + 2t \partial_{a \theta} R(\theta+ta,a)+ \partial_{a}^2 R(\theta+ta,a).
		\end{split}
	\end{align}
	With Lemma \ref{lem:EstimatesSecondDerivRadFunct} we then have similar to above
	\begin{align*}
		\left| \partial_{\theta}^2\tilde{R}\right| \lesssim \dfrac{p}{(at)^2}, \quad \left| \partial_{a\theta}^2\tilde{R}\right| \lesssim \dfrac{p}{a^2t}.
	\end{align*}
	Finally, we also have
	\begin{align*}
		\left| \partial_{a}^2\tilde{R}\right| \lesssim \dfrac{p}{a^2} + \dfrac{p}{a^2}\ln\jabr{\dfrac{at}{p}} \lesssim  \dfrac{p}{a^2}\left( \ln\jabr{\dfrac{a}{p}}+\ln\jabr{t} \right) \lesssim \dfrac{1}{a}\jabr{\dfrac{p}{a}}\ln\jabr{t}.
	\end{align*}
	This concludes the proof.
\end{proof}

\begin{lem}[Estimates outside the bulk]\label{lem:EstDynRadNoBulk}
	We have for all $ (t,\theta,a,\ell)\in \R_+\times\R\times \R_+\times\R_+ $
	\begin{align*}
		|\partial_\theta \tilde{R}| &\lesssim \jabr{\dfrac{p}{\tilde{R}}}^{1/2} \lesssim \jabr{\dfrac{1}{a\sqrt{\ell}}},
		\\
		|\partial_a \tilde{R}| &\lesssim t\jabr{\dfrac{p}{\tilde{R}}}^{1/2}+ \dfrac{p}{a}\ln\jabr{\dfrac{\tilde{R}}{p}} \lesssim  t\jabr{\dfrac{1}{a\sqrt{\ell}}}  + \jabr{\dfrac{1}{a^3}}\jabr{a\sqrt{\ell}}\ln\jabr{\dfrac{|\theta|}{a}},
		\\
		|\partial_\theta^2 \tilde{R}| &\lesssim \dfrac{p}{\tilde{R}^2}\lesssim \dfrac{1}{a^2\ell^2}\jabr{a\sqrt{\ell}}^3,
		\\
		|\partial_{a\theta}^2 \tilde{R}| & \lesssim \dfrac{tp}{\tilde{R}^2}+\dfrac{p}{a\tilde{R}} \lesssim \dfrac{1}{a\ell}\jabr{a\sqrt{\ell}}+\dfrac{|\theta|}{a\ell^2}\jabr{a\sqrt{\ell}}^2,
		\\
		 |\partial_{a}^2 \tilde{R}| & \lesssim \dfrac{pt^2}{\tilde{R}^2} +\dfrac{tp}{a\tilde{R}} +\dfrac{p}{a^2}\ln\jabr{\dfrac{\tilde{R}}{p}} \lesssim \jabr{\theta}^2\jabr{a\sqrt{\ell}}^3\jabr{\dfrac{1}{a^4}}\ln\jabr{t}.
	\end{align*}
\end{lem}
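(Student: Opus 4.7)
The plan is to reduce this lemma to the pointwise bounds on derivatives of $R$ already established in Lemmas \ref{lem:EstimatesDerivRadFunct}(ii) and \ref{lem:EstimatesSecondDerivRadFunct}(ii), via the same chain-rule identities \eqref{eq:Sec2:DerivDynRadFct}--\eqref{eq:Sec2:SecDerivDynRadFct} that drove the proof of Lemma \ref{lem:EstDynRadBulk}, but now \emph{without} exploiting the bulk restriction $|\theta|\le at/2$. Since $\tilde R(\theta,a)=R(\theta+at,a)$, every derivative of $\tilde R$ is expressed as a polynomial in $t$ whose coefficients are $\partial_\theta^j\partial_a^k R$ evaluated at the shifted point $(\theta+at,a,\ell)$. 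Substituting the pointwise bounds $|\partial_\theta R|\lesssim \jabr{p/R}^{1/2}$, $|\partial_a R|\lesssim (p/a)\ln\jabr{R/p}$, $|\partial^2_\theta R|\lesssim p/R^2$, $|\partial^2_{a\theta}R|\lesssim p/(aR)$, $|\partial^2_a R|\lesssim (p/a^2)\ln\jabr{R/p}$ immediately yields the first (structural) inequality in each of the five lines of the claim, with $R$ replaced by $\tilde R$ and the extra powers of $t$ accounted for by the chain rule.

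The second (explicit) inequality in each line is obtained by translating $p$- and $\tilde R$-dependencies into expressions in $(a,\ell,\theta,t)$. For this I rely on Lemma \ref{lem:EstimatesRadFunct}(i): $p\approx \jabr{a\sqrt\ell}/a^2$ and $r_0\approx \ell/\jabr{a\sqrt\ell}$. Since $\tilde R\ge r_0$, this gives the uniform bound $p/\tilde R\lesssim \jabr{a\sqrt\ell}^2/(a^2\ell)\approx \jabr{1/(a\sqrt\ell)}^2$, which converts every factor $\jabr{p/\tilde R}^{1/2}$ into $\jabr{1/(a\sqrt\ell)}$ and handles the prefactors in the bounds for $\partial_\theta^2\tilde R$ and $\partial^2_{a\theta}\tilde R$ through the elementary manipulations $p/\tilde R^2\le (p/\tilde R)\cdot r_0^{-1}$ and $p/(a\tilde R)\le (p/a)\cdot r_0^{-1}$, after also using $|\theta+at|\lesssim \tilde R$ (from Lemma \ref{lem:EstimatesRadFunct}(ii)) to trade the $t$-factor against a factor of $\tilde R$ in the $tp/\tilde R^2$ and $tp/(a\tilde R)$ terms.

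The main obstacle will be the careful packaging of the logarithmic factors $\ln\jabr{\tilde R/p}$ that appear in the bounds for $\partial_a\tilde R$ and $\partial^2_a\tilde R$. A priori these depend on both $|\theta|$ and $t$ via $\tilde R\lesssim p+|\theta|+at$ (Lemma \ref{lem:EstimatesRadFunct}(ii)). The strategy is to split
\begin{equation*}
    \ln\jabr{\tilde R/p}\lesssim \ln\jabr{|\theta|/p}+\ln\jabr{at/p},
\end{equation*}
and then use $p\gtrsim 1/a^2\cdot \jabr{a\sqrt\ell}\gtrsim 1/a$ (at least when $a\sqrt\ell\gtrsim 1$; the opposite regime is handled by a direct bound) to replace $\ln\jabr{|\theta|/p}$ by $\ln\jabr{|\theta|/a}$ up to a bounded prefactor, while $\ln\jabr{at/p}\lesssim \ln\jabr{t}$ is absorbed into the polynomial $t$-factor of the leading term $t\jabr{p/\tilde R}^{1/2}$. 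For the $\partial^2_a\tilde R$ bound, the $t^2$ contribution comes from $t^2|\partial^2_\theta R|\lesssim pt^2/\tilde R^2$, which is then dominated by $\jabr{\theta}^2\jabr{a\sqrt\ell}^3\jabr{1/a^4}$ after inserting the quantitative forms of $p$ and $r_0$ and combining with the $\ln\jabr{t}$ coming from the log term, exactly as above. Once these bookkeeping steps are done the calculations become routine applications of the arithmetic already performed in Lemmas \ref{lem:EstimatesDerivRadFunct} and \ref{lem:EstimatesSecondDerivRadFunct}, no new analytic ingredients being needed.
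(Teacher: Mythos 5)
Your overall route is exactly the paper's: use the chain--rule identities $\partial_a\tilde R=t\,\partial_\theta R(\theta+at,a)+\partial_aR(\theta+at,a)$ etc.\ together with Lemmas \ref{lem:EstimatesDerivRadFunct} and \ref{lem:EstimatesSecondDerivRadFunct} to get the first (structural) inequality on each line, then convert via Lemma \ref{lem:EstimatesRadFunct} ($p\approx\jabr{a\sqrt\ell}/a^2$, $\tilde R\geq r_0\approx\ell/\jabr{a\sqrt\ell}$, $p/\tilde R\leq 1/(1-\kappa)\approx\jabr{1/(a^2\ell)}$, $|\theta+at|\lesssim\tilde R$). Up to that point the proposal matches the paper, and your device $at\leq|\theta|+|\theta+at|$ to trade $t$ against $\tilde R$ is precisely \eqref{eq:Sec2:ProofSecDerivDynRad}.

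The gap is in how you dispose of $\ln\jabr{\tilde R/p}$ in the $\partial_a\tilde R$ line. First, the inequality $\ln\jabr{at/p}\lesssim\ln\jabr{t}$ is not true uniformly: $a/p\approx a^3/\jabr{a\sqrt\ell}$ can be arbitrarily large (e.g.\ $a$ large, $\ell\lesssim 1$, $t=1$), so $\ln\jabr{at/p}$ carries a $\ln\jabr{a/p}$ contribution unrelated to $t$. Second, even after splitting that off, your plan to ``absorb'' a $\ln\jabr{t}$ into the leading term cannot yield the stated bound, which contains no $\ln\jabr{t}$: the leftover $(p/a)\ln\jabr{t}$ is not controlled by $t\jabr{1/(a\sqrt\ell)}+\jabr{1/a^3}\jabr{a\sqrt\ell}\ln\jabr{|\theta|/a}$ — take $a\sqrt\ell\approx 1$ with $a$ small, $\theta=0$, $t\approx a^{-3}$: the left side is $\approx a^{-3}\ln(1/a^3)$ while the right side is $\approx a^{-3}$. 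The paper closes this by \emph{linearizing} the logarithm in the $t$-part, $\ln\jabr{at/p}\lesssim 1+at/p$, so that $(p/a)\ln\jabr{at/p}\lesssim p/a+t$, and by handling the $\theta$-part through $(p/a)\ln\jabr{|\theta|/p}\lesssim (p/a)\ln\jabr{|\theta|/a}+(p/a)\ln\jabr{a/p}\lesssim\jabr{p/a}\ln\jabr{|\theta|/a}$, using $x\ln\jabr{1/x}\lesssim\jabr{x}$. Relatedly, your pointwise replacement of $\ln\jabr{|\theta|/p}$ by $\ln\jabr{|\theta|/a}$ ``up to a bounded prefactor'' is not correct as stated: $a\sqrt\ell\gtrsim 1$ does not give $p\gtrsim 1/a$ (try $a=10$, $\ell=10^{-2}$), and in any case the relevant comparison would be $p$ versus $a$; what actually saves the estimate is the compensation by the small prefactor $p/a$ just described. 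For $\partial^2_a\tilde R$ the statement does tolerate a $\ln\jabr{t}$, so the same absorption is harmless there; but for $\partial_a\tilde R$ your bookkeeping, as written, proves only a weaker bound, and you need the two elementary tricks above to recover the lemma.
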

\begin{proof}
	As for Lemma \ref{lem:EstDynRadBulk} we use \eqref{eq:Sec2:DerivDynRadFct}. This yields with Lemma \ref{lem:EstimatesDerivRadFunct} \eqref{it:lem:EstimatesDerivRadFunct2}
	\begin{align*}
		|\partial_\theta \tilde{R}| &= |\partial_\theta R(\theta+ta,a)| \lesssim \jabr{\dfrac{p}{\tilde{R}}}^{1/2} \lesssim \jabr{\dfrac{1}{a\sqrt{\ell}}},
		\\
		|\partial_a \tilde{R}| &\lesssim t\jabr{\dfrac{p}{\tilde{R}}}^{1/2} + \dfrac{p}{a} \ln\jabr{\dfrac{\tilde{R}}{p}}.
	\end{align*}
	Furthermore we have with Lemma \ref{lem:EstimatesRadFunct} \eqref{it:lem:EstimatesRadFunct1} and \eqref{it:lem:EstimatesRadFunct2}
	\begin{align*}
		\dfrac{p}{a} \ln\jabr{\dfrac{\tilde{R}}{p}} &\lesssim \dfrac{p}{a}\ln\jabr{\dfrac{|\theta+ta|}{p}} \lesssim \dfrac{p}{a}\left[ \ln\jabr{\dfrac{|\theta|}{p}} + \ln\jabr{\dfrac{at}{p}}\right] 
		\\
		&\lesssim t\jabr{\dfrac{1}{a\sqrt{\ell}}}+ \dfrac{p}{a}\left[ \ln\jabr{\dfrac{|\theta|}{a}} +\ln\jabr{\dfrac{a}{p}} + 1+\dfrac{at}{p} \right] 
		\\
		&\lesssim \jabr{\dfrac{p}{a}}\ln\jabr{\dfrac{|\theta|}{a}} + t.
	\end{align*} 
	We then have with Lemma \ref{lem:EstimatesDerivRadFunct} \eqref{it:lem:EstimatesDerivRadFunct2} and the previous estimate
	\begin{align*}
		|\partial_a \tilde{R}| &\lesssim t\jabr{\dfrac{1}{a\sqrt{\ell}}}+ \dfrac{p}{a} \ln\jabr{\dfrac{\tilde{R}}{p}} \lesssim  t\jabr{\dfrac{1}{a\sqrt{\ell}}}  + \jabr{\dfrac{p}{a}}\ln\jabr{\dfrac{|\theta|}{a}}
		\\
		&\lesssim t\jabr{\dfrac{1}{a\sqrt{\ell}}}  + \jabr{\dfrac{1}{a^3}}\jabr{a\sqrt{\ell}}\ln\jabr{\dfrac{|\theta|}{a}}.
	\end{align*}	
	Concerning the second order derivatives we use \eqref{eq:Sec2:SecDerivDynRadFct} and Lemma \ref{lem:EstimatesSecondDerivRadFunct}. This yields
	\begin{align*}
		|\partial_\theta^2 \tilde{R}| &\lesssim \dfrac{p}{\tilde{R}^2} \lesssim \dfrac{1}{a^2\ell^2}\jabr{a\sqrt{\ell}}^3,
		\\
		|\partial_{a\theta}^2 \tilde{R}| & \lesssim \dfrac{tp}{\tilde{R}^2}+\dfrac{p}{a\tilde{R}}. 
	\end{align*}
	To further bound the last term we make use of
	\begin{align}\label{eq:Sec2:ProofSecDerivDynRad}
		\dfrac{t}{\tilde{R}}\leq \dfrac{|\theta+at|}{a\tilde{R}} +\dfrac{|\theta|}{\tilde{R}} \lesssim \dfrac{1}{a} + \dfrac{|\theta|}{r_0(a,\ell)}\lesssim \dfrac{1}{a} + \dfrac{|\theta|}{\ell}\jabr{a\sqrt{\ell}}.
	\end{align}
	This yields,
	\begin{align*}
		|\partial_{a\theta}^2 \tilde{R}| \lesssim \dfrac{1}{1-\kappa}\dfrac{t}{\tilde{R}}+\dfrac{1}{a(1-\kappa)}\lesssim \dfrac{1}{a\ell}\jabr{a\sqrt{\ell}}+\dfrac{|\theta|}{a\ell^2}\jabr{a\sqrt{\ell}}^2.
	\end{align*}
	Finally, we also have with Lemma \ref{lem:EstimatesRadFunct} \eqref{it:lem:EstimatesRadFunct1}
	\begin{align*}
		|\partial_{a}^2 \tilde{R}| & \lesssim \dfrac{t^2p}{\tilde{R}^2}+\dfrac{tp}{a\tilde{R}}+\dfrac{p}{a^2}\ln\jabr{\dfrac{\tilde{R}}{p}} \lesssim  \dfrac{t^2p}{\tilde{R}^2}+\dfrac{tp}{a\tilde{R}}+\dfrac{1}{a}\dfrac{p}{a}\ln\jabr{\dfrac{|\theta+at|}{p}}
		\\
		&\lesssim \dfrac{t^2p}{\tilde{R}^2}+\dfrac{tp}{a\tilde{R}} + \dfrac{1}{a}\jabr{\dfrac{p}{a}}\left( \ln\jabr{\theta}+\ln\jabr{t} \right) \lesssim \dfrac{t^2p}{\tilde{R}^2} + \dfrac{1}{a}\jabr{\dfrac{p}{a}}\left( \ln\jabr{\theta}+\ln\jabr{t} \right) 
		\\
		&\lesssim p\left( \dfrac{1}{a} + \dfrac{|\theta|}{\ell}\jabr{a\sqrt{\ell}}\right) ^2+ \dfrac{1}{a}\jabr{\dfrac{p}{a}}\left( \ln\jabr{\theta}+\ln\jabr{t} \right) 
		\\
		&\lesssim \dfrac{|\theta|^2}{a^2\ell^2}\jabr{a\sqrt{\ell}}^3 + \dfrac{1}{a}\jabr{\dfrac{1}{a^3}}\jabr{a\sqrt{\ell}} \left( \ln\jabr{\theta}+\ln\jabr{t} \right)
		\\
		&\lesssim \jabr{\theta}^2\jabr{a\sqrt{\ell}}^3\jabr{\dfrac{1}{a^4}}\ln\jabr{t}.
	\end{align*}
	where we also use the estimate in \eqref{eq:Sec2:ProofSecDerivDynRad}. This concludes the proof.
\end{proof}


\section{Study of gravitational field}\label{sec:GravField}
In this section we analyze the gravitational field of a distribution along the linearized flow, i.e.\ (see \eqref{eq:Sec1:GravField} resp.\ \eqref{eq:Sec1:VPinActionAngle})
\begin{align}\label{eq:Sec3:DynGravField}
	\F(t,r) = -\dfrac{1}{r^2} \int_{\R}\int_0^\infty\int_0^\infty \ind_{\{R(\theta+ta,a)\leq r\}} \gamma(t,\theta,a,\ell)^2 \, d\theta da d\ell.
\end{align}
Motivated by the asymptotics $ R(\theta+ta,a)= at + o(t) $ as $ t\to \infty $ (see Section \ref{sec:LinearizedDyn}), we also consider the corresponding ``effective gravitational field''
\begin{align}\label{eq:Sec3:EffGravField}
	\F_{\eff}(t,r) := -\dfrac{1}{r^2} \int_{\R}\int_0^\infty\int_0^\infty \ind_{\{at\leq r\}} \gamma(t,\theta,a,\ell)^2 \, d\theta da d\ell.
\end{align}
In Section \ref{subsec:EstGravFieldLInf} we provide estimates on $ \F(t,r),\,  \partial_r \F(t,r) $ using $ L^2\cap L^\infty $-norms with weights. In Section \ref{subsec:EstEffGravField} we prove similar estimates for $ \F_{\eff}(t,r),\,  \partial_r \F_{\eff}(t,r) $. In addition, we show that indeed $ \F_{\eff}(t,r) $ provides a good approximation of the asymptotics of $ \F(t,r) $ when $ t\to \infty $. Finally, in Section \ref{subsec:EstEffGravFieldAA} we give estimates of the gravitational field $ \nabla_{(\theta,a)}\Psi(t,\tilde{R}) $ in action-angle variables.

\subsection{Estimates of the gravitational field}\label{subsec:EstGravFieldLInf}
Here we give estimates for $ \F(t,r),\,  \partial_r \F(t,r) $ for distributions $ \gamma\in L^2\cap L^\infty $ including appropriate moments.

\begin{lem}\label{lem:EstimatesGravFieldLInf}
	We have for any $ q\geq0 $
	\begin{align*}
		|\F(t,r)| \lesssim& \dfrac{1}{1+r^2+t^2}\min\left( \dfrac{r^q}{(1+t)^q},1 \right) M_q(\gamma),
		\\
		M_q(\gamma):=& \norm[L^2]{\left(a+a^{-1}\right)^{2+q}\gamma}^2 +  \norm[L^\infty]{\jabr{a}\ell^{-q/2}\gamma}^2 + \dfrac{1}{1+t} \norm[L^\infty]{\jabr{a}a^{-4-2q}\jabr{\ell}\ell^{-q/2}\jabr{\theta}^{3+q/2}\gamma}^2.
	\end{align*}
\end{lem}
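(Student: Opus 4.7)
The plan is to start from
\[
r^2|\F(t,r)|\leq \int \gamma^2\,\ind_{\{\tilde{R}\leq r\}}\,d\theta\,da\,d\ell
\]
and translate the indicator into explicit restrictions on $(\theta,a,\ell)$ using Lemma \ref{lem:EstimatesRadFunct}: from $|\theta|\lesssim R\lesssim p+|\theta|$ and $R\geq r_0(a,\ell)$ we obtain $|\theta+at|\lesssim r$ and $\ell\leq \ell^\ast(r,a):=\max(r,r^2a^2)$. The first inequality localizes $\theta$ to an interval of length $\mathcal{O}(r)$ centred at $-at$ or, equivalently, for each fixed $\theta\neq 0$ restricts $a$ to an interval of length $\mathcal{O}(r/t)$ around $|\theta|/t$; the second controls the $\ell$-integration.

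Next, I would split the integration into a \emph{bulk} region $|\theta|\leq 1+at/2$ (where $\tilde{R}\leq r$ forces $a\lesssim r/t$ and $|\theta|\lesssim r$) and an \emph{outer} region $|\theta|\geq 1+at/2$ (where one necessarily has $\theta<0$ with $\theta\approx -at$, and for fixed $\theta$ the $a$-window still has width $\mathcal{O}(r/t)$). On each region I would combine $L^\infty$-times-measure bounds with an $L^2$-$L^\infty$ Cauchy--Schwarz duality in order to match the three terms of $M_q(\gamma)$. In the bulk, the smallness $a\lesssim r/t$ permits a direct $L^\infty$ bound with the third weight $\jabr{a}a^{-4-2q}\jabr{\ell}\ell^{-q/2}\jabr{\theta}^{3+q/2}$: the resulting $a^{8+4q}$ growth of the inverse squared weight is more than compensated by the tiny $a$-interval. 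In the outer region, the narrow $a$-window of length $r/t$ combined with the decay $\jabr{\theta}^{-6-q}\sim (at)^{-6-q}$ along $\theta\approx -at$ produces both the additional $(1+t)^{-1}$ factor of the third term in $M_q$ and the convergence of the $\theta$-integral. For the tail $a\gtrsim 1$, which forces $|\theta|\gtrsim t$, the $a^{8+4q}$ growth is controlled instead via the Cauchy--Schwarz splitting
\[
\int \gamma^2\,\ind \leq \norm[L^2]{(a+a^{-1})^{2+q}\gamma}\cdot \norm[L^\infty]{\jabr{a}\ell^{-q/2}\gamma}\cdot \sqrt{\int \frac{\ell^q\,\ind}{(a+a^{-1})^{2(2+q)}\jabr{a}^2}\,d\theta\,da\,d\ell},
\]
after which AM--GM converts the product of norms into the sum appearing in $M_q(\gamma)$; the resulting measure integral gains the extra $a^{-2}$ from $\jabr{a}^{-2}$ which renders the large-$a$ tail convergent.

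The claimed decay $\frac{1}{1+r^2+t^2}\min\bigl(r^q/(1+t)^q,1\bigr)$ is then assembled from the trivial prefactor $1/r^2$, the two factors of $r/t$ (from the narrow $a$-window and from the bulk constraint $a\lesssim r/t$) that yield the $1/(1+t^2)$ decay, and the $\ell$-integration $\int_0^{\ell^\ast}\ell^q\,d\ell\lesssim \max(r,r^2a^2)^{q+1}$ which, combined with $a\lesssim r/t$ (or $a\approx|\theta|/t$), delivers the $r^q/(1+t)^q$ factor. The main technical obstacle will be the careful bookkeeping across the three regimes (bulk, outer with moderate $a$, and outer with $a\gtrsim 1$) so that each integral converges and the powers of $r$, $t$, $a$ and $\theta$ line up; in particular, the precise exponents $3+q/2$ and $4+2q$ in the third weight of $M_q$ are dictated by the tight balance between the $a^{8+4q}$ growth of the inverse weight and the $\jabr{\theta}^{-6-q}\sim (at)^{-6-q}$ decay in the outer region.
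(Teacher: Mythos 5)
Your skeleton shares the paper's main ingredients: from $\tilde R\le r$ you extract $|\theta+at|\lesssim r$ and $r_0(a,\ell)\lesssim r$ (equivalently $\ell\lesssim\max(r,r^2a^2)$, or the paper's $\min(a^{-2},\ell)\lesssim r$), you split into a bulk $\{|\theta|\lesssim at\}$ and its complement, and outside the bulk you trade powers of $\jabr{\theta}$ against $at$ to generate the extra $(1+t)^{-1}$; that outer mechanism is essentially the paper's estimate of $I_2$ in its Case 2 (there via $\ind_{\mathcal{B}^c}\lesssim(|\theta|/(at))^{3+q}$), and your mixed $L^2$--$L^\infty$ Cauchy--Schwarz plus AM--GM step is legitimate.

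The genuine gap is in the bulk. You propose to bound the bulk contribution by a direct $L^\infty$ bound against the third weight $\jabr{a}a^{-4-2q}\jabr{\ell}\ell^{-q/2}\jabr{\theta}^{3+q/2}$. But that term of $M_q(\gamma)$ carries the prefactor $(1+t)^{-1}$, so for $1\le r\le t$ you must extract a total factor of order $r^qt^{-(3+q)}$ from the measure of $\{at\lesssim r,\ |\theta|\lesssim at,\ \ell\lesssim\max(r,r^2a^2)\}$ weighted by $\jabr{a}^{-2}a^{8+4q}\jabr{\ell}^{-2}\ell^{q}\jabr{\theta}^{-6-q}$, and there is no mechanism in the bulk to produce the missing $(1+t)^{-1}$: the only smallness is the $a$-interval $(0,Cr/t]$, giving $(r/t)^{9+4q}$, while the $\theta$-integral is $O(1)$ and the $\ell$-integral even grows like $\max(r,r^2a^2)^{q-1}$ for large $r$. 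Concretely, for $q=2$ and $r=t^{0.9}$ your bulk bound is of size $t^{-1.8}\cdot(r/t)^{17}\cdot(r^4/t^2)\approx t^{-1.9}$ times the $L^\infty$ norm squared, whereas the statement requires $\lesssim t^{-2}\cdot(r/t)^q\cdot(1+t)^{-1}\approx t^{-3.2}$; the same failure occurs for every $q\ge0$ once $r\sim t$. The paper avoids this by routing the bulk through the \emph{first}, $L^2$ term of $M_q$, which carries no $(1+t)^{-1}$: on $\{at\le 2r\}$ one writes $\gamma^2\le(2r/t)^{2+q}\,(a^{-1-q/2}\gamma)^2$ and gets directly $I_1\lesssim r^qt^{-(2+q)}\norm[L^2]{a^{-1-q/2}\gamma}^2$, exactly the claimed decay. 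Your argument goes through if you reroute the bulk this way; relatedly, your outer-region claim that necessarily $\theta<0$ with $\theta\approx-at$ only holds when $at\gg r$, and the complementary piece should be absorbed into the bulk-type estimate.
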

\begin{proof}
	We split the proof into three cases, according to the (relative) size of $t,r$. To begin, notice that by Lemma \ref{lem:EstimatesRadFunct} \eqref{it:lem:EstimatesRadFunct1} we have
	\begin{align*}
		r_0(a,\ell)\approx \ell\jabr{a\sqrt{\ell}}^{-1} \approx \ell\ind_{\{a\sqrt{\ell}\geq1\}} \dfrac{1}{a\sqrt{\ell}} + \ell\ind_{\{a\sqrt{\ell}\leq 1\}} \geq a^{-2}\ind_{\{a\sqrt{\ell}\geq1\}}+ \ell\ind_{\{a\sqrt{\ell}\leq 1\}} = \min\left(a^{-2},\ell \right),
	\end{align*}
	and hence with Lemma \ref{lem:EstimatesRadFunct} \eqref{it:lem:EstimatesRadFunct2}
	\begin{align}\label{eq:Sec3:LowerBoundRadVar}
		\tilde{R}\gtrsim |\theta+at| + r_0 \gtrsim |\theta+at| + \min\left(a^{-2},\ell \right). 
	\end{align}

	\textit{Case 1: $ t,r\leq 1 $.} We prove $ |\mathcal{F}(t,r)|\lesssim r^q $. We hence obtain by splitting into $ a^{-2}\leq \ell $ as well as $ a^{-2}\geq \ell $ and using \eqref{eq:Sec3:LowerBoundRadVar} that
	\begin{align*}
		|\F(t,r)| &=  \dfrac{1}{r^2}\int_{\R}\int_0^\infty\int_0^\infty \ind_{\{\tilde{R}\leq r\}} \gamma^2 \, d\theta da d\ell 
		\\
		&\leq \dfrac{1}{r^2}\int_{\R}\int_0^\infty\int_0^\infty \left( \ind_{\{ |\theta+at| + a^{-2}\leq r\}}+ \ind_{\{|\theta+at| + \ell\leq r\}} \right) \gamma^2 \, d\theta da d\ell
		\\
		&\leq r^q \norm[L^2]{a^{2+q}\gamma}^2 +  \dfrac{r^q}{r^2}\int_{\R}\int_0^\infty\int_0^\infty \ind_{\{|\theta+at| + \ell\leq r\}} \ell^{-q}\gamma^2 \, d\theta da d\ell
		\\
		&\lesssim r^q\left( \norm[L^2]{a^{2+q}\gamma}^2 + \norm[L^\infty]{\jabr{a}\ell^{-q/2}\gamma}^2 \right).
	\end{align*}
	Here, we used in the last inequality that
	\begin{align*}
		\dfrac{1}{r^2}\int_{\R}\int_0^\infty\int_0^\infty \ind_{\{|\theta+at| + \ell\leq r\}} \jabr{a}^{-2} \, d\theta da d\ell = \dfrac{1}{r^2}\int_{\R}\int_0^\infty\int_0^\infty \ind_{\{|\bar{\theta}| + \ell\leq r\}} \jabr{a}^{-2} \, d\bar{\theta} da d\ell \lesssim 1.
	\end{align*}
	
	\textit{Case 2: $ t\geq r $, $t\geq 1$.} We will prove $ |\F(t,r)|\lesssim r^q/t^{2+q} $. We split the integral into the part containing the bulk and its complement
	\begin{align*}
		|\F(t,r)| = \dfrac{1}{r^2}\int_{\R}\int_0^\infty\int_0^\infty \ind_{\{\tilde{R}\leq r\}\cap \mathcal{B}} \gamma^2 \, d\theta da d\ell +\dfrac{1}{r^2}\int_{\R}\int_0^\infty\int_0^\infty \ind_{\{\tilde{R}\leq r\}\cap \mathcal{B}^c} \gamma^2 \, d\theta da d\ell =:I_1+I_2.
	\end{align*}
	We have by the definition of $ \mathcal{B} $, see \eqref{eq:Sec2:DefBulk}, and Lemma \ref{lem:EstDynRadBulk}
	\begin{align*}
		I_1(t,r) \leq \dfrac{1}{r^2}\int_{\R}\int_0^\infty\int_0^\infty \ind_{\{at\leq 2r\}} \gamma^2 \, d\theta da d\ell\lesssim \dfrac{r^q}{t^{2+q}}\norm[L^2]{a^{-1-q/2}\gamma}^2.
	\end{align*}
	Note that outside the bulk we have for any $ k\geq0 $ that
	\begin{align*}
		\ind_{\mathcal{B}^c} \lesssim \dfrac{|\theta|^k}{(at)^k}\, \ind_{\mathcal{B}^c},
	\end{align*}
	so together with \eqref{eq:Sec3:LowerBoundRadVar} we obtain that
	\begin{align*}
		I_2(t,r) \lesssim \dfrac{r^q}{t^{3+q}}\dfrac{1}{r^{2+q}} \int_{\R}\int_0^\infty\int_0^\infty \ind_{\{\min\{|\theta+at| + a^{-2},\ell\}\leq r\}} \left( \dfrac{|\theta|}{a} \right)^{3+q}  \gamma^2 \, d\theta da d\ell.
	\end{align*}
	We again distinguish the cases $ a^{-2}\leq \ell $ and $ a^{-2}\geq \ell $, yielding
	\begin{align*}
		I_2(t,r) &\lesssim \dfrac{r^q}{t^{3+q}}\left( \norm[L^2]{a^{-4-2q}\jabr{\theta}^{(3+q)/2}\gamma}^2 + \norm[L^\infty]{\jabr{a}a^{-3/2-q/2}\jabr{\theta}^{3/2+q}\ell^{-q/2}\gamma}^2\right) 
		\\
		&\lesssim  \dfrac{r^q}{t^{2+q}}\dfrac{1}{1+t}\norm[L^\infty]{\jabr{a}a^{-4-2q}\jabr{\ell}\ell^{-q/2}\jabr{\theta}^{3+q/2}\gamma}^2,
	\end{align*}
	by a similar estimate as in Case 1.
	
	\textit{Case 3: $ r\geq t $, $r\geq 1$.} Here we have that
	\begin{align*}
		|\F(t,r)| \leq \dfrac{1}{r^2}\norm[L^2]{\gamma}^2.
	\end{align*}
	This concludes the proof.
\end{proof}

\begin{lem}\label{lem:EstimatesDerivGravFieldLInf}
	For any $ q\geq0 $ we have the estimate
	\begin{align*}
		|\partial_r \F(t,r)| \lesssim& \dfrac{1}{1+r^2+t^2}\dfrac{1}{r}\min\left( \dfrac{r^q}{(1+t)^q},1 \right) M'_q(\gamma),
		\\
		M'_q(\gamma) :=&\norm[L^\infty]{\jabr{\ell}^{8+4q} \ell^{-q/2} (a+a^{-1})^{8+4q} \jabr{\theta}^{8+4q} \gamma(t)}^2.
	\end{align*}
\end{lem}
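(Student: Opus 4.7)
The approach is to differentiate $\F$ directly. Writing the enclosed mass $M(t,r) := \int\int\int \ind_{\{\tilde R(\theta,a,\ell)\leq r\}} \gamma^2\,d\theta\,da\,d\ell$ so that $\F(t,r)=-M(t,r)/r^2$, the product rule gives
\begin{equation*}
\partial_r \F(t,r) = -\dfrac{2}{r}\F(t,r) - \dfrac{\rho(t,r)}{r^2},\qquad \rho(t,r) := \partial_r M(t,r).
\end{equation*}
For the first summand I would simply invoke Lemma \ref{lem:EstimatesGravFieldLInf} and divide by $r$, after the elementary observation that $M_q(\gamma)\lesssim M_q'(\gamma)$: each $L^2$- and $L^\infty$-weighted norm in $M_q$ is dominated by the single stronger $L^\infty$-weighted norm in $M_q'$ thanks to the extra polynomial decay in $(a+a^{-1})$, $\langle\theta\rangle$, $\langle\ell\rangle$ and the absorption $\ell^q\cdot \langle\ell\rangle^{-16-8q}$ at $\ell\to 0$.

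For the $\rho$-term I would use a co-area argument. Since $\theta\mapsto\tilde R(\theta,a,\ell)=p H_\kappa(|\theta+at|/p)-p\kappa$ is $U$-shaped with unique minimum $r_0(a,\ell)$ at $\theta=-at$, for $r>r_0(a,\ell)$ the sublevel set $\{\tilde R\leq r\}$ is a single interval $[\theta_-(r,a,\ell,t),\theta_+(r,a,\ell,t)]$ with $\tilde R(\theta_\pm,a,\ell)=r$. Lemma \ref{lem:DerivRadFunct}\eqref{it:lem:DerivRadFunct2} together with the factorization $a^2R^2+mR-\ell=a^2(R-r_0)(R+r_-)$, where $r_-(a,\ell):=\tfrac{m}{2a^2}+p(a,\ell)$, yields
\begin{equation*}
|\partial_\theta \tilde R(\theta_\pm,a,\ell)| = \dfrac{\sqrt{(r-r_0)(r+r_-)}}{r},
\end{equation*}
so by the change of variables $\theta\mapsto\rho=\tilde R$ on each branch one obtains
\begin{equation*}
\rho(t,r) = \int_0^\infty\int_0^\infty \dfrac{r\bigl[\gamma^2(t,\theta_+,a,\ell)+\gamma^2(t,\theta_-,a,\ell)\bigr]}{\sqrt{(r-r_0(a,\ell))(r+r_-(a,\ell))}}\,\ind_{\{r>r_0(a,\ell)\}}\,da\,d\ell.
\end{equation*}

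The bulk of the work is then to show $\rho(t,r)/r^2$ satisfies the claimed bound. I would control $\gamma^2(\theta_\pm,a,\ell)$ pointwise via
\begin{equation*}
\gamma^2(\theta_\pm,a,\ell) \leq M_q'(\gamma)\,\langle\ell\rangle^{-16-8q}\ell^{q}(a+a^{-1})^{-16-8q}\langle\theta_\pm\rangle^{-16-8q}
\end{equation*}
and replicate the three-case split from the proof of Lemma \ref{lem:EstimatesGravFieldLInf} ($r,t\leq 1$; $t\geq r\vee 1$; $r\geq t\vee 1$). The principal ingredients will be: the lower bound $r+r_-\gtrsim p\approx \langle a\sqrt\ell\rangle/a^2$ from Lemma \ref{lem:EstimatesRadFunct}\eqref{it:lem:EstimatesRadFunct1}, which converts $1/\sqrt{r+r_-}$ into a power of $a$ absorbed by the $(a+a^{-1})$-weights; the explicit formula $\theta_\pm=-at\pm p\,G_\kappa(r/p+\kappa)$ together with Lemma \ref{lem:EstimatesG} and Lemma \ref{lem:EstimatesRadFunct}\eqref{it:lem:EstimatesRadFunct2}, yielding $|\theta_\pm|\lesssim at+r$ in general and $|\theta_\pm|\gtrsim at$ in the bulk regime $at\geq 2r$; and the integrability of the singularity $1/\sqrt{r-r_0(a,\ell)}$ against $da$ (or $d\ell$), which follows from the quantitative monotonicity of $r_0$ given by Lemma \ref{lem:EstimatesRadFunct}\eqref{it:lem:EstimatesRadFunct1}, with the weight $\ell^q$ handling the endpoint $\ell\to 0$.

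The most delicate step will be the bookkeeping in the case $t\geq r\vee 1$, where the extra factor $r^{-1}$ compared to the bound on $\F$ must be extracted simultaneously with the sharp $(1+t)^{-(q+2)}$ decay, while trading the square-root singularity for powers of $a$ and $\ell$. This is precisely where the strengthened pointwise weights in $M_q'(\gamma)$ (powers $8+4q$ in $\langle\theta\rangle$ and $(a+a^{-1})$, versus $3+q/2$ in Lemma \ref{lem:EstimatesGravFieldLInf}) afford the margin required to absorb the additional $\sqrt{r-r_0}^{-1}$ factor after the $a,\ell$ integration.
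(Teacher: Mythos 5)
Your reduction is sound and in fact lands exactly on the paper's decomposition: the term $-\tfrac{2}{r}\F$ is the paper's $I_1$ (handled, as you say, by Lemma \ref{lem:EstimatesGravFieldLInf} and $M_q\lesssim M_q'$), and your co-area boundary term is identical to the paper's $I_2$, which is obtained there by undoing the symplectic change of variables and evaluating on the sphere $s=r$: the Jacobian $a/\sqrt{a^2+\tfrac{m}{r}-\tfrac{\ell}{r^2}}$ appearing in the paper equals your $r/\sqrt{(r-r_0)(r+r_-)}$, and $\Theta(r,v,\ell)-t\Act(r,v,\ell)=\theta_\pm$. So up to the formula for $\rho(t,r)$ you are on the paper's track.

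The gap is in the plan for estimating $\rho/r^2$. Your stated mechanisms --- $\jabr{\theta_\pm}$-decay when $at\gtrsim r$, smallness of $a$ when $at\lesssim r$, and integrability of $1/\sqrt{r-r_0}$ absorbed by stronger weights --- do not cover the outgoing resonant region where $at\approx r$ with $a\approx 1$ (i.e.\ $r\approx t$ in your case $t\geq r\vee 1$, and its analogue when $r\geq t$). There $\theta_+=-at+pG_\kappa(\tfrac{r}{p}+\kappa)=r-at-\tfrac{m}{2a^2}\ln\jabr{r}+\mathcal{O}(a^{-2}+\ell)$ can vanish, all the weights $\jabr{\theta_+}^{-K}$, $(a+a^{-1})^{-K}$, $\jabr{\ell}^{-K}$ are $O(1)$, and $\sqrt{(r-r_0)(r+r_-)}\approx r$, so any pointwise bound on $\gamma^2(\theta_+)$ followed by integration in $(a,\ell)$ over an order-one set yields only $\rho/r^2\lesssim r^{-2}\approx t^{-2}$, a full power of $t$ short of the claimed $r^{q-1}(1+t)^{-(2+q)}\approx t^{-3}$ at $r\approx t$; no strengthening of polynomial moments can repair this, since they are all $O(1)$ on that region. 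The missing factor $1/t$ comes from the measure in $a$ of the resonant set: since on the relevant range $\partial_a\theta_+=-t+\tfrac{m}{a^3}\ln\jabr{r}+\dots\lesssim -t$, the $a$-integral of $\jabr{\theta_+(a)}^{-K}$ is $O(1/t)$, with $a\approx r/t$ there supplying the remaining $(r/t)^{q}$-type factors. This is precisely the change of variables $a\mapsto\zeta=r-at-\tfrac{m}{2a^2}\ln\jabr{r}$ (after restricting to a set like $\{t^{-1/5}\leq a\leq t^{1/5},\ \ell\leq t^{1/5},\ |\theta_+|\leq\sqrt{t}\}$ and disposing of the complement by moments) that the paper carries out in Cases 2 and 3 for $I_3^+$; your sketch contains no substitute for it, and without it the estimate fails. (A minor additional point: $|\theta_+|\gtrsim at$ on $\{at\geq 2r\}$ requires the implicit constant of Lemma \ref{lem:EstimatesG}, i.e.\ a threshold $at\geq Cr$; that is cosmetic compared with the resonant region above.)
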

\begin{proof}
	By \eqref{eq:Sec3:DynGravField} we have
	\begin{align*}
		\partial_r \F(t,r) &= \dfrac{2}{r^3} \int_{\R}\int_0^\infty\int_0^\infty \ind_{\{R(\theta+ta,a)\leq r\}} \gamma^2 \, d\theta da d\ell - \dfrac{1}{r^2}\partial_r\int_{\R}\int_0^\infty\int_0^\infty \ind_{\{R(\theta+ta,a)\leq r\}} \gamma^2 \, d\theta da d\ell
		\\
		&=:I_1+I_2.
	\end{align*}
Note that by Lemma \ref{lem:EstimatesGravFieldLInf} we get
\begin{align*}
	|I_1(t,r)| =|\F(t,r)| \dfrac{1}{r} \lesssim \dfrac{1}{1+r^2+t^2}\dfrac{1}{r}\min\left( \dfrac{r^q}{(1+t)^q},1 \right) M_q(\gamma).
\end{align*}
This suffices, since $ M_q(\gamma)\lesssim M_q'(\gamma) $. 

We now treat the second integral $ I_2 $. Invoking the symplectic change of variables $ (s,v)\mapsto (\Theta(s,v,\ell)-t\Act(s,v,\ell),\Act(s,v,\ell)) $ gives
\begin{align*}
	I_2(t,r)&=-\dfrac{1}{r^2}\partial_r \int_0^\infty \int_{\R}\int_0^\infty  \ind_{\{s\leq r\}} \ind_{\{v^2-\frac{m}{s}+\frac{\ell}{s^2}>0\}} \gamma\left( \Theta(s,v,\ell)-t\Act(s,v,\ell),\Act(s,v,\ell),\ell \right)^2dsdvd\ell
	\\
	&=  -\dfrac{1}{r^2}\int_0^\infty \int_{\R} \ind_{\{v^2-\frac{m}{r}+\frac{\ell}{r^2}>0\}} \gamma\left(t, \Theta(r,v,\ell)-t\Act(r,v,\ell),\Act(r,v,\ell),\ell \right)^2d\ell dv.
\end{align*}
We then obtain that for $ k:=15+7q $
\begin{align*}
	|I_2(t,r)| &\lesssim I_3(t,r) \, \tilde{M}_q(t,r),
	\\
	I_3(t,r) &:= \dfrac{1}{r^2}\int_0^\infty \int_{\R} \ind_{\{v^2-\frac{m}{r}+\frac{\ell}{r^2}>0\}} \jabr{\ell}^{-k}\ell^q(\Act+\Act^{-1})^{-k}\jabr{\Theta-t\Act}^{-k}d\ell dv,
	\\
	\tilde{M}_q(t,r) &:= \sup_{v,\ell}\left\lbrace \ind_{\{v^2-\frac{m}{r}+\frac{\ell}{r^2}>0\}}\jabr{\ell}^{k}\ell^{-q}(\Act+\Act^{-1})^{k}\jabr{\Theta-t\Act}^{k} \gamma\left(t, \Theta-t\Act,\Act,\ell \right)^2 \right\rbrace.
\end{align*}
Note that
\begin{align*}
	\tilde{M}_q(t,r) &\leq \sup_{r,v,\ell}\left\lbrace \ind_{\{v^2-\frac{m}{r}+\frac{\ell}{r^2}>0\}}\jabr{\ell}^{k}\ell^{-q}(\Act+\Act^{-1})^{k}\jabr{\Theta-t\Act}^{k}\gamma\left(t, \Theta-t\Act,\Act,\ell \right)^2 \right\rbrace
	\\
	&= \sup_{\theta,a,\ell}\left\lbrace \ind_{\{a>0\}}\jabr{\ell}^{k} \ell^{-q} (a+a^{-1})^{k} \jabr{\theta}^{k} \gamma(t,\theta,a,\ell)^2 \right\rbrace\leq M'_q(\gamma).
\end{align*}
In the following we will prove that
\begin{align*}
	I_3(t,r)\lesssim \dfrac{1}{1+r^2+t^2}\dfrac{1}{r}\min\left( \dfrac{r^q}{(1+t)^q},1 \right).
\end{align*}
To this end, we split the integral into two terms $ I^{+}_3(t,r),\, I^{-}_3(t,r) $ according to $ v\geq0 $ and $ v\leq 0 $. For each integral we use the change of variables 
\begin{align*}
	a\mapsto v=\pm \sqrt{a^2+\dfrac{m}{r}-\dfrac{\ell}{r^2}},
\end{align*}
where by Lemma \ref{pro:ActionAngleVariables}
\begin{align*}
	\Theta(r,v,\ell) = \sgn(v) pG_\kappa\left( \dfrac{r}{p}+\kappa \right),
\end{align*}
so that
\begin{align*}
	I^{\pm}_3(t,r) = \dfrac{1}{r^2}\int_0^\infty \int_0^\infty \ind_{\{a^2+\frac{m}{r}-\frac{\ell}{r^2}>0\}} \, \dfrac{a\ell^q\jabr{\ell}^{-k}}{\sqrt{a^2+\frac{m}{r}-\frac{\ell}{r^2}}} (a+a^{-1})^{-k}\jabr{\pm pG_\kappa\left( \dfrac{r}{p}+\kappa \right)-at}^{-k}d\ell da.
\end{align*}
We now treat three cases.

\textit{Case 1: $ t,r\lesssim 1 $ (with an implicit constant that will be clear below).} We will prove that $ I_3^\pm(t,r)\lesssim r^{q-1} $. We have
\begin{align*}
		I^{\pm}_3(t,r) \leq \dfrac{1}{r^2}\int_0^\infty  \int_0^\infty \,  \ind_{\{a^2+\frac{m}{r}-\frac{\ell}{r^2}>0\}} \dfrac{a\jabr{\ell}^{-k}\ell^q(a+a^{-1})^{-k}}{\sqrt{a^2+\frac{m}{r}-\frac{\ell}{r^2}}}d\ell da,
\end{align*}
and the change of variables $ a\mapsto \xi=a^2 $, $ \ell\mapsto\eta=\ell/rm $ gives
\begin{align*}
	I^{\pm}_3(t,r) \lesssim \dfrac{r^q}{r}\int_0^\infty  \int_0^\infty \,  \ind_{\{\xi+\frac{m}{r}(1-\eta)>0\}}\dfrac{ \jabr{r\eta}^{-k} \eta^q \jabr{\xi}^{-k/2}}{\sqrt{\xi+\frac{m}{r}(1-\eta)}}d\eta d\xi.
\end{align*}
For this integral we split into the contribution $ \eta\leq 1 $ and $ \eta \geq 1 $. Since $k\geq 3$, the first contribution can be bounded by
\begin{align*}
	\dfrac{r^q}{r}\int_0^1 \int_0^\infty  \,  \ind_{\{\xi+\frac{m}{r}(1-\eta)>0\}} \dfrac{1}{\sqrt{\xi}}\jabr{\xi}^{-k/2}\lesssim \dfrac{r^q}{r}d\eta d\xi
\end{align*}
The second contribution is bounded by
\begin{align*}
	 \dfrac{r^q}{r}\int_0^\infty\int_0^\infty \,  \ind_{\{\xi+\frac{m}{r}(1-\eta)>0\}}\dfrac{ \eta^q \jabr{\xi}^{-k/2}}{\sqrt{\xi+\frac{m}{r}(1-\eta)}} d\eta d\xi&= \dfrac{r^q}{r}\int_0^\infty\int_0^\infty \,  \ind_{\{\zeta+\frac{m}{r}(\eta-1)>0\}}\dfrac{ \eta^q \jabr{\zeta+\frac{m}{r}(\eta-1)}^{-k/2}}{\sqrt{\zeta}} d\eta d\xi
	 \\
	 &\lesssim \dfrac{r^q}{r}\int_0^\infty  \int_0^\infty  \, \dfrac{ \eta^q \jabr{\zeta+\eta}^{-k/2}}{\sqrt{\zeta}}d\eta d\xi \lesssim  \dfrac{r^q}{r},
\end{align*}
having used the change of variables $ \xi\mapsto \zeta=\xi+\frac{m}{r}(1-\eta) $, as well as the facts that $ r\lesssim 1 $ in the second to last inequality, and $ k\geq 5+2q $ in the last inequality.

\textit{Case 2: $ r\leq t $ and $ t\gtrsim 1 $.} 
We will prove that $ I^{\pm}_3(t,r) \lesssim r^{q-1}t^{-2-q} $. We first consider the integral $ I_3^{-}(t,r) $:
\begin{align*}
	I_3^{-}(t,r) &\lesssim \dfrac{1}{r^2}\dfrac{1}{t^{2+q}} \int_0^\infty  \int_0^\infty \ind_{\{a^2+\frac{m}{r}-\frac{\ell}{r^2}>0\}} \, \dfrac{a\ell^q\jabr{\ell}^{-k}}{\sqrt{a^2+\frac{m}{r}-\frac{\ell}{r^2}}} (a+a^{-1})^{-k}a^{-2-q}d\ell da
	\\
	&\lesssim \dfrac{r^q}{r}\dfrac{1}{t^{2+q}} \int_0^\infty \int_0^\infty  \ind_{\{\xi+\frac{m}{r}(1-\eta)>0\}} \, \dfrac{\eta^q\jabr{r\eta}^{-k}}{\sqrt{\xi+\frac{m}{r}(1-\eta)}} \jabr{\xi}^{-k/2+4+2q}d\eta d\xi,
\end{align*}
thanks to the same change of variables as in Case 1. For $ r\lesssim1 $ we can now argue as in Case 1, making use of $ k\geq 14+6q $, whereas for $ r\geq 1 $ we use the change of variables $ \xi\mapsto\zeta = \xi+\frac{m}{r}(1-\eta) $ and $ m(\eta-1)/r\geq -m $ to obtain that
\begin{align*}
	I_3^{-}(t,r) &\lesssim\dfrac{r^q}{r}\dfrac{1}{t^{2+q}} \int_0^\infty \int_0^\infty \ind_{\{\zeta+\frac{m}{r}(\eta-1)>0\}} \, \dfrac{\eta^q\jabr{\eta}^{-k}}{\sqrt{\zeta}} \jabr{\zeta+\frac{m}{r}(\eta-1)}^{-k/2+4+2q}d\eta d\xi
	\\
	&\lesssim \dfrac{r^q}{r}\dfrac{1}{t^{2+q}} \int_0^\infty \int_0^\infty \ind_{\{\zeta+\frac{m}{r}(\eta-1)>0\}} \, \dfrac{\eta^q\jabr{\eta}^{-k}}{\sqrt{\zeta}} \jabr{\zeta}^{-k/2+4+2q} d\eta d\xi\lesssim \dfrac{r^q}{r}\dfrac{1}{t^{2+q}},
\end{align*}
since $ k\geq 12+4q $. Consider next the integral $ I^+_3(t,r) $, and let us introduce the notation
\begin{align*}
	\bar{\theta}(t,r,a,\ell) :=& pG_\kappa\left( \dfrac{r}{p}+\kappa \right)-at,
	\qquad D:= \left\lbrace t^{-1/5}\leq a\leq t^{1/5},\, \ell\leq t^{1/5},\, |\bar{\theta}|\leq \sqrt{t} \right\rbrace. 
\end{align*}
Note that the contribution of $ I^+_3(t,r) $ on the set $ D^c $ can be bounded by
\begin{align*}
	\dfrac{1}{r^2}\dfrac{1}{t^{2+q}} \int_0^\infty \int_0^\infty \ind_{\{a^2+\frac{m}{r}-\frac{\ell}{r^2}>0\}} \, \dfrac{a\ell^q\jabr{\ell}^{-k'}}{\sqrt{a^2+\frac{m}{r}-\frac{\ell}{r^2}}} (a+a^{-1})^{-k'} d\ell da,
\end{align*}
while the exponent $ k' $ is chosen as either $ k $ or $ k-10-5q $. This integral can be treated as for $ I_3^-(t,r) $, using $ k\geq 15+7q $.

We next study the contribution of the integral $ I^+_3(t,r) $ on the set $ D $. Let us recall that by Lemma \ref{lem:EstimatesG} we have
\begin{align*}
	\bar{\theta} &= r+kp-at-p\kappa \ln\left( \dfrac{r}{p}+\kappa \right)   +\mathcal{O}(p) = r-at - \kappa p \ln\jabr{r} - \kappa p\ln\jabr{p^{-1}} + \mathcal{O}(p) 
	\\
	&=r-at -\dfrac{m}{2a^2}\ln\jabr{r} + \mathcal{O}\left( a^{-2}+\ell\right),
\end{align*}
since $ \kappa p=m/2a^2 $ (see Lemma \ref{lem:HypTraj}) and $ p\approx \ell/\jabr{a\sqrt{\ell}}\lesssim a^{-2}+\ell $. The change of variables
\begin{align*}
	a\mapsto \zeta := r-at -\dfrac{m}{2a^2}\ln\jabr{r}
\end{align*}
is well-defined since
\begin{align}\label{eq:Sec3:ProofLInfChangeOfVar}
	\dfrac{d\zeta}{da} = -t + \dfrac{m}{a^3}\ln\jabr{r} \leq -t + mt^{3/5}\ln\jabr{t} \lesssim -t,\qquad t\gtrsim 1. 
\end{align}
Furthermore, on $ D $  we have that
\begin{align*}
	|\bar{\theta}| \lesssim \sqrt{t} \implies |\zeta| \lesssim \sqrt{t} + a^{-2}+\ell \lesssim \sqrt{t},
\end{align*}
which implies by definition of $ \zeta $ with $ a\geq t^{-1/5} $ that
\begin{align*}
	|r-at| \lesssim \sqrt{t} \implies r\gtrsim t^{4/5}.
\end{align*}
Hence we have with $ |\zeta|\lesssim \sqrt{t} $ and $t\gtrsim 1$ that
\begin{align*}
	a = \dfrac{r-\zeta-m\ln\jabr{r}/2a^2}{t} \approx \dfrac{r}{t}.
\end{align*}
Similarly, for some constant $ C $
\begin{align*}
	|\bar{\theta}| \geq \max\left( |\zeta|-C\left( a^{-2}+\ell \right) ,0 \right)
\end{align*}
as well as $ \ell\leq t^{1/5} \leq t^{4/5}\lesssim r $ and hence
\begin{align*}
	a^2+\dfrac{m}{r}-\dfrac{\ell}{r^2} =a^2 + \dfrac{m}{r}\left( 1- \dfrac{\ell}{mr} \right) \geq a^2.
\end{align*}
Hence, for the contribution of $ I^+_3(t,r) $ on the set $ D $ we obtain upon changing variables $ a\mapsto \zeta $ and using the bound \eqref{eq:Sec3:ProofLInfChangeOfVar} that
\begin{align*}
	&\dfrac{1}{r^2}\dfrac{1}{t} \int \int  \left( a(\zeta)+a(\zeta)^{-1} \right)^{-k}\jabr{\ell}^{-k+q}\max\left( |\zeta|-C\left( a(\zeta)^{-2}+\ell \right) ,1 \right)^{-k}d\zeta d\ell \lesssim \dfrac{1}{r^2}\dfrac{1}{t} \left( \dfrac{r}{t} \right)^{2+q} = \dfrac{r^q}{rt^{2+q}},
\end{align*}
since $ a\approx r/t $ and $ k\geq 4+q $.

\textit{Case 3: $ r\geq t $ and $ r\gtrsim1 $.} 
We will prove that $ I_3^\pm(t,r) \lesssim  1/r^3 $. First, we treat the integral $ I_3^{-}(t,r) $. Observe that in this case
\begin{align*}
	pG_\kappa\left( \dfrac{r}{p}+\kappa \right)+at = r+at -\dfrac{m}{2a^2} \ln\jabr{r} + \mathcal{O}(p),
\end{align*}
due to Lemma \ref{lem:EstimatesG} and arguments as in Case 2. Hence the contribution on the set $ \{a\geq 1/r^{1/4}, \ell\leq r^{1/4}\} $ can be bounded by
\begin{align*}
	\dfrac{1}{r^{2+q}}\int_0^\infty \int_0^\infty \,  \ind_{\{a^2+\frac{m}{r}-\frac{\ell}{r^2}>0\}} \dfrac{a\jabr{\ell}^{-k}\ell^q(a+a^{-1})^{-k}}{\sqrt{a^2+\frac{m}{r}-\frac{\ell}{r^2}}} d\ell da\lesssim \frac{r^q}{r r^{2+q}}=1/r^3,
\end{align*}
by arguments as in Cases 1 and 2. On the other hand, on the set $ \{a\geq 1/r^{1/4}, \ell\leq r^{1/4}\}^c $ we can use moments in $ a^{-1} $ and $ \ell $ to obtain the claimed bound. It thus remains treat the integral $ I^+_3(t,r) $. In analogy with Case 2 we let
\begin{align*}
	\bar{\theta}(t,r,a,\ell) :=& pG_\kappa\left( \dfrac{r}{p}+\kappa \right)-at,
	\qquad D':= \left\lbrace r^{-1/5}\leq a\leq r^{1/5},\, \ell\leq r^{1/5},\, |\bar{\theta}|\leq \sqrt{r} \right\rbrace. 
\end{align*}
On the set $ (D')^c $ we can argue using moments to yield the bound $ 1/r^3 $. On the set $ D' $ we have again by Lemma \ref{lem:EstimatesG}
\begin{align*}
	\bar{\theta} = r-at-\dfrac{m}{2a^2}\ln\jabr{r} + \mathcal{O}\left( a^{-2}+\ell\right),
\end{align*}
and in addition for $ r\gtrsim1 $
\begin{align*}
	|\bar{\theta}|\leq \sqrt{r} \implies at\geq r-\dfrac{m}{2a^2}\ln\jabr{r} -Cr^{2/5} \implies t\gtrsim r^{4/5}
\end{align*}
Furthermore, we observe that for $ \zeta(a) = r-at-\dfrac{m}{2a^2}\ln\jabr{r} $ it holds
\begin{align*}
	|\bar{\theta}| &\leq \sqrt{r} \implies |\zeta| \lesssim \sqrt{r}
	\\
	\dfrac{d\zeta}{da}&=-t + \dfrac{m}{a^3}\ln\jabr{r} \leq -t + 2mr^{3/5}\ln\jabr{r} \lesssim -t,
\end{align*}
where in the last step we used that $ t\gtrsim r^{4/5} $, $ r\gtrsim 1 $. Let us also note that we have with $ |\zeta|\lesssim \sqrt{r} $ that
\begin{align*}
	a = \dfrac{r-\zeta-m\ln\jabr{r}/2a^2}{t} \approx \dfrac{r}{t},
\end{align*}
and for some constant $ C $
\begin{align*}
	|\bar{\theta}| \geq \max\left( |\zeta|-C\left( a^{-2}+\ell \right) ,0 \right)
\end{align*}
as well as $ \ell\leq r^{1/5} \leq r $, and hence
\begin{align*}
	a^2+\dfrac{m}{r}-\dfrac{\ell}{r^2} \geq a^2 + \dfrac{m}{r}\left( 1- \dfrac{\ell}{mr} \right) \geq a^2.
\end{align*}
All in all, we obtain for the contribution of $ I_3^+(t,r) $ on $ D' $, after the change of variables $ a\mapsto \zeta $,
\begin{align*}
	&\dfrac{1}{r^2}\dfrac{1}{t} \int  \int  \left( a(\zeta)+a(\zeta)^{-1} \right)^{-k}\jabr{\ell}^{-k+q}\max\left( |\zeta|-C\left( a(\zeta)^{-2}+\ell \right) ,1 \right)^{-k} d\zeta d\ell\lesssim \dfrac{1}{r^2}\dfrac{1}{t}\dfrac{t}{r} = \dfrac{1}{r^3}.
\end{align*}
Here, we used $ a^{-1}\approx t/r $ and $ k\geq 3+q $. This concludes the proof.
\end{proof}

\subsection{Estimates on the effective gravitational field}\label{subsec:EstEffGravField}
In the following we provide bounds for both $ \F_{\eff}(t,r) $ and $ \partial_r \F_{\eff}(t,r) $ as given in \eqref{eq:Sec3:EffGravField}.

\begin{lem}\label{lem:EstimatesEffGravField}
	We have the estimate
	\begin{align*}
		|\F_{\eff}(t,r)| \lesssim&  \dfrac{1}{r^2 + t^2} \norm[L^2]{\left(1+a^{-1}\right)\gamma(t)}^2.
	\end{align*}
\end{lem}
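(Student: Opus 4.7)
The estimate follows from two complementary bounds for the integral
\begin{align*}
    I(t,r) := \int_{\R}\int_0^\infty\int_0^\infty \ind_{\{at\leq r\}}\gamma(t,\theta,a,\ell)^2\, d\theta\, da\, d\ell,
\end{align*}
which we combine at the end. The plan is to distinguish between the regime $r\gtrsim t$ (where the indicator is not restrictive and the factor $r^{-2}$ in front of $I$ already suffices) and $r\lesssim t$ (where the indicator forces $a$ to be small, so a moment $a^{-1}$ is needed to compensate).

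First, I would observe the trivial bound $\ind_{\{at\leq r\}}\leq 1$, which gives $I(t,r)\leq \norm[L^2]{\gamma(t)}^2$ and hence
\begin{align*}
    |\F_\eff(t,r)|\leq \dfrac{1}{r^2}\norm[L^2]{\gamma(t)}^2.
\end{align*}
Second, for any $(\theta,a,\ell)$ with $at\leq r$ we have $1\leq r^2/(at)^2$, which yields $\ind_{\{at\leq r\}}\leq r^2 a^{-2}t^{-2}$ and therefore
\begin{align*}
    |\F_\eff(t,r)|\leq \dfrac{1}{r^2}\cdot\dfrac{r^2}{t^2}\int_{\R}\int_0^\infty\int_0^\infty a^{-2}\gamma(t,\theta,a,\ell)^2\, d\theta\, da\, d\ell = \dfrac{1}{t^2}\norm[L^2]{a^{-1}\gamma(t)}^2.
\end{align*}

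Combining the two bounds by taking the minimum and using $r^2+t^2\leq 2\max(r^2,t^2)$, so that $\min(r^{-2},t^{-2})\lesssim (r^2+t^2)^{-1}$, I obtain
\begin{align*}
    |\F_\eff(t,r)|\lesssim \dfrac{1}{r^2+t^2}\left(\norm[L^2]{\gamma(t)}^2 + \norm[L^2]{a^{-1}\gamma(t)}^2\right)\lesssim \dfrac{1}{r^2+t^2}\norm[L^2]{(1+a^{-1})\gamma(t)}^2,
\end{align*}
which is the asserted bound. There is no serious obstacle here: unlike the estimates for $\F(t,r)$ in Lemma \ref{lem:EstimatesGravFieldLInf}, the support condition $\{at\leq r\}$ in $\F_\eff$ is directly a condition on the action variable $a$, so no analysis of the map $\tilde{R}$ or splitting into bulk/non-bulk regions is required. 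The only care needed is to choose the right power of the indicator: $\ind\leq 1$ on the one hand, and $\ind\leq r^2/(at)^2$ on the other, which are exactly the two scales appearing in the final estimate.
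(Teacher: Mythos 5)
Your proposal is correct and rests on the same key observation as the paper's proof, namely that on the support $\{at\leq r\}$ one has $t/r\leq 1/a$; the paper simply writes $(r^2+t^2)|\F_{\eff}(t,r)|\leq (1+t^2/r^2)\int \ind_{\{at\leq r\}}\gamma^2\lesssim \norm[L^2]{(1+a^{-1})\gamma}^2$ in one step, whereas you obtain the two bounds $r^{-2}\norm[L^2]{\gamma}^2$ and $t^{-2}\norm[L^2]{a^{-1}\gamma}^2$ separately and take the minimum. The two arguments are essentially identical in content.
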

\begin{proof}
	We have with \eqref{eq:Sec3:EffGravField} and $ t/r\leq 1/a $
	\begin{align*}
		(r^2+t^2)|\F_{\eff}(t,r)| \leq \left(1+\dfrac{t^2}{r^2}\right) \int_{\R}\int_0^\infty\int_0^\infty \ind_{\{at\leq r\}} \gamma^2 \, d\theta da d\ell \lesssim \norm[L^2]{\left(1+a^{-1}\right)\gamma}^2,
	\end{align*}
	which yields the claim.
\end{proof}
\begin{lem}\label{lem:EstimatesDerivGravEffField}
	We have the inequality
	\begin{align*}
		|\partial_r \F_{\eff}(t,r)| \lesssim \dfrac{1}{r^3+t^3}\left[ \norm[L^2]{\left(1+a^{-3/2}\right)\gamma(t)}^2+\norm[L^\infty]{\left( a+a^{-1}\right) \jabr{\theta}^{3/4}\jabr{\ell}^{3/4}\, \gamma(t)}^2 \right].
	\end{align*}
\end{lem}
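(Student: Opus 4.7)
The plan is to differentiate the definition of $\F_{\eff}$ directly in $r$, split $\partial_r\F_{\eff}$ into two contributions, and estimate each with exactly one of the two weighted norms in the statement. Explicitly, one computes
\begin{align*}
	\partial_r\F_{\eff}(t,r) = \dfrac{2}{r^3}\int_{\R}\int_0^\infty\int_0^\infty\ind_{\{at\leq r\}}\gamma(t)^2 \, d\theta da d\ell -\dfrac{1}{r^2 t}\int_{\R}\int_0^\infty \gamma\left(t,\theta,\dfrac{r}{t},\ell\right)^2 d\theta d\ell =: I_1+I_2,
\end{align*}
where the trace in $I_2$ is meaningful (for a.e.\ $r$) thanks to the pointwise representative of $\gamma$ provided by the weighted $L^\infty$ norm in the hypothesis. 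The only mildly delicate step is justifying the distributional derivative of the indicator, which is harmless at this regularity.

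For $I_1$, the key observation is that on the support of $\ind_{\{at\leq r\}}$ one has $a\leq r/t$, so $a^3\leq (r/t)^3$. Writing $\gamma^2=a^3\cdot(a^{-3/2}\gamma)^2$ produces
\begin{align*}
	\int_{\R}\int_0^\infty\int_0^\infty\ind_{\{at\leq r\}}\gamma(t)^2\,d\theta da d\ell \leq \left(\dfrac{r}{t}\right)^3\norm[L^2]{a^{-3/2}\gamma(t)}^2,
\end{align*}
while trivially the same integral is also bounded by $\norm[L^2]{\gamma(t)}^2$. Combining both bounds and dividing by $r^3$ yields
\begin{align*}
	|I_1|\lesssim \min\left(\dfrac{1}{t^3},\dfrac{1}{r^3}\right)\norm[L^2]{(1+a^{-3/2})\gamma(t)}^2\lesssim \dfrac{1}{r^3+t^3}\norm[L^2]{(1+a^{-3/2})\gamma(t)}^2.
\end{align*}

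For $I_2$, I would denote by $N^2$ the square of the $L^\infty$ norm appearing in the statement and use the pointwise bound $\gamma(t,\theta,a,\ell)\leq N/((a+a^{-1})\jabr{\theta}^{3/4}\jabr{\ell}^{3/4})$ at $a=r/t$. Using $(r/t+t/r)^2=(r^2+t^2)^2/(r^2t^2)$ together with the fact that $\int_\R\jabr{\theta}^{-3/2}d\theta$ and $\int_0^\infty\jabr{\ell}^{-3/2}d\ell$ are finite, integrating in $(\theta,\ell)$ and multiplying by $1/(r^2t)$ yields
\begin{align*}
	|I_2|\lesssim \dfrac{tN^2}{(r^2+t^2)^2}\lesssim \dfrac{N^2}{r^3+t^3},
\end{align*}
where the last step splits into the cases $r\leq t$ (so $(r^2+t^2)^2\gtrsim t^4$) and $r\geq t$ (so $(r^2+t^2)^2\gtrsim r^4$ combined with $t/r\leq 1$). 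Summing the bounds on $|I_1|$ and $|I_2|$ gives the claim.
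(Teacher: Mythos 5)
Your proposal is correct and follows essentially the same route as the paper: the same explicit differentiation yielding the two terms $I_1$ and $I_2$, the bound on $I_1$ via $a\leq r/t$ on the support of the indicator (the paper phrases this as $t/r\leq 1/a$) against the weighted $L^2$ norm, and the bound on $I_2$ via the weighted $L^\infty$ norm with the integrability of $\jabr{\theta}^{-3/2}$ and $\jabr{\ell}^{-3/2}$. The only cosmetic difference is that you keep the full weight $(r/t+t/r)^2$ and conclude with a case split $r\lessgtr t$, whereas the paper bounds $r^3|I_2|$ and $t^3|I_2|$ separately using $a^{1/2}$ and $a^{-1}$ weights respectively.
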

\begin{proof}
	By direct computation we ahve
	\begin{align*}
		\partial_r\F_{\eff}(t,r) &= \dfrac{2}{r^3}\int_{\R}\int_0^\infty\int_0^\infty \ind_{\{at\leq r\}} \gamma^2 \, d\theta da d\ell - \dfrac{1}{tr^2}\int_{\R}\int_0^\infty \gamma^2\left(t,\theta,\dfrac{r}{t},\ell\right) \, d\theta d\ell
		\\
		&=:I_1+I_2.
	\end{align*}
	Using that $ t/r\leq 1/a $, estimating as in Lemma \ref{lem:EstimatesEffGravField} gives
	\begin{align*}
		\left(r^3+t^3\right)|I_1(t,r)| \lesssim \norm[L^2]{(1+a^{-3/2})\gamma}^2.
	\end{align*}
	Furthermore, we get
	\begin{align*}
		r^3|I_2(t,r)| = \dfrac{r}{t}\int_{\R}\int_0^\infty \gamma^2\left(t,\theta,\dfrac{r}{t},\ell\right) \, d\theta d\ell \lesssim \norm[L^\infty]{\jabr{\theta}^{3/2}\jabr{\ell}^{3/2} a \, \gamma^2 } \lesssim \norm[L^\infty]{\jabr{\theta}^{3/4}\jabr{\ell}^{3/4} a^{1/2} \, \gamma}^2,
	\end{align*}
    and similarly
	\begin{align*}
		t^3|I_2(t,r)| &= \dfrac{t^2}{r^2}\int_{\R}\int_0^\infty \gamma^2\left(t,\theta,\dfrac{r}{t},\ell\right) \, d\theta d\ell \lesssim \norm[L^\infty]{\jabr{\theta}^{3/2}\jabr{\ell}^{3/2}a^{-2}\, \gamma^2 } \lesssim \norm[L^\infty]{\jabr{\theta}^{3/4}\jabr{\ell}^{3/4}a^{-1}\, \gamma}^2.
	\end{align*}
\end{proof}
In the remainder of this section we prove that $ \F(t,r) $ can be approximated by $ \F_{\eff}(t,r) $ as $ t\to \infty $. More precisely, we have the following lemma.

\begin{lem}\label{lem:GravFieldvsEffField}
	For $ q>1 $ there holds that
	\begin{align*}
		(r^2+t^2)|\F(t,r)-\F_{\eff}(t,r)| \lesssim \dfrac{1}{\sqrt{1+t}} \norm[L^\infty]{\left( a+a^{-1}\right)^{4}\jabr{\theta}^{4}\jabr{\ell}^{4}\ell^{-q/2} \gamma(t)}^2.
	\end{align*}
\end{lem}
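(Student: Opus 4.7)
The starting point is the identity
\begin{align*}
\F(t,r)-\F_{\eff}(t,r) = -\frac{1}{r^2}\int_{\R}\int_0^\infty\int_0^\infty \left[\ind_{\{\tilde{R}\leq r\}} - \ind_{\{at\leq r\}}\right]\gamma^2(t,\theta,a,\ell)\, d\theta\, da\, d\ell,
\end{align*}
in which the integrand is supported on the symmetric-difference set $\{r\in[\min(\tilde{R},at),\max(\tilde{R},at))\}$ and vanishes elsewhere. Denote $M:=\|(a+a^{-1})^{4}\jabr{\theta}^{4}\jabr{\ell}^{4}\ell^{-q/2}\gamma(t)\|_{L^\infty}$, so that pointwise $\gamma^2\leq M^2(a+a^{-1})^{-8}\jabr{\theta}^{-8}\jabr{\ell}^{-8}\ell^{q}$. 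The regime $t\lesssim 1$ is handled at once by the triangle inequality using Lemmas \ref{lem:EstimatesGravFieldLInf} and \ref{lem:EstimatesEffGravField}; so we focus on $t\gg 1$.

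I would split the $(\theta,a)$-domain into the bulk $\mathcal{B}=\{|\theta|\leq at/2\}$ and its complement. Outside the bulk, the weight $\jabr{\theta}^{-8}\lesssim (at)^{-8}$ readily provides enough decay to meet the bound. Inside the bulk, by Lemma \ref{lem:EstDynRadBulk} we have $\partial_a\tilde{R}\approx t$, so for each fixed $(\theta,\ell)$ the level set $\tilde{R}(a)=r$ consists of at most one point $a_*$, and the measure in $a$ of the symmetric-difference slice reduces to $|a_*-r/t|\lesssim|\tilde{R}(r/t)-r|/t$. Using the far-field asymptotic of $H_\kappa$ from Lemma \ref{lem:HFunction} together with Lemma \ref{lem:EstimatesG}, whenever $r\gg p(r/t,\ell)$ we obtain $\tilde{R}(r/t)-r=\theta+\kappa p\ln\jabr{r/p}+\mathcal{O}(p)$, yielding a bound of the form $(|\theta|+p(r/t,\ell)\ln\jabr{r/p(r/t,\ell)})/t$ for the Lebesgue measure of the $a$-slice; when $r$ is close to the turning point $r_0(r/t,\ell)=p(1-\kappa)$, the near-perihelion expansion \eqref{eq:Sec2:HFunction1} gives an analogous $\mathcal{O}((|\theta|+p)/t)$ bound.

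Integrating the above slice-measure against $\jabr{\theta}^{-8}\jabr{\ell}^{-8}\ell^q$ (the $\ell$-integral converges since $q>-1$, here $q>1$) and evaluating $(a+a^{-1})^{-8}$ at $a\approx r/t$, one arrives at
\begin{align*}
\frac{1}{r^2}\int_{\mathcal{B}}\!\left[\ind_{\{\tilde{R}\leq r\}}-\ind_{\{at\leq r\}}\right]\gamma^2\;\lesssim\;\frac{M^2}{r^2 t}\left(\frac{r}{t}+\frac{t}{r}\right)^{\!-8}\!\ln\jabr{t},
\end{align*}
and a routine case check over the ranges $r\leq\sqrt{t}$, $\sqrt{t}\leq r\leq t^{3/2}$, $r\geq t^{3/2}$ yields the target bound $M^2/[(r^2+t^2)\sqrt{1+t}]$ (in fact, a stronger $M^2\ln\jabr{t}/[(r^2+t^2)t]$). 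The main technical obstacle is the uniform tracking of the two distinct asymptotic regimes of $H_\kappa$ (near apogee, where $H_\kappa(x)-1\approx x^2/(1-\kappa)^2$ with its $(1-\kappa)^{-2}$ singularity, versus at infinity, where $H_\kappa(x)-x\approx\kappa\ln x$) throughout the large parameter window in $(a,\ell)$; this requires the weights $(a+a^{-1})^{-8}$ and $\jabr{\ell}^{-8}$ to dominate the $p(r/t,\ell)$ and $\ln\jabr{r/p}$ factors uniformly, which explains the relatively large polynomial orders appearing in the statement.
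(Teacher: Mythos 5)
Your overall strategy coincides with the paper's: the same symmetric-difference identity, the same key input $|\tilde{R}-at|\lesssim p\kappa+|\theta|+p\ln\jabr{\tilde{R}/p}$ (Lemmas \ref{lem:EstimatesG} and \ref{lem:EstimatesRadFunct}), the same split into $\mathcal{B}$ and $\mathcal{B}^c$, and the same mechanism of estimating the measure of a thin slice in $a$. The only genuine deviation is that you try to keep the precise slice measure $\sim \zeta/t$ through an implicit-function/monotonicity argument, which would yield the sharper rate $\ln\jabr{t}/t$, whereas the paper caps the displacement at $\sqrt{1+t}$ (treating the complement by moments) and bounds the $a$-measure of $\{|r-at|\lesssim\sqrt{1+t}\}$ by $\sqrt{1+t}/t$, settling for $1/\sqrt{1+t}$.

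Two steps, however, are not justified uniformly as you state them. First, Lemma \ref{lem:EstDynRadBulk} only gives $|\partial_a\tilde{R}-t|\lesssim\jabr{p/a}\ln\jabr{t}$, so the claim $\partial_a\tilde{R}\approx t$ (and with it the single-crossing reduction and the bound $|a_*-r/t|\lesssim|\tilde{R}(r/t)-r|/t$) fails in the bulk where $a$ is small or $\ell$ is large, namely where $\jabr{p/a}\ln\jabr{t}\gtrsim t$; that region has to be handled separately using the weights $(a+a^{-1})^{-8}\jabr{\ell}^{-8}$, which you do not do. Second, your displayed bulk bound evaluates $(a+a^{-1})^{-8}$ at $a\approx r/t$, but on the branch $\{at\leq r\leq\tilde{R}\}\cap\mathcal{B}$ one only knows $r\lesssim p+at$ (Lemma \ref{lem:EstimatesRadFunct}), so when $r\lesssim p(a,\ell)$ the action need not be comparable to $r/t$ and the factor $(r/t+t/r)^{-8}$ is not a valid pointwise bound there; again this sub-region is controlled by the moments in $a^{-1}$ and $\ell$, not by the formula you wrote. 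Finally, for $t\lesssim1$ the triangle-inequality step needs care: $M_q(\gamma)$ in Lemma \ref{lem:EstimatesGravFieldLInf} contains an $L^2$ norm and the weight $a^{-4-2q}$, which the single $L^\infty$ norm of the statement does not dominate if you use the same $q$; one must either take the parameter there to be $0$ or bound the difference integral directly, as the paper does. These gaps are all fixable (the paper's cruder capping avoids them at the cost of the weaker rate), but as written the bulk estimate is incomplete.
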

\begin{proof}
	We have the formula
	\begin{align*}
		|\F(t,r)-\F_{\eff}(t,r)| = \dfrac{1}{r^2}\int_{\R}\int_0^\infty\int_0^\infty \left| \ind_{\{\tilde{R}\leq r\}}-\ind_{\{at\leq r\}} \right|  \gamma^2 \, d\theta da d\ell.
	\end{align*}
	The integral is thus computed on the set
	\begin{align*}
		D:=\{\tilde{R}\leq r\leq at\}\cup \{at\leq r\leq\tilde{R}\} \subset \left\lbrace |r-at|\leq |\tilde{R}-at| \right\rbrace.
	\end{align*}
	With Lemma \ref{lem:EstimatesRadFunct} \eqref{it:lem:EstimatesRadFunct1} and \eqref{it:lem:EstimatesRadFunct2} we have
	\begin{align}\label{eq:Sec3:ProofApproxEffField}
		\begin{split}
			|\tilde{R}(\theta+at,a)-at|&\lesssim p\kappa +|\theta| + p\ln\jabr{\dfrac{\tilde{R}}{p}}\lesssim p\kappa + |\theta| + p\ln \jabr{\dfrac{|\theta+at|}{p}}
			\\
			&\lesssim p\kappa + |\theta| + p\ln\jabr{\theta+at}\lesssim p\kappa + |\theta| + p(1+|\theta+at|)^{1/8}
			\\
			&\lesssim p^2(1+a^{1/4})+|\theta| + t^{1/4}\lesssim a^{-4}+\ell^2 +|\theta| + t^{1/4}=:\zeta(t,\theta,a,\ell).
		\end{split}
	\end{align}
	We then observe
	\begin{align*}
		|\F(t,r)-\F_{\eff}(t,r)| \lesssim& I(t,r)\norm[L^\infty]{\left( a+a^{-1}\right)^{k/2}\jabr{\theta}^{k/2}\jabr{\ell}^{k/2}\ell^{-q/2}  \gamma}^2,
		\\
		I(t,r) :=& \dfrac{1}{r^2}\int_{\R}\int_0^\infty\int_0^\infty \left( a+a^{-1}\right)^{-k}\jabr{\theta}^{-k}\jabr{\ell}^{-k}\ell^{q} \ind_D\, d\theta da d\ell
	\end{align*}
	for some $ k $ to be specified below and $ q>1 $. We now prove that for $ k>7 $
	\begin{align*}
		(r^2+t^2)I(t,r)\lesssim \dfrac{1}{\sqrt{1+t}}.
	\end{align*}
	We have using either $ 1/r\leq 1/at $ or $ 1/r\leq 1/\tilde{R} $ on $ D $
	\begin{align*}
		(r^2+t^2)I(t,r) &\lesssim \int_{\R}\int_0^\infty\int_0^\infty \left(1+\dfrac{1}{a^2}+\dfrac{t^2}{\tilde{R}^2} \right)  \left( a+a^{-1}\right)^{-k}\jabr{\theta}^{-k}\jabr{\ell}^{-k}\ell^{q} \ind_D\, d\theta da d\ell.
	\end{align*}
	On the one hand, the contribution on the set $ \{|\theta|\geq at/2\}  $ can be bounded using moments in $ |\theta|a^{-1} $ and $ r_0 \gtrsim\min(a^{-2},\ell) $. At this point we use $ q>1 $ and $ k>4 $. On the other hand, on the set $ \{|\theta|\leq at/2\}  $ we can use  that $ \tilde{R}\gtrsim |\theta+at|\geq at/2 $, see Lemma \ref{lem:EstimatesRadFunct} \eqref{it:lem:EstimatesRadFunct2}. We are left with the integral
	\begin{align*}
		&\int_{\R}\int_0^\infty\int_0^\infty \left(1+\dfrac{1}{a^2}+\dfrac{4t^2}{t^2a^2} \right) \left( a+a^{-1}\right)^{-k}\jabr{\theta}^{-k}\jabr{\ell}^{-k}\ell^{q} \ind_D\, d\theta da d\ell
		\\
		&\qquad \lesssim \int_{\R}\int_0^\infty\int_0^\infty\left( a+a^{-1}\right)^{-k+2}\jabr{\theta}^{-k}\jabr{\ell}^{-k}\ind_{\{|r-at|\lesssim \zeta\}}\, d\theta da d\ell.
	\end{align*}
	We can further restrict to the set $ \{\zeta \leq \sqrt{1+t}\} $. The contribution of the complement can be treated using moments in $ |\theta|, \, a^{-1}, \, \ell $, which requires $ k>7 $. We are left with the integral 
	\begin{align*}
		\int_{\R}\int_0^\infty\int_0^\infty\left( a+a^{-1}\right)^{-k+2}\jabr{\theta}^{-k}\jabr{\ell}^{-k} \ind_{\{|r-at|\lesssim \sqrt{1+t}\}}\, d\theta da d\ell \lesssim \ind_{t\leq1}+\ind_{t\geq1}\dfrac{\sqrt{1+t}}{t}\lesssim \dfrac{1}{\sqrt{1+t}},
	\end{align*}
	where we used the fact that the integral with respect to $ a $ is restricted to the set $ \{|a-r/t|\leq \sqrt{1+t}/t\} $ which has size $ \sqrt{1+t}/t $. This yields the first claim.
\end{proof}

In the study of the long-time behavior the following estimate will be useful.
\begin{lem}\label{lem:EffectiveFieldAA}
	Assume that 
	\begin{align*}
		|\partial_r\F_{\eff}(t,r)|\leq \dfrac{K'}{r^3+t^3}.
	\end{align*}
	Then it holds for $ t\geq1 $
	\begin{align*}
		|\F_{\eff}(t,\tilde{R}) - \F_{\eff}(t,at)| \lesssim& \dfrac{K'}{t^3}\jabr{\ell}^{1/2}\jabr{a}\jabr{\dfrac{1}{a^2}}\jabr{\theta}\ln\jabr{t}.
	\end{align*}
\end{lem}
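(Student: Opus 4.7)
My plan has three steps. First, by the fundamental theorem of calculus applied to $r \mapsto \F_{\eff}(t,r)$ and the trivial lower bound $r^3+t^3 \geq t^3$ in the denominator, the hypothesis yields
\[
|\F_{\eff}(t,\tilde R) - \F_{\eff}(t,at)| \leq \left|\int_{\min(\tilde R, at)}^{\max(\tilde R, at)} |\partial_r\F_{\eff}(t,r)|\,dr\right| \leq \frac{K'\,|\tilde R - at|}{t^3}.
\]
This reduces the problem to establishing the displacement estimate
\[
|\tilde R - at| \lesssim \jabr{\ell}^{1/2}\jabr{a}\jabr{1/a^2}\jabr{\theta}\ln\jabr{t} \qquad \text{for } t\geq 1.
\]

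Second, I would obtain this displacement bound by invoking Lemma \ref{lem:EstimatesRadFunct}~(ii), applied with the argument $\theta$ replaced by $\theta+at$, which gives
\[
\bigl|\tilde R + p\kappa - |\theta+at|\bigr| \lesssim p\ln\jabr{\tilde R/p}.
\]
Splitting by the sign of $\theta+at$ (when $\theta+at<0$ one exploits that $|\theta|>at$, so the extra $at$-term produced by the triangle inequality is absorbed into $|\theta|$), one arrives in either case at
\[
|\tilde R - at| \lesssim |\theta| + p\kappa + p\ln\jabr{\tilde R/p}.
\]

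Third, I would close the estimate using the quantitative bounds of Lemma \ref{lem:EstimatesRadFunct}~(i): $p\kappa = m/(2a^2)\lesssim \jabr{1/a^2}$, $p \lesssim \jabr{1/a^2}\jabr{a\sqrt{\ell}} \lesssim \jabr{1/a^2}\jabr{a}\jabr{\ell}^{1/2}$, and $1/p \lesssim a^2$ (from $\kappa \leq 1$). Combined with $\tilde R \lesssim p + |\theta| + at$, these give
\[
\ln\jabr{\tilde R/p} \lesssim \ln\jabr{\theta} + \ln\jabr{a} + \ln\jabr{t}.
\]
A short case split on whether $a \lessgtr 1$ shows that $p\ln\jabr{a} \lesssim \jabr{1/a^2}\jabr{a}\jabr{\ell}^{1/2}$, so the three contributions from $p\ln\jabr{\tilde R/p}$ are bounded by $\jabr{1/a^2}\jabr{a}\jabr{\ell}^{1/2}\cdot(\jabr{\theta}+1+\ln\jabr{t})$; for $t\geq 1$ both $\jabr{\theta}$ and $\ln\jabr{t}$ are bounded below, so their sum is $\lesssim \jabr{\theta}\ln\jabr{t}$, producing the claim.

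The only delicate point is the bookkeeping in the last step: several logarithmic factors in $\theta,a,\ell$ arising from $\ln\jabr{\tilde R/p}$ must be absorbed into polynomial weights while leaving a single clean $\ln\jabr{t}$ factor. This is routine but relies on the inequalities $\ln\jabr{x}\lesssim \jabr{x}$, $\ln\jabr{1/a}\lesssim \jabr{1/a^2}$, and the fact that $p\ln\jabr{a}$ exhibits no pathological growth (morally because the closest approach radius $r_0=p(1-\kappa)$ and the factor $p$ itself are already controlled by the target weights). I do not foresee any structural obstruction beyond this accounting.
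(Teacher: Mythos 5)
Your argument is correct and follows essentially the same route as the paper's proof: a fundamental-theorem-of-calculus estimate with $r^3+t^3\geq t^3$, the displacement bound $|\tilde R-at|\lesssim |\theta|+p\kappa+p\ln\jabr{\tilde R/p}$ via Lemma \ref{lem:EstimatesRadFunct}~(\ref{it:lem:EstimatesRadFunct2}), and the weight bookkeeping $p\approx \jabr{a\sqrt{\ell}}/a^2\lesssim \jabr{\ell}^{1/2}\jabr{1/a^2}$ from part~(\ref{it:lem:EstimatesRadFunct1}). The only cosmetic difference is that the paper controls $p\ln\jabr{\tilde R/p}$ through $x\ln\jabr{1/x}\lesssim\jabr{x}$ rather than your $1/p\lesssim a^2$ with the extra $\ln\jabr{a}\lesssim\jabr{a}$ absorption, which lands on the same final weight.
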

\begin{proof}
	Indeed, we have
	\begin{align*}
		|\F_{\eff}(t,\tilde{R}) - \F_{\eff}(t,at)| &\leq \int_0^1 |\partial_r\F_{\eff}(t,\tilde{R}+s(at-\tilde{R}))| |at-\tilde{R}| \, ds.
	\end{align*}
	By Lemma \ref{lem:EstimatesRadFunct} \eqref{it:lem:EstimatesRadFunct2} we obtain
	\begin{align*}
		|at-\tilde{R}| &\lesssim |\theta| + ||\theta+at|-\tilde{R}|\lesssim |\theta| + p\ln\jabr{\dfrac{\tilde{R}}{p}} \lesssim |\theta| + p\ln\jabr{\dfrac{|\theta+at|}{p}} \lesssim |\theta|+\jabr{p} + p\ln\jabr{\theta+at}
		\\
		&\lesssim \jabr{p}\jabr{\theta}\jabr{a}\ln\jabr{t} \lesssim \jabr{\ell}^{1/2}\jabr{a}\jabr{\dfrac{1}{a^2}}\jabr{\theta}\ln\jabr{t}.
	\end{align*}
	In the last inequality we used $ p\approx \jabr{a\sqrt{\ell}}/a^2\lesssim \jabr{\ell}^{1/2}\jabr{a^{-2}} $ by Lemma \ref{lem:EstimatesRadFunct} \eqref{it:lem:EstimatesRadFunct1}. Finally, we apply the estimate for $ \partial_r\F_{\eff} $ to obtain the claim.
\end{proof}

\subsection{Estimates of the gravitational field in action-angle variables}\label{subsec:EstEffGravFieldAA}
Using the previous lemmas we next provide estimates for the gravitational field as relevant for the equation in action-angle variables. Recall that the gravitational potential has the following form (see \eqref{eq:Sec1:VPinActionAngle}),
\begin{align*}
	\Psi(t,r) &= - \int_{\R}\int_0^\infty\int_0^\infty  \,  \dfrac{\gamma(t,\theta,a,\ell)^2}{\max(R(\theta+at,a),r)} \, d\theta da d\ell, \quad \F(t,r)=-\partial_r\Psi(t,r),
\end{align*}
and $ \tilde{\Psi} := \Psi(t,\tilde{R}) $ is the gravitational potential in action-angle coordinates. 
We prove bounds on $ \partial_{\alpha}\tilde{\Psi} $ and $ \partial_{\alpha,\beta}^2\tilde{\Psi} $, for $ \alpha,\, \beta\in \{\theta,a\} $. To this end, we will assume suitable estimates on $ \F(t,r) $ and $ \partial_r\F(t,r) $ (corresponding to assumptions on $\gamma$ as in the previous sections). We also note that in the results below we will make use of the assumption that $ a\gtrsim \delta/\jabr{\ell}^{1/2} $ for some $ \delta\in (0,1) $. As will be shown in the study of the nonlinear problem this estimate is dynamically stable, see Corollary \ref{cor:nonlin_chars}.

\begin{lem}\label{lem:EstimatesGravFieldAA}
	Let $ \delta\in (0,1) $. Assume that $ \F=\F(t,r) $ for $ t,\, r\geq 0 $ satisfies
	\begin{align*}
		|\F(t,r)| \leq \dfrac{K}{1+r^2+t^2}
	\end{align*}
	for some constant $ K>0 $. Then, we have for $ a\gtrsim \delta/\jabr{\ell}^{1/2} $ and $ t\geq0 $
	\begin{align*}
		|\partial_{\theta}\tilde{\Psi}| \lesssim \dfrac{\min\{1,a\}}{\delta^2}\dfrac{K}{(1+t)^{3/2}}, \quad |\partial_{a}\tilde{\Psi}| \lesssim \dfrac{\jabr{\ell}^{1/2}}{\delta^3}\dfrac{K}{1+t}.
	\end{align*}
\end{lem}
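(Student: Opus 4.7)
The plan is to apply the chain rule directly: since $\tilde{\Psi}(t,\theta,a,\ell) = \Psi(t,\tilde{R}(\theta,a,\ell))$ and $\F=-\partial_r\Psi$, for $\alpha\in\{\theta,a\}$ one has
\[
\partial_\alpha \tilde{\Psi} = -\F(t,\tilde{R})\,\partial_\alpha \tilde{R},
\]
and the hypothesis $|\F(t,r)|\leq K(1+r^2+t^2)^{-1}$ reduces the problem to bounding $|\partial_\alpha \tilde{R}|/(1+\tilde{R}^2+t^2)$ in terms of the stated right-hand sides. All the required formulas and estimates for $\partial_\alpha \tilde{R}$ are already available from Section \ref{sec:LinearizedDyn}.

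For the $\theta$-estimate, Lemma \ref{lem:DerivRadFunct} gives $\partial_\theta \tilde{R} = V(\theta+at,a,\ell)/a$ with $V^2 = a^2 + m\tilde{R}^{-1}-\ell\tilde{R}^{-2}$, which factors as $V^2 = a^2(\tilde{R}-r_0)(\tilde{R}+|r_-|)/\tilde{R}^2$ where $r_\pm$ are the roots of $a^2x^2+mx-\ell=0$. I would split based on the bulk $\mathcal{B}=\{|\theta|\leq at/2\}$: inside $\mathcal{B}$, Lemma \ref{lem:EstDynRadBulk} provides $\tilde{R}\gtrsim at$ and $|\partial_\theta \tilde{R}|\lesssim 1 + p/(at)$, so I redistribute the decay via $(1+\tilde{R}^2+t^2)^{-1} \lesssim (1+t^2)^{-3/4}(1+\tilde{R}^2+t^2)^{-1/4}$ to recover a $(1+t)^{-3/2}$ factor while the leftover denominator is harmless; outside $\mathcal{B}$, Lemma \ref{lem:EstDynRadNoBulk} gives $|\partial_\theta \tilde{R}|\lesssim \jabr{p/\tilde{R}}^{1/2}$, and the moment $\ind_{\mathcal{B}^c}\lesssim (|\theta|/(at))^k$ can be exploited in the spirit of Lemma \ref{lem:EstimatesGravFieldLInf}. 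The assumption $a\jabr{\ell}^{1/2}\gtrsim \delta$, together with Lemma \ref{lem:EstimatesRadFunct} (in particular $r_0 \approx \ell/\jabr{a\sqrt{\ell}}$ and $p\approx\jabr{a\sqrt{\ell}}/a^2$), controls the ratios $p/\tilde{R}$ and $1/(a\sqrt{\ell})$ and produces the $\delta^{-2}$ factor, while the $\min(1,a)$ gain arises by distinguishing the cases $a\leq 1$ and $a\geq 1$ (using the additional bulk gain $\tilde{R}^2+t^2\gtrsim at^2$ in the small-$a$ regime).

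For the $a$-estimate, Lemma \ref{lem:DerivRadFunct} gives the decomposition $\partial_a\tilde{R} = tV/a + \partial_a R(\theta+at,a,\ell)$. The main contribution $tV/a$ yields
\[
\frac{K\, t\,|V|/a}{1+\tilde{R}^2+t^2} \lesssim \frac{K\, |V|/a}{1+t},
\]
and bounding $|V|/a \lesssim \jabr{\ell}^{1/2}/\delta^2$ (one power of $\delta^{-1}$ to tame the perihelion peak of $V$, a second to absorb the $\jabr{\ell}^{1/2}$) yields the announced rate $\jabr{\ell}^{1/2}/(\delta^3(1+t))$. The remainder $\partial_a R$, controlled by Lemma \ref{lem:EstimatesDerivRadFunct} (namely $|\partial_a R|\lesssim (p/a)\ln\jabr{R/p}$), then contributes strictly lower-order terms after using the decay of $(1+\tilde{R}^2+t^2)^{-1}$.

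The main technical obstacle is the simultaneous handling of the $\delta$-, $\ell$-, and $a$-dependences in the near-perihelion regime where $V$ attains its peak $V_{peak}\approx \sqrt{a^2+m^2/(4\ell)}$: there the factor $|V|/a$ can be as large as $m/(a\sqrt{\ell})$, and only the combined use of the explicit formula for $r_0$, the factorization of $V^2$ in action-angle variables, and the lower bound $a\jabr{\ell}^{1/2}\gtrsim \delta$ closes the estimates uniformly in $\ell$. Everywhere else the bounds are essentially bookkeeping once the correct case distinction has been made.
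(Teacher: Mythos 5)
Your skeleton (chain rule $\partial_\alpha\tilde{\Psi}=-\F(t,\tilde{R})\,\partial_\alpha\tilde{R}$, the derivative bounds of Lemmas \ref{lem:EstimatesDerivRadFunct}/\ref{lem:EstDynRadNoBulk}, case analysis in $\ell$ and use of $r_0$ and the support condition) is the same as the paper's, but there is a genuine gap exactly at the point you single out as the main obstacle, and it cannot be closed with the ingredients you list. Under the literal hypothesis $|\F|\leq K(1+r^2+t^2)^{-1}$, consider $\ell\leq 1$ small, $a\approx\delta$ (admissible, since $\jabr{\ell}^{1/2}\approx 1$), and $\theta$ chosen so that $\tilde{R}=r_{peak}=2\ell/m$: then $|\partial_\theta\tilde{R}|=|V|/a=v_{peak}/a\approx \tfrac{m}{2a\sqrt{\ell}}$ (Lemma \ref{lem:HypTraj}), while the assumed field bound only gives $|\F(t,\tilde{R})|\leq K/(1+t^2)$ because $\tilde{R}\ll 1$; the product is of size $\tfrac{K}{1+t^2}\tfrac{1}{\delta\sqrt{\ell}}$, which is not $\lesssim \tfrac{\min\{1,a\}}{\delta^2}\tfrac{K}{(1+t)^{3/2}}$ uniformly as $\ell\to 0$, and the same configuration defeats your bound $|V|/a\lesssim \jabr{\ell}^{1/2}/\delta^2$ used for $\partial_a\tilde{\Psi}$ (for $\ell\leq 1$ the condition $a\jabr{\ell}^{1/2}\gtrsim\delta$ gives only $a\gtrsim\delta$ and no lower bound on $a\sqrt{\ell}$). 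What closes this regime in the paper is additional vanishing of the field at small radii: the proof there works with the factor $\min\bigl(\tilde{R}/(1+t),1\bigr)$ (this is exactly the extra smallness supplied in the application by Lemma \ref{lem:EstimatesGravFieldLInf} with $q\geq 1$, i.e.\ by the $\ell^{-1}$-moments of $\gamma$, and it appears as an explicit hypothesis in Lemma \ref{lem:EstimatesDerivGravFieldAA}); with it the dangerous case $p/\tilde{R}\geq 1$, $\ell\leq 1$ is tamed via
\begin{align*}
\dfrac{K}{1+t^2}\,\dfrac{\tilde{R}}{1+t}\left(\dfrac{p}{\tilde{R}}\right)^{1/2}\leq \dfrac{Kp}{(1+t)^3}\lesssim \dfrac{K}{\delta^2(1+t)^3}.
\end{align*}
So you must either strengthen the assumed bound on $\F$ (or import this factor from the application) — without it your plan does not close for either derivative.

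Two further points. First, the proposed use of $\ind_{\mathcal{B}^c}\lesssim(|\theta|/(at))^k$ ``in the spirit of Lemma \ref{lem:EstimatesGravFieldLInf}'' is not available here: this lemma is a pointwise bound in $(\theta,a,\ell)$, there is no integration against a weighted density, so nothing absorbs such a factor; it is also unnecessary, since $\tfrac{1}{1+\tilde{R}^2+t^2}\leq\tfrac{1}{1+t^2}$ already exceeds the required $(1+t)^{-3/2}$ decay, and indeed the paper dispenses with the bulk/non-bulk split entirely and uses only the global estimates of Lemma \ref{lem:EstDynRadNoBulk}. Second, your mechanism for the $\min\{1,a\}$ gain is backwards: $\tilde{R}^2+t^2\gtrsim at^2$ yields an upper bound carrying $a^{-1}$, not $a$. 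In the paper the gain for $a\leq1$, $\ell\geq 1$ comes from $\tfrac{1}{1+\tilde{R}^2+t^2}\lesssim\tfrac{1}{(1+t)^{3/2}\tilde{R}^{1/2}}$ together with $\tilde{R}\geq r_0\approx \ell\jabr{a\sqrt{\ell}}^{-1}$ and $a\sqrt{\ell}\gtrsim\delta$, while for $\ell\leq 1$ it is free from $1\lesssim a/\delta$.
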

\begin{proof}
	We will use the formulas
	\begin{align*}
		\partial_{\theta}\tilde{\Psi} = -\F(t,\tilde{R}) \partial_{\theta}\tilde{R}, \quad \partial_{a}\tilde{\Psi} = -\F(t,\tilde{R}) \partial_{a}\tilde{R},
	\end{align*}
	and will distinguish cases according to $ \ell\leq 1 $ or $ \ell\geq 1 $.
	
	\textit{Bound for $ \partial_{\theta}\tilde{\Psi} $.} 
    We have by Lemma \ref{lem:EstDynRadNoBulk} 
	\begin{align*}
		|\partial_{\theta}\tilde{\Psi}| \lesssim \dfrac{K}{1+\tilde{R}^2+t^2}\min\left( \dfrac{\tilde{R}}{1+t},1 \right) \jabr{\dfrac{p}{\tilde{R}}}^{1/2}.
	\end{align*}
	For $ \ell \leq 1 $ we have
    \begin{align*}
        p\approx a^{-2}\jabr{a\sqrt{\ell}}\lesssim \min\{a^{-2}, a^{-1}\}\leq 1/\delta^2.
    \end{align*}
    Hence, we obtain with $ a\gtrsim \delta $ that
	\begin{align*}
		|\partial_{\theta}\tilde{\Psi}| \lesssim \ind_{p/\tilde{R}\leq1} \dfrac{K}{(1+t)^2} + \ind_{p/\tilde{R}\geq 1}  \dfrac{K}{(1+t)^2} p^{1/2}\lesssim \dfrac{1}{\delta}\dfrac{K}{(1+t)^2}\lesssim \dfrac{\min\{1, a\}}{\delta^2}\dfrac{K}{(1+t)^2} .
	\end{align*} 
	For $ \ell\geq 1 $ we observe first that since $ a\geq \delta/\jabr{\ell}^{1/2} $,
	\begin{align*}
		\jabr{\dfrac{p}{\tilde{R}}} \lesssim \jabr{\dfrac{1}{a^2\ell}}\lesssim \dfrac{1}{\delta^2}.
	\end{align*}
    In case $ a\geq 1 $, Lemma \ref{lem:EstimatesRadFunct} yields
	\begin{align*}
		|\partial_{\theta}\tilde{\Psi}| \lesssim \dfrac{1}{\delta}\dfrac{K}{(1+t)^2}.
	\end{align*}
	For $ a\leq 1 $ we use Lemma \ref{lem:EstimatesRadFunct} \eqref{it:lem:EstimatesRadFunct1}, \eqref{it:lem:EstimatesRadFunct2} to obtain
	\begin{align*}
		|\partial_{\theta}\tilde{\Psi}|\lesssim \dfrac{K}{(1+t)^{3/2}}\dfrac{1}{r_0^{1/2}}\jabr{\dfrac{p}{\tilde{R}}}^{1/2} \lesssim \dfrac{a}{\delta} \dfrac{K}{(1+t)^{3/2}}\dfrac{\jabr{a\sqrt{\ell}}^{1/2}}{a\sqrt{\ell}} \lesssim \dfrac{a}{\delta^2}\dfrac{K}{(1+t)^{3/2}}.
	\end{align*}
	
	\textit{Bound for $ \partial_{a}\tilde{\Psi} $} For $ \ell\leq 1 $ we have by Lemma \ref{lem:EstDynRadNoBulk} 
	\begin{align*}
		|\partial_{a}\tilde{\Psi}| \lesssim \dfrac{K}{1+\tilde{R}^2+t^2}\min\left( \dfrac{\tilde{R}}{1+t},1 \right) \left( t \jabr{\dfrac{p}{\tilde{R}}}^{1/2} + \dfrac{p}{a}\ln\jabr{\dfrac{\tilde{R}}{p}}\right) \lesssim \dfrac{K}{1+t} \jabr{\dfrac{1}{a}}\jabr{\dfrac{p}{\tilde{R}}}\min\left( \dfrac{\tilde{R}}{1+t},1 \right).
	\end{align*}
	Here, we used the inequality $ x\ln\jabr{1/x}\lesssim \jabr{x} $ for $ x=p/\tilde{R} $. Arguing as in Step 1 by splitting into the case $ \ell\leq 1 $ and $ \ell \geq 1 $ yields
	\begin{align*}
		|\partial_{a}\tilde{\Psi}|\lesssim \dfrac{\jabr{\ell}^{1/2}}{\delta^3}\dfrac{K}{1+t}.
	\end{align*}
\end{proof}
Next we treat the second order derivatives.

\begin{lem}\label{lem:EstimatesDerivGravFieldAA}
	Let $ \delta\in (0,1) $. Assume that $ \F=\F(t,r) $ for $ t,\, r\geq 0 $ satisfies
	\begin{align*}
		|\F(t,r)| \leq \dfrac{K}{1+r^2+t^2}\min\left( \dfrac{r^2}{(1+t)^2},1 \right), \quad 	|\partial_r\F(t,r)| \leq \dfrac{K'}{1+r^3+t^3}\min\left( \dfrac{r}{1+t},1 \right),
	\end{align*}
	for some constants $ K,\,  K'>0 $. Then, we have for $ a\gtrsim \delta/\jabr{\ell}^{1/2} $ and $ t\geq0 $ that
	\begin{align*}
		|\partial_{\theta}^2\tilde{\Psi}| \lesssim \dfrac{1}{\delta^3}\min\left( \dfrac{a}{\jabr{\ell}^{1/2}}, 1 \right) \dfrac{K+K'}{(1+t)^2},\quad |\partial_{\theta a}^2\tilde{\Psi}|\lesssim \dfrac{1}{\delta^4}\dfrac{K+K'}{(1+t)^{3/2}},
		\quad
		|\partial_{a}^2\tilde{\Psi}| \lesssim \dfrac{1}{\delta^6}\dfrac{\jabr{\ell}^{1/2}}{a}\dfrac{K+K'}{1+t}.
	\end{align*}
\end{lem}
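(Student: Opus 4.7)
The plan is to differentiate the identities
\[
\partial_{\theta}\tilde\Psi = -\F(t,\tilde R)\,\partial_{\theta}\tilde R,\qquad
\partial_{a}\tilde\Psi = -\F(t,\tilde R)\,\partial_{a}\tilde R,
\]
to obtain the three formulas
\[
\partial_{\alpha\beta}^2\tilde\Psi = -\partial_r\F(t,\tilde R)\,\partial_{\alpha}\tilde R\,\partial_{\beta}\tilde R - \F(t,\tilde R)\,\partial_{\alpha\beta}^2\tilde R,\quad \alpha,\beta\in\{\theta,a\},
\]
and then to bound each term using the hypotheses on $\F$, $\partial_r\F$ together with the radial estimates of Section \ref{sec:LinearizedDyn}. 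As in the proof of Lemma \ref{lem:EstimatesGravFieldAA}, the argument splits according to whether $(t,\theta,a,\ell)$ belongs to the bulk $\mathcal B=\{|\theta|\leq at/2\}$ or not, and further into $\ell\leq 1$ versus $\ell\geq 1$ in order to dispose of the $\jabr{\ell}^{1/2}$ weights via the hypothesis $a\gtrsim \delta/\jabr{\ell}^{1/2}$.

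In the bulk I would invoke Lemma \ref{lem:EstDynRadBulk}: there $\tilde R\approx at+p$, $\partial_{\theta}\tilde R \approx 1$, $\partial_{a}\tilde R \approx t$ plus a logarithmic correction, $\partial_{\theta}^2\tilde R\lesssim p/(at)^2$, $\partial_{\theta a}^2\tilde R\lesssim p/(a^2 t)$, and $\partial_a^2\tilde R\lesssim a^{-1}\jabr{p/a}\ln\jabr t$. Combined with the assumed decay $|\F|\lesssim K(1+t)^{-2}\min(\tilde R/(1+t),1)^2$ and $|\partial_r\F|\lesssim K'(1+t)^{-3}\min(\tilde R/(1+t),1)$, each term produces a bound of the form $(K+K')(1+t)^{-2}$ times powers of $p/a$, which after distinguishing $\ell\lessgtr 1$ (as in Lemma \ref{lem:EstimatesGravFieldAA}) yields the stated dependence on $\delta$ and $\ell$. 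The extra factor $\min(a/\jabr{\ell}^{1/2},1)$ in the $\partial_\theta^2\tilde\Psi$ bound appears in the regime $\tilde R\ll 1+t$ (small $a\sqrt\ell$), where the min in the hypothesis on $\F$ provides one extra power of $\tilde R/(1+t)$.

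Outside the bulk I would use Lemma \ref{lem:EstDynRadNoBulk}, where now $\tilde R\gtrsim r_0\approx \ell/\jabr{a\sqrt\ell}$ and the derivatives of $\tilde R$ grow polynomially in $\theta$. The idea is that the lower bound $\tilde R\gtrsim r_0$, together with $a\gtrsim\delta/\jabr\ell^{1/2}$, forces $\tilde R\gtrsim\delta^2/\jabr\ell$, which converts the decay in $r$ in the hypotheses on $\F,\partial_r\F$ into decay in $\ell$ and ultimately into the claimed $\delta$-dependence. Moreover, in $\mathcal B^c$ one has $1\lesssim |\theta|/at$, so each factor of $|\theta|$ appearing in the bounds of Lemma \ref{lem:EstDynRadNoBulk} can be traded for a factor $t^{-1}$ (together with $a^{-1}$); this is precisely what balances the growth of $\partial_a\tilde R$ and $\partial_a^2\tilde R$ in $\theta$.

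The main obstacle will be the $\partial_a^2\tilde\Psi$ estimate outside the bulk, where $\partial_a^2\tilde R$ contains terms of size $\jabr\theta^2\jabr{a\sqrt\ell}^3\jabr{a^{-4}}\ln\jabr t$. To close the bound at order $(1+t)^{-1}$ one has to combine the quadratic smallness $|\F|\lesssim K\tilde R^2/(1+t)^4$ with the lower bound $\tilde R^2\gtrsim |\theta+at|^2+r_0^2$ (Lemma \ref{lem:EstimatesRadFunct}~\eqref{it:lem:EstimatesRadFunct2}), so that the $\jabr\theta^2$ and $\jabr{a^{-4}}$ factors in $\partial_a^2\tilde R$ are absorbed; the final step is again a case split on $\ell\lessgtr 1$ to arrive at the $\delta^{-6}\jabr\ell^{1/2}/a$ behavior. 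The mixed derivative $\partial_{\theta a}^2\tilde\Psi$ is then an interpolation between the two previous cases and follows by exactly the same scheme.
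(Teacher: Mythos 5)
Your overall skeleton matches the paper's: the same product-rule formulas $\partial^2_{\alpha\beta}\tilde\Psi=-\partial_r\F(t,\tilde R)\,\partial_\alpha\tilde R\,\partial_\beta\tilde R-\F(t,\tilde R)\,\partial^2_{\alpha\beta}\tilde R$, a case split in $\ell\lessgtr 1$, and the use of the extra $\min$-factors in the hypotheses. However, the quantitative mechanism you propose away from the bulk has a genuine gap. First, the claim that in $\mathcal B^c$ ``each factor of $|\theta|$ can be traded for a factor $t^{-1}$'' via $1\lesssim |\theta|/(at)$ is backwards: that inequality lets you introduce a factor $|\theta|/(at)$ (useful when integrating against $\theta$-moments of $\gamma$, as in Section \ref{subsec:EstGravFieldLInf}), but it gives no upper bound on $|\theta|$, which can be arbitrarily large for fixed $t$. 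Since the conclusion of the lemma carries no $\theta$-weights, any bound containing $\jabr{\theta}$ cannot be closed this way. Second, your fallback for $\partial_a^2\tilde\Psi$ — absorbing the $\jabr{\theta}^2$ in the simplified bound $|\partial_a^2\tilde R|\lesssim\jabr{\theta}^2\jabr{a\sqrt\ell}^3\jabr{a^{-4}}\ln\jabr{t}$ through $\tilde R\gtrsim|\theta+at|$ — fails exactly where it is needed: take $\theta\approx-at$ (which lies in $\mathcal B^c$ since $|\theta|\approx at$), where $|\theta+at|$ is small, $\tilde R$ can be as small as $r_0$, and $\jabr{\theta}^2\approx(at)^2$; then $|\F(t,\tilde R)|\,|\partial_a^2\tilde R|$ is only bounded by a quantity of size $K a^2\jabr{a\sqrt\ell}^3\jabr{a^{-4}}\ln\jabr{t}$ with no $t$-decay, far from the claimed $\delta^{-6}\jabr{\ell}^{1/2}a^{-1}(K+K')(1+t)^{-1}$.

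The fix — and this is what the paper does — is to never pass to the $\jabr{\theta}$-dependent simplified forms at all, and in fact no bulk/non-bulk decomposition is needed: Lemma \ref{lem:EstDynRadNoBulk} holds for all $(t,\theta,a,\ell)$, and one keeps its intermediate bounds in terms of $\tilde R$, e.g.\ $|\partial_a^2\tilde R|\lesssim pt^2/\tilde R^2+tp/(a\tilde R)+(p/a^2)\ln\jabr{\tilde R/p}$ and $|\partial^2_{\theta a}\tilde R|\lesssim tp/\tilde R^2+p/(a\tilde R)$. The inverse powers of $\tilde R$ are then cancelled by the hypothesized factors $\min(\tilde R^2/(1+t)^2,1)$ and $\min(\tilde R/(1+t),1)$ (e.g.\ $\min(\tilde R^2/(1+t)^2,1)\cdot pt^2/\tilde R^2\leq p$), while the prefactor $1/(1+\tilde R^2+t^2)$ — which you should not weaken to $(1+t)^{-2}$, since the $\tilde R$-decay is also used via $\tilde R\gtrsim r_0$ in the $\ell\geq1$ cases — supplies the remaining time decay. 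The support condition $a\jabr{\ell}^{1/2}\gtrsim\delta$ then converts $p\approx\jabr{a\sqrt\ell}/a^2$ and $r_0\approx\ell/\jabr{a\sqrt\ell}$ into the stated powers of $\delta$, $\jabr{\ell}^{1/2}$ and $a$. With that replacement your argument goes through; as written, the $\partial^2_{\theta a}\tilde\Psi$ and $\partial_a^2\tilde\Psi$ estimates do not close.
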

\begin{proof}
	Note that
	\begin{align*}
		\partial_{\theta}^2\tilde{\Psi} &= -\F(t,\tilde{R})\partial_{\theta}^2\tilde{R} - \partial_r\F(t,\tilde{R}) \left( \partial_{\theta}\tilde{R}\right)^2,
		\\
		\partial_{\theta a}^2\tilde{\Psi} &= -\F(t,\tilde{R})\partial_{\theta a}^2\tilde{R} - \partial_r\F(t,\tilde{R}) \, \partial_{\theta}\tilde{R} \, \partial_{a}\tilde{R},
		\\
		\partial_{a}^2\tilde{\Psi} &= -\F(t,\tilde{R})\partial_{a}^2\tilde{R} - \partial_r\F(t,\tilde{R}) \left( \partial_{a}\tilde{R}\right)^2.
	\end{align*}
	We estimate each term separately, and again distinguish the cases $ \ell\leq 1 $ and $ \ell\geq 1 $.
	
	\textit{Bound for $ \partial_{\theta}^2\tilde{\Psi} $.} 
    We use Lemma \ref{lem:EstDynRadNoBulk} to get
	\begin{align*}
		|\F(t,\tilde{R})||\partial_{\theta}^2\tilde{R}| \lesssim \dfrac{K}{(1+t)^2}\min\left( \dfrac{\tilde{R}^2}{(1+t)^2},1 \right) \dfrac{p}{\tilde{R}^2}.
	\end{align*}
	For $ \ell \leq 1 $ we have $ p\lesssim 1/\delta^2 $ and hence
	\begin{align*}
		|\F(t,\tilde{R})||\partial_{\theta}^2\tilde{R}| \lesssim \dfrac{1}{\delta^2} \dfrac{K}{(1+t)^4}\lesssim \dfrac{1}{\delta^3}\min\left( \dfrac{a}{\jabr{\ell}^{1/2}},1 \right) \dfrac{K}{(1+t)^4}.
	\end{align*}
	For $ \ell\geq 1 $ we obtain with Lemma \ref{lem:EstimatesRadFunct} \eqref{it:lem:EstimatesRadFunct1} and \eqref{it:lem:EstimatesRadFunct2} that
    \begin{align*}
		|\F(t,\tilde{R})||\partial_{\theta}^2\tilde{R}| &\lesssim \dfrac{K}{(1+t)^2} \dfrac{p}{\tilde{R}^2}\lesssim \dfrac{1}{\delta^2}\dfrac{K}{(1+t)^2} \dfrac{1}{r_0}\lesssim \dfrac{1}{\delta^2}\dfrac{K}{(1+t)^2} \dfrac{\jabr{a\sqrt{\ell}}}{\ell}\lesssim \dfrac{1}{\delta^2}\left( \dfrac{1}{\ell}+\dfrac{a}{\sqrt{\ell}} \right) \dfrac{K}{(1+t)^2} 
		\\
		&\lesssim \dfrac{1}{\delta^3}\dfrac{a}{\sqrt{\ell}}\dfrac{K}{(1+t)^2}.
	\end{align*}
	Alternatively, we have
	\begin{align*}
		|\F(t,\tilde{R})||\partial_{\theta}^2\tilde{R}| \lesssim \dfrac{K}{(1+t)^2}\min\left( \dfrac{\tilde{R}^2}{(1+t)^2},1 \right) \dfrac{p}{\tilde{R}^2}\lesssim \dfrac{K}{(1+t)^3} \dfrac{p}{\tilde{R}}\lesssim \dfrac{1}{\delta^2}\dfrac{K}{(1+t)^3}.
	\end{align*}
	Furthermore, we have with Lemma \ref{lem:EstDynRadNoBulk}
	\begin{align*}
		|\partial_r\F(t,\tilde{R})| \left( \partial_{\theta}\tilde{R}\right)^2 \lesssim \dfrac{K'}{(1+t)^3}\min\left( \dfrac{\tilde{R}}{1+t},1 \right) \jabr{\dfrac{p}{\tilde{R}}}.
	\end{align*}
	When $ \ell\leq 1 $ we have with $ p\lesssim 1/\delta^2 $
	\begin{align*}
		|\partial_r\F(t,\tilde{R})| \left( \partial_{\theta}\tilde{R}\right)^2 \lesssim \dfrac{1}{\delta^2}\dfrac{K'}{(1+t)^4}\lesssim \dfrac{1}{\delta^3}\dfrac{a}{\jabr{\ell}^{1/2}}\dfrac{K}{(1+t)^4}.
	\end{align*}
	On the other hand, when $ \ell\geq 1 $ we get
	\begin{align*}
		|\partial_r\F(t,\tilde{R})| \left( \partial_{\theta}\tilde{R}\right)^2 \lesssim \dfrac{K'}{(1+t)^3} \jabr{\dfrac{p}{\tilde{R}}} \lesssim \dfrac{1}{\delta^2}\dfrac{K'}{(1+t)^3} \lesssim \dfrac{1}{\delta^3}\min\left( \dfrac{a}{\sqrt{\ell}},1\right) \dfrac{K'}{(1+t)^3}.
	\end{align*}
	
	\textit{Bound for $ \partial_{\theta a}^2\tilde{\Psi} $.} 
    By Lemma \ref{lem:EstDynRadNoBulk} we have for $ \ell \leq 1 $
	\begin{align*}
		|\F(t,\tilde{R})||\partial_{\theta a}^2\tilde{R}| &\lesssim \dfrac{K}{(1+t)^2}\min\left( \dfrac{\tilde{R}^2}{(1+t)^2},1 \right)\left( \dfrac{tp}{\tilde{R}^2}+\dfrac{p}{a\tilde{R}}\right) 
		\lesssim \dfrac{1}{\delta^3}\dfrac{K}{(1+t)^3},
	\end{align*}
	having used again $ p\lesssim1/\delta^2 $. On the other hand, for $ \ell\geq 1 $ we have
	\begin{align*}
		|\F(t,\tilde{R})||\partial_{\theta a}^2\tilde{R}| &\lesssim \dfrac{K}{1+\tilde{R}^2+t^2}\min\left( \dfrac{\tilde{R}^2}{(1+t)^2},1 \right)\left( \dfrac{tp}{\tilde{R}^2}+\dfrac{p}{a\tilde{R}}\right) 
		\lesssim \dfrac{K}{(1+t)^{3/2}} \left( \dfrac{1}{\delta^2} + \dfrac{1}{\delta^2}\dfrac{1}{a}\dfrac{1}{r_0^{1/2}} \right) 
		\\
		&\lesssim \dfrac{1}{\delta^2}\dfrac{K}{(1+t)^{3/2}} \dfrac{\jabr{a\sqrt{\ell}}^{1/2}}{a\sqrt{\ell}}\lesssim \dfrac{1}{\delta^3}\dfrac{K}{(1+t)^{3/2}}.
	\end{align*}
	Furthermore, for $ \ell\leq 1 $ we have with Lemma \ref{lem:EstDynRadNoBulk}
	\begin{align*}
		|\partial_r\F(t,\tilde{R})|  |\partial_{\theta}\tilde{R} \, \partial_{a}\tilde{R}| &\lesssim \dfrac{K'}{1+\tilde{R}^3+t^3}\min\left( \dfrac{\tilde{R}}{1+t},1 \right) \jabr{\dfrac{p}{\tilde{R}}}^{1/2}\left( t\jabr{\dfrac{p}{\tilde{R}}}^{1/2}+ \dfrac{p}{a}\ln\jabr{\dfrac{\tilde{R}}{p}} \right)
		\\
		&\lesssim \dfrac{K'}{(1+t)^2}\min\left( \dfrac{\tilde{R}}{1+t},1 \right)\jabr{\dfrac{p}{\tilde{R}}}^{1/2}\left( \jabr{\dfrac{p}{\tilde{R}}}^{1/2}+\dfrac{1}{a}\dfrac{p}{1+\tilde{R}} \ln\jabr{\dfrac{\tilde{R}}{p}} \right)
		\\
		&\lesssim \ind_{p/\tilde{R}\leq 1} \dfrac{K'}{(1+t)^2}\jabr{\dfrac{1}{a}} + \ind_{p/\tilde{R}\geq 1}  \dfrac{K'}{(1+t)^3}\jabr{\dfrac{1}{a}}\jabr{p} \lesssim \dfrac{1}{\delta^4}\dfrac{K'}{(1+t)^2}.
	\end{align*}
	On the other hand, for $ \ell\geq 1 $ we have
	\begin{align*}
		|\partial_r\F(t,\tilde{R})|  |\partial_{\theta}\tilde{R} \, \partial_{a}\tilde{R}| &\lesssim \dfrac{K'}{1+\tilde{R}^3+t^3}  \jabr{\dfrac{p}{\tilde{R}}}^{1/2}\left( t\jabr{\dfrac{p}{\tilde{R}}}^{1/2}+ \dfrac{p}{a}\ln\jabr{\dfrac{\tilde{R}}{p}} \right) \lesssim \dfrac{1}{\delta}\dfrac{K'}{1+\tilde{R}^2+t^2} \left( 1+\dfrac{1}{a}\jabr{\dfrac{p}{\tilde{R}}}\right) 
		\\
		&\lesssim \dfrac{1}{\delta^3}\dfrac{K'}{(1+t)^{3/2}} \dfrac{1}{ar_0^{1/2}} \lesssim\dfrac{1}{\delta^3}\dfrac{K'}{(1+t)^{3/2}} \dfrac{\jabr{a\sqrt{\ell}}^{1/2}}{a\sqrt{\ell}} \lesssim \dfrac{1}{\delta^4}\dfrac{K'}{(1+t)^{3/2}}.
	\end{align*}
	
	\textit{Bound for $ \partial_{a}^2\tilde{\Psi} $.} 
    We have with Lemma \ref{lem:EstDynRadNoBulk} and $ \ell\leq 1 $
	\begin{align*}
		|\F(t,\tilde{R})||\partial_{a}^2\tilde{R}| &\lesssim \dfrac{K}{1+\tilde{R}^2+t^2} \, \min\left( \dfrac{\tilde{R}^2}{(1+t)^2},1 \right) \left( \dfrac{pt^2}{\tilde{R}^2} +\dfrac{tp}{a\tilde{R}} +\dfrac{p}{a^2}\ln\jabr{\dfrac{\tilde{R}}{p}}\right) 
		\\
		&\lesssim \dfrac{K}{1+t} \min\left( \dfrac{\tilde{R}^2}{(1+t)^2},1 \right) \left( \dfrac{pt}{\tilde{R}^2}+\dfrac{pt}{a\tilde{R}^2} +\dfrac{p}{a^2\tilde{R}}\ln\jabr{\dfrac{\tilde{R}}{p}}\right) 
		\\
		&\lesssim \dfrac{K}{1+t} \jabr{\dfrac{1}{a^2}}\min\left( \dfrac{\tilde{R}^2}{(1+t)^2},1 \right) \left( \dfrac{pt}{\tilde{R}^2} +\jabr{\dfrac{p}{\tilde{R}}}\right) \lesssim \dfrac{1}{\delta^4}\dfrac{K}{1+t}.
	\end{align*}
    When $ \ell\geq 1 $ we get
	\begin{align*}
		|\F(t,\tilde{R})||\partial_{a}^2\tilde{R}| &\lesssim \dfrac{K}{1+\tilde{R}^2+t^2} \, \min\left( \dfrac{\tilde{R}^2}{(1+t)^2},1 \right) \left( \dfrac{pt^2}{\tilde{R}^2} +\dfrac{tp}{a\tilde{R}} +\dfrac{p}{a^2}\ln\jabr{\dfrac{\tilde{R}}{p}}\right) 
		\\
		&\lesssim \dfrac{K}{1+t}\left(p + \dfrac{1}{\delta}\dfrac{\sqrt{p}}{a} + \dfrac{1}{\delta^2}\dfrac{1}{a^2} \right) \lesssim \dfrac{1}{\delta^2}\dfrac{K}{1+t}\left( p+\dfrac{1}{a^2}\right) \lesssim \dfrac{1}{\delta^2}\dfrac{K}{1+t}\dfrac{\jabr{a\sqrt{\ell}}}{a^2}
		\\
		&\lesssim \dfrac{1}{\delta^2}\dfrac{K}{1+t}\left( \dfrac{1}{a^2}+\dfrac{\sqrt{\ell}}{a}\right) \lesssim \dfrac{1}{\delta^3}\dfrac{\sqrt{\ell}}{a}\dfrac{K}{1+t},
	\end{align*}
	and in case $ \ell\leq 1 $ we have via Lemma \ref{lem:EstDynRadNoBulk} that
	\begin{align*}
		|\partial_r\F(t,\tilde{R})| \left( \partial_{a}\tilde{R}\right)^2 &\lesssim \dfrac{K'}{1+\tilde{R}^3+t^3} \, \min\left( \dfrac{\tilde{R}}{1+t},1 \right) \left( t^2\jabr{\dfrac{p}{\tilde{R}}}+ \dfrac{p^2}{a^2}\ln^2\jabr{\dfrac{\tilde{R}}{p}} \right) 
		\\
		&\lesssim \dfrac{K'}{1+\tilde{R}^2+t^2} \, \min\left( \dfrac{\tilde{R}}{1+t},1 \right) \left( t\jabr{\dfrac{p}{\tilde{R}}}+ \dfrac{p^2}{a^2(1+\tilde{R})}\ln^2\jabr{\dfrac{\tilde{R}}{p}} \right)
		\\
		&\lesssim \ind_{p/\tilde{R}\leq 1} \dfrac{K'}{1+t}\jabr{\dfrac{1}{a^2}} + \ind_{p/\tilde{R}\geq 1} \dfrac{K'}{1+t}\jabr{\dfrac{1}{a^2}}\jabr{p}^2 \lesssim \dfrac{1}{\delta^6}\dfrac{K'}{1+t}.
	\end{align*}
	Here, we also used $ x\ln^2\jabr{1/x}\lesssim\jabr{x} $ for $ x=p/\tilde{R} $. For $ \ell\geq 1 $ we estimate 
	\begin{align*}
		|\partial_r\F(t,\tilde{R})| \left( \partial_{a}\tilde{R}\right)^2 &\lesssim \dfrac{K'}{1+\tilde{R}^3+t^3} \, \min\left( \dfrac{\tilde{R}}{1+t},1 \right) \left( t^2\jabr{\dfrac{p}{\tilde{R}}}+ \dfrac{p^2}{a^2}\ln^2\jabr{\dfrac{\tilde{R}}{p}} \right) 
		\\
		&\lesssim \dfrac{K'}{1+t}\left( p+\dfrac{1}{\delta^2}\dfrac{1}{a^2} \right) \lesssim \dfrac{1}{\delta^3}\dfrac{\sqrt{\ell}}{a}\dfrac{K'}{1+t}.
	\end{align*}
\end{proof}


\section{Nonlinear dynamics}\label{sec:nonlinear}
In this section we turn to the study of the full nonlinear problem
\begin{align}\label{eq:Sec4:VPeq}
	\begin{split}
		\partial_t \gamma + \lambda \{\gamma,\tilde{\Psi}\} =0,\quad \tilde{\Psi}(t,\theta,a) = \Psi(t,\tilde{R}),
		\quad \Psi(t,r) = - \int_{\R}\int_0^\infty\int_0^\infty  \,  \dfrac{\gamma(t,\theta,a,\ell)^2}{\max(R(\theta+at,a),r)} \, d\theta da d\ell.
	\end{split}
\end{align}
In order to demonstrate the equivalence of \eqref{eq:Sec4:VPeq} with \eqref{eq:VPD} (under radial symmetry), we begin by demonstrating the potential self-consistency of \eqref{eq:Sec4:VPeq}: under suitable smallness assumptions on the gravitational field $\F(t,r) = -\partial_r\Psi(t,r)$, in Lemma \ref{lem:NonlinCharSys} we give quantitative bounds which guarantee that the nonlinear characteristics of \eqref{eq:Sec4:VPeq} remain in a region of hyperbolic trajectories for the linearized problem, if initially this is the case. Subsequently, we establish the global well-posedness of \eqref{eq:Sec4:VPeq} for sufficiently small initial data: we do this first for bounded Lagrangian solutions with suitable moments (see Definition \ref{def:Lagrangian_sol} and Theorem \ref{thm:WellPosedLagrangianSol}), and subsequently show that regularity can be propagated (Theorem \ref{thm:WellPosednessRegularSol}), yielding the global well-posedness of strong solutions in $\SobH^1$.

Our main result then establishes the asymptotic behavior, a modified scattering dynamic which arises naturally from an asymptotic shear equation. While this can be readily seen for the global strong solutions of Theorem \ref{thm:WellPosednessRegularSol}, a key novelty of our arguments is that it can be identified also for the merely Lagrangian solutions of Theorem \ref{thm:WellPosedLagrangianSol} -- see Theorems \ref{thm:ModifiedScattering} respectively \ref{thm:ModifiedScatteringLagrangianSol}.

\medskip
\paragraph{\bf The characteristic system of \eqref{eq:Sec4:VPeq}.} In order to justify the study of the nonlinear problem, we first consider the characteristic system of \eqref{eq:Sec4:VPeq}, i.e.\ for fixed $\ell>0$
\begin{align}\label{eq:Sec4:NonlinCharSys}
	\begin{cases}
		\dot{\theta} = \lambda\partial_{a}\tilde{\Psi}(\theta,a,\ell)=-\lambda\F(t,\tilde{R})\partial_{a}\tilde{R},
		\\
		\dot{a} = -\lambda\partial_{\theta}\tilde{\Psi}(\theta,a,\ell) = \lambda\F(t,\tilde{R})\partial_{\theta}\tilde{R}.
	\end{cases}
\end{align}
where as usual $\F(t,r) = -\partial_r\Psi(t,r)$.

\begin{lem}\label{lem:NonlinCharSys}
Assume $ \F(t,r) $ is a given continuous function for $ t\in [0,T] $, $ r>0 $, satisfying for some $C>0$ that
	\begin{align*}
		|\F(t,r)| \leq \dfrac{C\varepsilon^2}{1+r^2+t^2}.
	\end{align*} 
There exists a sufficiently small $c>0 $ such that for any $ \delta\in (0,1) $ and $ \nu\in [0,3/4] $ the following holds provided that $ \eps \leq c\delta^{2} $. For fixed $\ell>0$, consider a solution  $(a(t),\theta(t))$  to \eqref{eq:Sec4:NonlinCharSys} with initial data satisfying $ a(0)\jabr{\ell}^{\nu}\geq \delta $. Then for all $ t\in [0,T] $ we have
\begin{equation}\label{eq:Sec4:ActionLowerBound}
    a(t)\jabr{\ell}^{\nu}\geq \frac{\delta}{2},
\end{equation}
and the further bounds
\begin{equation}\label{eq:nonlin_char_bds}
\begin{aligned}
			|a(t)-a(0)| &\leq C\dfrac{\varepsilon^2}{\delta^2},
			\\
			|\theta(t)-\theta(0)| &\leq  C\dfrac{\varepsilon^2}{\delta^3} \left( 1+\jabr{\ell}^{3\nu-1}\right) \ln(1+t).
\end{aligned}
\end{equation}
\end{lem}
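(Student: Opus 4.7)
The argument is a bootstrap/continuation in time, coupling the characteristic ODE \eqref{eq:Sec4:NonlinCharSys} to the field bounds from Lemma \ref{lem:EstimatesGravFieldAA} applied along the trajectory.

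First, fix $\ell > 0$ and set $\delta_\ell := \delta\jabr{\ell}^{-\nu}$, so the hypothesis reads $a(0) \geq \delta_\ell$. Define
\begin{equation*}
T^* := \sup\{\, t \in [0,T] \,:\, a(\sigma) \geq \delta_\ell/2 \text{ for all } \sigma \in [0, t] \,\},
\end{equation*}
which is positive by continuity. The aim is to show $T^*=T$ while extracting \eqref{eq:nonlin_char_bds}.

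On $[0,T^*]$, the bootstrap bound rewrites as $a(\sigma) \gtrsim \delta'/\jabr{\ell}^{1/2}$ with $\delta' := \tfrac{\delta}{2}\jabr{\ell}^{(1/2-\nu)\wedge 0}$ (so $\delta' = \delta/2$ for $\nu\leq 1/2$ and $\delta' = \tfrac{\delta}{2}\jabr{\ell}^{1/2-\nu}$ for $\nu > 1/2$). This is exactly the hypothesis of Lemma~\ref{lem:EstimatesGravFieldAA}, which with $K = C\eps^2$ yields
\begin{equation*}
|\dot{a}(\sigma)| = \lambda|\partial_\theta\tilde\Psi| \lesssim \frac{\lambda\eps^2 \min(1,a(\sigma))}{(\delta')^2(1+\sigma)^{3/2}}, \qquad |\dot{\theta}(\sigma)| = \lambda|\partial_a\tilde\Psi| \lesssim \frac{\lambda\eps^2 \jabr{\ell}^{1/2}}{(\delta')^3(1+\sigma)}.
\end{equation*}
Integrating the second inequality from $0$ to $t$ and using $(\delta')^{-3}\lesssim \delta^{-3}(1+\jabr{\ell}^{3\nu-3/2})$ (the two cases of $\nu$ combining into the stated form) gives directly
\begin{equation*}
|\theta(t)-\theta(0)| \lesssim \tfrac{\eps^2}{\delta^3}(1+\jabr{\ell}^{3\nu-1})\ln(1+t),
\end{equation*}
which is the claimed bound on $\theta$.

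The control of $a$ is more delicate and relies crucially on the multiplicative $\min(1,a)$ factor. When $a(\sigma)\geq 1$, the direct integration yields $|a(t)-a(0)|\lesssim \lambda\eps^2(\delta')^{-2}$; when $a(\sigma)\leq 1$, dividing by $a$ and integrating gives the logarithmic estimate $|\ln(a(t)/a(0))|\lesssim \eps^2(\delta')^{-2}$, hence the \emph{multiplicative} stability $a(t)\geq a(0)\exp(-C\eps^2(\delta')^{-2})$. Combining these, and choosing $c$ sufficiently small in $\eps \leq c\delta^2$, leads both to the additive bound $|a(t)-a(0)|\lesssim \eps^2/\delta^2$ and to $a(t)\geq \tfrac{3}{4}\delta_\ell > \tfrac{1}{2}\delta_\ell$ throughout $[0,T^*]$. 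A standard continuity argument then forces $T^*=T$ and concludes the proof.

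The principal obstacle is the regime $\nu>1/2$ with large $\ell$: here $\delta' = \tfrac{\delta}{2}\jabr{\ell}^{1/2-\nu}$ is small, so the raw field estimates degenerate and a purely additive bound on $|a(t)-a(0)|$ would be too weak to close the bootstrap with the $\ell$-independent smallness $\eps\leq c\delta^2$. The $\min(1,a)$ factor in Lemma~\ref{lem:EstimatesGravFieldAA} is precisely what allows the switch to multiplicative control of $a$, so that the smallness required becomes a smallness of the \emph{ratio} $\eps^2/(\delta')^2$ against a dimensionless constant rather than against $\delta_\ell$ itself; this is where the structure of the Kepler-adapted action variables pays off, and it is what makes the bootstrap close uniformly across the whole range of $\nu\in[0,3/4]$.
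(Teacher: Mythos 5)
Your overall skeleton (continuation/bootstrap on a lower bound for $a(t)$, then integrating the characteristic ODE against field bounds) matches the paper's strategy, but the argument has a genuine gap precisely in the regime you yourself flag as the principal obstacle, namely $\nu\in(1/2,3/4]$ with $\ell$ large. There your rescaled parameter is $\delta'=\tfrac{\delta}{2}\jabr{\ell}^{1/2-\nu}$, so
\begin{equation*}
\frac{\eps^2}{(\delta')^2}=\frac{4\eps^2}{\delta^2}\,\jabr{\ell}^{2\nu-1}\leq 4c^2\delta^2\,\jabr{\ell}^{2\nu-1},
\end{equation*}
which is \emph{not} uniformly small in $\ell$ under the hypothesis $\eps\leq c\delta^2$. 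Hence neither the additive estimate nor your multiplicative one, $a(t)\geq a(0)\exp\bigl(-C\eps^2(\delta')^{-2}\bigr)$, closes the bootstrap: the exponent diverges as $\ell\to\infty$, and the claim that the $\min(1,a)$ factor reduces matters to ``smallness of the ratio $\eps^2/(\delta')^2$'' is exactly where the argument fails, since that ratio is unbounded. (Your additive bound would likewise give $|a(t)-a(0)|\lesssim \eps^2\delta^{-2}\jabr{\ell}^{2\nu-1}$, weaker than the $\ell$-uniform bound claimed in the statement.) A telling symptom is that your argument never uses $\nu\leq 3/4$. The paper instead works directly with $|\F|\leq C\eps^2/(1+r^2+t^2)$ \emph{along the trajectory}: since WLOG $a\leq \ell^{-\nu}$ one has $\tilde{R}\geq r_0\gtrsim \ell^{1/2+\nu}$ (resp.\ $\gtrsim\ell$ for $\nu\geq 1/2$), so the time integral $\int_0^\infty (r_0^2+s^2)^{-1}ds$ gains negative powers of $\ell$; the $1/(a\sqrt{\ell})$ singularity is then absorbed by a comparison ODE at the level of $a^2$ (not $\ln a$), giving $a(t)^2\geq a(0)^2-C\eps^2\ell^{-3/2}$-type bounds, and the condition $\ell^{2\nu-3/2}\lesssim 1$, i.e.\ $\nu\leq 3/4$, is exactly what lets this beat $a(0)^2\geq\delta^2\ell^{-2\nu}$. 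Your use of Lemma \ref{lem:EstimatesGravFieldAA} as a black box throws away the spatial part of the decay and with it this $\ell$-gain.

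A second, related concern: invoking Lemma \ref{lem:EstimatesGravFieldAA} under only the decay $|\F|\leq C\eps^2/(1+r^2+t^2)$ is shaky. The proof of that lemma uses the additional vanishing factor $\min\bigl(\tfrac{r}{1+t},1\bigr)$ (available in the nonlinear problem through $\ell^{-1}$-moments via Lemma \ref{lem:EstimatesGravFieldLInf} with $q\geq1$), which is not assumed in the present lemma. Without it, the claimed bound $|\partial_\theta\tilde\Psi|\lesssim \delta^{-2}\min(1,a)(1+t)^{-3/2}K$ fails near the point of closest approach for small $\ell$, since there $|\partial_\theta\tilde R|\sim \jabr{p/\tilde R}^{1/2}\sim (a\sqrt{\ell})^{-1}$ is not controlled by $\delta^{-1}$ alone at times with $\theta+at\approx 0$. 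This is why the paper's proof of the present lemma does not go through Lemma \ref{lem:EstimatesGravFieldAA} at all, but handles the near-collision region by hand: it shows $\tfrac{d}{dt}(\theta(t)+a(t)t)\geq \delta/8$ and uses this monotonicity to change variables in the singular time integral $\int (1+s^2)^{-1}|\theta+as|^{-1/2}ds$. Your low-$\ell$ case silently outsources this step, and your high-$\ell$, $\nu>1/2$ case does not close; both would need to be reworked along the paper's lines.
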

For future reference, we record the following simple consequence of Lemma \ref{lem:NonlinCharSys}.
\begin{cor}\label{cor:nonlin_chars}
 For $\delta>0$, let
\begin{align*}
    \D(\delta)=\left\lbrace (\theta,a,\ell) \, : \, a\jabr{\ell}^{\frac12}\geq \delta\right\rbrace.
\end{align*}
Under the assumptions of Lemma \ref{lem:NonlinCharSys} it follows that if $(\theta(0),a(0),\ell)\subset \D(\delta)$, then the corresponding solution satisfies $(\theta(t),a(t),\ell)\subset \D(\frac{\delta}{2})$ on its interval of definition.   
\end{cor}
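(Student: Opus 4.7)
The corollary is the specialization $\nu=\tfrac12$ of the action lower bound \eqref{eq:Sec4:ActionLowerBound} in Lemma \ref{lem:NonlinCharSys}, read fiberwise in $\ell$, so the plan is simply to check that the hypotheses line up. The first ingredient is that in the characteristic system \eqref{eq:Sec4:NonlinCharSys} the angular momentum $\ell$ appears only as a parameter: the Poisson bracket in \eqref{eq:Sec4:VPeq} involves only derivatives in $(\theta,a)$, so $\ell$ is conserved along the flow. Hence for data $(\theta(0),a(0),\ell)$ with $\ell>0$ fixed, the third component of the characteristic is identically $\ell$ on its interval of definition, and it suffices to track the $(\theta,a)$ component.

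Next, by the definition $\D(\delta)=\{(\theta,a,\ell):a\jabr{\ell}^{1/2}\geq\delta\}$, the hypothesis $(\theta(0),a(0),\ell)\in\D(\delta)$ is precisely the scalar inequality $a(0)\jabr{\ell}^{1/2}\geq\delta$, which matches the initial condition in Lemma \ref{lem:NonlinCharSys} with $\nu=\tfrac12\in[0,\tfrac34]$. Since the smallness assumption $\eps\leq c\delta^2$ in the corollary is exactly the one required by the lemma, we may apply the lemma and invoke \eqref{eq:Sec4:ActionLowerBound} to obtain $a(t)\jabr{\ell}^{1/2}\geq\delta/2$ for all $t$ in the interval of definition. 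Combined with the preservation of $\ell$ noted above, this is the assertion $(\theta(t),a(t),\ell)\in\D(\delta/2)$.

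There is no real obstacle here: the whole point is that the lemma was designed to accommodate the $\ell$-dependent threshold appearing in $\D(\delta)$ through the parameter $\nu$, and the corollary is just the instance $\nu=\tfrac12$. The only minor check is that $\tfrac12$ lies in the admissible range $[0,\tfrac34]$, which is immediate. The additional $\theta(t)$ and $a(t)$ bounds in \eqref{eq:nonlin_char_bds} are not needed for the corollary itself, although they are of course available (with $\jabr{\ell}^{3\nu-1}=\jabr{\ell}^{1/2}$) and are what will later feed into the bootstrap arguments of Section \ref{sec:nonlinear}.
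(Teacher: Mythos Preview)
Your proof is correct and matches the paper's approach: the corollary is recorded there without an explicit proof, merely as ``a simple consequence of Lemma \ref{lem:NonlinCharSys}'', and your argument makes explicit exactly this—apply \eqref{eq:Sec4:ActionLowerBound} with $\nu=\tfrac12$ and use that $\ell$ is conserved along the flow.
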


\begin{proof}[Proof of Lemma \ref{lem:NonlinCharSys}]
	We first prove \eqref{eq:Sec4:ActionLowerBound} and then turn to the proof of the estimates \eqref{eq:nonlin_char_bds}. We split the proof into several steps.
	
	\textit{Step 1.} We first consider $ \ell\leq 1 $ and therefore suppress the term $\jabr{\ell}^{\nu}\approx 1$. We can assume that $ a(t)\leq1 $ for all $ t\in [0,T] $, else by continuity of $a(t)$ we use the following arguments on the time intervals on which $ a(t)\leq1 $. Furthermore, we make a bootstrap argument and show that as long as $ a(t)\geq \delta/4 $ we have in fact $ a(t) \geq \delta/2 $. Since $ a(0)\geq \delta $ it follows $ a(t)\geq \delta/2 $ for all $ t\in [0,T] $ by a continuation argument. 
	
	We use Lemma \ref{lem:EstimatesRadFunct} \eqref{it:lem:EstimatesRadFunct1} to get with $ p\approx a^{-2}\jabr{a\sqrt{\ell}} \lesssim a^{-2}+\ell $ and $ a,\, \ell\leq 1 $
	\begin{align*}
		\jabr{\dfrac{p}{\tilde{R}}}^{1/2}\approx \left( 1+ \dfrac{p}{\tilde{R}}\right)^{1/2} \lesssim 1+ \dfrac{1}{a\sqrt{\widetilde{R}}}.
	\end{align*}
	Lemma \ref{lem:EstDynRadNoBulk} and the assumptions on $ \F $ yield
	\begin{align}\label{eq:Sec4:ProofInvSet}
		|\F(t,\tilde{R}(\theta,a))|  |\partial_\theta \tilde{R}(\theta,a)| \leq \dfrac{C\varepsilon^2}{1+t^2} \left( 1+\dfrac{1}{a\sqrt{\widetilde{R}}}\right)  \leq\dfrac{C\varepsilon^2}{1+t^2} \left( 1+\dfrac{1}{a\sqrt{|\theta+at|}}\right) .
	\end{align}
	In the last estimate we used Lemma \ref{lem:EstimatesRadFunct} \eqref{it:lem:EstimatesRadFunct2}. Let us define $ \psi(t) := \theta(t)+a(t)t $. We observe that due to \eqref{eq:Sec4:NonlinCharSys}
	\begin{align*}
		\dot{\psi} = a - \lambda\partial_aR(\psi,a) \F(t,R(\psi,a)).
	\end{align*}
	We then have with Lemma \ref{lem:EstimatesRadFunct} \eqref{it:lem:EstimatesRadFunct1} and Lemma \ref{lem:EstimatesDerivRadFunct}, the constant $C>0$ may change from line to line,
	\begin{align*}
		|\partial_aR(\psi,a)| |\F(t,R(\psi,a))|&\leq \dfrac{p}{a} \ln \jabr{\dfrac{\tilde{R}}{p}} \dfrac{C\varepsilon^2}{1+t^2+\tilde{R}^2} \leq \dfrac{1}{a^3} \jabr{a\sqrt{\ell}}\ln \jabr{\dfrac{\tilde{R}}{p}} \dfrac{C\varepsilon^2}{1+t^2+\tilde{R}^2} 
		\\
		&\leq \dfrac{1}{a^3} \ln \jabr{\tilde{R}} \dfrac{C\varepsilon^2}{1+t^2+\tilde{R}^2}
		\leq  \dfrac{C\varepsilon^2}{\delta^3}\dfrac{1}{1+t^{3/2}}.
	\end{align*}
	Here, we also used the assumption that $ a,\ell \leq 1 $ and hence $ 1/p\lesssim 1 $. In the last estimate we used the assumption $ a(t)\geq \delta/4 $. We then obtain
	\begin{align*}
		\psi'(t) \geq \dfrac{\delta}{4} - \dfrac{C\varepsilon^2}{\delta^3}\dfrac{1}{1+t^{3/2}} \geq \dfrac{\delta}{8}>0
	\end{align*}
	as long as $ \varepsilon\leq c\delta^2 $ with $c>0$ sufficiently small. Thus, the function $ t\mapsto \psi(t) $ is one-to-one. We then have the estimate with \eqref{eq:Sec4:ProofInvSet}
	\begin{align*}
		a(t) &\geq a(0) - \int_0^t \dfrac{C\varepsilon^2}{1+s^2}\left( 1+ \dfrac{1}{a(s) \sqrt{|\psi(s)|}}\right) \, ds 
		\\
		&\geq \delta - \dfrac{C\varepsilon^2}{\delta}\int_0^t \dfrac{1}{1+s^2}\left( 1+\dfrac{1}{\sqrt{|\psi(s)|}}\right)  \, ds
		\\
		&\geq\delta - \dfrac{C\varepsilon^2}{\delta} \left[\arctan(t)+\int_0^t \dfrac{1}{1+s^2}\ind_{|\psi(s)|\geq1} \, ds + \int_0^t\ind_{|\psi(s)|\leq 1} \dfrac{1}{\sqrt{|\psi(s)|}} \, ds \right] 
		\\
		&\geq \delta - \dfrac{C\varepsilon^2}{\delta} - \dfrac{C\varepsilon^2}{\delta} \int_{-1}^{1} \dfrac{1}{\sqrt{|\xi|}}\dfrac{1}{\psi'(\psi^{-1}(\xi))} \, d\xi
		\\
		&\geq \delta - \dfrac{C\varepsilon^2}{\delta} -\dfrac{C\varepsilon^2}{\delta^2} \geq \dfrac{\delta}{2}.
	\end{align*}
	The last estimate holds as long as $ \varepsilon\leq c\delta^2 $ with $c>0$ sufficiently small. As mentioned above this allows to use a continuation argument yielding $ a(t)\geq \delta/2 $ for all $ t\in [0,T] $.
	
	\textit{Step 2.} We now consider the case $ \ell\geq 1 $ and $ \nu\in [0,1/2] $. We write therefore $ \ell^\nu $ instead of $ \jabr{\ell}^{\nu} $. We can assume that $ a(t)\leq1/\ell^\nu $ for all $ t\in [0,T] $ otherwise we use the following arguments on the time intervals on which $ a(t)\leq1/\ell^\nu $. Note that $ a\sqrt{\ell}\leq \ell^{1/2-\nu} $ and hence $ r_0\approx \ell/\jabr{a\sqrt{\ell}}\geq \ell^{1/2+\nu} $. We then have due to the assumptions on $ \F $ and Lemma \ref{lem:EstDynRadNoBulk}
	\begin{align*}
		|\F(t,\tilde{R}(\theta,a))|  |\partial_\theta \tilde{R}(\theta,a)| \leq \dfrac{C\varepsilon^2}{r_0^2+t^2}\jabr{\dfrac{p}{\tilde{R}}}^{1/2} \leq \dfrac{C\varepsilon^2}{\ell^{1+2\nu}+t^2}\left( 1+\dfrac{1}{a\sqrt{\ell}}\right) 
	\end{align*}
	Here, we used also Lemma \ref{lem:EstimatesRadFunct}. We then have
	\begin{align*}
		a(t) &\geq a(0) - \int_0^t \dfrac{C\varepsilon^2}{\ell^{1+2\nu}+s^2}\left( 1+\dfrac{1}{a(s)\sqrt{\ell}}\right)\, ds 
		\\
		&\geq a(0)-C\varepsilon^2\ell^{-\nu-1/2}-\int_0^t \dfrac{C\varepsilon^2}{\ell^{1+2\nu}+s^2}\dfrac{1}{a(s)\sqrt{\ell}}\, ds
	\end{align*}
	We now make a comparison argument with the function $ \alpha(t) $ satisfying
	\begin{align*}
		\alpha(t) = \alpha(0)-C\varepsilon^2\int_0^t \dfrac{1}{1+\ell^{1+2\nu}+s^2}\dfrac{1}{\alpha(s)\sqrt{\ell}}\, ds
	\end{align*}
	and hence
	\begin{align*}
		\alpha(t)= \left( \alpha(0)^2-2C\varepsilon^2\ell^{-1-\nu}\arctan\left( \dfrac{t}{\ell^{1/2+\nu}} \right)  \right)^{1/2}.
	\end{align*}
	This gives
	\begin{align*}
		a(t) &\geq \left( \left( a(0)-C\varepsilon^2\ell^{-\nu-1/2}\right)^2-\pi C\varepsilon^2\ell^{-1-\nu}  \right)^{1/2} \geq \ell^{-\nu} \left( \left( \delta-C\varepsilon^2\ell^{-1/2}\right)^2-\pi C\varepsilon^2\ell^{\nu-1} \right)^{1/2}
		\\
		&\geq \ell^{-\nu}\delta/2.
	\end{align*}
	The last estimate holds when $ \varepsilon\leq c\delta^2 $ and $c>0$ is chosen small enough.
	
	\textit{Step 3.} Finally, we look at the case $ \ell\geq 1 $ and $ \nu\in [1/2,3/4] $. We can again assume that $ a(t)\leq1/\ell^\nu $ for all $ t\in [0,T] $ otherwise we use the following arguments on the time intervals on which $ a(t)\leq1/\ell^\nu $. We now have $ r_0\gtrsim \ell $ and hence
	\begin{align*}
		a(t) &\geq a(0) - \int_0^t \dfrac{C\varepsilon^2}{\ell^2+s^2}\left( 1+\dfrac{1}{a(s)\sqrt{\ell}}\right)\, ds 
		\\
		&\geq a(0)-C\varepsilon^2\ell^{-1}-\int_0^t \dfrac{C\varepsilon^2}{\ell^{2}+s^2}\dfrac{1}{a(s)\sqrt{\ell}}\, ds
	\end{align*}
	As above we get via a comparison argument
	\begin{align*}
		a(t) \geq \left( \left( a(0)- C\varepsilon^2\ell^{-1}\right)^2-C\varepsilon^2\ell^{-3/2}  \right)^{1/2} \geq \ell^{\nu} \left( \left( \delta- C\varepsilon^2\ell^{-1+\nu}\right)^2-C\varepsilon^2\ell^{-3/2+2\nu}  \right)^{1/2}.
	\end{align*}
	For $ 2\nu\leq 3/2 $, i.e.\ $ \nu\leq 3/4 $, we can choose $ \varepsilon\leq c\delta^2 $ with $c>0$ small enough to obtain $ a(t)\geq \ell^{\nu}\delta/2 $.
	
	\textit{Step 4.} We now turn to the proof of the estimates \eqref{eq:nonlin_char_bds}. First of all, we prove the estimate for the actions. When $ \ell \leq 1 $ we have with $ p\approx a^{-2}\jabr{a\sqrt{\ell}}\lesssim a^{-2}+\ell $
	\begin{align*}
		|\F(t,\tilde{R}(\theta,a))|  |\partial_\theta \tilde{R}(\theta,a)| \leq \dfrac{C\varepsilon^2}{1+t^2} \jabr{\dfrac{p}{\tilde{R}}}^{1/2}\leq \dfrac{C\varepsilon^2}{1+t^2} \left( 1+\dfrac{1}{a} \right) \left( 1+\dfrac{1}{\sqrt{|\theta+at|}} \right).
	\end{align*}
	With $ a(t)\gtrsim \delta $ we hence obtain as in Step 1
	\begin{align*}
		|a(t)-a(0)| \leq C\dfrac{\varepsilon^2}{\delta}\int_0^t \dfrac{\varepsilon^2}{1+s^2} \left( 1+\dfrac{1}{\sqrt{|\psi(s)|}} \right)\, ds \leq C\dfrac{\varepsilon^2}{\delta^2}.
	\end{align*}
	On the other hand, for $ \ell\geq 1 $ we have
	\begin{align*}
		|\F(t,\tilde{R}(\theta,a))|  |\partial_\theta \tilde{R}(\theta,a)| \leq \dfrac{C\varepsilon^2}{1+r_0^2+t^2}\left( 1+\dfrac{1}{a\sqrt{\ell}} \right)
	\end{align*}
	and hence
	\begin{align*}
		|a(t)-a(0)|\leq C\varepsilon^2\int_0^t\dfrac{1}{1+r_0^2+s^2}
		\left( 1+\dfrac{1}{a(s)\sqrt{\ell}} \right) \, ds.
	\end{align*}
	We split the integral into the contributions $ a\sqrt{\ell}\leq 1 $ and $ a\sqrt{\ell}\geq 1 $ yielding with $ r_0\approx \ell/\jabr{a\sqrt{\ell}} $ as well as $ a(t)\gtrsim \ell^{-\nu}\delta $
	\begin{align*}
		|a(t)-a(0)|\leq C\varepsilon^2\left( \dfrac{\ell^{\nu-3/2}}{\delta}+1 \right) \leq C\dfrac{\varepsilon^2}{\delta}.
	\end{align*}
	In the last step we used $ \nu\leq 3/4 $.
	
	Now we prove the bound for the angles. For $ \ell\leq 1 $ we have with Lemma \ref{lem:EstDynRadNoBulk}
	\begin{align*}
		|\F(t,\tilde{R}(\theta,a))|  |\partial_a\tilde{R}| &\leq \dfrac{C\varepsilon^2}{1+\tilde{R}^2+t^2} \left(t\jabr{\dfrac{p}{\tilde{R}}}^{1/2} + \dfrac{p}{a}\ln\jabr{\dfrac{\tilde{R}}{p}}\right)  
		\\
		&\leq \dfrac{C\varepsilon^2}{1+t} \left( 1+\dfrac{1}{a} \right)\left( 1+\dfrac{1}{\sqrt{\tilde{R}}} \right) + \dfrac{C\varepsilon^2}{1+t^{3/2}}\dfrac{p}{a}.
	\end{align*}
	In the last step we split into the cases $ \tilde{R}/p\geq1 $ and $ \tilde{R}/p\leq 1 $. We hence obtain with $ a(t)\gtrsim \delta $ and $ \tilde{R}\gtrsim |\theta+at|=|\psi(t)| $
	\begin{align*}
		|\theta(t)-\theta(0)|\leq C\dfrac{\varepsilon^2}{\delta}\int_0^t \dfrac{1}{1+s}\dfrac{1}{\sqrt{\psi(s)}}\, ds + C\dfrac{\varepsilon^2}{\delta^3}
		\leq C\dfrac{\varepsilon^2}{\delta^3}\ln(1+t).
	\end{align*}
	Here, also used $ p\lesssim \ell+1/a^2 $ and $ \ell\leq 1 $ as well as arguments from Step 1 above. 
	
	On the other hand, for $ \ell\geq 1 $ we have with Lemma \ref{lem:EstDynRadNoBulk}
	\begin{align*}
		|\F(t,\tilde{R}(\theta,a))|  |\partial_a\tilde{R}| &\leq \dfrac{C\varepsilon^2}{1+\tilde{R}^2+t^2} \left(t\jabr{\dfrac{p}{\tilde{R}}}^{1/2} + \dfrac{p}{a}\ln\jabr{\dfrac{\tilde{R}}{p}}\right)  
		\\
		&\leq \dfrac{C\varepsilon^2}{1+t}\jabr{\dfrac{1}{a\sqrt{\ell}}} + \dfrac{C\varepsilon^2}{1+t}\dfrac{1}{a}\jabr{\dfrac{p}{\tilde{R}}} \leq\dfrac{C\varepsilon^2}{1+t}\left( 1+\dfrac{1}{a}\right)\jabr{\dfrac{1}{a^2\ell}}.
	\end{align*}
	Thus, with $ a(t)\gtrsim \ell^{-\nu}\delta $ we get
	\begin{align*}
		|\theta(t)-\theta(0)|\leq C\dfrac{\varepsilon^2}{\delta^3} \left( 1+\ell^{3\nu-1}\right)\ln(1+t).
	\end{align*}
	This concludes the proof.
\end{proof}

\subsection{Global well-posedness for small initial data}\label{subsec:WellPosedness}

By estimates from Section \ref{subsec:EstGravFieldLInf} one sees that the force $ \nabla_{(\theta,a)}\tilde{\Psi} $ in action-angle variables is locally Lipschitz for distributions in $\gamma\in L^\infty $ with appropriate moments -- see the summary in Lemma \ref{lem:BootstrapDerivGravFieldAALInf} below. As a consequence, (transport along) the flow of the characteristic system \eqref{eq:Sec4:NonlinCharSys} is well-defined. This provides a natural notion of weak solutions, which are the push-forward of their induced characteristic flow. We refer to these as Lagrangian solutions, according to the following definition:

\begin{defi}[Lagrangian solutions]\label{def:Lagrangian_sol}
	We say $ \gamma=\gamma(t,\theta,a,\ell)\in C([0,\infty),L^\infty_{\theta,a,\ell})$ is a Lagrangian solution to \eqref{eq:Sec4:VPeq} with initial condition $ \gamma_0 $, if the following are satisfied:
	\begin{enumerate}[(i)]
		\item The characteristic flow $ \Phi_t(\theta,a,\ell)=(\hat{\Theta}_t(\theta,a,\ell),\hat{A}_t(\theta,a,\ell),\ell) $ to \eqref{eq:Sec4:NonlinCharSys} is well-defined and solves the characteristic system, i.e.\ for all $ (\theta,a,\ell)\in\R\times\R_+\times\R_+ $ and all $ t\geq 0 $ we have
		\begin{equation}\label{eq:chars_def}
			(\hat{\Theta}_t(\theta,a,\ell),\hat{A}_t(\theta,a,\ell)) = (\theta,a) + \lambda\int_0^t (-\partial_a\tilde{\Psi}(s,\Phi_s(\theta,a,\ell)),\partial_\theta\tilde{\Psi}(s,\Phi_s(\theta,a,\ell)))\, ds,
		\end{equation}
		where $ \tilde{\Psi}(t,\theta,a,\ell) $ is defined in \eqref{eq:Sec4:VPeq} via $ \gamma(t) $.
		
		\item The function $ \gamma(t) $ is given by the push-forward $ (\Phi_t)_\#\gamma_0 $, i.e.\ $\gamma(t,\theta,a,\ell)=\gamma_0(\Phi_t^{-1}(\theta,a,\ell))$.
	\end{enumerate}
\end{defi}

We now prove global well-posedness of Lagrangian solutions via moment propagation.
\begin{thm}\label{thm:WellPosedLagrangianSol}
	There exists $c_0>0$ such that for $ \delta\in (0,1) $ and $ \eps_0=c_0\delta^2>0 $ the following holds. For $ \gamma_0\in L^2\cap L^\infty $ with $ \supp (\gamma_0)\subset \D(\delta) $ satisfying
    \begin{align}\label{eq:mom_bd_id}
        \norm[L^\infty]{\jabr{\ell}^{32} \ell^{-1} (a+a^{-1})^{32} \gamma_0}+\norm[L^\infty]{\jabr{\ell}^{16} \ell^{-1} \jabr{\theta}^{32} \gamma_0} \leq \eps\leq\eps_0,
	\end{align}
	the equation \eqref{eq:Sec4:VPeq} has a unique, global in time Lagrangian solution $ \gamma\in C([0,\infty),L^2\cap L^\infty) $ with initial datum $ \gamma_0 $. Moreover, $ \supp\gamma(t)\subset \D(\delta/2) $ and $\gamma(t)$ satisfies for all $t\geq 0$ that
	\begin{align}
		\norm[L^2]{(a+a^{-1})^{4}\gamma(t)} +\norm[L^\infty]{\jabr{\ell}^{32} \ell^{-1} (a+a^{-1})^{32} \gamma(t)}&\lesssim \eps, \label{eq:mom_bd_a}\\
        \norm[L^\infty]{\jabr{\ell}^{16} \ell^{-1} \jabr{\theta}^{k} \gamma(t)}&\lesssim \eps\ln^k\jabr{t},\quad 1\leq k\leq 32.\label{eq:mom_bd_theta}
	\end{align}
\end{thm}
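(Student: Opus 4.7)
The plan is a bootstrap argument run along the nonlinear characteristics. Concretely, on any time interval $[0,T]$ on which a Lagrangian solution $\gamma$ with initial datum $\gamma_0$ exists and satisfies the moment bounds \eqref{eq:mom_bd_a}--\eqref{eq:mom_bd_theta} with constants $2C$ for some $C>0$ (depending only on the implicit constants of Section~3 and~4), we show that in fact the same bounds hold with constant $C$. Combined with local-in-time construction (by approximation with smooth, compactly supported data, application of standard Cauchy--Lipschitz theory on the ODE \eqref{eq:chars_def}, and passage to the limit using the estimates below), this continuation closes the bootstrap and yields a global solution. The bootstrap hypothesis produces via Lemma~\ref{lem:EstimatesGravFieldLInf} (with $q=0$) a bound $|\F(t,r)|\leq C\eps^2/(1+r^2+t^2)$, so that Lemma~\ref{lem:NonlinCharSys} with $\nu=1/2$ and Corollary~\ref{cor:nonlin_chars} apply: the support propagates to $\D(\delta/2)$, and along each characteristic we have the quantitative bounds
\begin{align*}
|a(t)-a(0)|\lesssim \tfrac{\eps^2}{\delta^2},\qquad |\theta(t)-\theta(0)|\lesssim \tfrac{\eps^2}{\delta^3}\jabr{\ell}^{1/2}\ln\jabr{t}.
\end{align*}

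\textbf{Moment propagation.} The first step is to observe that $a(t)+a(t)^{-1}\lesssim a(0)+a(0)^{-1}$: indeed, when $a(0)\geq 1$ this follows from $a(t)\leq a(0)+C\eps^2/\delta^2\lesssim a(0)$ for $\eps\leq c_0\delta^2$; when $a(0)\leq 1$ we use $a(t)\geq\delta/(2\jabr{\ell}^{1/2})$ from \eqref{eq:Sec4:ActionLowerBound} to bound $a(t)^{-1}\leq 2 a(0)^{-1}$, while $a(t)\lesssim 1\lesssim a(0)^{-1}$. Since $\ell$ is conserved along the flow and $\gamma(t,\theta,a,\ell)=\gamma_0(\theta(0),a(0),\ell)$, this immediately yields
\begin{align*}
\jabr{\ell}^{32}\ell^{-1}(a+a^{-1})^{32}\gamma(t) \lesssim \jabr{\ell}^{32}\ell^{-1}(a(0)+a(0)^{-1})^{32}\gamma_0\leq \eps.
\end{align*}
For the $\theta$-moment one combines both initial $L^\infty$ bounds. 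Using $\gamma_0\leq \eps\ell\jabr{\ell}^{-32}$ from the $(a+a^{-1})$-moment and the triangle inequality $\jabr{\theta(t)}^k\lesssim \jabr{\theta(0)}^k+(\eps^2/\delta^3)^k\jabr{\ell}^{k/2}\ln^k\jabr{t}$, we obtain
\begin{align*}
\jabr{\ell}^{16}\ell^{-1}\jabr{\theta(t)}^k\gamma(t) \lesssim \jabr{\ell}^{16}\ell^{-1}\jabr{\theta(0)}^k\gamma_0+\eps\jabr{\ell}^{k/2-16}\ln^k\jabr{t}\lesssim \eps\ln^k\jabr{t}
\end{align*}
for $1\leq k\leq 32$, using $\jabr{\theta(0)}^k\leq \jabr{\theta(0)}^{32}$ in the first summand.

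\textbf{The $L^2$ bound.} Since the characteristic vector field in \eqref{eq:Sec4:NonlinCharSys} is Hamiltonian (with Hamiltonian $\lambda\tilde{\Psi}$), the flow $\Phi_t$ is symplectic on each $\ell$-slice and hence Lebesgue measure preserving in $(\theta,a)$. The change of variables $(\theta,a)\mapsto (\theta(0),a(0))$ then gives
\begin{align*}
\norm[L^2]{(a+a^{-1})^4\gamma(t)}^2 = \int (a(t)+a(t)^{-1})^8 \gamma_0^2\,d\theta(0)\,da(0)\,d\ell \lesssim \norm[L^2]{(a+a^{-1})^4\gamma_0}^2,
\end{align*}
and this initial quantity is controlled by $\eps^2$ via Hölder: $\int(a+a^{-1})^8\gamma_0^2 \leq \|(a+a^{-1})^8\gamma_0\|_{L^\infty}\|\gamma_0\|_{L^1}$, where the $L^1$ norm is bounded by interpolating the two initial $L^\infty$ moments against the weights $\jabr{\theta}^{-32}\jabr{\ell}^{-16}\ell$ and $(a+a^{-1})^{-32}\jabr{\ell}^{-32}\ell$, both of which are integrable on $\R\times\R_+\times\R_+$.

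\textbf{Uniqueness and main obstacle.} Uniqueness is the delicate step. Given two Lagrangian solutions $\gamma^{(1)},\gamma^{(2)}$ with the same initial data, both satisfying the above bounds, we compare their flows:
\begin{align*}
|\Phi_t^{(1)}(z)-\Phi_t^{(2)}(z)| \leq \lambda\int_0^t \|\nabla^2\tilde{\Psi}^{(1)}(s)\|_{L^\infty}|\Phi_s^{(1)}-\Phi_s^{(2)}|\,ds+\lambda\int_0^t\|\nabla\tilde{\Psi}^{(1)}(s)-\nabla\tilde{\Psi}^{(2)}(s)\|_{L^\infty}\,ds.
\end{align*}
The Lipschitz bound comes from Lemma~\ref{lem:EstimatesDerivGravFieldAA}, whose hypotheses on $|\F|$ and $|\partial_r\F|$ are supplied by Lemmas~\ref{lem:EstimatesGravFieldLInf}--\ref{lem:EstimatesDerivGravFieldLInf} applied to the propagated moments; critically, all resulting bounds on $|\nabla^2\tilde{\Psi}^{(i)}|$ are time-integrable, so a Gronwall argument controls the flow difference by the difference of the fields. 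The latter, in turn, is controlled by the difference of the flows via the explicit integral representation of $\Psi$, closing the loop. The main obstacle is therefore the interplay between the very low regularity setting (no derivative control on $\gamma$) and the singularities of $\F$ at $r=0$ and at large momenta $\ell\to 0$ or small actions $a\to 0$; it is precisely the restriction to $\D(\delta)$ with $\delta>0$, together with the sharp decay and vanishing order estimates of Section~\ref{sec:GravField}, that renders $\nabla\tilde\Psi$ sufficiently regular for the Gronwall argument to close.
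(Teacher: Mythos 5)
Your moment-propagation step contains a flawed inequality. The claim $a(t)^{-1}\leq 2a(0)^{-1}$ does not follow from \eqref{eq:Sec4:ActionLowerBound}: on $\D(\delta)$ the support condition gives only the \emph{lower} bound $a(0)\geq \delta\jabr{\ell}^{-1/2}$, so $a(t)\geq \delta/(2\jabr{\ell}^{1/2})$ yields $a(t)^{-1}\leq 2\jabr{\ell}^{1/2}/\delta$, which can exceed $a(0)^{-1}$ by an arbitrarily large factor (take $\ell$ large and $a(0)$ comparable to $\eps^2/\delta^2$, where the additive bound $|a(t)-a(0)|\lesssim \eps^2/\delta^2$ from \eqref{eq:nonlin_char_bds} gives no relative control). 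What is actually needed, and true, is \emph{multiplicative} comparability $a(t)\approx a(0)$, which comes from the vanishing of $\partial_\theta\tilde{\Psi}$ as $a\to0$: by Lemma \ref{lem:BootstrapDerivGravFieldAALInf}, $a^{-1}|\partial_\theta\tilde{\Psi}|\lesssim \eps^2\delta^{-2}(1+t)^{-3/2}$, so $\frac{d}{dt}\ln a(t)$ is time-integrable (this is exactly Step 1 of the paper's uniqueness proof, and the same mechanism that drives the paper's Eulerian propagation via \eqref{eq:mom-eqs}). Your $L^2$ step relies on the same faulty comparison and needs the same repair; with it, your Lagrangian route to the moment bounds is a legitimate alternative to the paper's Eulerian argument.

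The more serious gap is uniqueness, which the paper flags as the core difficulty and which you essentially assert rather than prove. With $\gamma^{(i)}(t)$ merely in $L^\infty$ (no derivative control), the difference of the fields is an integral of $\ind_{\{\tilde{R}\circ\Phi_t^1\leq r\}}-\ind_{\{\tilde{R}\circ\Phi_t^2\leq r\}}$ against $\gamma_0^2$, and there is no linearization converting it into the flow difference; the paper's Step 3 achieves this by a geometric set-size argument that crucially uses the strict monotonicity of $a\mapsto R(\theta,a,\ell)$ and the comparability of $\partial_aR$ over comparable actions (Lemmas \ref{lem:HFunctEquiv} and \ref{lem:BoundActionDerivR}), none of which your sketch supplies. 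Moreover the resulting estimate \eqref{eq:Fdiff_bd} grows like $(1+t)$, and after multiplication by $\partial_a\tilde{R}\sim t$ like $(1+t)^2$, so your claim that the relevant coefficients are time-integrable is false: the Gr\"onwall argument closes only because both flows share the same initial data, so the growing source is multiplied by a quantity vanishing at $t=0$. Finally, an unweighted sup-norm Gr\"onwall cannot work uniformly in $\ell$: on $\D(\delta/2)$ one has $|\partial_a^2\tilde{\Psi}|\lesssim \delta^{-6}\jabr{\ell}^{1/2}a^{-1}(1+t)^{-1}\sim \delta^{-7}\jabr{\ell}(1+t)^{-1}$, unbounded in $\ell$, and since $\F$ couples all values of $\ell$ through the phase-space integral you cannot run the comparison for each fixed $\ell$ separately; this is precisely why the paper measures the flow difference in the anisotropically weighted quantity built from $\omega=(a/(a+\jabr{\ell}^{1/2}))^{1/2}$. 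As written, your uniqueness argument does not go through.
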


Since moments in $\ell$ commute with the nonlinear dynamic, these can be propagated trivially. Moments in $a$ (and any $L^p$) are also uniformly bounded in time (thanks to the time integrable decay rate of $\partial_\theta\tilde{\Psi}(t)$, see Lemma \ref{lem:BootstrapDerivGravFieldAALInf} and \eqref{eq:mom-eqs}). The role of the $L^2$ bound in \eqref{eq:mom_bd_a} is to ensure the sharp $t^{-1}$ decay for $\partial_a\tilde{\Psi}(t)$, from which in turn the logarithmic bounds on moments in $\theta$ in \eqref{eq:mom_bd_theta} follow. The core difficulty of the proof is to obtain uniqueness in the relatively weak topology of Theorem \ref{thm:WellPosedLagrangianSol}, which in particular does not require any derivative control. 

As a preparation for the proof of Theorem \ref{thm:WellPosedLagrangianSol}, we collect some basic bounds regarding the regularity of the gravitational potential $ \tilde{\Psi} $ in action-angle variables. For sharp rates, the order of vanishing of $\gamma$ as $\ell\to 0$ (quantified via the parameter $q\geq 2$) plays an important role.

\begin{lem}\label{lem:BootstrapDerivGravFieldAALInf}
	Let $ \delta\in(0,1) $ and $ q\geq 2 $. Then, for $ a\jabr{\ell}^{\frac12}\gtrsim \delta$ we have
	\begin{align*}
		|\partial_{\theta}\tilde{\Psi}| &\lesssim \dfrac{\min\{a,1\}}{\delta^2}\dfrac{M_q(\gamma)}{(1+t)^{3/2}}, \quad |\partial_{a}\tilde{\Psi}| \lesssim \dfrac{\jabr{\ell}^{\frac12}}{\delta^3}\dfrac{M_q(\gamma)}{1+t}, \quad
		|\partial_{\theta}^2\tilde{\Psi}| \lesssim \dfrac{1}{\delta^3}\min\left( \dfrac{a}{\jabr{\ell}^{\frac12}}, 1 \right)\dfrac{M_q(\gamma)+M_q'(\gamma)}{(1+t)^2},
		\\
		|\partial_{\theta a}^2\tilde{\Psi}| &\lesssim \dfrac{1}{\delta^4}\dfrac{M_q(\gamma)+M_q'(\gamma)}{(1+t)^{3/2}},
		\quad
		|\partial_{a}^2\tilde{\Psi}| \lesssim \dfrac{1}{\delta^6}\dfrac{\jabr{\ell}^{\frac12}}{a}\dfrac{M_q(\gamma)+M_q'(\gamma)}{1+t},
	\end{align*}
	where $ M_q(\gamma) $ and $ M_q'(\gamma) $ are defined in Lemmas \ref{lem:EstimatesGravFieldLInf} and \ref{lem:EstimatesDerivGravFieldLInf}, respectively.
\end{lem}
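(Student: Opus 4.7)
The lemma is essentially a concatenation of the $L^\infty$ bounds on the physical-variable gravitational field from Section \ref{subsec:EstGravFieldLInf} with the symbolic bounds on the action-angle potential from Section \ref{subsec:EstEffGravFieldAA}. First I would record that for $q\geq 2$, Lemmas \ref{lem:EstimatesGravFieldLInf} and \ref{lem:EstimatesDerivGravFieldLInf} give
\begin{align*}
|\F(t,r)| &\lesssim \dfrac{M_q(\gamma)}{1+r^2+t^2}\min\left(\dfrac{r^2}{(1+t)^2},1\right), \\
|\partial_r\F(t,r)| &\lesssim \dfrac{M_q'(\gamma)}{(1+r^2+t^2)\, r}\min\left(\dfrac{r^2}{(1+t)^2},1\right),
\end{align*}
where for the $\min$ factor I use that for $q\geq 2$ and $r\leq 1+t$ one has $(r/(1+t))^q\leq (r/(1+t))^2$.

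Next, I would verify that the $\partial_r\F$ estimate just obtained implies, with comparable constants, the form required as hypothesis of Lemma \ref{lem:EstimatesDerivGravFieldAA}, namely
\[
|\partial_r\F(t,r)| \lesssim \dfrac{M_q'(\gamma)}{1+r^3+t^3}\min\left(\dfrac{r}{1+t},1\right).
\]
This is a direct calculation treating the two regimes $r\leq 1+t$ and $r\geq 1+t$: in the first, $1+r^2+t^2\approx (1+t)^2$ and $1+r^3+t^3\approx (1+t)^3$, so both sides are $\approx r/(1+t)^3$ up to the factor $1/(1+r^2+t^2)$; in the second, both sides collapse to $\approx 1/r^3$.

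Finally, I would apply Lemma \ref{lem:EstimatesGravFieldAA} with constant $K=M_q(\gamma)$ (for which only the weaker bound $|\F(t,r)|\leq K/(1+r^2+t^2)$ is needed, which is a consequence of the first estimate above) to obtain the stated bounds on $\partial_\theta\tilde\Psi$ and $\partial_a\tilde\Psi$. For the second derivatives, I would apply Lemma \ref{lem:EstimatesDerivGravFieldAA} with $K=M_q(\gamma)$ and $K'=M_q'(\gamma)$, whose hypotheses are precisely the two estimates verified in the preceding steps. The conclusions of Lemma \ref{lem:EstimatesDerivGravFieldAA} then match verbatim the claimed bounds on $\partial_\theta^2\tilde\Psi$, $\partial_{\theta a}^2\tilde\Psi$ and $\partial_a^2\tilde\Psi$, up to replacing $K+K'$ by $M_q(\gamma)+M_q'(\gamma)$.

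There is no real obstacle in this proof: it is bookkeeping of constants and a short regime-based comparison for the $\partial_r\F$ bound. The only point that requires attention is that Lemma \ref{lem:EstimatesDerivGravFieldAA} demands the $r^2$-version of the $\min$-factor rather than the $r^q$-version delivered by Lemmas \ref{lem:EstimatesGravFieldLInf}--\ref{lem:EstimatesDerivGravFieldLInf}, which is precisely the reason for the assumption $q\geq 2$.
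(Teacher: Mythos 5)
Your proposal is correct and follows essentially the same route as the paper: both apply Lemma \ref{lem:EstimatesGravFieldAA} and Lemma \ref{lem:EstimatesDerivGravFieldAA} with $K=M_q(\gamma)$, $K'=M_q'(\gamma)$, after checking (using $q\geq 2$) that the bounds of Lemmas \ref{lem:EstimatesGravFieldLInf} and \ref{lem:EstimatesDerivGravFieldLInf} imply the hypotheses in the required form, in particular converting $\frac{1}{(1+r^2+t^2)\,r}\min\left(\frac{r^q}{(1+t)^q},1\right)$ into $\frac{1}{1+r^3+t^3}\min\left(\frac{r}{1+t},1\right)$. The paper records this conversion in one line (keeping the exponent $q-1$), while you argue by the two regimes $r\lessgtr 1+t$; this is only a cosmetic difference in bookkeeping.
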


\begin{proof}[Proof of Lemma \ref{lem:BootstrapDerivGravFieldAALInf}.]
	The results follow from Lemmas \ref{lem:EstimatesGravFieldAA} and \ref{lem:EstimatesDerivGravFieldAA} by using the estimates in Lemmas \ref{lem:EstimatesGravFieldLInf} and \ref{lem:EstimatesDerivGravFieldLInf}. In this case we have $ K=M_q(\gamma) $ and $ K'=M_q'(\gamma) $. Note that we have for $ q\geq 2 $
	\begin{align*}
		|\partial_r\F(t,r)| \lesssim \dfrac{M_q'(\gamma)}{1+r^2+t^2}\dfrac{1}{r}\min\left( \dfrac{r^q}{(1+t)^q},1 \right) \lesssim \dfrac{M_q'(\gamma)}{1+r^3+t^3}\min\left( \dfrac{r^{q-1}}{(1+t)^{q-1}},1 \right)
	\end{align*}
	as needed for Lemma \ref{lem:EstimatesDerivGravFieldAA}.
\end{proof}

\begin{proof}[Proof of Theorem \ref{thm:WellPosedLagrangianSol}]
In the first part of the proof we show how to bootstrap the relevant a priori estimates for general weak solutions. Standard arguments allow to construct the latter, and in particular also a Lagrangian weak solution satisfying the claimed support assumption (see also Corollary \ref{cor:nonlin_chars}). The second part establishes the uniqueness of Lagrangian solutions.
	
\textit{Moment estimates.} Assume for the sake of a bootstrap that for some $T>0$ we are given a weak solution $\gamma\in C([0,T],L^2\cap L^\infty)$ to \eqref{eq:Sec4:VPeq} with $\gamma(0)=\gamma_0$, $\supp(\gamma(t))\subset\D(\frac{\delta}{2})$ and satisfying for $0\leq t\leq T$ the bounds
\begin{equation}
    \norm[L^2]{(a+a^{-1})^{4}\gamma(t)} +\norm[L^\infty]{\jabr{\ell}^{32} \ell^{-1} (a+a^{-1})^{32} \gamma_0}\leq 100\eps, \qquad 
        \norm[L^\infty]{\jabr{\ell}^{16} \ell^{-1} \jabr{\theta}^{32} \gamma(t)}\leq \eps\jabr{t}^{\frac12}.
\end{equation}
It then follows that $M_2(\gamma)\lesssim \eps^2$, and thus by Lemma \ref{lem:BootstrapDerivGravFieldAALInf} we have
\begin{equation}
    |\partial_{\theta}\tilde{\Psi}| \lesssim \dfrac{\eps^2}{\delta^2}\dfrac{\min\{a,1\}}{(1+t)^{3/2}}, \quad |\partial_{a}\tilde{\Psi}| \lesssim \dfrac{\eps^2}{\delta^3}\dfrac{\jabr{\ell}^{\frac12}}{1+t}.
\end{equation}
Since for a scalar function $f(\theta,a,\ell)$ one has
\begin{equation}\label{eq:mom-eqs}
	\partial_t (f\gamma) + \lambda \{f\gamma,\tilde{\Psi}\} = \lambda \gamma\{f,\tilde{\Psi}\},
\end{equation}
where the left hand side conserves $L^p$ norms for all $1\leq p\leq \infty$.
It follows that for any $k_j\in\Z$, $1\leq j\leq 3$,
\begin{equation}
\begin{aligned}
   \partial_t (\jabr{\ell}^{k_1}\ell^{k_2}a^{k_3}\gamma) + \lambda \{\jabr{\ell}^{k_1}\ell^{k_2}a^{k_3}\gamma,\tilde{\Psi}\} = -\lambda k_3\jabr{\ell}^{k_1}\ell^{k_2}a^{k_3}\gamma \,a^{-1}\partial_\theta\tilde{\Psi},
\end{aligned}    
\end{equation}
which yields \eqref{eq:mom_bd_a} since $\eps\leq\eps_0 \leq c_0\delta$ with $c_0>0$ sufficiently small. Similarly, we have that
\begin{equation}
\begin{aligned}
   \partial_t (\jabr{\ell}^{k_1}\ell^{k_2}\theta^{k_3}\gamma) + \lambda \{\jabr{\ell}^{k_1}\ell^{k_2}\theta^{k_3}\gamma,\tilde{\Psi}\} = \lambda k_3\jabr{\ell}^{k_1}\ell^{k_2}\theta^{k_3-1}\gamma \partial_a\tilde{\Psi},
\end{aligned}    
\end{equation}
and thus
\begin{equation}
  \frac{d}{dt}\norm[L^\infty]{\jabr{\ell}^{k_1}\ell^{k_2}\theta^{k_3}\gamma}\lesssim \eps^2\delta^{-3}\frac{1}{1+t}\norm[L^\infty]{\jabr{\ell}^{k_1+\frac12}\ell^{k_2}\theta^{k_3-1}\gamma}.  
\end{equation}
Since $\eps\leq \eps_0\leq c_0 \delta^{\frac32}$ and $c_0>0$ sufficiently small, this implies that
\begin{equation}
   \norm[L^\infty]{\jabr{\ell}^{32-\frac{k}{2}} \ell^{-1} \jabr{\theta}^{k} \gamma(t)}\lesssim \eps\ln^k\jabr{t},\qquad 1\leq k\leq 32,
\end{equation}
from which \eqref{eq:mom_bd_theta} follows. The property $\supp(\gamma(t))\subset\D(\frac{\delta}{2})$ is then a consequence of Lemma \ref{lem:NonlinCharSys} provided $\eps \leq \eps_0 \leq c_0\delta^2$ with $c_0>0$ sufficiently small.
	
\textit{Uniqueness.} In the following steps we prove uniqueness. To this end, let $ \gamma_1, \, \gamma_2 $ be two Lagrangian solutions with initial data $\gamma_0$ satisfying the stated assumptions. We will denote by $ \F^1,\,  \F^2 $ the associated gravitational fields and by $ \Psi^1,\, \Psi^2 $ the corresponding gravitational potentials. To prove uniqueness it suffices to show that the induced flows $ \Phi_t^1=(\hat{\Theta}_t^1,\hat{A}_t^1,\ell) $,  $ \Phi_t^2=(\hat{\Theta}_t^2,\hat{A}_t^2,\ell) $ satisfying the characteristic system \eqref{eq:Sec4:NonlinCharSys} coincide. (Since $\ell$ is not a dynamic variable, we will sometimes abuse notation by suppressing the explicit dependence on $\ell$ and treat $\Phi_t^j$ as $(\hat{A}_t^j,\hat{\Theta}_t^j)$, $j=1,2$.) To achieve this, we will invoke a Gr\"onwall estimate for the function
\begin{equation}
	\bar{\zeta}(t) := \sup_{(\theta,a,\ell)\in \D(\delta)} \zeta(t,\theta,a,\ell),\qquad 	\zeta(t,\theta,a,\ell) := \left( \omega^2 \left| \hat{\Theta}_t^1-\hat{\Theta}_t^2 \right|^2+\omega^{-2}\left| \hat{A}_t^1-\hat{A}_t^2 \right|^2  \right)^{1/2},
\end{equation}
where
\begin{equation}
    \omega:=\left(\frac{a}{a+\jabr{l}^{\frac12}}\right)^{\frac12}.
\end{equation}
To motivate this choice, note that from \eqref{eq:chars_def} we have
that
\begin{equation}\label{eq:chars_diff}
\begin{aligned}
	\dfrac{d}{dt}\left( \Phi_t^1-\Phi_t^2\right) &= \lambda J\nabla_{(\theta,a)}\tilde{\Psi}^1\circ\Phi_t^1-\lambda J\nabla_{(\theta,a)}\tilde{\Psi}^2\circ\Phi_t^2
	\\
	&=\lambda \left[ J\nabla_{(\theta,a)}\tilde{\Psi}^1\circ\Phi_t^1-  J\nabla_{(\theta,a)}\tilde{\Psi}^1\circ\Phi_t^2 \right] + \lambda \left[  J\nabla_{(\theta,a)}\tilde{\Psi}^1\circ\Phi_t^2- J\nabla_{(\theta,a)}\tilde{\Psi}^2\circ\Phi_t^2 \right] 
    \\
    &=:\lambda I^1+\lambda I^2, \quad J := \begin{pmatrix}
			0 & -1 \\ 1 & 0
		\end{pmatrix}.
\end{aligned}    
\end{equation}    
To control $ I^1 $ we will use Lipschitz bounds for $ \nabla_{(\theta,a)}\tilde{\Psi}^1 $: as one sees from Lemma \ref{lem:BootstrapDerivGravFieldAALInf}, these hold locally in $\D(\delta)$, but can be globally rebalanced using the weight $\omega$.\footnote{More precisely, we observe that $|D^2_{(\theta,a)}\tilde{\Psi}^1|\lesssim \eps_0^2\delta^{-6}\begin{pmatrix}
			\omega^{2} & 1 \\ 1 & \omega^{-2}
\end{pmatrix}$, since $\omega^2\sim\min\{1,a\jabr{\ell}^{-\frac12}\}$}.
This is important for $ I^2 $, where we use that $ \F^1-\F^2 $ can be controlled (nonlocally) through $\bar\zeta$ via set size estimates (see \eqref{eq:Fdiff_bd}).
	
	\textit{Step 1. Comparability of actions.} We begin by observing that there exists $C>0$ such that
    \begin{equation}\label{eq:a-compare}
		\exp\left( -2C\lambda\dfrac{\varepsilon_0^2}{\delta^2} \right)  \leq \dfrac{\hat{A}_t^1}{\hat{A}_t^2} \leq \exp\left( 2C\lambda\dfrac{\varepsilon_0^2}{\delta^2} \right).
	\end{equation}
    Indeed, from \eqref{eq:chars_def} we obtain
	\begin{align*}
		\dfrac{d}{dt} \dfrac{\hat{A}^j_t}{a} = \lambda\dfrac{\hat{A}^j_t}{a} \dfrac{1}{\hat{A}^j_t}\partial_{\theta}\tilde{\Psi}(\hat{\Theta}^j_t,\hat{A}^j_t),\quad j=1,2,
	\end{align*}
	and with Lemma \ref{lem:BootstrapDerivGravFieldAALInf} a Gr\"onwall argument yields that
	\begin{equation*}
		\exp\left( -C\lambda\dfrac{\varepsilon_0^2}{\delta^2} \right)  \leq \dfrac{\hat{A}}{a} \leq \exp\left( C\lambda\dfrac{\varepsilon_0^2}{\delta^2} \right).
	\end{equation*}
	
	\textit{Step 2. Estimation of $ I^1 $.} We bound each component of $ I^1=(I^1_\theta,I^2_a) $ separately. With \eqref{eq:a-compare} we obtain
	\begin{align*}
		|I_\theta^1| &\leq \int_0^1\left( |\partial_{a}^2\tilde{\Psi}^1|(t,s\Phi_t^1+(1-s)\Phi_t^2) \, |\hat{A}_t^1-\hat{A}_t^2| + |\partial_{\theta a}^2\tilde{\Psi}^1|(t,s\Phi_t^1+(1-s)\Phi_t^2) \, |\hat{\Theta}_t^1-\hat{\Theta}_t^2| \right) \, ds
		\\
		&\lesssim \dfrac{\varepsilon^2}{\delta^6}\left( \omega^{-2}|\hat{A}_t^1-\hat{A}_t^2| + |\hat{\Theta}_t^1-\hat{\Theta}_t^2| \right),
	\end{align*}
	and similarly
	\begin{align*}
		|I_a^1| &\leq \int_0^1\left( |\partial_{\theta a}^2\tilde{\Psi}^1|(t,s\Phi_t^1+(1-s)\Phi_t^2) \, |\hat{A}_t^1-\hat{A}_t^2| + |\partial_{\theta}^2\tilde{\Psi}^1|(t,s\Phi_t^1+(1-s)\Phi_t^2) \, |\hat{\Theta}_t^1-\hat{\Theta}_t^2| \right) \, ds
		\\
		&\lesssim \dfrac{\varepsilon_0^2}{\delta^4}\left( |\hat{A}_t^1-\hat{A}_t^2| + \omega^2 |\hat{\Theta}_t^1-\hat{\Theta}_t^2| \right).
	\end{align*}
	
	\textit{Step 3. Auxiliary estimate on gravitational fields.} In order to bound $ I^2 $ we will use the following bound, valid for any $r>0$:
	\begin{equation}\label{eq:Fdiff_bd}
		|\F^1-\F^2|(t,r)\lesssim \dfrac{\varepsilon^2(1+t) \, \min\{\sqrt{r},1\}}{1+r^2}\bar{\zeta}(t).
	\end{equation}
	To see this, we write
	\begin{align*}
		|\F^1-\F^2|(t,r)= \dfrac{1}{r^2}\left| \int_\R\int_0^\infty\int_0^\infty \left[ \ind_{\{\tilde{R}\leq r\}}\gamma_1^2(t)-\ind_{\{\tilde{R}\leq r\}}\gamma_2^2(t)\right] \, d\theta da d\ell\right|  \lesssim \dfrac{1}{r^2}\int_\R\int_0^\infty\int_0^\infty \ind_{D(t)}\gamma_0^2\, d\theta da d\ell,
	\end{align*}
	where we defined
	\begin{align*}
		D(t) :=& D_1(t)\cup D_2(t),
		\\
		D_1(t) :=& \left\lbrace\tilde{R}\circ\Phi_t^1\leq r\leq \tilde{R}\circ\Phi_t^2 \right\rbrace \cap\D(\delta),
		\\
		D_2(t) := &\left\lbrace\tilde{R}\circ\Phi_t^2\leq r\leq \tilde{R}\circ\Phi_t^1 \right\rbrace \cap \D(\delta).
	\end{align*}
	By symmetry it suffices to consider $ D_1(t) $, which we further decompose as $ D_1(t)\subset D_{11}(t)\cup D_{12}(t) $, where
	\begin{align*}
		D_{11}(t) &=  \left\lbrace R\left( \hat{\Theta}_t^1+t\hat{A}_t^1,\hat{A}_t^1\right) \leq r\leq R\left( \hat{\Theta}_t^2+t\hat{A}_t^2,\hat{A}_t^1\right) \right\rbrace \cap\D(\delta),
		\\
		D_{12}(t) &=  \left\lbrace R\left( \hat{\Theta}_t^2+t\hat{A}_t^2,\hat{A}_t^1\right) \leq r\leq R\left( \hat{\Theta}_t^2+t\hat{A}_t^2,\hat{A}_t^2\right) \right\rbrace \cap\D(\delta).
	\end{align*}
	Accordingly, let us define the corresponding integrals
	\begin{align*}
		J_{11}= \dfrac{1}{r^2}\int_\R\int_0^\infty\int_0^\infty \ind_{D_{11}(t)}\gamma_0^2\, d\theta da d\ell, \quad J_{12}= \dfrac{1}{r^2}\int_\R\int_0^\infty\int_0^\infty \ind_{D_{12}(t)}\gamma_0^2\, d\theta da d\ell.
	\end{align*}
	For $ J_{11} $, by definition of $R$ (see \eqref{eq:Sec2:PropAAVar1}) we have
	\begin{align*}
		D_{11}(t) =& \left\lbrace |\hat{\Theta}_t^1+t\hat{A}_t^1|\leq\xi_*(r,\hat{A}_t^1,\ell)\leq |\hat{\Theta}_t^2+t\hat{A}_t^2| \right\rbrace\cap \D(\delta)
		\\
		=& \left\lbrace 0\leq\xi_*(r,\hat{A}_t^1,\ell)-|\hat{\Theta}_t^1+t\hat{A}_t^1|\leq |\hat{\Theta}_t^2+t\hat{A}_t^2|-|\hat{\Theta}_t^1+t\hat{A}_t^1| \right\rbrace \cap \D(\delta),
	\end{align*}	
	where
	\begin{align*}
		\xi_*(r,\hat{A}_t^1,\ell) := p(\hat{A}_t^1,\ell)G_{\kappa(\hat{A}_t^1,\ell)}\left(\dfrac{r}{p(\hat{A}_t^1,\ell)}+\kappa(\hat{A}_t^1,\ell)\right).
	\end{align*}
	Noting that by \eqref{eq:a-compare}
	\begin{align*}
		|\hat{\Theta}_t^2+t\hat{A}_t^2|-|\hat{\Theta}_t^1+t\hat{A}_t^1| \lesssim (1+t)\zeta(t,\theta,a,\ell) \left( 1+\dfrac{\hat{A}_t^1}{\jabr{\sqrt{\ell}}}+\dfrac{\jabr{\sqrt{\ell}}}{\hat{A}_t^1} \right) \lesssim (1+t)\bar{\zeta}(t) \left( 1+\dfrac{\hat{A}_t^1}{\jabr{\sqrt{\ell}}}+\dfrac{\jabr{\sqrt{\ell}}}{\hat{A}_t^1} \right),
	\end{align*}
	applying the inverse $ (\Phi_t^1)^{-1} $ yields
	\begin{align*}
		D_{11}(t) \subset (\Phi_t^1)^{-1}\left( \left\lbrace (\theta, a,\ell) \, : \,  \xi_* -|\theta+at| \lesssim (1+t)\jabr{a+a^{-1}}\jabr{\ell}\bar{\zeta}(t) \right\rbrace \cap\D(\delta) \right),
	\end{align*}
	so that
	\begin{align*}
		J_{11}\lesssim \dfrac{1}{r^2}\int_\R\int_0^\infty\int_0^\infty \ind_{\{r_0\leq r\}} \ind_{\{0\leq \xi_*(r,a,\ell)-|\theta+at|\lesssim(1+t)\jabr{a+a^{-1}}\jabr{\ell}\bar{\zeta}(t) \}}(\gamma_1(t))^2\, d\theta da d\ell.
	\end{align*}
	Changing variables $ \theta\mapsto\xi=\theta+at $ and invoking the moment bounds \eqref{eq:mom_bd_a} for $\gamma_1$ gives
	\begin{align*}
		J_{11} &\lesssim \dfrac{\varepsilon^2}{r^2}\int_\R\int_0^\infty\int_0^\infty \ind_{\{r_0\leq r\}}\ind_{\{0\leq \xi_*(r,a,\ell)-|\xi|\lesssim(1+t)\jabr{a+a^{-1}}\jabr{\ell}\bar{\zeta}(t) \}}\, \left(a+a^{-1}\right)^{-8}\jabr{\ell}^{-8}\ell^2\,  d\xi da d\ell\\
        &\lesssim (1+t)\bar{\zeta}(t)\dfrac{\varepsilon^2}{r^2} \int_0^\infty\int_0^\infty \ind_{\{r_0\leq r\}}\, \left(a+a^{-1}\right)^{-7}\jabr{\ell}^{-7}\ell^2\, da d\ell,
	\end{align*}
    since the integral in $ \xi $ is restricted to a set with size of order $ \bar{\zeta}(t) $. Using the fact that $ r_0(a,\ell)\gtrsim \min\{a^{-2}, \ell\} $ one obtains by distinguishing the cases $ r\leq 1 $ and $ r\geq 1 $ that
	 \begin{equation}\label{eq:J11_bd}
	 	J_{11} \lesssim \dfrac{\varepsilon^2(1+t) \, \min\{\sqrt{r}, 1\}}{1+r^2}\bar{\zeta}(t).
	 \end{equation}
Next we treat the integral $ J_{12} $. To this end, we observe that by Lemma \ref{lem:BoundActionDerivR}, for fixed $\xi\in\R$, $\ell>0$ the function $ a\mapsto R(\xi,a,\ell) $ is strictly decreasing. Hence there exists $ \tilde{A}=\tilde{A}(r,\hat{\Theta}_t^2+t\hat{A}_t^2,\ell) \in [\hat{A}_t^2,\hat{A}_t^1]$ such that $r=R\left(  \hat{\Theta}_t^2+t\hat{A}_t^2,\tilde{A}\right)$, and thus
 	\begin{align*}
 		D_{12}(t)  =  \left\lbrace R\left( \hat{\Theta}_t^2+t\hat{A}_t^2,\hat{A}_t^1\right) \leq R\left(  \hat{\Theta}_t^2+t\hat{A}_t^2 , \tilde{A}(r,t,\Phi_t^2) \right)  \leq R\left( \hat{\Theta}_t^2+t\hat{A}_t^2,\hat{A}_t^2\right) \right\rbrace \cap\D(\delta).
 	\end{align*}
 Moreover, by \eqref{eq:a-compare} there exists $C_0\geq 1$ such that
 	\begin{align*}
 		\dfrac{\hat{A}_t^1}{a},\,  \dfrac{\hat{A}_t^2}{a},\, \dfrac{\tilde{A}}{a} \in \left(\dfrac{1}{C_0},C_0\right),
 	\end{align*}
 and we infer that on the set $ D_{12}(t) $
 	\begin{align*}
 		-\left| \hat{A}_t^2-\hat{A}_t^1 \right|\,  \max_{[a/C_0,C_0a]}|\partial_aR(\hat{\Theta}_t^2+t\hat{A}_t^2,\cdot,\ell)| &\leq R\left(  \hat{\Theta}_t^2+t\hat{A}_t^2 ,\hat{A}_t^1 \right)  - R\left( \hat{\Theta}_t^2+t\hat{A}_t^2,\hat{A}_t^2\right) 
 		\\
 		& \leq R\left( \hat{\Theta}_t^2+t\hat{A}_t^2,\tilde{A}\right) -R\left( \hat{\Theta}_t^2+t\hat{A}_t^2,\hat{A}_t^2\right)
 		\\
 		& \leq -\left|\tilde{A}-\hat{A}_t^2\right|\,  \min_{[a/C_0,C_0a]}|\partial_aR(\hat{\Theta}_t^2+t\hat{A}_t^2,\cdot,\ell)|.
 	\end{align*}
 	As a consequence of Lemma \ref{lem:BoundActionDerivR} we get
 	\begin{align*}
 		\left|\tilde{A}(r,\hat{\Theta}_t^2+t\hat{A}_t^2,\ell)-\hat{A}_t^2\right| \lesssim \left| \hat{A}_t^2-\hat{A}_t^1 \right| \leq \bar{\zeta}(t),
 	\end{align*}
 	and thus
 	\begin{align*}
 		D_{12}(t) \subset \left\lbrace \left| \tilde{A}(r,\hat{\Theta}_t^2+t\hat{A}_t^2,\ell)-\hat{A}_t^2 \right| \lesssim \bar{\zeta}(t) \right\rbrace \cap\D(\delta).
 	\end{align*}
 	This implies after application of $ (\Phi_t^2)^{-1} $ and using the change of variables $ \theta\mapsto \xi=\theta+at $ that
 	\begin{align*}
 		J_{12} &\lesssim \dfrac{\varepsilon^2}{r^2} \int_\R\int_0^\infty\int_0^\infty \ind_{\{r_0\leq r\}}\ind_{\{| \tilde{A}(r,\xi,\ell)-a |\lesssim \bar{\zeta}(t) \}}\, \left(a+a^{-1}\right)^{-8}\jabr{\ell}^{-8}\ell^2\,  d\xi da d\ell\\
        &\lesssim \varepsilon^2\dfrac{\min\{\sqrt{r},1\}}{1+r^2}\int_\R\int_0^\infty\int_0^\infty \ind_{\{| \tilde{A}(r,\xi,\ell)-a |\lesssim \bar{\zeta}(t) \}}\, \left(a+a^{-1}\right)^{-3}\jabr{\ell}^{-8}\ell^{-1/2}\,  d\xi da d\ell
 		\\
 		&\lesssim \dfrac{\varepsilon_0^2(1+t) \, \min\{\sqrt{r},1\}}{1+r^2}\bar{\zeta}(t),
 	\end{align*}
    having used again that $ r_0\gtrsim \min(a^{-2},\ell) $. Together with \eqref{eq:J11_bd}, this implies the assertion \eqref{eq:Fdiff_bd}.

	\textit{Step 4. Estimation of $ I^2 $.} In order to account for the weights in $\zeta$ (see Step 5 for the details), we will establish the following bounds:
	\begin{equation}\label{eq:I2_bds}
		\omega |I^2_\theta| \lesssim \varepsilon^2\delta^{-3}(1+t)^2\bar{\zeta}(t),
		\quad
		\omega^{-1} |I^2_a| \lesssim\varepsilon^2\delta^{-2}(1+t)\bar{\zeta}(t).
	\end{equation}
	To see the first inequality, note that by $\omega\lesssim1$ and \eqref{eq:Fdiff_bd}
	\begin{align*}
		\omega |I^2_\theta| \lesssim |\partial_a\tilde{\Psi}^1-\partial_a\tilde{\Psi}^2|\circ\Phi_t^2 \lesssim \varepsilon_0^2(1+t)\bar{\zeta}(t)\left( \dfrac{\min\{1,\tilde{R}^{1/2}\}}{1+\tilde{R}^2} |\partial_{a}\tilde{R}| \right) \circ \Phi_t^2.
	\end{align*}
    Here we have by Lemma \ref{lem:EstDynRadNoBulk} and \eqref{eq:a-compare} that
	\begin{align*}
		\dfrac{\min\{1,\tilde{R}^{1/2}\}}{1+\tilde{R}^2} |\partial_{a}\tilde{R}| \lesssim \dfrac{\min\{1,\tilde{R}^{1/2}\}}{1+\tilde{R}^2} \left( t\jabr{\dfrac{p}{\tilde{R}}}^{1/2}+\dfrac{p}{a}\ln\jabr{\dfrac{\tilde{R}}{p}} \right).
	\end{align*}
	When $ \ell\leq 1 $ we have $ p\lesssim 1+1/a^2 $ as well as $ a\gtrsim \delta $, and we can bound
	\begin{align*}
		\dfrac{\min\{1,\tilde{R}^{1/2}\}}{1+\tilde{R}^2} |\partial_{a}\tilde{R}| \lesssim(1+t)\dfrac{\min\{1,\tilde{R}^{1/2}\}}{1+\tilde{R}^{3/2}} \jabr{\dfrac{1}{a}} \jabr{\dfrac{p}{\tilde{R}}}^{1/2}\jabr{p}^{1/2} \lesssim (1+t)\jabr{\dfrac{1}{a}}\jabr{p} \lesssim (1+t)\dfrac{1}{\delta^3},
	\end{align*}
	having distinguished the cases $ \tilde{R}\leq 1 $ and $ \tilde{R}\geq1 $. On the other hand, for $ \ell\geq 1 $ we have $ p/\tilde{R}\lesssim 1/\delta^2 $ due to $ a\gtrsim \delta/\sqrt{\ell} $, and hence
	\begin{align*}
		\dfrac{\min\{1,\tilde{R}^{1/2}\}}{1+\tilde{R}^2} |\partial_{a}\tilde{R}| \lesssim (1+t)\dfrac{1}{1+\tilde{R}} \jabr{\dfrac{1}{a}}\jabr{\dfrac{p}{\tilde{R}}} \lesssim (1+t)\dfrac{1}{\delta^2} \dfrac{1}{r_0} \jabr{\dfrac{1}{a}} \lesssim (1+t)\dfrac{1}{\delta^2} \left( 1+ \dfrac{\jabr{a\sqrt{\ell}}}{a\ell}\right) \lesssim (1+t)\dfrac{1}{\delta^3}.
	\end{align*}
	For $ I^2_a $ we have with \eqref{eq:a-compare} that
	\begin{align*}
		\omega^{-1}|I^2_a| \lesssim \dfrac{\jabr{\ell}^{\frac12}}{\hat{A}_t^2}|\partial_\theta\tilde{\Psi}^1-\partial_\theta\tilde{\Psi}^2|\circ\Phi_t^2 \lesssim \varepsilon_0^2(1+t)\bar{\zeta}(t)\left( \dfrac{\jabr{\ell}^{\frac12}}{a}\dfrac{\min\{1,\tilde{R}^{1/2}\}}{1+\tilde{R}^2} |\partial_{\theta}\tilde{R}| \right) \circ \Phi_t^2.
	\end{align*}
	We have with Lemma \ref{lem:EstDynRadNoBulk} that
	\begin{align*}
		\dfrac{\jabr{\ell}^{\frac12}}{a}\dfrac{\min\{1,\tilde{R}^{1/2}\}}{1+\tilde{R}^2} |\partial_{\theta}\tilde{R}|  \lesssim 	\dfrac{\jabr{\ell}^{\frac12}}{a}\dfrac{\min\{1,\tilde{R}^{1/2}\}}{1+\tilde{R}^2} \jabr{\dfrac{p}{\tilde{R}}}^{1/2}.
	\end{align*}
	For $ \ell\leq 1 $ we argue as before, distinguishing the cases $ \tilde{R}\leq1 $ and $ \tilde{R}\geq 1 $, and thereby obtain that
	\begin{align*}
		\dfrac{\jabr{\ell}^{\frac12}}{a}\dfrac{\min\{1,\tilde{R}^{1/2}\}}{1+\tilde{R}^2} \jabr{\dfrac{p}{\tilde{R}}}^{1/2} \lesssim \dfrac{1}{\delta^2},
	\end{align*}
	while for $ \ell\geq 1 $ we have
	\begin{align*}
			\dfrac{\jabr{\ell}^{\frac12}}{a}\dfrac{\min\{1,\tilde{R}^{1/2}\}}{1+\tilde{R}^2} \jabr{\dfrac{p}{\tilde{R}}}^{1/2} \lesssim \dfrac{1}{\delta} \dfrac{1}{r_0}\dfrac{\jabr{\ell}^{\frac12}}{a} \lesssim \dfrac{1}{\delta}\dfrac{\jabr{a\sqrt{\ell}}}{a\ell} \jabr{\ell}^{\frac12} \lesssim \dfrac{1}{\delta^2}.
	\end{align*}
	This yields \eqref{eq:I2_bds}.
	
	\textit{Step 5. Proof of uniqueness.} From \eqref{eq:chars_diff} and the bounds in Step 2 we infer that
	\begin{align*}
		\dfrac{d}{dt} \left( \hat{\Theta}_t^1-\hat{\Theta}_t^2 \right)^2 &\lesssim  \dfrac{\varepsilon^2}{\delta^6}\left( \omega^{-2}|\hat{A}_t^1-\hat{A}_t^2||\hat{\Theta}_t^1-\hat{\Theta}_t^2| + |\hat{\Theta}_t^1-\hat{\Theta}_t^2|^2 \right) +  |\hat{\Theta}_t^1-\hat{\Theta}_t^2||I^2_\theta|,
		\\
		\dfrac{d}{dt} \left( \hat{A}_t^1-\hat{A}_t^2 \right)^2 &\lesssim \dfrac{\varepsilon^2}{\delta^4}\left( |\hat{A}_t^1-\hat{A}_t^2|^2 + \omega^2 |\hat{\Theta}_t^1-\hat{\Theta}_t^2||\hat{A}_t^1-\hat{A}_t^2| \right) + |\hat{A}_t^1-\hat{A}_t^2||I^2_a|,
	\end{align*}
	and thus
	\begin{align*}
		\dfrac{d}{dt} \left[ \omega^2\left( \hat{\Theta}_t^1-\hat{\Theta}_t^2 \right)^2\right]  &\lesssim  \dfrac{\varepsilon^2}{\delta^6}\left( \omega^{-1}|\hat{A}_t^1-\hat{A}_t^2|\, \omega|\hat{\Theta}_t^1-\hat{\Theta}_t^2| + \omega^2|\hat{\Theta}_t^1-\hat{\Theta}_t^2|^2 \right) +  \omega|\hat{\Theta}_t^1-\hat{\Theta}_t^2|\omega|I^2_\theta|,
		\\
		\dfrac{d}{dt} \left[ \omega^{-2}\left( \hat{A}_t^1-\hat{A}_t^2 \right)^2\right]  &\lesssim \dfrac{\varepsilon^2}{\delta^4}\left( \omega^{-2}|\hat{A}_t^1-\hat{A}_t^2|^2 + \omega |\hat{\Theta}_t^1-\hat{\Theta}_t^2|\, \omega^{-1}|\hat{A}_t^1-\hat{A}_t^2| \right) + \omega^{-1}|\hat{A}_t^1-\hat{A}_t^2|\omega^{-1}|I^2_a|.
	\end{align*}
	Summing these inequalities and using \eqref{eq:I2_bds} gives
	\begin{align*}
		\dfrac{d}{dt}\zeta(t,\theta,a,\ell)^2 \lesssim \dfrac{\varepsilon^2}{\delta^6}\zeta(t,\theta,a,\ell)^2 + \dfrac{\varepsilon^4}{\delta^6}(1+t)^4\bar{\zeta}(t)^2.
	\end{align*}
	Integrating this and taking the supremum over $ (\theta,a,\ell)\in\D(\delta) $ yields with Gr\"onwall's lemma that $ \bar{\zeta}(t)\leq \bar{\zeta}(0)=0 $. As a consequence, $ \Phi_t^1(t)=\Phi_t^2(t) $ on $ \D(\delta) $, and thus $\gamma_1(t) = \Phi_t^1(t)_\# \gamma_0 = \Phi_t^2(t)_\# \gamma_0 = \gamma_2(t)$. This concludes the proof. 
\end{proof}

Next we turn to strong solutions and show that (suitably weighted) derivatives can be propagated. For simplicity and in order not to be too demanding in terms of the topology, we have chosen to propagate derivatives in an $L^2$-based framework. (As one can easily see from the proof, other integrability orders and higher order derivatives could be propagated as well.) This yields the global existence of unique, strong solutions.

\begin{thm}\label{thm:WellPosednessRegularSol}
There exists $c_1>0$ such that the following holds. Let $\gamma\in C([0,\infty),L^2\cap L^\infty)$ be a global solution to \eqref{eq:Sec4:VPeq} as given by Theorem \ref{thm:WellPosedLagrangianSol}.  Assume also that $ \gamma_0\in \SobH^1$ with
	\begin{align*}
		\norm[L^2]{\omega^{-1}\chi\partial_\theta\gamma_0} + \norm[L^2]{\omega\chi\partial_a\gamma_0}+
		\norm[L^2]{\omega^{-1}\jabr{\theta}^{k_5}\partial_\theta\gamma_0} + \norm[L^2]{\omega\jabr{\theta}^{k_5}\partial_a\gamma_0}\leq \eps\leq c_1\delta^3,
	\end{align*}
where 
	\begin{align*}
		\omega=\left(\frac{a}{a+\jabr{l}^{\frac12}}\right)^{\frac12}, \quad \chi = \jabr{\ell}^{k_1} \jabr{\ell^{-1}}^{k_2} \jabr{a}^{k_3} \jabr{a^{-1}}^{k_4},
	\end{align*}
for $k_i\in\mathbb{N}_0$, $1\leq i\leq 5$, with $k_1\geq k_5/2$.
Then the solution is strong $ \gamma\in C^1((0,\infty);L^2)\cap C([0,\infty);\SobH^1) $ and satisfies for all $t\geq0$ the bounds
	\begin{align}
		\norm[L^2]{\omega^{-1}\chi\partial_\theta\gamma(t)}&\lesssim \varepsilon, \quad \norm[L^2]{\omega\chi\partial_a\gamma(t)}\lesssim \varepsilon \ln^{33}\jabr{t},\label{eq:deriv_bd1}
		\\
		\norm[L^2]{\omega^{-1}\jabr{\ell}^{\frac{k_5-k}{2}}\jabr{\theta}^{k}\partial_\theta\gamma(t)} &\lesssim \eps\ln^k\jabr{t}, \quad \norm[L^2]{\omega\jabr{\ell}^{\frac{k_5-k}{2}}\jabr{\theta}^{k}\partial_a\gamma(t)}\lesssim \eps\ln^{k+33}\jabr{t},\quad 1\leq k\leq k_5.\label{eq:deriv_bd2}
	\end{align}
\end{thm}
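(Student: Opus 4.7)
The strategy is to differentiate the equation \eqref{eq:Sec4:VPeq} and run weighted $L^2$ energy estimates for the coupled system satisfied by $(f,g):=(\partial_{\theta}\gamma,\partial_{a}\gamma)$, using the already-established moment bounds on $\gamma$ (from Theorem \ref{thm:WellPosedLagrangianSol}) to control the second derivatives of $\tilde{\Psi}$. A direct computation gives
\begin{equation*}
    \partial_t f + \lambda\{f,\tilde{\Psi}\} = -\lambda f\,\partial_{\theta a}^{2}\tilde{\Psi} + \lambda g\,\partial_{\theta}^{2}\tilde{\Psi},\qquad
    \partial_t g + \lambda\{g,\tilde{\Psi}\} = -\lambda f\,\partial_{a}^{2}\tilde{\Psi} + \lambda g\,\partial_{\theta a}^{2}\tilde{\Psi}.
\end{equation*}
Setting $W_1=\omega^{-1}\chi$ and $W_2=\omega\chi$ (which depend only on $a,\ell$) and testing against $W_i^2 \cdot$(eq.), the Poisson bracket is handled by integration by parts: since $\int\{W_i^2 f^2,\tilde{\Psi}\}=0$ with sufficient decay, one obtains
\begin{equation*}
    \tfrac{1}{2}\tfrac{d}{dt}\norm[L^2]{W_1 f}^2 = -\tfrac{\lambda}{2}\!\int f^2\,\partial_a W_1^2\,\partial_{\theta}\tilde{\Psi} - \lambda\!\int W_1^2 f^2\,\partial_{\theta a}^{2}\tilde{\Psi} + \lambda\!\int W_1^2 fg\,\partial_{\theta}^{2}\tilde{\Psi},
\end{equation*}
and analogously for $\norm[L^2]{W_2 g}^2$ with $\partial_a^2\tilde{\Psi}$ replacing $\partial_\theta^2\tilde{\Psi}$.

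The key observation is that the weight $\omega^2\approx\min(1,a\jabr{\ell}^{-1/2})$ is precisely calibrated against the asymmetric bounds on the second derivatives of $\tilde{\Psi}$ given in Lemma \ref{lem:BootstrapDerivGravFieldAALInf}. Inserting the moment bounds of Theorem \ref{thm:WellPosedLagrangianSol} into the quantities $M_q(\gamma),M_q'(\gamma)$ (with $q=2$, yielding $M_q'(\gamma)\lesssim\eps^2\ln^{32}\jabr{t}$) and using $W_1 W_2=\chi^2$, one obtains
\begin{equation*}
    \bigl|\partial_{\theta}^{2}\tilde{\Psi}\bigr|\lesssim \tfrac{\eps^2 \omega^2 \ln^{32}\jabr{t}}{\delta^3(1+t)^2},\qquad \bigl|\partial_{a}^{2}\tilde{\Psi}\bigr|\lesssim \tfrac{\eps^2 \omega^{-2} \ln^{32}\jabr{t}}{\delta^6(1+t)},\qquad \bigl|\partial_{\theta a}^{2}\tilde{\Psi}\bigr|\lesssim \tfrac{\eps^2\ln^{32}\jabr{t}}{\delta^4(1+t)^{3/2}},
\end{equation*}
and a short calculation of $\partial_a\log(\omega^{\pm 1}\chi)$ combined with the bound on $\partial_\theta\tilde{\Psi}$ shows that $|\partial_a\log W_i|\cdot|\partial_\theta\tilde{\Psi}|\lesssim \eps^2\delta^{-3}(1+t)^{-3/2}$ uniformly on $\D(\delta/2)$. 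Setting $E_i=\norm[L^2]{W_i f}^2$ and applying Cauchy--Schwarz on the cross term using the cancellation $W_1 W_2=\chi^2$, this leads to the coupled system
\begin{equation*}
    \tfrac{d}{dt}\sqrt{E_1} \lesssim \tfrac{\eps^2\ln^{32}\jabr{t}}{\delta^{4}(1+t)^{3/2}}\sqrt{E_1} + \tfrac{\eps^2\ln^{32}\jabr{t}}{\delta^{3}(1+t)^{2}}\sqrt{E_2},\qquad
    \tfrac{d}{dt}\sqrt{E_2}\lesssim \tfrac{\eps^2\ln^{32}\jabr{t}}{\delta^{4}(1+t)^{3/2}}\sqrt{E_2} + \tfrac{\eps^2\ln^{32}\jabr{t}}{\delta^{6}(1+t)}\sqrt{E_1}.
\end{equation*}
Both coefficients of the diagonal terms are time-integrable. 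In a bootstrap argument with ansatz $\sqrt{E_1}\leq C\eps$, the second equation integrates to $\sqrt{E_2}\lesssim \eps\,\ln^{33}\jabr{t}$ after absorbing $\eps^2\delta^{-6}\leq c_1^2\eps$; plugging this back yields integrability of the source in the first equation, and hence $\sqrt{E_1}\lesssim \eps$ uniformly, provided $\eps\leq c_1\delta^3$ with $c_1>0$ sufficiently small. This produces the bounds \eqref{eq:deriv_bd1}.

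The weighted moments \eqref{eq:deriv_bd2} in $\theta$ are propagated by the same energy estimate applied to $\jabr{\theta}^k f$ and $\jabr{\theta}^k g$, with the extra weight $\jabr{\theta}^k$ depending only on $\theta$. The bracket commutator $\{\jabr{\theta}^k,\tilde{\Psi}\}=k\theta\jabr{\theta}^{k-2}\partial_a\tilde{\Psi}$ contributes a source proportional to $|\partial_a\tilde{\Psi}|\lesssim\eps^2\delta^{-3}\jabr{\ell}^{1/2}(1+t)^{-1}$ times the $(k-1)$-th $\theta$-moment; an inductive argument then yields one additional logarithm per power of $\theta$, and the exchange of one power of $\jabr{\ell}^{1/2}$ per step is exactly compensated by the prefactor $\jabr{\ell}^{(k_5-k)/2}$, which is why one requires $k_1\geq k_5/2$. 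Finally, the actual existence of the strong solution $\gamma\in C([0,\infty);\SobH^1)\cap C^1((0,\infty);L^2)$ follows by a standard mollification of the initial data, uniform propagation of the above estimates, passage to the limit (possible since the Lagrangian solution is unique by Theorem \ref{thm:WellPosedLagrangianSol}), and returning to the equation to identify $\partial_t\gamma=-\lambda\{\gamma,\tilde{\Psi}\}\in C((0,\infty);L^2)$. The main technical obstacle is the bookkeeping of the coupled Gr\"onwall inequalities together with the logarithmic factors inherited from the $L^\infty$ moments, which is the reason for the precise choice of $\omega$ and the requirement $\eps\leq c_1\delta^3$ (stronger than in Theorem \ref{thm:WellPosedLagrangianSol}).
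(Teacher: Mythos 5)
Your proposal is correct and follows essentially the same route as the paper's proof: differentiate the equation, run weighted $L^2$ energy estimates with exactly the weights $\omega^{-1}\chi$ and $\omega\chi$ calibrated against the asymmetric bounds of Lemma \ref{lem:BootstrapDerivGravFieldAALInf} (with $M_2\lesssim\eps^2$, $M_2'\lesssim\eps^2\ln^{32}\jabr{t}$ from Theorem \ref{thm:WellPosedLagrangianSol}), close the coupled Gr\"onwall system by a bootstrap under $\eps\leq c_1\delta^3$, and propagate $\theta$-moments through the hierarchy trading one factor $\jabr{\ell}^{1/2}$ and one logarithm per power of $\theta$, which is why $k_1\geq k_5/2$ is needed. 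The only differences are cosmetic (testing against $W_i^2$ and integrating by parts versus writing the transport equation for $W_i\partial_\nu\gamma$ directly, and the harmless slip ``$\eps^2\delta^{-6}\leq c_1^2\eps$'' which should read $\eps^2\delta^{-6}\leq c_1^2$), plus your explicit mollification step for upgrading the a priori estimates, which the paper leaves implicit.
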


\begin{rem}
    The condition $k_1\geq k_5/2$ is imposed since each moment in $\theta$ requires an extra moment $\jabr{\ell}^{1/2}$ -- see the bound of $\partial_a\tilde{\Psi}$ in Lemma \ref{lem:BootstrapDerivGravFieldAALInf} resp.\ \eqref{eq:mom-bracket-daPsi} -- leading also to the combination of moments $\jabr{\ell}^{\frac{k_5-k}{2}}\jabr{\theta}^{k}$ in \eqref{eq:deriv_bd2}. We note also that the weight $\omega$ has already appeared in the proof of uniqueness in Theorem \ref{thm:WellPosedLagrangianSol}, with the same purpose: to rebalance the bounds for $D^2_{(\theta,a)}\tilde{\Psi}$.
\end{rem}

\begin{proof}[Proof of Theorem \ref{thm:WellPosednessRegularSol}]
It suffices to give the relevant a priori estimate. Differentiating the equation for $\gamma$ yields
\begin{equation*}
	\partial_t\partial_\nu\gamma + \lambda \{\partial_\nu\gamma,\tilde{\Psi}\} = -\lambda\{\gamma,\partial_\nu\tilde{\Psi}\},\quad \nu\in\{a,\theta\},
\end{equation*}
so that for any scalar function $f(\theta,a,\ell)$ one has
\begin{equation}\label{eq:mom-deriv-eqs}
	\partial_t (f\partial_\nu\gamma) + \lambda \{f\partial_\nu\gamma,\tilde{\Psi}\} = \lambda \partial_\nu\gamma\{f,\tilde{\Psi}\} -\lambda f\{\gamma,\partial_\nu\tilde{\Psi}\}.
\end{equation}
Moreover, the moment bounds \eqref{eq:mom_bd_a} and \eqref{eq:mom_bd_theta} in Theorem \ref{thm:WellPosedLagrangianSol} imply that there exists $C>0$ such that
\begin{equation}\label{eq:Mq_bds_derivs}
   M_2(\gamma)\leq C\varepsilon^2, \quad M'_2(\gamma)\leq C\varepsilon^2\ln^{32}\jabr{t}.
\end{equation}
Invoking these with Lemma \ref{lem:BootstrapDerivGravFieldAALInf} and observing that
\begin{align*}
	\partial_\theta\chi=\partial_\theta\omega=0,\quad |\partial_{a}\chi| \lesssim \jabr{\dfrac{1}{a}}\chi,\quad |\partial_{a}\omega| \lesssim \jabr{\dfrac{1}{a}}\omega,\quad |\partial_{a}\omega^{-1}| \lesssim \jabr{\dfrac{1}{a}}\omega^{-1},
\end{align*}
and hence
\begin{align*}
	\left| \left\lbrace \omega^\sigma\chi,\tilde{\Psi} \right\rbrace \right| \lesssim \jabr{\dfrac{1}{a}}\omega^\sigma\chi \left| \partial_{\theta}\tilde{\Psi} \right| \lesssim \dfrac{\eps^2}{\delta^2}\dfrac{\omega^\sigma\chi}{(1+t)^{3/2}},\quad\sigma\in\{-1,+1\},
\end{align*}
we obtain from \eqref{eq:mom-deriv-eqs} (with $f=\omega^{-1}\chi$ resp.\ $f=\omega\chi$) that
\begin{align*}
	\partial_t\left(  (\omega^{-1}\chi\partial_{\theta}\gamma)^2 \right) + \left\lbrace  (\omega^{-1}\chi\partial_{\theta}\gamma)^2,\tilde{\Psi}\right\rbrace  &\lesssim \dfrac{\eps^2}{\delta^4}\left[ \dfrac{\ln^{32}\jabr{t}  }{(1+t)^{3/2}}(\omega^{-1}\chi\partial_{\theta}\gamma)^2+ \dfrac{\ln^{32}\jabr{t}}{(1+t)^{2}}\chi |\partial_{\theta}\gamma|\chi|\partial_{a}\gamma|\right],
	\\
	\partial_t\left( (\omega\chi\partial_{a}\gamma)^2 \right) + \left\lbrace (\omega\chi\partial_{a}\gamma)^2,\tilde{\Psi}\right\rbrace  &\lesssim \dfrac{\eps^2}{\delta^6}\left[ \dfrac{\ln^{32}\jabr{t}}{(1+t)}\chi |\partial_{\theta}\gamma|\chi|\partial_{a}\gamma|+\dfrac{\ln^{32}\jabr{t} }{(1+t)^{3/2}}(\omega\chi\partial_{a}\gamma)^2 \right].
\end{align*}
Abbreviating $ \Gamma_\theta=\omega^{-1}\chi\partial_{\theta}\gamma $ and $ \Gamma_a=\omega\chi\partial_{a}\gamma $ and integrating we obtain that
\begin{equation}
\begin{aligned}
 \frac{d}{dt}\norm[L^2]{\Gamma_{\theta}(t)}&\lesssim \eps^2\delta^{-4} \left[ \dfrac{\ln^{32}\jabr{t}  }{(1+t)^{3/2}}\norm[L^2]{\Gamma_{\theta}(t)}+ \dfrac{\ln^{32}\jabr{t}}{(1+t)^{2}}\norm[L^2]{\Gamma_{a}(t)}\right],\\
 \frac{d}{dt}\norm[L^2]{\Gamma_{a}(t)}&\lesssim \eps^2\delta^{-6}\left[ \dfrac{\ln^{32}\jabr{t}}{(1+t)}\norm[L^2]{\Gamma_{\theta}(t)}+\dfrac{\ln^{32}\jabr{t} }{(1+t)^{3/2}}\norm[L^2]{\Gamma_{a}(t)} \right],
\end{aligned} 
\end{equation}
which gives the bounds \eqref{eq:deriv_bd1} upon integration in time and bootstrapping for $\eps\leq c_1\delta^3$ with $c_1>0$ sufficiently small.

For additional $\theta$-moments we argue similarly, using the bounds for $\tilde\Psi$ from \eqref{eq:Mq_bds_derivs} and Lemma \ref{lem:BootstrapDerivGravFieldAALInf}. Since for $k\in\N$
\begin{equation}\label{eq:mom-bracket-daPsi}
 \left| \left\lbrace \omega^\sigma\theta^k,\tilde{\Psi} \right\rbrace \right| \lesssim \jabr{\dfrac{1}{a}}\omega^\sigma|\theta|^k \left| \partial_{\theta}\tilde{\Psi} \right| +k\omega^\sigma|\theta|^{k-1}\left|\partial_a\tilde{\Psi} \right|\lesssim \dfrac{\eps^2}{\delta^2}\dfrac{\omega^\sigma|\theta|^k}{(1+t)^{3/2}}+k\dfrac{\eps^2}{\delta^3}\dfrac{\jabr{\ell}^{\frac12}\omega^\sigma|\theta|^{k-1}}{1+t},\quad\sigma\in\{-1,+1\}, 
\end{equation}
from \eqref{eq:mom-deriv-eqs} with $f=\omega^{-1}\theta^k$ resp.\ $f=\omega\theta^k$ for some $k\in\N$, we obtain
\begin{align*}
	\partial_t\left(  (\omega^{-1}\theta^k\partial_{\theta}\gamma)^2 \right) + \left\lbrace  (\omega^{-1}\theta^k\partial_{\theta}\gamma)^2,\tilde{\Psi}\right\rbrace  &\lesssim \dfrac{\eps^2}{\delta^4}\left[ \dfrac{\ln^{32}\jabr{t}  }{(1+t)^{3/2}}(\omega^{-1}\theta^k\partial_{\theta}\gamma)^2+ \dfrac{\ln^{32}\jabr{t}}{(1+t)^{2}}\theta^k |\partial_{\theta}\gamma|\theta^k|\partial_{a}\gamma|\right]\\
    &\qquad +k\dfrac{\eps^2}{\delta^3}\dfrac{1}{1+t}\jabr{\ell}^{\frac12}(\omega^{-1}|\theta|^{k-1}\partial_\theta\gamma) (\omega^{-1}\theta^k\partial_{\theta}\gamma),
    \\
    \partial_t\left( (\omega\theta^k\partial_{a}\gamma)^2 \right) + \left\lbrace (\omega\theta^k\partial_{a}\gamma)^2,\tilde{\Psi}\right\rbrace  &\lesssim \dfrac{\eps^2}{\delta^6}\left[ \dfrac{\ln^{32}\jabr{t}}{(1+t)}\theta^k |\partial_{\theta}\gamma|\theta^k|\partial_{a}\gamma|+\dfrac{\ln^{32}\jabr{t} }{(1+t)^{3/2}}(\omega\theta^k\partial_{a}\gamma)^2 \right]\\
    &\qquad +k\dfrac{\eps^2}{\delta^3}\dfrac{1}{1+t}\jabr{\ell}^{\frac12}(\omega|\theta|^{k-1}\partial_a\gamma) (\omega\theta^k\partial_{a}\gamma).
\end{align*}    
Using additionally the commutation with $\ell$ and abbreviating $ \Sigma^{j,k}_\theta=\omega^{-1}\jabr{\ell}^j\theta^k\partial_{\theta}\gamma $, $ \Sigma^{j,k}_a=\omega\jabr{\ell}^j\theta^k\partial_{a}\gamma $, we obtain upon integration that 
\begin{equation*}
\begin{aligned}
 \frac{d}{dt}\norm[L^2]{\Sigma^{j,k}_{\theta}(t)}&\lesssim \eps^2\delta^{-4} \left[ \dfrac{\ln^{32}\jabr{t}  }{(1+t)^{3/2}}\norm[L^2]{\Sigma^{j,k}_{\theta}(t)}+ \dfrac{\ln^{32}\jabr{t}}{(1+t)^{2}}\norm[L^2]{\Sigma^{j,k}_{a}(t)}\right]+k\eps^2\delta^{-3}\frac{1}{1+t}\norm[L^2]{\Sigma^{j+\frac12,k-1}_{\theta}(t)},\\
 \frac{d}{dt}\norm[L^2]{\Sigma^{j,k}_{a}(t)}&\lesssim \eps^2\delta^{-6}\left[ \dfrac{\ln^{32}\jabr{t}}{(1+t)}\norm[L^2]{\Sigma^{j,k}_{\theta}(t)}+\dfrac{\ln^{32}\jabr{t} }{(1+t)^{3/2}}\norm[L^2]{\Sigma^{j,k}_{a}(t)} \right]+k\eps^2\delta^{-3}\frac{1}{1+t}\norm[L^2]{\Sigma^{j+\frac12,k-1}_{a}(t)}.
\end{aligned} 
\end{equation*}
Since for $0\leq j\leq 2k_1$ we have that
\begin{equation}
 \norm[L^2]{\Sigma^{j,0}_{\theta}(t)}\lesssim \norm[L^2]{\Gamma_{\theta}(t)}\lesssim \eps,\qquad \norm[L^2]{\Sigma^{j,0}_{a}(t)}\lesssim \norm[L^2]{\Gamma_{a}(t)}\lesssim \eps\ln^{33}\jabr{t},
\end{equation}
we obtain from this hierarchy the claimed bounds \eqref{eq:deriv_bd2} via a bootstrap for $\eps\leq c_1\delta^3$ with $c_1>0$ sufficiently small.
\end{proof}

We end this section with a short remark. 
\begin{rem}[Existence of weak solutions]
 As a matter of fact in order to ensure that the characteristic flow remains in the domain $ \D(\delta/2) $ for all times $ t\geq0 $, so that the action-angle variables can be used, the induced gravitational field needs to be merely bounded, see Lemma \ref{lem:NonlinCharSys}. This bound can be proved using weaker assumptions in terms of moments in the space $ L^2\cap L^\infty $, see Lemma \ref{lem:EstimatesGravFieldLInf}. This observation allows to construct global in time weak (distributional) solutions $ \gamma\in C([0,\infty);\mathcal{D}') $, provided the initial data $ \gamma_0 $ satisfy a suitable smallness assumption for the moments
 \begin{align*}
	\norm[L^\infty]{\left(a+a^{-1} \right)^{4} \jabr{a}a^{-4}\jabr{\ell}^{5/2}\jabr{\theta}^{3}\gamma_0}.
 \end{align*}
 We highlight that no vanishing condition for $\ell\to 0$ is thus imposed. The proof uses standard arguments, e.g.\ by approximating with solutions as constructed in Theorem \ref{thm:WellPosedLagrangianSol}, but does not provide any uniqueness.
\end{rem}

\subsection{Asymptotic behavior: modified scattering}\label{subsec:LongtimeBehavior}
In this section we prove that solutions constructed in Theorems \ref{thm:WellPosedLagrangianSol} resp.\ \ref{thm:WellPosednessRegularSol} exhibit a modified scattering dynamic for large times. More precisely, we identify a logarithmic correction to the linear characteristics, along which the nonlinear solutions converge as $t\to\infty$. For the Lagrangian solutions of Theorem \ref{thm:WellPosedLagrangianSol} this convergence is weak in the sense of distributions (see Theorem \ref{thm:ModifiedScatteringLagrangianSol}), whereas for the strong solutions of Theorem \ref{thm:WellPosednessRegularSol} the convergence is strong in $L^2$ (see Theorem \ref{thm:ModifiedScattering}).

\medskip
\paragraph{\bf Long-time behavior: Lagrangian solutions.} 
To understand the long-time behavior of Lagrangian solutions, we begin by studying the asymptotics of the associated characteristic flow.

\begin{pro}\label{pro:ConvergenceCharacFlow}
	Let $ \gamma $ be a solution as given in Theorem \ref{thm:WellPosedLagrangianSol}, satisfying in particular \eqref{eq:mom_bd_a}--\eqref{eq:mom_bd_theta}. Let $(\hat{\Theta}_t,\hat{A}_t)$ denote the associated characteristic flow, as given by \eqref{eq:chars_def}.
    Then, if $ \varepsilon\leq \tilde{c}_0\delta^3 $ for $\tilde{c}_0>0$ sufficiently small, the following statements hold.
	\begin{enumerate}[(i)]
		\item\label{it:Aconv} There is a continuous map $ \hat{A}_\infty=\hat{A}_\infty(\theta,a,\ell):\D(\delta)\to [\delta/2,\infty) $ such that
		\begin{equation}\label{eq:Aconv}
			\norm[L^\infty(\D(\delta))]{\hat{A}_t-\hat{A}_\infty} \lesssim \dfrac{\varepsilon^2}{\delta^2}\dfrac{1}{(1+t)^{1/2}}.
		\end{equation}
		\item\label{it:Finfty} There is a locally Lipschitz continuous map $ \F_{\infty}=\F_{\infty}(a):\R_+\to (-\infty,0] $ such that for all $ a>0 $
		\begin{equation}\label{eq:Finfty_conv}
			|t^2\F_{\eff}(t,at)-\F_\infty(a)| \lesssim \dfrac{\eps^2}{\delta^2}\dfrac{1}{a^2}\dfrac{1}{(1+t)^{1/2}}.
		\end{equation}
		 Furthermore, we have for all $a>0$
		\begin{equation}\label{eq:Finfty_bd}
			|\F_\infty(a)|\lesssim \dfrac{\varepsilon^2}{a^2}, \quad |\F'_\infty(a)|\lesssim \dfrac{\varepsilon^2}{a^3}+\dfrac{\varepsilon^2}{a^2}.
		\end{equation}
	\end{enumerate}
\end{pro}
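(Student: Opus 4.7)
The approach is to integrate the characteristic ODE $\dot{\hat A}_t = -\lambda\partial_\theta\tilde\Psi(t,\hat\Theta_t,\hat A_t,\ell)$ directly in time. By Corollary \ref{cor:nonlin_chars}, the characteristics remain in $\D(\delta/2)$, so Lemma \ref{lem:BootstrapDerivGravFieldAALInf} applies with $q=2$: since the moment $M_2(\gamma(s))$ is controlled uniformly in $s$ by $\eps^2$ via \eqref{eq:mom_bd_a}--\eqref{eq:mom_bd_theta} (the $\jabr{\theta}$-weights in the third piece of $M_2$ enter with the integrable factor $(1+s)^{-1}$), we obtain $|\dot{\hat A}_s| \lesssim \eps^2\delta^{-2}(1+s)^{-3/2}$. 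This decay is integrable at infinity, and the tail bound $\int_t^\infty |\dot{\hat A}_s|\,ds \lesssim \eps^2\delta^{-2}(1+t)^{-1/2}$ yields both the existence of $\hat A_\infty$ and the rate \eqref{eq:Aconv}. The lower bound $\hat A_\infty\geq\delta/2$ is inherited from Corollary \ref{cor:nonlin_chars}, and continuity of $\hat A_\infty$ follows from uniform convergence of continuous maps.

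\textbf{Part (ii), Lagrangian reformulation and definition of $\F_\infty$.} The key observation is that $\Phi_t$ is symplectic with unit Jacobian in $(\theta,a)$ (for each fixed $\ell$) and $\gamma(t)=(\Phi_t)_\#\gamma_0$, so the defining formula \eqref{eq:Sec3:EffGravField} rewrites as
\begin{equation*}
	t^2\F_{\eff}(t,at) \;=\; -\frac{1}{a^2}\int_{\R}\int_0^\infty\int_0^\infty \ind_{\{\hat A_t(\theta,\alpha,\ell)\leq a\}}\,\gamma_0(\theta,\alpha,\ell)^2\,d\theta\,d\alpha\,d\ell,
\end{equation*}
which naturally suggests defining $\F_\infty(a) := -a^{-2}\int \ind_{\{\hat A_\infty\leq a\}}\gamma_0^2$. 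Non-positivity is immediate and the bound $|\F_\infty(a)|\lesssim\eps^2/a^2$ follows from $\int\gamma_0^2\lesssim\eps^2$ via \eqref{eq:mom_bd_id}. For the convergence \eqref{eq:Finfty_conv}, the difference reduces to an integral of $\gamma_0^2$ over the symmetric difference of the two sub-level sets, which sits inside the slab $\{|\hat A_t - a|\leq C\eps^2\delta^{-2}(1+t)^{-1/2}\}$ by part (i).

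\textbf{Main obstacle: slab measure and the bound on $\F'_\infty$.} The core difficulty is to control the measure of $\{\alpha : |\hat A_t(\theta,\alpha,\ell) - a| \leq h\}$ by a constant multiple of $h$, uniformly in $t$ and $(\theta,\ell)$. The plan is to show that $\alpha \mapsto \hat A_t(\theta,\alpha,\ell)$ is bi-Lipschitz with constants close to $1$: differentiating \eqref{eq:Sec4:NonlinCharSys} in $\alpha$ produces a linear system driven by $\partial^2_{\theta a}\tilde\Psi,\,\partial^2_\theta\tilde\Psi$, whose time decay from Lemma \ref{lem:BootstrapDerivGravFieldAALInf} is integrable. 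A Gr\"onwall argument should yield $|\partial_\alpha\hat A_t - 1|\lesssim\eps^2/\delta^{c}$ for some absolute $c$, which under the smallness $\eps\leq\tilde c_0\delta^3$ is $\leq 1/2$; passing to $t\to\infty$ endows $\hat A_\infty$ with the same property. This gives both the slab measure estimate needed for \eqref{eq:Finfty_conv} and the Lipschitz character of $\F_\infty$. Writing $\F_\infty(a)=-\mu(a)/a^2$ with $\mu(a):=\int\gamma_0^2\ind_{\{\hat A_\infty\leq a\}}$, the weak derivative splits as $\F_\infty'(a) = 2\mu(a)/a^3 - \mu'(a)/a^2$: the boundary term is controlled by $\eps^2/a^3$, while the coarea identity combined with the bi-Lipschitz property reduces $\mu'(a)$ to an integral of $\gamma_0^2$ over a surface close to $\{\alpha=a\}$, bounded by $\eps^2$ thanks to the $L^\infty$ moment assumption \eqref{eq:mom_bd_id}. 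This gives the stated bound $|\F_\infty'(a)|\lesssim\eps^2/a^3 + \eps^2/a^2$.
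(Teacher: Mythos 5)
Your overall route coincides with the paper's: part (i) by integrating the characteristic ODE with the $(1+t)^{-3/2}$ decay of $\partial_\theta\tilde\Psi$ from Lemma \ref{lem:BootstrapDerivGravFieldAALInf}, and part (ii) by rewriting $t^2\F_{\eff}(t,at)$ as an integral of $\gamma_0^2$ against $\ind_{\{\hat A_t\leq a\}}$ (measure preservation of the flow), defining $\F_\infty$ through $\hat A_\infty$, estimating the symmetric difference of sub-level sets by a slab of width $\norm[L^\infty]{\hat A_t-\hat A_\infty}$, and bounding $\F'_\infty$ by a change of variables $\alpha\mapsto\hat A_\infty(\theta,\alpha,\ell)$ together with the $L^\infty$ moments of $\gamma_0$. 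All of this is exactly what the paper does, and the key quantitative input you identify -- that $\alpha\mapsto\hat A_t$ (resp.\ $\hat A_\infty$) has derivative close to $1$, cf.\ \eqref{eq:Sec4:ProofConvergActionFlow} -- is indeed the crux.

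However, the step you flag as the main obstacle is not closed as you describe it, and as stated it would fail. Differentiating \eqref{eq:Sec4:NonlinCharSys} in $\alpha$ gives a coupled system in which $\partial_\alpha\hat A_t$ is driven by $\partial_\theta^2\tilde\Psi$ and $\partial^2_{\theta a}\tilde\Psi$, but $\partial_\alpha\hat\Theta_t$ (which you cannot avoid controlling) is driven by $\partial_a^2\tilde\Psi$, and by Lemma \ref{lem:BootstrapDerivGravFieldAALInf} this coefficient decays only like $\jabr{\ell}^{1/2}a^{-1}\ln^{32}\jabr{t}\,(1+t)^{-1}$ -- it is \emph{not} time-integrable, so a plain Gr\"onwall argument does not yield $|\partial_\alpha\hat A_t-1|\lesssim\eps^2\delta^{-c}$. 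The missing ingredient is the rebalancing by the weight $\omega=(a/(a+\jabr{\ell}^{1/2}))^{1/2}$ already used in the uniqueness and regularity proofs: one bounds $\omega\,\partial_\alpha\hat\Theta_t$ and $\omega^{-1}\partial_\alpha\hat A_t$, accepts a logarithmic growth of $\omega\,\partial_\alpha\hat\Theta_t$ (of order $\ln^{33}\jabr{t}$ after a bootstrap), and closes the estimate for $\partial_\alpha\hat A_t$ because in its equation this growth is multiplied by $\partial_\theta^2\tilde\Psi\lesssim\omega^{2}\ln^{32}\jabr{t}\,(1+t)^{-2}$, which is integrable; only then does integrating $\tfrac{d}{dt}\partial_\alpha(\hat A_t-a)$ give $|\partial_\alpha\hat A_\infty-1|\lesssim\eps^2\delta^{-4}\le 1/2$ under $\eps\leq\tilde c_0\delta^3$. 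With this correction your argument matches the paper's proof; the remaining ingredients (slab measure, $|\F_\infty|\lesssim\eps^2 a^{-2}$, the split $\F_\infty'=2\mu/a^3-\mu'/a^2$ and the change of variables for $\mu'$) are fine as written.
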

\begin{proof}
    For statement \eqref{it:Aconv} we observe that by Lemma \ref{lem:BootstrapDerivGravFieldAALInf} and \eqref{eq:chars_def} we have
	\begin{equation}\label{eq:dtAt_bd}
		\left|  \dfrac{d}{dt}\hat{A}_t \right|  \lesssim |\partial_{\theta}\tilde{\Psi}\circ\Phi_t| \lesssim \dfrac{\varepsilon^2}{\delta^2}\dfrac{1}{(1+t)^{3/2}}.
	\end{equation}
	Here, we used that $ \Phi_t $ maps $ \D(\delta) $ into $ \D(\delta/2) $ by Corollary \ref{cor:nonlin_chars}. Hence, $ \hat{A}_t $ converges locally uniformly to a continuous function $ \hat{A}_\infty $ on $ \D(\delta) $. In particular, the above estimate yields the inequality in \eqref{it:Aconv}.
	
    In order to prove \eqref{it:Finfty}, we first show that for all $ (\theta,a,\ell)\in \D(\delta) $ we have
	\begin{align}\label{eq:Sec4:ProofConvergActionFlow}
		\dfrac{1}{2}\leq \partial_{a}\hat{A}_\infty(\theta,a,\ell)\leq \dfrac{3}{2}.
	\end{align}
	In order to prove this inequality, note that by Lemma \ref{lem:BootstrapDerivGravFieldAALInf} the flow $ \Phi_t(\theta,a,\ell) $ is differentiable with respect to the initial condition $ (\theta,a,\ell) $. Differentiating the characteristic system \eqref{eq:chars_def} then yields
	\begin{align*}
		\dfrac{d}{dt} \partial_{a}\hat{\Theta}_t &= \lambda\partial_{a \theta}^2\tilde{\Psi}\circ \Phi_t \, \partial_{a}\hat{\Theta}_t + \lambda\partial_{a}^2\tilde{\Psi}\circ \Phi_t \, \partial_{a}\hat{A}_t,
		\\
		\dfrac{d}{dt} \partial_{a}\hat{A}_t &= -\lambda\partial_{\theta}^2\tilde{\Psi}\circ \Phi_t \, \partial_{a}\hat{\Theta}_t - \lambda\partial_{a\theta}^2\tilde{\Psi}\circ \Phi_t \, \partial_{a}\hat{A}_t.
	\end{align*}
	Using the estimates in Lemma \ref{lem:BootstrapDerivGravFieldAALInf} together with the bound $M'_2(\gamma)\lesssim\varepsilon^2 \ln^{32}\jabr{t} $, which follows from \eqref{eq:mom_bd_a} and \eqref{eq:mom_bd_theta}, we obtain that
	\begin{align*}
		\dfrac{d}{dt} \left( \partial_{a}\hat{\Theta}_t \right)^2  &\lesssim C_0\dfrac{\varepsilon^2}{\delta^6} \left( |\partial_{a}\hat{\Theta}_t|^2 \dfrac{\ln^{32}\jabr{t}}{(1+t)^{3/2}} + \omega^{-2}|\partial_{a}\hat{\Theta}_t||\partial_{a}\hat{A}_t|\dfrac{\ln^{32}\jabr{t}}{(1+t)} \right),
		\\
		\dfrac{d}{dt} \left( \partial_{a}\hat{A}_t \right)^2  &\lesssim C_0\dfrac{\varepsilon^2}{\delta^4} \left(  \omega^2|\partial_{a}\hat{\Theta}_t||\partial_{a}\hat{A}_t|\dfrac{\ln^{32}\jabr{t}}{(1+t)^{2}} + |\partial_{a}\hat{A}_t|^2\dfrac{\ln^{32}\jabr{t}}{(1+t)^{3/2}} \right),
	\end{align*}
    where $\omega=a^{\frac12}\left(a+\jabr{l}^{\frac12}\right)^{-\frac12}$ as in the proof of Theorem \ref{thm:WellPosedLagrangianSol} and Theorem \ref{thm:WellPosednessRegularSol}. As in the proof of Theorem \ref{thm:WellPosednessRegularSol}, if $\eps\leq \tilde{c}_0\delta^3$ and $\tilde{c}_0>0$ sufficiently small, a bootstrap argument yields the bounds
	\begin{align*}
		|\omega\partial_a\hat{\Theta}_t|^2\leq 2C_0\varepsilon^2\delta^{-6}\ln^{33}\jabr{t}, \quad |\omega^{-1}\partial_a\hat{A}_t|^2\leq 1+2C_0\varepsilon^2\delta^{-4}.
	\end{align*}
	In particular, we obtain
	\begin{align*}
		|\partial_a\hat{A}_t|\lesssim \omega\left( 1+ C\eps\delta^{-2}\right) \lesssim 1,
	\end{align*}
	provided $ \eps\leq\tilde{c}_0 \delta^3$ is small enough. Writing $ A_t(\theta,a,\ell)=a+\Omega_t(\theta,a,\ell) $, by \eqref{eq:Aconv} we know that $ \Omega_t $ converges uniformly to some $ \Omega_\infty $, and $ \hat{A}_\infty=a+\Omega_\infty $. Moreover, we have that $ |\partial_{a}\Omega_\infty|\lesssim \varepsilon^2\delta^{-4} $, which yields the claim \eqref{eq:Sec4:ProofConvergActionFlow} by choosing $ \tilde{c}_0$ small enough: since
    \begin{align*}
		\dfrac{d}{dt}\partial_{a}\Omega_t= \dfrac{d}{dt} \partial_{a}\hat{A}_t = -\lambda\partial_{\theta}^2\tilde{\Psi}\circ \Phi_t \, \partial_{a}\hat{\Theta}_t - \lambda\partial_{a\theta}^2\tilde{\Psi}\circ \Phi_t \, \partial_{a}\hat{A}_t,
	\end{align*}
    with the above estimates we have
    \begin{align*}
		\left| \dfrac{d}{dt}\partial_{a}\Omega_t\right| &\lesssim \dfrac{\varepsilon^2}{\delta^4}\left( \omega|\partial_{a}\hat{\Theta}_t|\dfrac{\ln^{32}\jabr{t}}{(1+t)^{2}} + |\partial_{a}\hat{A}_t|\dfrac{\ln^{32}\jabr{t}}{(1+t)^{3/2}} \right) \lesssim \dfrac{\varepsilon^2}{\delta^4}\dfrac{\ln^{32}\jabr{t}}{(1+t)^{3/2}},
	\end{align*}	
    so it suffices to use that $\partial_a\Omega_0=\partial_a A_0-1=0$.
    
    \textit{Definition of $\F_\infty.$} We now define $ \F_\infty $ as
	\begin{equation}\label{eq:def_Finfty}
		\F_\infty(a) := -\dfrac{1}{a^2}\int_{\R}\int_{0}^\infty\int_{0}^\infty\ind_{\{\hat{A}_\infty(\theta,\alpha,\ell)\leq a\}} \, \gamma_0^2(\theta,\alpha, \ell) \, d\theta d\alpha d\ell.
	\end{equation}
	As a consequence, by \eqref{eq:Aconv} we have
	\begin{align*}
		\left| t^2\F_{\eff}(t,at) - \F_{\infty}(a)\right| &\leq \dfrac{1}{a^2}\int_{\R}\int_{0}^\infty\int_{0}^\infty\ind_{\{\hat{A}_\infty\leq a\leq \hat{A}_t\}\cup\{\hat{A}_t\leq a\leq \hat{A}_\infty\}} \, \gamma_0^2(\theta,\alpha, \ell) \, d\theta d\alpha d\ell
		\\
		&\lesssim \dfrac{\eps^2}{\delta^2}\dfrac{1}{a^2}\dfrac{1}{(1+t)^{1/2}},
	\end{align*}
    since for any fixed $(\theta,\ell)$ we have
    \begin{equation*}
      \left|\{\alpha:\hat{A}_\infty(\theta,\alpha,\ell)\leq a\leq \hat{A}_t(\theta,\alpha,\ell)\}\right|+\left|\{\alpha:\hat{A}_t(\theta,\alpha,\ell)\leq a\leq \hat{A}_\infty(\theta,\alpha,\ell)\}\right| \leq 2 \norm[L^\infty(\D(\delta))]{\hat{A}_t-\hat{A}_\infty}.
    \end{equation*}
    This shows \eqref{eq:Finfty_conv}.
	
	\textit{Bounds on $\F_\infty$.} From the definition of $ \F_\infty $ it is clear that
	\begin{align*}
		|\F_\infty(a)|\leq\dfrac{1}{a^2}\norm[L^2]{\gamma_0}^2\lesssim \frac{\eps^2}{a^2}.
	\end{align*}
	On the other hand, we have
	\begin{align*}
		\F_\infty'(a) &= \dfrac{2}{a^3}\int_{\R}\int_{0}^\infty\int_{0}^\infty\ind_{\{\hat{A}_\infty(\theta,\alpha,\ell)\leq a\}} \, \gamma_0^2 \, d\theta d\alpha d\ell - \dfrac{1}{a^2}\dfrac{d}{da} \int_{\R}\int_{0}^\infty\int_{0}^\infty\ind_{\{\hat{A}_\infty(\theta,\alpha,\ell)\leq a\}} \, \gamma_0^2\, d\theta d\alpha d\ell
		\\
		&=:I_1+I_2.
	\end{align*}
	Clearly, we have $ |I_1|\lesssim \varepsilon^2a^{-3} $. Using the change of variables $ \alpha\mapsto \hat{A}_\infty(\theta,\alpha,\ell) $, which is well-defined by \eqref{eq:Sec4:ProofConvergActionFlow}, we obtain 
	\begin{align*}
		|I_2|\lesssim \dfrac{1}{a^2}\int_{\R}\int_{0}^\infty \, \gamma_0^2\left( \theta,\hat{A}_\infty^{-1}(\theta,a,\ell) ,\ell\right) \, d\theta d\ell \lesssim \dfrac{\varepsilon^2}{a^2}.
	\end{align*}
	This concludes the proof.
\end{proof}
We can now prove the modified scattering dynamics for Lagrangian solutions.

\begin{thm}[Modified scattering for Lagrangian solutions]\label{thm:ModifiedScatteringLagrangianSol}
    There exists $c_2>0$ such that if $ \gamma $ is a Lagrangian solution as in Theorem \ref{thm:WellPosedLagrangianSol} with $\eps\leq c_2\delta^3$, and $ \F_\infty $ is as in Proposition \ref{pro:ConvergenceCharacFlow},
    then there is a continuous map $ \Phi_\infty:\D(\delta)\to \D(\delta/2) $ such that the characteristic flow $\Phi_t=(\hat{\Theta}_t,\hat{A}_t,\ell)$ converges as
    \begin{equation}\label{eq:char_modscat}
		\Phi_t + (\lambda\ln(1+t)\F_{\infty}(\hat{A}_t),0,0) \to \Phi_\infty,\quad t\to\infty.
	\end{equation}
    Furthermore, there exists $ \gamma_\infty\in L^2\cap L^\infty $ such that
	\begin{equation}\label{eq:gamma_Lagrange_modscat}
		\lim_{t\to \infty} \gamma\left( t, \theta-\lambda\ln(1+t)\F_\infty(a),a,\ell \right) = \gamma_\infty(\theta,a,\ell)
	\end{equation}
	in the sense of distributions.
\end{thm}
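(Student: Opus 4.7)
The plan is to first establish the pointwise convergence of the modified flow $\Psi_t(\theta_0,a_0,\ell_0) := \Phi_t(\theta_0,a_0,\ell_0) + (\lambda\ln(1+t)\F_\infty(\hat A_t), 0, 0)$ to a continuous limit $\Phi_\infty$ on $\D(\delta)$, and then transfer this to the distributional convergence of $\hat\gamma(t,\cdot) = \gamma_0\circ \Psi_t^{-1}$. The convergence $\hat A_t \to \hat A_\infty$ and the local Lipschitz bound on $\F_\infty$ are provided by Proposition \ref{pro:ConvergenceCharacFlow}, and by Corollary \ref{cor:nonlin_chars} the flow $\Phi_t$ maps $\D(\delta)$ into $\D(\delta/2)$. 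Thus it only remains to analyze the angle component $\hat\Theta_t$.

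\textbf{Asymptotic expansion of $\dot{\hat\Theta}_t$.} I would start from $\dot{\hat\Theta}_t = \lambda\, \F(t, \tilde R)\, \partial_a \tilde R$ with $\tilde R = R(\hat\Theta_t + t\hat A_t, \hat A_t, \ell_0)$, and telescope through four approximations each producing errors integrable in $t$. Since $|\hat\Theta_t| \lesssim \ln^{32}\jabr{t}$ by \eqref{eq:mom_bd_theta} and $\hat A_t$ is bounded below, the trajectory eventually lies in the bulk $\mathcal{B}$, where Lemma \ref{lem:EstDynRadBulk} gives $\tilde R \gtrsim t\hat A_t$ and $\partial_a \tilde R = t + O(\ln\jabr{t})$. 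In this regime, replacing $\F$ by $\F_{\eff}$ costs an error $O(t^{-3/2})$ by Lemma \ref{lem:GravFieldvsEffField}; replacing $\F_{\eff}(t,\tilde R)$ by $\F_{\eff}(t, t\hat A_t)$ costs $O(t^{-2}\ln^{33}\jabr{t})$ by Lemma \ref{lem:EffectiveFieldAA}; replacing $\partial_a \tilde R$ by $t$ costs $O(t^{-2}\ln\jabr{t})$ since $|\F_{\eff}(t,t\hat A_t)| \lesssim \eps^2 t^{-2}$ by Lemma \ref{lem:EstimatesEffGravField}; and finally Proposition \ref{pro:ConvergenceCharacFlow} yields $|t\, \F_{\eff}(t, t\hat A_t) - t^{-1}\F_\infty(\hat A_t)| \lesssim \eps^2 (1+t)^{-3/2}$. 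Summing,
\[
\dot{\hat\Theta}_t = -\frac{\lambda \F_\infty(\hat A_t)}{1+t} + \rho(t), \qquad \int_0^\infty |\rho(t)|\, dt < \infty.
\]

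\textbf{Extracting the log correction and distributional convergence.} I would then integrate in time and isolate the logarithmic growth via
\[
\int_0^t \frac{\F_\infty(\hat A_s)}{1+s}\, ds = \F_\infty(\hat A_t)\ln(1+t) + \int_0^t \frac{\F_\infty(\hat A_s) - \F_\infty(\hat A_t)}{1+s}\, ds,
\]
where the bound $|\F_\infty(\hat A_s) - \F_\infty(\hat A_t)| \lesssim \eps^2\delta^{-2}((1+s)^{-1/2}+(1+t)^{-1/2})$ for $s\leq t$ (combining \eqref{eq:Aconv} with \eqref{eq:Finfty_bd}) shows that the residual integral converges as $t\to\infty$, which defines the limit $\hat\Theta_\infty$ and hence $\Phi_\infty$. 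For the distributional convergence of $\hat\gamma$: since the extra shift in $\Psi_t$ depends on $(\theta_0,a_0,\ell_0)$ only through $\hat A_t$, one has $\det D\Psi_t = \det D\Phi_t = 1$, so a change of variables gives for any $\varphi \in C_c^\infty(\D_0)$
\[
\langle \hat\gamma(t),\varphi\rangle = \int_{\D(\delta)} \gamma_0(\theta_0,a_0,\ell_0)\, \varphi(\Psi_t(\theta_0,a_0,\ell_0))\, d\theta_0\, da_0\, d\ell_0.
\]
Pointwise convergence $\Psi_t \to \Phi_\infty$ combined with dominated convergence then passes the right-hand side to its limit, and the measure-preservation bound $\|\hat\gamma(t)\|_{L^2\cap L^\infty} \leq \|\gamma_0\|_{L^2\cap L^\infty}$ together with Banach-Alaoglu identifies this limit as an element $\gamma_\infty \in L^2\cap L^\infty$.

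\textbf{Main obstacle.} The hard part will be producing the four approximations above with errors that are genuinely integrable in $t$. This relies on the sharp decay rate $(1+t)^{-1/2}$ in Lemma \ref{lem:GravFieldvsEffField} and the sharp convergence rate in Proposition \ref{pro:ConvergenceCharacFlow}, while the definition of $\F_\infty$ as precisely the limit of the effective field at the asymptotic actions is what makes the cancellation producing the log correction actually convergent.
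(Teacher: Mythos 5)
Your proposal is correct and follows essentially the paper's own proof: the same four-term decomposition of the angle drift (matching the paper's $T_1$--$T_4$, via Lemmas \ref{lem:EstDynRadBulk}/\ref{lem:EstDynRadNoBulk}, \ref{lem:GravFieldvsEffField}, \ref{lem:EffectiveFieldAA} and Proposition \ref{pro:ConvergenceCharacFlow}), followed by the same volume-preserving change-of-variables/push-forward argument for the distributional limit. The only deviations are cosmetic: you extract the $\lambda\ln(1+t)\F_\infty(\hat{A}_t)$ correction by splitting the time integral rather than differentiating $\Xi(t)=\hat{\Theta}_t+\lambda\ln(1+t)\F_\infty(\hat{A}_t)$ as the paper does (equivalent inputs, since your residual-integral bound uses \eqref{eq:Aconv} and \eqref{eq:Finfty_bd} just as the paper's $\mathcal{R}_2$ term does), and the logarithmic control of $|\hat{\Theta}_t|$ along a characteristic should be cited from Lemma \ref{lem:NonlinCharSys} (bounds \eqref{eq:nonlin_char_bds}), not from the Eulerian moment estimate \eqref{eq:mom_bd_theta}.
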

We note that thanks to the convergence \eqref{eq:char_modscat} of the characteristics and the definition of $\F_\infty$ in \eqref{eq:def_Finfty}, we can express $\F_\infty$ in terms of $\gamma_\infty$ as
\begin{equation}
    \F_\infty(a) = -\dfrac{1}{a^2}\int_{\R}\int_{0}^\infty\int_{0}^\infty\ind_{\{\alpha\leq a\}} \, \gamma_\infty^2(\theta,\alpha, \ell) \, d\theta d\alpha d\ell.
\end{equation}

\begin{rem}
  One can quantify the above distributional convergence of $\gamma$: the proof of Theorem \ref{thm:ModifiedScatteringLagrangianSol} in fact shows that
  \begin{align*}
		\mathcal{W}_1\left( \hat{\gamma}(t)^2,\gamma_\infty^2 \right) \lesssim \dfrac{\eps^4}{\delta^4}\dfrac{\ln^{16}\jabr{t}}{(1+t)^{1/2}}, \quad \hat{\gamma}(t,\theta,a,\ell):=\gamma\left( t, \theta-\lambda\ln(1+t)\F_\infty(a),a,\ell \right),
	\end{align*}
	with respect to the $ 1 $-Wasserstein distance $ \mathcal{W}_1 $, for example.
\end{rem}

\begin{proof}[Proof of Theorem \ref{thm:ModifiedScatteringLagrangianSol}]
By the convergence $\hat{A}_t\to\hat{A}_\infty$ in Proposition \ref{pro:ConvergenceCharacFlow} \eqref{it:Aconv}, it suffices to prove that
	\begin{align*}
		\Xi(t) := \hat{\Theta}_t+\lambda\ln(1+t)\F_{\infty}(\hat{A}_t) \to \hat{\Theta}_\infty, \quad t\to\infty,
	\end{align*}
locally uniformly for some $ \hat{\Theta}_\infty $, and then let $ \Phi_\infty = (\hat{\Theta}_\infty,\hat{A}_\infty,\ell) $. To this end, we observe that by \eqref{eq:Sec4:NonlinCharSys}
	\begin{align*}
		\dfrac{d}{dt}\Xi(t) = -\lambda\left[ \F(t,\tilde{R}) \partial_{a}\tilde{R} - \dfrac{1}{1+t}\F_{\infty}(a) \right] \circ\Phi_t + \lambda\ln(1+t) \F_{\infty}'(\hat{A}_t)\dfrac{d}{dt}\hat{A}_t =: \mathcal{R}_1+\mathcal{R}_2.
	\end{align*}
From \eqref{eq:dtAt_bd} in the proof of Proposition \ref{pro:ConvergenceCharacFlow} and \eqref{eq:Finfty_bd} we obtain that for all $ (\theta,a,\ell)\in \D(\delta) $ 
	\begin{align*}
		|\mathcal{R}_2(t)| \lesssim \dfrac{\jabr{\ell}^{\frac32}\eps^4}{\delta^5}\dfrac{\ln(1+t)}{(1+t)^{3/2}},
	\end{align*}
having used also that $ \hat{A}_t\jabr{\ell}^{\frac12}\gtrsim \delta $. On the other hand, we have
	\begin{align*}
		\mathcal{R}_1 &= -\lambda\left[ \F(t,\tilde{R})\left( \partial_{a}\tilde{R} - t \right)\right]\circ\Phi_t  + \lambda t\left[ -\F(t,\tilde{R})+\F_{\eff}(t,\tilde{R}) \right]\circ\Phi_t
		\\
		&\quad +\lambda t\left[ -\F_{\eff}(t,\tilde{R})+\F_{\eff}(t,at) \right]\circ\Phi_t+\dfrac{\lambda}{1+t} \left[ -(1+t)t\F_{\eff}(t,at)+\F_\infty(a)\right]\circ\Phi_t
		\\
		&=: T_1+T_2+T_3+T_4.
	\end{align*}   
	We estimate now term by term. We use Lemmas \ref{lem:EstDynRadBulk}, \ref{lem:EstDynRadNoBulk} and Lemma \ref{lem:EstimatesGravFieldLInf} to get
	\begin{align*}
		|T_1\circ\Phi_t^{-1}| &\lesssim \ind_{\mathcal{B}} \dfrac{\varepsilon^2}{(1+t)^2}\jabr{\dfrac{p}{a}}\ln\jabr{t} + \ind_{\mathcal{B}^c} \dfrac{1}{a}\dfrac{\varepsilon^2}{1+t}\jabr{\dfrac{p}{\tilde{R}}} \lesssim \dfrac{1}{\delta^3}\dfrac{\varepsilon^2\ln\jabr{t}}{(1+t)^2}\jabr{\ell}^{\frac32} + \dfrac{1}{\delta^4}\dfrac{\varepsilon^2}{(1+t)^2}\jabr{\dfrac{1}{\ell}}\jabr{\ell}^{\frac12}\jabr{\theta}
		\\
		&\lesssim \dfrac{1}{\delta^4}\dfrac{\varepsilon^2\ln\jabr{t}}{(1+t)^2}\jabr{\dfrac{1}{\ell}} \jabr{\ell}^{\frac32}\jabr{\theta}.
	\end{align*}
	Here we used a decomposition into the contribution inside and outside the bulk $ \mathcal{B} $, see \eqref{eq:Sec2:DefBulk}, and in particular that outside the bulk we have $ 1\leq2|\theta|/ta $. Provided that $\eps_0^2\delta^3\lesssim 1$, we thus get with \eqref{eq:nonlin_char_bds} from Lemma \ref{lem:NonlinCharSys} (for $ \nu=1/2 $) that for all $ (\theta,a,\ell)\in \D(\delta) $
	\begin{align*}
		|T_1| \lesssim \dfrac{\eps^2}{\delta^4}\dfrac{\ln^2\jabr{t}}{(1+t)^2}\jabr{\dfrac{1}{\ell}} \jabr{\ell}^{2}\jabr{\theta}.
	\end{align*}
	By Lemma \ref{lem:GravFieldvsEffField} and the bounds \eqref{eq:mom_bd_a}, \eqref{eq:mom_bd_theta} we have for $ t\geq 1 $ and $ (\theta,a,\ell)\in \D(\delta) $
	\begin{align*}
		|T_2|\lesssim \dfrac{t\varepsilon^2}{\tilde{R}^2+t^2}\dfrac{\ln^{16}\jabr{t}}{(1+t)^{1/2}}\leq \dfrac{\varepsilon^2\ln^{16}\jabr{t}}{(1+t)^{3/2}}.
	\end{align*}
	Furthermore, for $ T_3 $ we use Lemma \ref{lem:EstimatesDerivGravEffField} in conjunction with Lemma \ref{lem:EffectiveFieldAA} and the estimates \eqref{eq:nonlin_char_bds} in Lemma \ref{lem:NonlinCharSys}, to obtain that for all $ (\theta,a,\ell)\in \D(\delta) $
	\begin{align*}
		|T_3| \lesssim \dfrac{\varepsilon^2t\ln^3\jabr{t}}{(1+t)^{3}}\left[\jabr{\ell}^{1/2}\jabr{a}\jabr{\dfrac{1}{a^2}}\jabr{\theta}\ln\jabr{t}\right]\circ \Phi_t \lesssim  \dfrac{\eps^2}{\delta^2}\dfrac{\ln^{5}\jabr{t}}{(1+t)^{2}}\jabr{\ell}^{2}\jabr{a}\jabr{\theta}.
	\end{align*}
	Finally, with Proposition \ref{pro:ConvergenceCharacFlow} \eqref{it:Finfty} we have for all $ (\theta,a,\ell)\in \D(\delta) $
	\begin{align*}
		|T_4| \lesssim \dfrac{\eps^2}{\delta^4}\dfrac{1}{(1+t)^{3/2}}\jabr{\ell}.
	\end{align*}
	Altogether, we conclude that
	\begin{align*}
		\left| \dfrac{d}{dt}\Xi(t) \right| \lesssim \dfrac{\eps^2}{\delta^4}\dfrac{\ln^{16}\jabr{t}}{(1+t)^{3/2}}\jabr{\theta}\jabr{a}\jabr{\dfrac{1}{\ell}}\jabr{\ell}^{5/2},
	\end{align*}
	whence $ \Xi(t;a,\theta,a,\ell) $ converges locally uniformly to a continuous map $ \hat{\Theta}_\infty(\theta,a,\ell) $ on $ \D(\delta) $, and we have for all $ (\theta,a,\ell)\in \D(\delta) $ that
	\begin{align*}
		\left| \Xi(t)-\hat{\Theta}_\infty \right| = \left| \hat{\Theta}_t+\lambda\ln(1+t)\F_{\infty}(\hat{A}_t) -\hat{\Theta}_\infty \right|  \lesssim \dfrac{\eps^2}{\delta^4}\dfrac{\ln^{16}\jabr{t}}{(1+t)^{1/2}}\jabr{\theta}\jabr{a}\jabr{\dfrac{1}{\ell}}\jabr{\ell}^{5/2}.
	\end{align*}
	Consequently, with Proposition \ref{pro:ConvergenceCharacFlow} \eqref{it:Aconv} we get
	\begin{align*}
		\left| \Phi_t + (\lambda\ln(1+t)\F_{\infty}(\hat{A}_t),0,0) - \Phi_\infty \right| \lesssim \dfrac{\eps^2}{\delta^4}\dfrac{\ln^{16}\jabr{t}}{(1+t)^{1/2}}\jabr{\theta}\jabr{a}\jabr{\dfrac{1}{\ell}}\jabr{\ell}^{5/2}.
	\end{align*}
	Defining the function $ \gamma_\infty:= (\Phi_\infty)_\# \gamma_0$ yields then for all test functions $ \varphi\in \D $
	\begin{align*}
		\skp{\varphi}{\hat{\gamma}(t)} = \skp{\varphi\circ(\Xi(t),\hat{A}_t,\ell)}{\gamma_0} \to \skp{\varphi\circ\Phi_\infty}{\gamma_0} = \skp{\varphi}{\gamma_\infty}, \quad t\to\infty.
	\end{align*}
\end{proof}

\medskip
\paragraph{\bf Long-time behavior: strong solutions.} Here we prove that the weak convergence in Theorem \ref{thm:ModifiedScatteringLagrangianSol} can in fact be strengthened to strong convergence for $ \SobH^1 $-regular solutions as in Theorem \ref{thm:WellPosednessRegularSol}. We give an independent proof of this, leaning on the methods of \cite{PW2020}, to illustrate the ease with which this result follows for strong solutions. As before, an essential (but here rather simple) result towards the proof is that macroscopic quantities only depending on the actions $ a $ converge as $ t\to \infty $.
\begin{pro}\label{pro:ConvergenceMacroQuant}
	Let $ \gamma $ be as in Theorem \ref{thm:WellPosednessRegularSol}. 
    Then for any $ \psi=\psi(a) \in L^\infty(0,\infty) $ the limit
	\begin{align*}
		\lim_{t\to\infty}\skp{\psi}{\gamma^2(t)}_{L^2} =:\jabr{\psi}_\infty
	\end{align*}
	exists and
	\begin{equation}\label{eq:a_av_conv}
		\left| \skp{\psi}{\gamma^2(t)}_{L^2} - \jabr{\psi}_\infty\right|\lesssim \dfrac{\varepsilon^4}{\delta^4}\dfrac{\ln^{32}\jabr{t}}{(1+t)^{1/2}}\norm[L^\infty]{\psi}.
	\end{equation}
\end{pro}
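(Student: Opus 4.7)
The plan is to compute the time derivative of $\langle\psi, \gamma^2(t)\rangle_{L^2}$, integrate by parts, and then integrate in time. For smooth $\psi = \psi(a)$, a short calculation using $\partial_t\gamma^2 = -\lambda\{\gamma^2,\tilde\Psi\}$ combined with integration by parts in $\theta$ (for the $\partial_\theta\gamma^2\partial_a\tilde\Psi$ term of the Poisson bracket) and in $a$ (for the $\partial_a\gamma^2\partial_\theta\tilde\Psi$ term) produces a cancellation of the $\partial_{a\theta}^2\tilde\Psi$ contributions, leaving
\begin{align*}
    \tfrac{d}{dt}\langle\psi, \gamma^2(t)\rangle_{L^2} = -\lambda \int_0^\infty \psi'(a)\, J_t(a)\, da, \qquad J_t(a) := \int_\R\int_0^\infty \gamma^2(t,\theta,a,\ell)\,\partial_\theta\tilde\Psi(t,\theta,a,\ell)\, d\theta\, d\ell.
\end{align*}
A further integration by parts in $a$ transforms this into $\lambda\int_0^\infty \psi(a)\, \partial_a J_t(a)\, da$. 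The boundary terms at $a=0^+$ and $a=\infty$ vanish: the first because the $\D(\delta/2)$-invariance from Corollary \ref{cor:nonlin_chars} forces the support of $\gamma(t,\cdot,a,\cdot)$ to sit at large $\ell$ as $a\to 0^+$, which together with the $\jabr\ell$-moments of Theorem \ref{thm:WellPosedLagrangianSol} drives $J_t(a)\to 0$; the second directly from the $(a+a^{-1})$-moments. The resulting identity $\tfrac{d}{dt}\langle\psi,\gamma^2(t)\rangle = \lambda\int\psi\,\partial_a J_t\,da$ extends to $\psi\in L^\infty$ by bounded convergence since smooth $\psi$ are dense in $L^\infty$ in the a.e.\ bounded sense and $\partial_a J_t\in L^1(da)$.

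Taking absolute values gives $|\tfrac{d}{dt}\langle\psi, \gamma^2(t)\rangle| \leq \lambda\|\psi\|_\infty\,\|\partial_a J_t\|_{L^1(da)}$, and the problem reduces to bounding
\begin{align*}
    \|\partial_a J_t\|_{L^1(da)} \leq 2\int|\gamma|\,|\partial_a\gamma|\,|\partial_\theta\tilde\Psi|\, d\theta\, da\, d\ell + \int \gamma^2\, |\partial_{a\theta}^2\tilde\Psi|\, d\theta\, da\, d\ell.
\end{align*}
I would estimate each piece by Cauchy-Schwarz, invoking from Lemma \ref{lem:BootstrapDerivGravFieldAALInf} the decay rates $\|\partial_\theta\tilde\Psi\|_\infty \lesssim \varepsilon^2\delta^{-2}(1+t)^{-3/2}$ and $\|\partial_{a\theta}^2\tilde\Psi\|_\infty \lesssim \varepsilon^2\delta^{-4}(1+t)^{-3/2}$, together with $\|\gamma(t)\|_{L^2} \lesssim \varepsilon$ from Theorem \ref{thm:WellPosedLagrangianSol}. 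The $L^2$-control of $\partial_a\gamma(t)$ on $\D(\delta/2)$ is deduced from the weighted bound $\|\omega\chi\partial_a\gamma(t)\|_{L^2} \lesssim \varepsilon\ln^{33}\jabr{t}$ in Theorem \ref{thm:WellPosednessRegularSol} by absorbing the inverse weight $(\omega\chi)^{-1}$ against the $L^\infty$ moment bounds on $\gamma(t)$ from Theorem \ref{thm:WellPosedLagrangianSol}. The outcome is $\|\partial_a J_t\|_{L^1} \lesssim \varepsilon^4\delta^{-4}\ln^{33}\jabr{t}(1+t)^{-3/2}$, which is time-integrable.

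Integrating over $[t,s]$ and letting $s\to\infty$, the family $\{\langle\psi, \gamma^2(t)\rangle\}_t$ is Cauchy, which identifies $\jabr\psi_\infty := \lim_{t\to\infty}\langle\psi, \gamma^2(t)\rangle$ and produces the estimate $|\langle\psi, \gamma^2(t)\rangle - \jabr\psi_\infty| \lesssim \varepsilon^4\delta^{-4}\ln^{32}\jabr{t}(1+t)^{-1/2}\|\psi\|_\infty$, which is the claim. The main obstacles are the careful justification of both integration-by-parts steps (verification of the vanishing of the boundary terms $\psi\,J_t|_{a=0^+,\infty}$ and the density argument extending the identity from $C^1_c$ to general $\psi \in L^\infty$, noting that $L^\infty$ is not separable so one must rely on bounded-a.e.\ convergence via mollifications), together with the delicate tracking of $\delta$-dependences through the weighted Cauchy-Schwarz estimates needed to recover the sharp exponent $\delta^{-4}$ in the final rate.
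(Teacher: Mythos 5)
Your proposal follows essentially the same route as the paper: both reduce, via the Poisson-bracket structure and two integrations by parts (no derivative ever landing on the merely bounded $\psi$ in the final identity), to $\frac{d}{dt}\skp{\psi}{\gamma^2}=\lambda\skp{\psi}{\partial^2_{a\theta}\tilde{\Psi}\,\gamma^2+2\gamma\,\partial_{\theta}\tilde{\Psi}\,\partial_{a}\gamma}$, which is then bounded using Lemma \ref{lem:BootstrapDerivGravFieldAALInf} together with the moment and weighted derivative bounds of Theorems \ref{thm:WellPosedLagrangianSol} and \ref{thm:WellPosednessRegularSol}, and integrated in time. Two minor points: in the Cauchy--Schwarz step the inverse weight $(\omega\chi)^{-1}$ should be placed on the factor $\gamma\,\partial_{\theta}\tilde{\Psi}$ (or controlled via $a\jabr{\ell}^{1/2}\gtrsim\delta$ on $\supp\gamma(t)$), rather than being ``absorbed'' against $L^\infty$ bounds on $\gamma$ while it still multiplies $\partial_a\gamma$; and your intermediate bound $\eps^4\delta^{-4}\ln^{33}\jabr{t}(1+t)^{-3/2}$ integrates to $\ln^{33}\jabr{t}(1+t)^{-1/2}$, a harmless one-logarithm loss compared with the power stated in \eqref{eq:a_av_conv}.
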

\begin{proof}
	Using the equation for $ \gamma $, we have
	\begin{align*}
		\dfrac{d}{dt}\skp{\psi}{\gamma^2} &= 2\lambda\skp{\psi\gamma}{\{\tilde{\Psi},\gamma\}} = \lambda\skp{\psi}{\{\tilde{\Psi},\gamma^2\}} = - \lambda\skp{\{\tilde{\Psi},\psi\}}{\gamma^2} = -\lambda\skp{\partial_{\theta}\tilde{\Psi}\, \partial_{a}\psi}{\gamma^2}
		\\
		&= \lambda\skp{\psi}{\partial_{a\theta}^2\tilde{\Psi}\gamma^2+2\gamma \partial_{\theta}\tilde{\Psi} \partial_{a}\gamma}.
	\end{align*}
	With the bounds in Lemma \ref{lem:BootstrapDerivGravFieldAALInf} and Theorem \ref{thm:WellPosednessRegularSol} we then have
	\begin{align*}
		\left| \dfrac{d}{dt}\skp{\psi}{\gamma^2}\right| \lesssim \dfrac{\varepsilon^4}{\delta^4}\dfrac{\ln^{32}\jabr{t}}{(1+t)^{3/2}}\norm[L^\infty]{\psi} + \dfrac{\varepsilon^4}{\delta^2}\dfrac{\ln^{33}\jabr{t}}{(1+t)^{2}}\norm[L^\infty]{\psi},
	\end{align*}
	which yields the claim upon integrating in time.
\end{proof}
As an immediate corollary of the previous result we have the following.
\begin{cor}\label{cor:ConvEffField}
	Let $ \gamma $ be as in Theorem \ref{thm:WellPosednessRegularSol}. Then the following pointwise limit exists
	\begin{align*}
		\lim_{t\to\infty} t^2\F_{\eff}(t,at) =: \F_\infty(a),
	\end{align*}
	and we have for all $ a>0 $ that
	\begin{align*}
		\left| t^2\F_{\eff}(t,at)-\F_\infty(a) \right| \lesssim \dfrac{\varepsilon^4}{\delta^4}\dfrac{1}{a^2}\dfrac{\ln^{32}\jabr{t}}{(1+t)^{1/2}}.
	\end{align*}
\end{cor}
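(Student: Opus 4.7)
The plan is to recognize that $t^2 \F_{\eff}(t, at)$ is precisely a macroscopic quantity of the form treated in Proposition \ref{pro:ConvergenceMacroQuant}, applied to a suitable test function depending only on the action variable. This reduces the corollary to a direct application of that proposition, with the $1/a^2$ factor in the error bound arising from the $L^\infty$-norm of the test function.

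First I would rewrite the effective gravitational field evaluated at $r = at$ by substituting into the definition \eqref{eq:Sec3:EffGravField}:
\begin{align*}
    t^2 \F_{\eff}(t, at) &= -\frac{t^2}{(at)^2} \int_\R \int_0^\infty \int_0^\infty \ind_{\{\alpha t \leq at\}} \, \gamma^2(t, \theta, \alpha, \ell) \, d\theta d\alpha d\ell \\
    &= -\frac{1}{a^2} \int_\R \int_0^\infty \int_0^\infty \ind_{\{\alpha \leq a\}} \, \gamma^2(t, \theta, \alpha, \ell) \, d\theta d\alpha d\ell = \skp{\psi_a}{\gamma^2(t)}_{L^2},
\end{align*}
where I define $\psi_a(\alpha) := -\frac{1}{a^2} \ind_{\{\alpha \leq a\}}$, which depends only on the action variable and satisfies $\norm[L^\infty]{\psi_a} = 1/a^2$.

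Then I would invoke Proposition \ref{pro:ConvergenceMacroQuant} with $\psi = \psi_a$: it ensures existence of the limit $\F_\infty(a) := \jabr{\psi_a}_\infty$, together with the quantitative estimate \eqref{eq:a_av_conv}, which upon substituting $\norm[L^\infty]{\psi_a} = 1/a^2$ gives exactly the claimed bound. There is no genuine obstacle here since all difficulty was absorbed into the proof of Proposition \ref{pro:ConvergenceMacroQuant}; one only has to verify the change of variables and identify the correct test function.
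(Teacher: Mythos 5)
Your proposal is correct and coincides with the paper's own proof: the paper likewise writes $t^2\F_{\eff}(t,at)=\skp{\psi_a}{\gamma^2(t)}_{L^2}$ (up to sign) with $\psi_a(\alpha)=a^{-2}\ind_{\{\alpha\leq a\}}$ and then applies Proposition \ref{pro:ConvergenceMacroQuant} together with $\norm[L^\infty]{\psi_a}\leq 1/a^2$ to obtain both the existence of the limit and the quantitative bound. Nothing is missing.
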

\begin{proof}
	We apply Proposition \ref{pro:ConvergenceMacroQuant} to the function $ \psi_a(\alpha)=a^{-2}\ind_{\{\alpha\leq a\}} $ for fixed $ a>0 $, thereby getting
	\begin{align*}
		\lim_{t\to \infty} t^2\F_{\eff}(t,at) = \lim_{t\to \infty} -\dfrac{1}{a^2} \int_{\R} \int_{0}^\infty \int_{0}^\infty\ind_{\{\alpha\leq a\}} \gamma^2(t,\theta,\alpha,\ell)\, d\theta d\alpha d\ell =:\F_\infty(a).
	\end{align*}
	For the asserted estimate we use $ \norm[L^\infty]{\psi_a}\leq 1/a^2 $ and the bound \eqref{eq:a_av_conv} from Proposition \ref{pro:ConvergenceMacroQuant}.
\end{proof}

We can now prove our main result concerning the long-time behavior.
\begin{thm}[Modified scattering for strong solutions]\label{thm:ModifiedScattering}
	 Let $\gamma$ be as in Theorem \ref{thm:WellPosednessRegularSol}, with $k_1\geq 4$, $k_2\geq 2$, $k_3\geq 2$, $k_4\geq 0$ and $k_5\geq \frac{k_1}{2}$, and $ \F_\infty $ be given via Corollary \ref{cor:ConvEffField}.
    Then there exists $ \gamma_\infty\in L^2 $ such that
	\begin{align*}
		\lim_{t\to \infty} \gamma\left( t, \theta-\lambda\ln(1+t)\F_\infty(a),a,\ell \right) = \gamma_\infty(\theta,a,\ell)
	\end{align*}
	with respect to the $ L^2 $-convergence. More precisely, we have
	\begin{align*}
		\norm[L^2]{\hat{\gamma}(t)-\gamma_\infty} \lesssim \dfrac{\varepsilon^3}{\delta^4}\dfrac{\ln^{35}\jabr{t}}{(1+t)^{1/2}}, \quad \hat{\gamma}(t,\theta,a,\ell):=\gamma\left( t, \theta-\lambda\ln(1+t)\F_\infty(a),a,\ell \right).
	\end{align*}
\end{thm}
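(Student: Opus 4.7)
The strategy is to identify a natural ``modified profile'' whose time derivative decays integrably in $L^2$, yielding a Cauchy sequence. Set $\theta^* := \theta - \lambda\ln(1+t)\F_\infty(a)$ so that by definition $\hat{\gamma}(t, \theta, a, \ell) = \gamma(t, \theta^*, a, \ell)$. A direct computation using the equation $\partial_t\gamma + \lambda\{\gamma, \tilde{\Psi}\} = 0$ together with the chain rule identity $\partial_a\gamma(t, \theta^*, a, \ell) = \partial_a\hat{\gamma} + \lambda\ln(1+t)\F'_\infty(a)\,\partial_\theta\hat{\gamma}$ yields
\[
\partial_t\hat{\gamma} = -\lambda\,\partial_\theta\hat{\gamma}\cdot \mathcal{E}(t) + \lambda\,\partial_a\hat{\gamma}\cdot(\partial_\theta\tilde{\Psi})(t, \theta^*, a, \ell),
\]
where the shear error is
\[
\mathcal{E}(t) := (\partial_a\tilde{\Psi})(t, \theta^*, a, \ell) + \frac{\F_\infty(a)}{1+t} - \lambda\ln(1+t)\F'_\infty(a)\,(\partial_\theta\tilde{\Psi})(t, \theta^*, a, \ell).
\]
The shift in $\theta^*$ has been chosen precisely so that the leading $(1+t)^{-1}$ behaviour of $\partial_a\tilde{\Psi}$---which drives the shear in $\theta$---cancels against $\F_\infty/(1+t)$.

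The transport term $\partial_a\hat{\gamma}\cdot\partial_\theta\tilde{\Psi}$ is comparatively straightforward: Lemma \ref{lem:BootstrapDerivGravFieldAALInf} provides $|\partial_\theta\tilde{\Psi}| \lesssim \varepsilon^2\delta^{-2}\min\{a,1\}(1+t)^{-3/2}$, and one has $\|\omega\chi\,\partial_a\hat{\gamma}\|_{L^2} \lesssim \varepsilon\ln^{33}\jabr{t}$---this follows from Theorem \ref{thm:WellPosednessRegularSol} together with the bound $|\F'_\infty(a)| \lesssim \varepsilon^2(a^{-2}+a^{-3})$ of Proposition \ref{pro:ConvergenceCharacFlow} applied to the chain rule identity, absorbing the $\jabr{\ell}^{1/2}$ growth via the weight $\chi$. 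The hard part is estimating $\mathcal{E}$, which I propose to handle via the telescopic decomposition
\begin{align*}
\mathcal{E}(t) + \lambda\ln(1+t)\F'_\infty\,\partial_\theta\tilde{\Psi} &= -\F(t,\tilde{R})(\partial_a\tilde{R} - t) + t\bigl[\F_{\eff}(t,\tilde{R}) - \F(t,\tilde{R})\bigr] \\
&\quad + t\bigl[\F_{\eff}(t, at) - \F_{\eff}(t, \tilde{R})\bigr] + \frac{\F_\infty(a) - t(1+t)\F_{\eff}(t, at)}{1+t}.
\end{align*}
The four summands are controlled in turn by: the dynamic-radius bounds of Lemmas \ref{lem:EstDynRadBulk}--\ref{lem:EstDynRadNoBulk} combined with the gravitational-field estimate of Lemma \ref{lem:EstimatesGravFieldLInf}; the field-versus-effective-field bound of Lemma \ref{lem:GravFieldvsEffField}; the Lipschitz estimate of Lemma \ref{lem:EffectiveFieldAA} together with Lemma \ref{lem:EstimatesDerivGravEffField}; and Corollary \ref{cor:ConvEffField}. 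Each term decays as $(1+t)^{-3/2}$ up to logarithmic factors and admissible polynomial weights in $(a, a^{-1}, \ell, \ell^{-1}, \theta)$---which is precisely why the hypotheses $k_1 \geq 4$, $k_2 \geq 2$, $k_3 \geq 2$ and $k_5 \geq k_1/2$ are imposed: they ensure that the pointwise product $\omega\chi^{-1}\mathcal{E}(t)$ is dominated by $\varepsilon^2\delta^{-4}\ln^{34}\jabr{t}(1+t)^{-3/2}$ on $\D(\delta/2)$. Pairing with $\|\omega^{-1}\chi\,\partial_\theta\hat{\gamma}\|_{L^2} \lesssim \varepsilon$ (which transfers from $\partial_\theta\gamma$ to $\partial_\theta\hat{\gamma}$ since $\theta^*$ differs from $\theta$ by a quantity independent of $\theta$) then yields the pivotal estimate
\[
\|\partial_t\hat{\gamma}(t)\|_{L^2} \lesssim \frac{\varepsilon^3}{\delta^4}\frac{\ln^{35}\jabr{t}}{(1+t)^{3/2}}.
\]

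With this in hand, the conclusion is immediate by the usual Cauchy argument: $t \mapsto \hat{\gamma}(t) \in L^2$ is Cauchy as $t \to \infty$, so there exists a limit $\gamma_\infty \in L^2$, and integrating from $t$ to $\infty$ gives
\[
\|\hat{\gamma}(t) - \gamma_\infty\|_{L^2} \leq \int_t^\infty \|\partial_s\hat{\gamma}(s)\|_{L^2}\,ds \lesssim \frac{\varepsilon^3}{\delta^4}\frac{\ln^{35}\jabr{t}}{(1+t)^{1/2}}.
\]
The main obstacle is thus the pointwise control of $\mathcal{E}$: each of the four terms in its decomposition must be shown to decay strictly faster than $(1+t)^{-1}$ in an $L^\infty$-type norm compatible with the weights $\omega, \chi$ on derivatives of $\gamma$, which requires careful bookkeeping of the moment assumptions on $\gamma$ against the weights carried by the gravitational-field estimates of Section \ref{sec:GravField}.
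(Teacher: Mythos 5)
Your proposal is correct and follows essentially the same route as the paper: the same profile $\hat{\gamma}$, the identical four-term decomposition of the shear error (estimated via Lemmas \ref{lem:EstDynRadBulk}, \ref{lem:EstDynRadNoBulk}, \ref{lem:EstimatesGravFieldLInf}, \ref{lem:GravFieldvsEffField}, \ref{lem:EstimatesDerivGravEffField}, \ref{lem:EffectiveFieldAA} and Corollary \ref{cor:ConvEffField}), and the same integrable-in-time $L^2$ bound on $\partial_t\hat{\gamma}$ followed by a Cauchy argument. The only (harmless) deviation is that you re-express $\partial_a\gamma\circ\S_t$ through $\partial_a\hat{\gamma}$ via the chain rule, which forces you to invoke the bound on $\F_\infty'$ from Proposition \ref{pro:ConvergenceCharacFlow} and a little extra slack in the $\ell$-weights, whereas the paper keeps the derivatives of $\gamma$ composed with the $\theta$-translation (which preserves the weighted $L^2$ norms, since the weights do not depend on $\theta$) and therefore never needs $\F_\infty'$ in this proof.
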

\begin{proof}
	We write $ \S_t(\theta,a,\ell)=(t, \theta-\lambda\ln(1+t)\F_\infty(a),a,\ell) $ for brevity, and use the equation for $ \gamma $ to get
	\begin{align*}
		\partial_t\hat{\gamma} &= \partial_t\gamma\circ\S_t -\dfrac{\lambda}{1+t} \F_\infty(a) \partial_{\theta}\gamma\circ\S_t = -\lambda\{\gamma,\tilde{\Psi}\}\circ\S_t -\dfrac{\lambda}{1+t} \F_\infty(a) \partial_{\theta}\gamma\circ\S_t
		\\
		&=-\lambda\left[ \partial_{a}\tilde{\Psi}\circ\S_t+\dfrac{1}{1+t} \F_\infty(a) \right]\partial_{\theta}\gamma\circ\S_t+\lambda\partial_{\theta}\tilde{\Psi}\circ\S_t \, \partial_{a}\gamma\circ\S_t
		\\
		&=-\lambda (\mathcal{R}_1\, \partial_{\theta}\gamma)\circ\S_t+\lambda(\mathcal{R}_2 \, \partial_{a}\gamma)\circ\S_t,
	\end{align*}
	where
	\begin{align*}
		\mathcal{R}_1 := \partial_{a}\tilde{\Psi}+\dfrac{1}{1+t} \F_\infty(a)=-\F(t,\tilde{R})\partial_a \tilde{R}+\dfrac{1}{1+t} \F_\infty(a), \quad \mathcal{R}_2:=\partial_{\theta}\tilde{\Psi}.
	\end{align*}
	This yields for $ s\leq t $ that
	\begin{align*}
		\norm[L^2]{\hat{\gamma}(t)-\hat{\gamma}(s)} &\leq \lambda\int_{s}^t\norm[L^2]{(\mathcal{R}_1\, \partial_{\theta}\gamma)\circ\S_r+(\mathcal{R}_2\S_r\, \partial_{a}\gamma)\circ\S_r}\, dr
		\\
		&= \lambda\int_{s}^t\norm[L^2]{\mathcal{R}_1(r)\,  \partial_{\theta}\gamma(r)+\mathcal{R}_2(r) \, \partial_{a}\gamma(r)}\, dr.
	\end{align*}
We decompose
	\begin{align*}
		\mathcal{R}_1 &= -\F(t,\tilde{R})\partial_{a}\tilde{R}+\dfrac{1}{1+t}\F_\infty(a) = -\F(t,\tilde{R})\left[ \partial_{a}\tilde{R} -t \right] + t\left[ -\F(t,\tilde{R})+\F_{\eff}(t,\tilde{R}) \right]
		\\
		&\quad +t\left[ -\F_{\eff}(t,\tilde{R})+\F_{\eff}(t,at) \right]+\dfrac{1}{1+t} \left[ -(1+t)t\F_{\eff}(t,at)+\F_\infty(a)\right]
		\\
		&=: T_1+T_2+T_3+T_4,
	\end{align*}
	and estimate term by term. Firstly, we have with Lemmas \ref{lem:EstDynRadBulk}, \ref{lem:EstDynRadNoBulk} and \ref{lem:EstimatesGravFieldLInf}
	\begin{align*}
		|T_1| &\lesssim \dfrac{\eps^2}{1+t^2}|\partial_{a}\tilde{R} -t| \lesssim \ind_{\mathcal{B}} \dfrac{\eps^2}{(1+t)^2}\jabr{\dfrac{p}{a}}\ln\jabr{t} + \ind_{\mathcal{B}^c} \dfrac{1}{a}\dfrac{\eps^2}{1+t}\jabr{\dfrac{p}{\tilde{R}}} 
		\\
		&\lesssim \dfrac{\eps^2}{\delta^3}\dfrac{\ln\jabr{t}}{(1+t)^2}\jabr{\ell}^2 + \dfrac{\eps^2}{\delta^4}\dfrac{1}{(1+t)^2}\jabr{\dfrac{1}{\ell}}\jabr{\ell}\jabr{\theta} \lesssim \dfrac{\eps^2}{\delta^4}\dfrac{\ln\jabr{t}}{(1+t)^2}\jabr{\dfrac{1}{\ell}}\jabr{\ell}^2\jabr{\theta}.
	\end{align*}
	Here we used a decomposition into the contribution inside and outside the bulk $ \mathcal{B} $, see \eqref{eq:Sec2:DefBulk}, and the fact that outside the bulk there holds $ 1\leq2|\theta|/ta $. Furthermore, we have with Lemma \ref{lem:GravFieldvsEffField} and $ t\geq1 $
	\begin{align*}
		|T_2|\lesssim \dfrac{\eps^2}{\tilde{R}+t}\dfrac{\ln^{16}\jabr{t}}{(1+t)^{1/2}}\leq \dfrac{\eps^2}{(1+t)^{3/2}}.
	\end{align*}
	In addition, by Lemma \ref{lem:EstimatesDerivGravFieldLInf} in conjunction with Lemma \ref{lem:EffectiveFieldAA} we get
	\begin{align*}
		|T_3| &\lesssim \dfrac{\eps^2}{\delta^2}\dfrac{t\ln^{32}\jabr{t}}{(1+t)^{3}}\jabr{\ell}^{3/2}\jabr{a}\jabr{\theta}\ln\jabr{t}.
	\end{align*}
	Finally, we have with Corollary \ref{cor:ConvEffField} and the fact that $ a\jabr{\ell}^{\frac12}\gtrsim\delta $
	\begin{align*}
		|T_4|\lesssim \dfrac{\eps^4}{\delta^6}\dfrac{\jabr{\ell}\ln^{32}\jabr{t}}{(1+t)^{3/2}}.
	\end{align*}
	
	Using the derivative bounds \eqref{eq:deriv_bd1}, \eqref{eq:deriv_bd2} satisfied by $ \gamma $ according to Theorem \ref{thm:WellPosednessRegularSol}, it follows that 
	\begin{align*}
		\norm[L^2]{\mathcal{R}_1(t)\partial_{\theta}\gamma(t)} &\lesssim \dfrac{\eps^2}{\delta^4}\dfrac{\ln^{33}\jabr{t}}{(1+t)^{3/2}}\norm[L^2]{\jabr{\dfrac{1}{\ell}}\jabr{\ell}^2\jabr{a}\jabr{\theta} \partial_{\theta}\gamma(t)} \lesssim \dfrac{\eps^3}{\delta^4}\dfrac{\ln^{35}\jabr{t}}{(1+t)^{3/2}}.
	\end{align*}
	In addition, Lemma \ref{lem:BootstrapDerivGravFieldAALInf} gives that
	\begin{align*}
		\norm[L^2]{\mathcal{R}_2(t)\partial_{a}\gamma(t)} = \norm[L^2]{\partial_{\theta}\tilde{\Psi}\partial_{a}\gamma(t)}\lesssim \dfrac{\eps^2}{\delta^2}\dfrac{1}{(1+t)^{3/2}}\norm[L^2]{\omega\jabr{\ell}^{1/4}\partial_{a}\gamma(t)} \lesssim \dfrac{\eps^3}{\delta^2}\dfrac{\ln^{33}\jabr{t}}{(1+t)^{3/2}}.
	\end{align*}
	This yields that
	\begin{align*}
		\norm[L^2]{\hat{\gamma}(t)-\hat{\gamma}(s)} \lesssim \dfrac{\eps^3}{\delta^4}\dfrac{\ln^{35}\jabr{s}}{(1+s)^{1/2}}, \quad s\leq t.
	\end{align*}
	The existence of the limit $ \lim_{t\to \infty}\hat{\gamma}(t)=:\gamma_\infty $ follows, and we have the bound
	\begin{align*}
		\norm[L^2]{\hat{\gamma}(t)-\gamma_\infty} \lesssim \dfrac{\eps^3}{\delta^4}\dfrac{\ln^{35}\jabr{t}}{(1+t)^{1/2}}.
	\end{align*}
	This concludes the proof.
\end{proof}

\addtocontents{toc}{\protect\setcounter{tocdepth}{0}}
\section*{Acknowledgments}
The authors gratefully acknowledge support of the SNSF through grant PCEFP2\_203059 and the NCCR SwissMAP.
\addtocontents{toc}{\protect\setcounter{tocdepth}{1}}

\appendix
\section{Derivation of the equation for radially symmetric data}\label{sec:AppendixDerivRadSymEq}
In this section we give a derivation of equation \eqref{eq:Sec1:NonDynPhysVar} satisfied for radially symmetric data $ f(t,\bx,\bv) $. More precisely we look at equation \eqref{eq:VPD} with $ \cX=\cV=0 $
\begin{align*}
	\left(  \partial_t + \bv \cdot\nabla_\bx - \dfrac{m}{2} \dfrac{\bx}{|\bx|^3} \cdot \nabla_\bv \right) f - \lambda \nabla_\bx \phi_g \cdot \nabla_\bv f =0,\quad \Delta \phi_g = 4\pi\int f \, d\bv.
\end{align*}
Observe that the equation remains invariant under transformations $ (\bx,\bv) \mapsto (O\bx,O\bv) $ for all $ O\in \mathcal{O}(3) $. Thus, we look for distributions $ f(t,\bx,\bv) $ invariant under such transformations. This implies that $ f $ only depends on $ r=|\bx| $ (by moving $ \bx $ to e.g. $ r(1,0,0) $). In each tangent space at a point $ \bx $ the distribution $ f $ then only depends on the parallel component of $ \bv $ to $ \bx $ and the length of the normal component (one can use a rotation with axis of rotation along $ \bx=r(1,0,0) $), i.e.
\begin{align*}
	f(t,\bx,\bv) = \mu(t,r(\bx),v(\bx,\bv),\ell(\bx,\bv))^2, \quad r(\bx) =|\bx|,\quad v(\bx,\bv):=\bv\cdot \dfrac{\bx}{|\bx|},\quad  \ell(\bx,\bv)=|\bx \wedge \bv|^2,
\end{align*}
Here, we defined the normal part using the length of the angular momentum $ \ell $. For the local density we obtain by using cylindrical coordinates in $ \bv $ with axis $ \bx $, using the abbreviation $ \hat{\bx}=\bx/r $,
\begin{align*}
	4\pi\int_{\R^3} f(t,\bx,\bv) \, d\bv &= 4\pi\int_{\R}dv\int_{w\perp \hat{\bx}}dw \, f\left(t,\bx,v \,  \hat{\bx} + w\right) =\dfrac{4\pi}{r^2}\int_{\R}dv\int_{w\perp \hat{\bx}}dw \, f\left(t,\bx,v \,  \hat{\bx} + r^2w\right)
	\\
	&= \dfrac{4\pi^2}{r^2} \int_{\R}dv\int_0^\infty d\ell \, \mu(t,r,v,\ell)^2,
\end{align*}
having used polar coordinates in the variable $ w $ and the fact that $ r^2|w|^2=\ell $. Furthermore, using spherical coordinates in $ \bx $ we obtain
\begin{align*}
	f(t,\bx,\bv) \, d\bx d\bv = 4\pi^2\mu(t,r,v,\ell)^2 \, drdvd\ell
\end{align*}
In order to use our $ L^2 $-based functional framework we write this in the form
\begin{align*}
	f(t,\bx,\bv) \, d\bx d\bv = \mu(t,r,v,\ell)^2\, drdvd\ell, \quad 4\pi\int_{\R^3_p} f(t,\bx,\bv) \, d\bv = \dfrac{1}{r^2} \int_{\R}dv\int_0^\infty d\ell \,  \mu(t,r,v,\ell)^2.
\end{align*}

\medskip
\paragraph{\bf Gravitational potential.} Due to the invariance of $ f $ with respect to rotations the induced gravitational potential is also radially symmetric, in particular we have $ \phi_g(t,\bx)=\psi(t,r) $. The Poisson equation for the potential of the gas can now be written using spherical coordinates
\begin{align*}
	\dfrac{1}{r^2}\partial_r \left( r^2\partial_r \psi(t,r)\right) = \dfrac{1}{r^2} \int_{\R}dv\int_0^\infty d\ell \,  \mu(t,r,v,\ell)^2 =:\dfrac{1}{r^2}\rho(t,r)
\end{align*}
This yields
\begin{align*}
	\psi(t,r) = \int_0^r\dfrac{ds}{s^2}\int_0^s dz \, \rho(t,z) + C = \int_0^rdz \int_z^r \dfrac{ds}{s^2} \, \rho(t,z) + C = \int_0^rdz \left( \dfrac{1}{z}-\frac{1}{r} \right)  \, \rho(t,z) + C.
\end{align*}
We choose the constant $ C $ to ensure that the potential vanishes at infinity. This yields
\begin{align*}
	\psi(t,r) &= \int_0^rdz \left( \dfrac{1}{z}-\frac{1}{r} \right)  \, \rho(t,z) - \int_0^\infty\dfrac{dz}{z} \rho(t,z) = -\int_0^\infty\dfrac{ds}{\max(r,s)}\rho(t,s)
	\\
	&=-\int_0^\infty ds\int_{\R}dv\int_0^\infty d\ell \,  \dfrac{\mu(t,s,v,\ell)^2}{\max(r,s)}.
\end{align*}
Furthermore, for the force we obtain
\begin{align*}
	\F(t,r) = -\partial_r \psi(t,r) = -\dfrac{1}{r^2}\int_0^r ds\int_{\R}dv\int_0^\infty d\ell \, \mu(t,s,v,\ell)^2.
\end{align*}

\medskip
\paragraph{\bf Equation for distribution.} For the equation of $ \mu $ we make use of the following identities
\begin{align*}
	\bv \cdot \nabla_\bx r = v, \quad \bv\cdot \nabla_\bx v &= \bv^\top \left( \dfrac{I-\hat{\bx}\otimes \hat{\bx}}{|\bx|} \right)\bv = \dfrac{\ell}{r^3}, \quad \bv\cdot \nabla_\bx \ell = 2(\bx\times \bv)\cdot (\bv\times \bv) =0,
	\\
	\dfrac{\bx}{r}\cdot \nabla_\bv v &= 1, \quad \dfrac{\bx}{r}\cdot  \nabla_\bv \ell =  \dfrac{2}{r}(\bx\wedge \bv)\cdot (\bx\times \bx) =0,
\end{align*}
and thereby obtain the equation
\begin{align*}
	\left( \partial_t + v\partial_r+ \dfrac{\ell}{r^3}\partial_v - \dfrac{m}{2} \dfrac{1}{r^2}\partial_v \right)\mu- \lambda\partial_r\psi \, \partial_v\mu =0.
\end{align*}
This can be derived using the weak formulation by testing with functions only depending on $ (r,v,\ell) $. This yields the equation \eqref{eq:Sec1:NonDynPhysVar}.

\bibliographystyle{habbrv}
\bibliography{vp-lib}

\end{document}